\documentclass{amsart}
\usepackage{amssymb}
\usepackage{amscd}
\usepackage{verbatim}
\usepackage{epsfig}
\begin{document}
\newcommand{\pa}{\partial}
\newcommand{\CI}{C^\infty}
\newcommand{\dCI}{\dot C^\infty}
\newcommand{\supp}{\operatorname{supp}}
\renewcommand{\Box}{\square}
\newcommand{\ep}{\epsilon}
\newcommand{\Ell}{\operatorname{Ell}}
\newcommand{\WF}{\operatorname{WF}}
\newcommand{\WFb}{\operatorname{WF}_{\bl}}
\newcommand{\WFsc}{\operatorname{WF}_{\scl}}
\newcommand{\diag}{\mathrm{diag}}
\newcommand{\sign}{\operatorname{sign}}
\newcommand{\Ker}{\operatorname{Ker}}
\newcommand{\Ran}{\operatorname{Ran}}
\newcommand{\Span}{\operatorname{Span}}
\newcommand{\sH}{\mathsf{H}}
\newcommand{\sk}{\mathsf{k}}
\newcommand{\codim}{\operatorname{codim}}
\newcommand{\Id}{\operatorname{Id}}
\newcommand{\cl}{{\mathrm{cl}}}
\newcommand{\piece}{{\mathrm{piece}}}
\newcommand{\bl}{{\mathrm b}}
\newcommand{\scl}{{\mathrm{sc}}}
\newcommand{\Psib}{\Psi_\bl}
\newcommand{\Psibc}{\Psi_{\mathrm{bc}}}
\newcommand{\Psibcc}{\Psi_{\mathrm{bcc}}}
\newcommand{\Psisc}{\Psi_\scl}
\newcommand{\Diff}{\mathrm{Diff}}
\newcommand{\Diffsc}{\Diff_\scl}
\newcommand{\BB}{\mathbb{B}}
\newcommand{\RR}{\mathbb{R}}
\newcommand{\Cx}{\mathbb{C}}
\newcommand{\NN}{\mathbb{N}}
\newcommand{\sphere}{\mathbb{S}}
\newcommand{\codimY}{k}
\newcommand{\dimX}{n}
\newcommand{\cO}{\mathcal O}
\newcommand{\cS}{\mathcal S}
\newcommand{\cP}{\mathcal P}
\newcommand{\cF}{\mathcal F}
\newcommand{\cL}{\mathcal L}
\newcommand{\cH}{\mathcal H}
\newcommand{\cG}{\mathcal G}
\newcommand{\cU}{\mathcal U}
\newcommand{\cM}{\mathcal M}
\newcommand{\cT}{\mathcal T}
\newcommand{\loc}{{\mathrm{loc}}}
\newcommand{\comp}{{\mathrm{comp}}}
\newcommand{\Tb}{{}^{\bl}T}
\newcommand{\Sb}{{}^{\bl}S}
\newcommand{\Tsc}{{}^{\scl}T}
\newcommand{\Ssc}{{}^{\scl}S}
\newcommand{\Vf}{\mathcal V}
\newcommand{\Vb}{{\mathcal V}_{\bl}}
\newcommand{\Vsc}{{\mathcal V}_{\scl}}
\newcommand{\Lambdasc}{{}^{\scl}\Lambda}
\newcommand{\etat}{\tilde\eta}
\newcommand{\Hsc}{H_{\scl}}
\newcommand{\Hscloc}{H_{\scl,\loc}}
\newcommand{\Hscd}{\dot H_{\scl}}
\newcommand{\Hscb}{\bar H_{\scl}}
\newcommand{\ff}{{\mathrm{ff}}}
\newcommand{\inter}{{\mathrm{int}}}
\newcommand{\Sym}{\mathrm{Sym}}
\newcommand{\be}[1]{\begin{equation}\label{#1}}
\newcommand{\ee}{\end{equation}}

\newcommand{\foliation}{x}
\newcommand{\loccoord}{y}
\newcommand{\Foliation}{\mathsf{X}}
\newcommand{\Loccoord}{\mathsf{Y}}

\setcounter{secnumdepth}{3}
\newtheorem{lemma}{Lemma}[section]
\newtheorem{prop}[lemma]{Proposition}
\newtheorem{thm}[lemma]{Theorem}
\newtheorem{cor}[lemma]{Corollary}
\newtheorem{result}[lemma]{Result}
\newtheorem*{thm*}{Theorem}
\newtheorem*{prop*}{Proposition}
\newtheorem*{cor*}{Corollary}
\newtheorem*{conj*}{Conjecture}
\numberwithin{equation}{section}
\theoremstyle{remark}
\newtheorem{rem}[lemma]{Remark}
\newtheorem*{rem*}{Remark}
\theoremstyle{definition}
\newtheorem{Def}[lemma]{Definition}
\newtheorem*{Def*}{Definition}

\newcommand{\mar}[1]{{\marginpar{\sffamily{\scriptsize #1}}}}
\newcommand\av[1]{\mar{AV:#1}} 
\newcommand\gu[1]{\mar{GU:#1}} 
\newcommand\ps[1]{\mar{PS:#1}} 

\title{Inverting the local geodesic X-ray transform on tensors}
\author[Plamen Stefanov, Gunther Uhlmann and Andras Vasy]{Plamen
  Stefanov, Gunther Uhlmann and Andr\'as Vasy}
\date{October 18, 2014}
\address{Department of Mathematics, Purdue University, West Lafayette,
IN 47907-1395, U.S.A.}
\email{stefanov@math.purdue.edu}
\address{Department of Mathematics, University of Washington, 
Seattle, WA 98195-4350, U.S.A.}
\email{gunther@math.washington.edu}
\address{Department of Mathematics, Stanford University, Stanford, CA
94305-2125, U.S.A.}
\email{andras@math.stanford.edu}
\thanks{The authors were partially supported by the National Science Foundation under
grant DMS-1301646 (P.S.), CMG-1025259 (G.U.\ and A.V.) and DMS-1265958 (G.U.) and
DMS-1068742 and DMS-1361432 (A.V.).}
\subjclass{53C65, 35R30, 35S05, 53C21}

\begin{abstract}
We prove the local invertibility, up to potential fields,  and stability of the geodesic X-ray transform on tensor fields of order $1$ and $2$ near a strictly convex boundary point, on manifolds with boundary of dimension $n\ge3$. We also present an inversion formula. Under the condition that the manifold can be foliated with a continuous family of strictly convex surfaces, we prove a global result which also implies a lens rigidity result near such a metric.  The class of manifolds  satisfying the foliation condition includes manifolds with no focal points, and does not exclude existence of conjugate points. 
\end{abstract} 

\maketitle

\section{Introduction}
Let $(M,g)$ be a compact Riemannian  manifold with boundary. The X-ray transform of symmetric covector fields of order $m$ is given by 
\be{1}
If(\gamma) = \int \langle f(\gamma(t)), \dot\gamma^m(t) \rangle \, d t,
\ee
where, in local coordinates, $\langle f, v^m\rangle  =f_{i_1\dots i_m} v^{i_1}\dots v^{i_m}$, and $\gamma$ runs over all  (finite length) geodesics with endpoints on $\partial M$. When $m=0$, we integrate functions; when $m=1$, $f$ is a covector field, in local coordinates,  $f_j dx^j$; when $m=2$, $f$ is a symmetric 2-tensor field $f_{ij}dx^i dx^j$, etc. The problem is of interest by itself but it also appears as a linearization  of boundary and lens rigidity problems, see, e.g., \cite{Sh-UW,Sh-book, SU-Duke, SU-JAMS, SU-lens,Croke04b,CrokeH02,Croke_scatteringrigidity}. Indeed, when $m=0$, $f$ can be interpreted as  the infinitesimal difference of two conformal factors, and when $m=2$, $f_{ij}$ can be thought of as an infinitesimal difference of two metrics.  The $m=1$ problem arises as a linearization of recovery a velocity fields from the time of fly. The $m=4$ problem appears in linearized elasticity. 

The problem we study is the invertibility of $I$. It is well known that   \textit{potential} vector fields,  i.e., $f$ which are a symmetric differential $d^sv$ of a symmetric field of order $m-1$ vanishing on $\partial M$ (when $m\ge1$), are in the kernel of $I$. When $m=0$, there are no potential fields; when $m=1$, potential fields are just ordinary differentials $d v$ of functions vanishing at the boundary; for $m=2$, potential fields are given by $d^sv = \frac12(v_{i,j} +v_{j,i})$, with $v$ one form, $v=0$ on $\partial M$; etc.  The natural invertibility question is then whether $If=0$ implies that $f$ is potential; we call that property \textit{s-injectivity}  below.  

This problem has been studied extensively for \textit{simple manifolds}, i.e., when $\partial M$ is strictly  convex and any two points are connected by a unique minimizing geodesic smoothly depending on the endpoints. 
For simple metrics, in case of functions ($m=0$), uniqueness and a non-sharp stability estimate was established in \cite{Mu1,Mu2,BGerver} using the energy method initiated by Mukhometov, and for $m=1$, in \cite{AnikonovR}. Sharp stability follows from \cite{SU-Duke}. 
The case $m\ge2$ is harder with less complete results and the $m=2$ one already contains all the difficulties. 
In two dimensions, uniqueness for simple metrics and $m=2$ has been proven in \cite{Sh-2D} following the boundary rigidity proof in \cite{PestovU}. For any $m$, this was done in \cite{PaternainSU_13}.

In dimensions $n\ge3$, the problem still remains open for $m\ge2$. Under an explicit upper bound of the curvature, uniqueness and a non-sharp stability was proved by Sharafutdinov, see \cite{Sh-book, Sh-UW} and the references there, using a suitable version of the  energy method developed in \cite{PestovSh}. Convexity of $\partial M$ is not essential for those kind of results and the curvature assumption can be replaced by an  assumption stronger than requiring no conjugate points,  see \cite{Sh-sibir, Dairbekov}.  This still does not answer the uniqueness question for metrics without conjugate points however. The first and the second author proved in \cite{SU-Duke, SU-JAMS}, using microlocal and analytic microlocal techniques, that for simple metrics, the problem  is Fredholm (modulo potential fields) with a finitely dimensional smooth kernel. For analytic simple metrics, there is uniqueness; and in fact, the uniqueness extends to an open and dense set of simple metrics in $C^k$, $k\gg1$. Moreover, there is a sharp stability  $L^2(M)\to H^1(\tilde M)$  estimate for $f\mapsto I^*If$, where $\tilde M$ is some extension of $M$, see \cite{S-AIP}. We study the $m=2$ case there for simplicity of the exposition but the methods extend to any $m\ge2$. 

The reason why $m\ge2$ is harder than the $m=1$ and the $m=0$ cases can be seen from the analysis in \cite{SU-Duke, SU-JAMS}. When $m=0$, the presence of the boundary $\partial M$ is not essential --- we can extend $(M,g)$ to a complete $(\tilde M,\tilde g)$ and just restrict $I$ to functions  supported in a fixed compact set. When $f$ is an one-form ($m=1$), we have to deal with non-uniqueness due to exact one-forms but then the symmetric differential is $d^s$ just the ordinary one $d$. When $n\ge2$, $d^s$ is an elliptic operator but recovery of $df$ from $d^sf$  is not a local operator.  One way to deal with the non-uniqueness due to potential fields is to project on solenoidal ones (orthogonal to the potential fields). This involves solving an elliptic boundary value problem and the presence of the boundary $\partial M$ becomes an essential factor. The standard pseudo-differential calculus is not suited naturally to work on manifolds with boundary. 

In \cite{SU-AJM}, the first two authors study manifolds with possible conjugate points of dimension $n\ge3$. The geodesic manifold (when it is a smooth manifold) has dimension $2n-2$ which exceeds $n$ when $n\ge3$. We restrict $I$ there to an open set $\Gamma$ of geodesics. Assuming that $\Gamma$ consists of geodesics without conjugate points so that the conormal bundle $\{T^*\gamma|\; \gamma\in\Gamma\}$ covers $T^*M\setminus 0$, we show uniqueness and stability for analytic metrics, and moreover for an open and dense set of such metrics. In this case, even though conjugate points are allowed, the analysis is done on the geodesics in $\Gamma$ assumed to have no such  points. 

A significant progress is done in the recent work \cite{UV:local}, where the second and the third author prove the following local result: if $\partial M$ is strictly convex at $p\in\partial M$ and $n\ge3$, then $If$, acting on functions ($m=0$), known for all geodesics close enough to the tangent ones to $\partial M$ at $p$, determine $f$ near $p$ in a stable way. The new idea in \cite{UV:local} was to introduce an artificial boundary near $p$ cutting off a small part of $M$ including $p$ and to apply the scattering calculus in the new domain $\Omega_c$, treating the artificial boundary as infinity, see Figure~\ref{fig:convex-1}. Then $\Omega_c$ is small enough, then a suitable ``filtered'' backprojection operator is not only Fredholm, but also invertible. We use this idea in the present work, as well. The authors used this linear results in a recent work \cite{SUV_localrigidity} to prove local boundary and lens rigidity near a convex boundary point. 

The purpose of this paper is to invert the geodesic X-ray transform
$f\mapsto If$ on one forms and symmetric 2-tensors ($m=1$ and $m=2$) for $n\geq 3$ near a strictly convex
boundary point. 
We give a local recovery procedure for $f$ on
suitable open sets $\Omega\subset M$ from the knowledge of
$If(\gamma)$ for $\Omega$-local geodesics $\gamma$, i.e.\ $\gamma$
contained in $\Omega$ with endpoints on $\pa M\cap\Omega$. More precisely,
there is an obstacle to the inversion explained above: one-forms or tensors which are potential, i.e.\ of the form $d^s v$,
where $v$ is scalar or a one-form, vanishing at $\pa M\cap\Omega$, have vanishing 
integrals along all the geodesics with endpoints there, so one may always add a potential (exact) form or a potential two-tensor to $f$ and obtain the same localized transform $If$. Our result is
thus the local recovery 
of $f$ from $If$ {\em up to this gauge freedom}; in a stable way. Further,
under an additional global convex foliation assumption we
also give a global counterpart to this result.

We now state our main results more concretely. Let $\rho$ be a local boundary defining function, so that $\rho\ge0$ in $M$. 
 It is convenient to also consider a manifold without
boundary $(\tilde M,g)$ extending $M$.
First, as in \cite{UV:local}, the
main local result is obtained for sufficiently small regions
$\Omega=\Omega_c=\{x\geq 0,\ \rho\geq 0\}$, $x=x_c$; see
Figure~\ref{fig:convex-1}.
Here $x=0$ is an `artificial
boundary' which is strictly concave as viewed from the region $\Omega$
between
it and the actual boundary $\pa M$; this (rather than $\pa M$) is the boundary
that plays a role in the analysis below.

We set this up in the same way as
in \cite{UV:local} by considering a function $\tilde x$ with
strictly concave level sets from the super-level set side for levels
$c$, $|c|<c_0$, and letting
$$
x_c=\tilde x+c,\ \Omega_c=\{x_c\geq 0,\ \rho\geq 0\}.
$$
(A convenient normalization is that there is a point $p\in\pa M$ such
that $\tilde x(p)=0$ and such that $d\tilde x(p)=-d\rho(p)$; then one
can take e.g.\ $\tilde x(z)=-\rho(z)-\ep|z-p|^2$ for small $\ep>0$,
which localizes in a lens shaped region near $p$, or indeed $\tilde
x=-\rho$ which only localizes near $\pa\Omega$.)
Here the
requirement on $\tilde x$ is, if we assume that $M$ is compact, that
there is a continuous function $F$ such that $F(0)=0$ and such that
$$
\Omega_c\subset\{\tilde x<-c+F(c)\},
$$
i.e.\ as $c\to 0$, $\Omega_c$ is a thinner and thinner shell in terms
of $\tilde x$. As in \cite{UV:local}, our constructions are
{\em uniform} in $c$ for $|c|<c_0$. We drop the subscript $c$ from
$\Omega_c$, i.e.\ simply write $\Omega$, again as in
\cite{UV:local}, to avoid overburdening the notation.

\begin{figure}[ht]
\includegraphics[width=60mm]{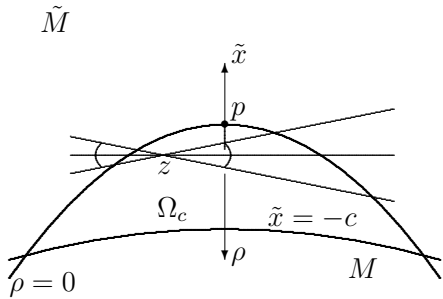}
\caption{The functions $\rho$ and $\tilde x$ when the background is
  flat space $\tilde M$. The intersection of
 $\rho\geq 0$ and $x_c> 0$ (where $x_c=\tilde x+c$, so this is the region $\tilde x>-c$) is the lens
 shaped region $O_p$. Note that, as viewed from the superlevel sets,
 thus from $O_p$, $\tilde
 x$ has concave level sets. At the point $z$, $L$ integrates over
 geodesics in the indicated small angle. As $z$ moves to the
 artificial boundary $x_c=0$, the angle of this cone shrinks like $C
 x_c$ so that in the limit the geodesics taken into account become
 tangent to $x_c=0$.}\label{fig:convex-1}
\end{figure}

A weaker version, in terms of function spaces, of the main local
theorem, presented in Corollaries~\ref{cor:local-linear-one-form}-\ref{cor:local-linear-2-tensor}, is then the following. The
notation here is that local spaces mean that the condition is
satisfied on compact subsets of $\Omega\setminus\{x=0\}$, i.e.\ the
conclusions are not stated uniformly up to the artificial boundary
(but are uniform up to the original boundary); this is due to our
efforts to minimize the analytic and geometric background in the
introduction. The dot denotes supported distributions in the sense of
H\"ormander relative to the actual boundary $\rho=0$, i.e.\ distributions in $x>0$ (within the extension
$\tilde M$) whose support lies in
$\rho\geq 0$, i.e.\ for $\dot H^1$, this is the $H^1_0$ space.

\begin{thm}\label{thm:local-linear-intro-1}(See Corollaries~\ref{cor:local-linear-one-form}-\ref{cor:local-linear-2-tensor}.)
With $\Omega=\Omega_c$ as above, there is $c_0>0$ such that for $c\in(0,c_0)$,
if $f\in L^2(\Omega)$ then $f=u+d^s v$,
where $v\in\dot H^1_{\loc}(\Omega\setminus\{x=0\})$, while $u\in
L_\loc^2(\Omega\setminus\{x=0\})$ can be stably determined from
$If$ restricted to $\Omega$-local geodesics in the following sense. 
There is a
continuous map
$If\mapsto u$,  where for $s\geq 0$, $f$ in $H^s(\Omega)$, the $H^{s-1}$ norm of $u$
restricted to any compact subset of $\Omega\setminus\{x=0\}$
is controlled by the $H^s$ norm of $If$ restricted to the set of $\Omega$-local geodesics.

Replacing $\Omega_c=\{\tilde x>-c\}\cap M$ by
$\Omega_{\tau,c}=\{\tau>\tilde x>-c+\tau\}\cap M$, $c$ can be taken
uniform in $\tau$ for $\tau$ in a compact set on which the strict
concavity assumption on level sets of $\tilde x$ holds.
\end{thm}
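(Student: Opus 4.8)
The plan is to follow the scattering-calculus strategy of \cite{UV:local}, adapting it to the tensorial setting where the gauge freedom $f\mapsto f+d^sv$ must be handled explicitly. First I would fix a gauge: rather than projecting onto solenoidal tensors (which requires solving an elliptic boundary value problem and reintroduces the actual boundary as an obstruction), I would impose a \emph{solenoidal-type gauge relative to the artificial boundary} $x=0$, e.g.\ requiring the normal component of $f$ at $x=0$ to vanish in an appropriate scattering sense, so that the decomposition $f=u+d^sv$ with $v\in\dot H^1_{\loc}$ is determined and $u$ is the gauge-fixed representative. The key point, as in the scalar case, is that on the small shell $\Omega_c$ the relevant operator is a small perturbation, in the scattering calculus on $\Omega_c$ (with $x=0$ treated as infinity), of an explicitly invertible model operator.

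The main steps: (1) Introduce the localizing/filtering operator $L$ (the analogue of the filtered backprojection $I^*$ composed with a cutoff that restricts to geodesics making an angle $\sim Cx$ with the level sets of $x$, as in Figure~\ref{fig:convex-1}), now acting on one-forms or symmetric $2$-tensors, and compute its principal symbol in the scattering calculus; show that $L\circ I$, together with the gauge condition, forms an operator that is elliptic in the scattering sense uniformly up to $x=0$. (2) Show that the normal operator $N=L\circ I$ is, modulo the gauge kernel, a scattering pseudodifferential operator of order $-1$ on $\Omega_c$ whose boundary principal symbol at $x=0$ can be computed explicitly and is invertible. (3) For $c$ small, $\Omega_c$ is a thin shell ($\Omega_c\subset\{\tilde x<-c+F(c)\}$), so the error terms are small and one gets genuine invertibility of the gauge-fixed normal operator on the scattering Sobolev spaces $H_{\scl}^{s}$, giving the stability estimate; then one transfers this from the uniform scattering spaces to the weaker local statement on compact subsets of $\Omega\setminus\{x=0\}$ by a standard elliptic-regularity/interpolation argument, losing uniformity only at the artificial boundary. (4) The uniformity in $\tau$ for $\Omega_{\tau,c}$ follows because all estimates are uniform in the level $c$ of the foliation by hypothesis, and $\tau$ merely shifts which level is the artificial boundary, over a compact parameter range on which strict concavity persists.

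The hard part will be Step (2): computing the boundary principal symbol of the normal operator on tensors and checking its invertibility on the quotient by potential tensors. In the scalar case this symbol is a scalar (essentially a Gaussian-type integral over the cone of admissible directions) and positivity is transparent; for $m=1,2$ the symbol is matrix-valued, acting on the fiber of the bundle of symmetric $m$-tensors, and it has a genuine kernel corresponding exactly to the potential (gauge) directions $d^sv$. One must therefore identify this kernel precisely, verify that the gauge condition chosen in Step (1) is transversal to it (so that the operator is invertible on the gauge-fixed complement), and control the operator uniformly as $x\to0$, where the cone of directions degenerates to the tangent directions of $x=0$. This requires a careful choice of the frame for symmetric tensors adapted to the splitting into tangential and normal parts relative to the level sets of $x$, together with an explicit stationary-phase or direct computation of the relevant oscillatory integrals, exactly as in the scalar analysis of \cite{UV:local} but now keeping track of the tensor indices; this is also where the inversion formula advertised in the abstract will come from.
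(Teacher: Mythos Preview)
Your overall architecture is right --- filtered backprojection $L$, show $LI$ is a scattering pseudodifferential operator of order $-1$, identify the gauge kernel, prove ellipticity on a complement, use smallness of $\Omega_c$ to invert --- but the gauge you propose is not what works, and the missing ingredient is substantial.

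You suggest imposing that ``the normal component of $f$ at $x=0$ vanish in an appropriate scattering sense.'' This is not the right condition, and more importantly it sidesteps the real obstruction. The paper uses a \emph{solenoidal} gauge, $\delta^s f=0$, but with two essential twists: (i) the adjoint $\delta^s$ is taken with respect to a \emph{scattering metric} $g_{\scl}=x^{-4}dx^2+x^{-2}h$, not the original (incomplete at $x=0$) metric $g$, so that $\delta^s$ and $d^s$ are scattering differential operators; and (ii) even then $\delta^s d^s$ fails to be \emph{fully elliptic} in the scattering sense (the problem is at finite points of $\Tsc^*_{\pa X}X$, essentially because the Euclidean Laplacian has a kernel at the zero section), so one conjugates by an exponential weight and works instead with $\delta^s_\digamma=e^{\digamma/x}\delta^s e^{-\digamma/x}$. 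This Witten-type twist makes $\Delta_{\digamma,s}=\delta^s_\digamma d^s_\digamma$ fully elliptic and invertible, which is what lets one define the solenoidal projection $\cS_{\digamma,\Omega}$. The operator one actually proves elliptic is $A_\digamma=N_\digamma+d^s_\digamma M\delta^s_\digamma$ with $N_\digamma=e^{-\digamma/x}LIe^{\digamma/x}$; without the exponential conjugation neither the gauge-fixing Laplacian nor the finite-point ellipticity of $N_\digamma$ on $\ker\delta^s_\digamma$ goes through. For symmetric $2$-tensors there is a further subtlety: the boundary-symbol computation only closes for $\digamma$ sufficiently large (Lemma~\ref{lemma:finite-elliptic} and Lemma~\ref{lemma:potential-elliptic}), because the zeroth-order part of $d^s_g$ (coming from $g$ not being a scattering metric) contributes an uncontrolled term $a$ that must be absorbed.

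Your Step~(3) also understates what is needed to pass from ellipticity of $A_\digamma$ to an actual inversion of $N_\digamma$ modulo potentials. One must show that the gauge can be arranged: this requires the invertibility of $\Delta_{\digamma,s}$ with Dirichlet conditions on $\pa_{\inter}\Omega_j$ (a Poincar\'e inequality in exponentially weighted spaces), a scattering-Korn inequality for one-forms, and a careful analysis of how the solenoidal projections on nested domains $\Omega\subset\Omega_1\subset\Omega_2$ interact via Poisson operators and a left inverse for $d^s_\digamma$ on the annulus $\Omega_1\setminus\Omega$. The smallness of the error comes not simply from $\Omega_c$ being thin but from specific smoothing properties of the remainders $K_1,K_1'$ proved via this machinery; see Section~\ref{sec:gauge}.
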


The uniqueness part of the theorem generalizes  Helgason's type of support theorems for tensors fields for analytic metrics \cite{Venky09,SV,BQ1}. In those works however, analyticity plays a crucial role and the proof is a form of a microlocal analytic continuation. In contrast, no analyticity is assumed here. 

As in \cite{UV:local}, this theorem can be applied in a
manner to obtain a global conclusion. To state this, assume
that $\tilde x$ is a globally defined function with level sets $\Sigma_t$
which are strictly concave from the super-level set for $t\in (-T,0]$,
with $\tilde x\leq 0$ on the manifold with boundary $M$.
Then we have:

\begin{thm}\label{thm:global-intro}(See Theorem~\ref{thm:global}.)
Suppose $M$ is compact.
Then the geodesic X-ray transform is injective and stable modulo potentials on the
restriction of one-forms and symmetric 2-tensors $f$ to $\tilde x^{-1}((-T,0])$ in
the following sense. For all $\tau>-T$ there is $v\in \dot H^1_\loc(\tilde
x^{-1}((\tau,0]))$ such that $f-d^sv\in L^2_\loc(\tilde
x^{-1}((\tau,0]))$ can be
stably recovered from $If$ in the sense that for $s\geq 0$ and $f\in
H^s$ locally on $\tilde
x^{-1}((\tau,0])$, the
$H^{s-1}$ norm of $v$ restricted to compact subsets of $\tilde
x^{-1}((\tau,0])$
is controlled by the $H^s$ norm of $If$ on local geodesics.
\end{thm}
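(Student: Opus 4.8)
The plan is to deduce the global statement from the local Theorem \ref{thm:local-linear-intro-1} by a layer-stripping (continuity/induction) argument along the foliation $\{\Sigma_t=\tilde x^{-1}(t)\}$, exactly as the corresponding global result is obtained from the local one in \cite{UV:local}. Fix $\tau>-T$. Let $I$ be the set of $t\in[\tau,0]$ such that $f$ can be recovered modulo a potential, with the stated stability, on $\tilde x^{-1}((t,0])$; we want to show $\tau\in I$ (hence $[\tau,0]=I$). First I would check that $I$ is nonempty and relatively open: near $t=0$ the region $\tilde x^{-1}((t,0])$ is a thin shell to which Theorem \ref{thm:local-linear-intro-1} applies directly, with $x_c=\tilde x+c$ and $c=-t$ small; the uniformity in $\tau$ (the last sentence of Theorem \ref{thm:local-linear-intro-1}, with $\Omega_{\tau,c}$) gives a fixed step size $c_0$ of the induction as long as $t$ stays in a compact subset of $(-T,0]$ on which strict concavity holds. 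Thus once recovery holds down to level $t$, the local theorem applied on the shell $\tilde x^{-1}((t-c_0,t])$ (treating $\Sigma_t$ as the auxiliary concave ``artificial boundary'') recovers $f$ modulo a potential there, and one patches the two gauge choices together on the overlap.

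The two technical points to address in the induction step are the gluing of the gauge (the potential $v$) across successive layers, and the closedness of $I$. For the gluing: on an overlap region $\tilde x^{-1}((t-\delta,t+\delta))$ one has $f-d^sv_1=u_1$ from the previous stage and $f-d^sv_2=u_2$ from the new application of the local theorem, so $d^s(v_1-v_2)=u_2-u_1$ is already known; since $d^s$ is elliptic and overdetermined, $v_1-v_2$ is determined up to an element of its (finite-dimensional, smooth) kernel — the Killing-type fields — and one can solve for a correction, redefining $v$ on the new layer so that it agrees with the old one on the overlap, with the $H^{s-1}$ bound on $v$ propagating. Iterating finitely many times (since $[\tau,0]$ is covered by finitely many steps of size $c_0$) reaches level $\tau$. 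Closedness of $I$ then follows because the estimates are uniform on compact subsets of $(-T,0]$, so the recovered $u=f-d^sv$ and the bound on $v$ on $\tilde x^{-1}((t,0])$ pass to the limit $\tilde x^{-1}((\inf I,0])$ as long as $\inf I>-T$; combined with openness this forces $\inf I\le\tau$ is attained, i.e.\ $\tau\in I$.

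The main obstacle I expect is bookkeeping the gauge consistently across the foliation: each application of the local result produces its own potential $v$ vanishing on that layer's artificial boundary, and these boundary conditions are incompatible from layer to layer, so one must genuinely re-solve for $v$ on the union and control the resulting norms, rather than merely concatenate. This is where the ellipticity of $d^s$ and the finite-dimensionality/smoothness of $\Ker d^s$ (already available from the theory quoted in the introduction) do the work, but making the constants uniform — so that the finitely many gluing steps do not lose derivatives or blow up the stability constant — is the delicate part. The compactness of $M$ and the uniformity built into Theorem \ref{thm:local-linear-intro-1} (uniform $c_0$ over compact parameter ranges where strict concavity holds) are exactly what make this possible, so modulo those the argument is a routine, if careful, continuity-method iteration.
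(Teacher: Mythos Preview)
Your overall strategy---layer-stripping along the foliation using the uniform local result, with a finite iteration---is the paper's, and the structure is right. But your handling of the gauge gluing is more complicated than the paper's and, as written, has a gap in the stability part.

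A minor geometric slip first: in the shell $\tilde x^{-1}((t-c_0,t])$ the artificial (concave-from-inside) boundary is $\Sigma_{t-c_0}$, not $\Sigma_t$. The hypersurface $\Sigma_t$ is convex from the shell side and plays the role of $\partial M$; that is where the local theorem forces $v_2$ to vanish.

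The substantive issue is your gluing. On the overlap you have $d^s(v_1-v_2)=u_2-u_1$, but $u_1$ and $u_2$ are solenoidal parts relative to \emph{different} regions and are not equal, so $v_1-v_2$ is \emph{not} in $\ker d^s$; correcting $v_2$ by a Killing-type field therefore cannot make it agree with $v_1$. What you would actually need is an extension of $v_1-v_2$ from the overlap down through the new shell with $d^s$ of the extension determined by $If$ alone---and you do not provide one (indeed $u_1$ is not even defined below the overlap). Your argument \emph{does} work for the pure injectivity statement, since $If=0$ forces $u_1=u_2=0$ and then $v_1-v_2$ is genuinely Killing on the overlap and extends; but that is not the general stability case.

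The paper's device avoids all of this. Having $v$ on $\{\tilde x>\tau'\}$ with $v|_{\partial M}=0$, one picks $\phi\in C^\infty(M)$ with $\phi\equiv 1$ near $\tilde x\geq\tau+2c/3$ and $\operatorname{supp}\phi\subset\{\tilde x>\tau+c/3\}$, and applies the local theorem on the shell $\{\tau-c/3<\tilde x<\tau+2c/3\}$ to $f-d^s(\phi v)$. The output $v'$ vanishes at the top of the shell, so its extension by zero, $\tilde v'$, glues seamlessly: $\phi v+\tilde v'$ is the new potential on $\{\tilde x>\tau-c/3\}$, still vanishing at $\partial M$. No Killing fields, no overlap-matching; the Dirichlet condition built into the local result makes the extension trivial. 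For stability the same trick works because the $v$-boundary terms arising from $I(d^s(\phi v))$ on shell-local geodesics cancel exactly against those coming from relating shell-local $If$ to $If$ on the extended $M$-geodesics, so the shell data remains determined by the original $If$.

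Finally, the separate closedness argument is unnecessary: the uniform $c$ over compact level ranges means finitely many steps cover $[\tau,0]$, so the iteration simply terminates. The paper packages this as an infimum/contradiction argument rather than open-and-closed, but the content is the same finite induction.
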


\begin{rem}
This theorem, combined with Theorem~2 in \cite{SU-lens} (with a minor change --- the no-conjugate condition there is only needed to guarantee a stability estimate, and we have it in our situation), implies a local, in terms of a perturbation of the metric, lens rigidity uniqueness result near metric satisfying the foliation condition. 
\end{rem}

Manifolds satisfying the foliation condition include manifolds without focal points \cite{RanjanS02}. Subdomains $M$ of $\mathbf{R}^n$ with the metric $c^{-2}(r)d x^2$, $r=|x|$ satisfying the Herglotz \cite{Herglotz} and  Wiechert and Zoeppritz \cite{WZ} condition $\frac{d}{dr}\frac{r}{c(r)}>0$ on $M$ satisfy it as well since then the Euclidean spheres $|x|=r$ form a strictly convex foliation. Conjugate points in that case may exist, and small perturbations of such metrics satisfy the condition, as well. We can also formulate semi-global results: if we can foliate $M\setminus K$ with $K\subset M$ compact, then we can recover $f$ up to a potential field there in a stable way, with stability degenerating near $\partial M$.  This can be considered as a linearized model of the seismology problem for anisotropic speeds of propagation.  One such example is  metrics $c^{-2}(r)d x^2$ (and close to them) for which $\frac{d}{dr}\frac{r}{c(r)}>0$ holds for $a\le r\le b$ and $M\subset \{|x|\le b\}$. Then $f$ can be stably recovered for $|x|>a$ up to a potential field.

Similarly to our work \cite{SUV_localrigidity}, this
paper, and its methods, will have applications to the boundary
rigidity problem; in this case without the conformal class
restriction. This paper is forthcoming.

The plan of the paper is the following. In Section~\ref{sec:idea} we
sketch the idea of the proof, and state the main technical result. In
Section~\ref{sec:ellipticity} we show the ellipticity of the modified
version of $LI$, modified by the addition of gauge terms. This
essentially proves the main result {\em if} one can satisfy the gauge
condition. In Section~\ref{sec:gauge} we analyze the gauge condition
and complete the proof of our main results.

\section{The idea of the proof and the scattering algebra}\label{sec:idea}
We now explain the basic ideas of the paper.

The usual approach in dealing with the gauge freedom is to add a gauge condition,
which typically, see e.g.\ the work of the first two authors
\cite{SU-JAMS}, is of the {\em solenoidal gauge
  condition} form, $\delta^s_g f=0$, where
$\delta^s_g $ is the adjoint of $d^s$ with respect to the Riemannian
metric on $M$. Notice that actually the particular choice of the
adjoint is irrelevant; once one recovers $f$ in one gauge, one could
always express it in terms of another gauge, e.g.\ in this case
relative to a {\em different} Riemannian metric.

In order to motivate
our gauge condition, we need to recall the method introduced by the
last two authors in \cite{UV:local} to analyze the geodesic
X-ray transform on functions: the underlying analysis strongly
suggests the form the gauge condition should take.

As in \cite{UV:local} we consider an operator $L$ that
integrates over geodesics in a small cone at each point, now multiplying
with a one form or symmetric 2-tensor, in the direction of the
geodesic, mapping (locally defined) functions on the space of
geodesics to (locally defined) one forms or tensors. The choice of
the operator, or more concretely the angle, plays a big role; we
choose it to be comparable to the distance to the artificial boundary,
$x=0$. In this case $LI$ ends up being in Melrose's scattering
pseudodifferential algebra, at least once conjugated by an exponential
weight. (The effect of this weight is that we get exponentially weak
estimates as we approach the artificial boundary.) The main analytic problem
one faces then is that, corresponding to the gauge freedom mentioned above, $LI$ is not elliptic, unlike in the scalar
(function) setting.

Concretely $L$ is defined as follows. Near $\pa\Omega$, one can use
coordinates $(x,y)$, with $x=x_c=\tilde x+c$ as before, $y$
coordinates on $\pa\Omega$. Correspondingly, elements of $T_pM$ can be
written as $\lambda\,\pa_x+\eta\,\pa_y$. The unit speed geodesics which
are close to being tangential to level sets of $\tilde x$ (with the
tangential ones being given by $\lambda=0$) through a
point $p$ can be parameterized by say $(\lambda,\omega)$ (with the
actual unit speed being a positive multiple of this) where
$\omega$ is unit length with respect to say a Euclidean metric. The
concavity of the level sets of $\tilde x$, as viewed from the
super-level sets, means that $\frac{d^2}{dt^2}\tilde x\circ\gamma$ is
bounded below by a positive constant along geodesics in $\Omega_c$, as
long as $c$ is small, which in turn means that, for sufficiently small
$C_1>0$, geodesics with
$|\lambda|<C_1\sqrt{x}$ indeed remain in $x\geq 0$ (as long as they
are in $M$). Thus, if $If$ is known along $\Omega$-local geodesics, it
is known for geodesics $(x,y,\lambda,\omega)$ in this range. As in
\cite{UV:local} we use a smaller range $|\lambda|<C_2 x$ because of
analytic advantages, namely the ability work in the well-behaved
scattering algebra.
Thus,
for $\chi$ smooth, even, non-negative, of compact support, to be specified, in the function case
\cite{UV:local} considered the operator
$$
Lv(z)=x^{-2}\int \chi(\lambda/x)v(\gamma_{x,y,\lambda,\omega})\,d\lambda\,d\omega,
$$
where $v$ is a (locally, i.e.\ on $\supp\chi$, defined) function on the space of geodesics, here parameterized
by $(x,y,\lambda,\omega)$. (In fact, $L$ had a factor $x^{-1}$
only in \cite{UV:local}, with another $x^{-1}$ placed elsewhere; here we simply combine
these, as was also done in \cite[Section~3]{SUV_localrigidity}. Also,
the particular measure $d\lambda\,d\omega$ is irrelevant; any smooth positive
multiple would work equally well.)
In this paper, with $v$ still a locally defined function on the space of geodesics,
for one-forms we consider the map $L$
\begin{equation}\label{eq:L-forms}
L v(z)=\int \chi(\lambda/x)v(\gamma_{x,y,\lambda,\omega})g_{\scl}(\lambda\,\pa_x+\omega\,\pa_y)\,d\lambda\,d\omega,
\end{equation}
while for 2-tensors
\begin{equation}\label{eq:L-tensors}
L v(z)=x^{2}\int \chi(\lambda/x)v(\gamma_{x,y,\lambda,\omega})g_{\scl}(\lambda\,\pa_x+\omega\,\pa_y)\otimes g_\scl(\lambda\,\pa_x+\omega\,\pa_y)\,d\lambda\,d\omega,
\end{equation}
so in the two cases $L$ maps into one-forms, resp.\ symmetric
2-cotensors,
where $g_{\scl}$ is a {\em scattering metric} used to
convert vectors into covectors --- this is discussed in detail below.

Since it plays a crucial role even in the setup, by giving the bundles
of which our tensors are sections of, as well as the gauge
condition, we need to discuss scattering geometry and the scattering pseudodifferential
algebra, introduced by Melrose in \cite{Melrose-book}, at least briefly. There is a more thorough discussion in
\cite[Section~2]{UV:local}, though the cotangent bundle,
which is crucial here, is suppressed there. Briefly, the scattering
pseudodifferential algebra $\Psisc^{m,l}(X)$ on a manifold with
boundary $X$ is the generalization of the standard pseudodifferential
algebra given by quantizations of symbols $a\in S^{m,l}$, i.e.\ $a\in\CI(\RR^n\times\RR^n)$ satisfying
\begin{equation}\label{eq:sc-symbols}
|D_z^\alpha D_\zeta^\beta a(z,\zeta)|\leq C_{\alpha\beta}\langle z\rangle^{l-|\alpha|}\langle\zeta\rangle^{-|\beta|}
\end{equation}
for all multiindices $\alpha,\beta$ in the same way that on a compact
manifold without boundary $\tilde X$, $\Psi^m(\tilde X)$ arises from (localized)
pseudodifferential operators on $\RR^n$ via considering coordinate
charts. More precisely, $\RR^n$ can be compactified to a ball
$\overline{\RR^n}$, by gluing a sphere at infinity, with the gluing
done via `reciprocal polar coordinates'; see
\cite[Section~2]{UV:local}. One then writes
$\Psisc^{m,l}(\overline{\RR^n})$ for the quantizations of the symbols \eqref{eq:sc-symbols}. Then $\Psisc^{m,l}(X)$ is
defined by requiring that locally in coordinate charts, including
charts intersecting with $\pa X$, the algebra arises from
$\Psisc^{m,l}(\overline{\RR^n})$. (One also has to allow smooth Schwartz
kernels on $X\times X$ which are vanishing to infinite order at
$\pa(X\times X)$, in analogy with the smooth Schwartz kernels on
$\tilde X\times\tilde X$.) Thus, while the compactification is
extremely useful
to package information, the reader should keep in mind that ultimately
almost all of the analysis reduces to uniform analysis on
$\RR^n$. Since we are working with bundles, we also mention that scattering
pseudodifferential operators acting on sections of vector bundles are
defined via local trivializations, in which these operators are given
by matrices of scalar scattering pseudodifferential operators (i.e.\ are given by
the $\RR^n$ definition above if in addition these trivializations are made to be
coordinate charts), up to the same smooth, infinite order vanishing at
$\pa(X\times X)$ Schwartz kernels as in the scalar case.

Concretely, the compactification $\overline{\RR^n}$, away from
$0\in\RR^n\subset\overline{\RR^n}$, is just
$[0,\infty)_x\times\sphere^{n-1}_\omega$, where the identification with
$\RR^n\setminus\{0\}$ is just the `inverse polar coordinate' map $(x,\omega)\mapsto
x^{-1}\omega$, with $r=x^{-1}$ the standard radial variable. Then a straightforward computation shows that
translation invariant vector fields $\pa_{z_j}$ on $\RR^n_z$ lift to
the compactification (via this identification) to generate, over
$\CI(\overline{\RR^n})$, the Lie algebra $\Vsc(\overline{\RR^n})=x\Vb(\overline{\RR^n})$ of
vector fields, where on a manifold with boundary $\Vb(X)$ is the Lie
algebra of smooth vector fields tangent to the boundary of $X$. In
general, if $x$ is a boundary defining function of $X$, we let
$\Vsc(X)=x\Vb(X)$. Then $\Psisc^{1,0}(X)$ contains $\Vsc(X)$,
corresponding to the analogous inclusion on Euclidean space, and the
vector fields in $\Psisc^{1,0}(X)$ are essentially the elements of
$\Vsc(X)$, after a slight generalization of coefficients (since above
$a$ does not have an asymptotic expansion at infinity in $z$, only symbolic
estimates; the expansion would correspond to smoothness of the
coefficients).

Now, a local basis for $\Vsc(X)$, in a coordinate chart
$(x,y_1,\ldots,y_{n-1})$, is
$$
x^2\pa_x,x\pa_{y_1},\ldots,
x\pa_{y_{n-1}}
$$
directly from the definition, i.e.\ $V\in\Vsc(X)$
means exactly that locally, on $U\subset X$
$$
V=a_0 (x^2\pa_x)+\sum a_j (x\pa_{y_j}),\ a_j\in\CI(U).
$$
This gives that elements
of $\Vsc(X)$ are exactly smooth sections of a vector bundle, $\Tsc X$,
with local basis $x^2\pa_x,x\pa_{y_1},\ldots,
x\pa_{y_{n-1}}$. In the case of $X=\overline{\RR^n}$, this simply
means that one is using the local basis $x^2\pa_x=-\pa_r$,
$x\pa_{y_j}=r^{-1}\pa_{\omega_j}$,
where the $\omega_j$ are local coordinates on the sphere. An
equivalent {\em global} basis is just $\pa_{z_j}$, $j=1,\ldots,n$,
i.e.\ $\Tsc\overline{\RR^n}=\overline{\RR^n_z}\times\RR^n$ is a trivial bundle with this
identification.

The dual bundle $\Tsc^*X$ of $\Tsc X$ correspondingly has a local
basis $\frac{dx}{x^2},\frac{dy_1}{x},\ldots,\frac{dy_{n-1}}{x}$, which
in case of $X=\overline{\RR^n}$ becomes $-dr,r\,d\omega_j$, with local
coordinates $\omega_j$ on the sphere. A global version is given by
using the basis $dz_j$, with covectors written as $\sum
\zeta_j\,dz_j$; thus
$\Tsc^*\overline{\RR^n}=\overline{\RR^n_z}\times\RR^n_\zeta$; this is
exactly the same notation as in the description of the symbol class
\eqref{eq:sc-symbols}, i.e.\ one should think of this class as living
on $\Tsc^*\overline{\RR^n}$. Thus, smooth scattering one-forms on $\overline{\RR^n}$, i.e.\
sections of $\Tsc^*\overline{\RR^n}$, are simply smooth one-forms on $\RR^n$
with an expansion at infinity. Similar statements apply to natural
bundles, such as the higher degree differential forms $\Lambdasc^k X$, as well as
symmetric tensors, such as $\Sym^2\Tsc^*X$. The latter give rise to
scattering metrics $g_\scl$, which are positive definite inner products on the
fibers of $\Tsc X$ (i.e.\ positive definite sections of
$\Sym^2\Tsc^*X$) of the form $g_\scl=x^{-4}dx^2+x^{-2}\tilde h$, $\tilde h$
a standard smooth 2-cotensor on $X$ (i.e.\ a section of $\Sym^2
T^*X$). For instance, one can take, in a product decomposition near
$\pa X$, $g_\scl=x^{-4}dx^2+x^{-2}h$, $h$ a metric on the level sets
of $x$.

The principal symbol of a pseudodifferential operator is the
equivalence class of $a$ as in \eqref{eq:sc-symbols} modulo $S^{m-1,l-1}$, i.e.\ modulo additional
decay {\em both in $z$ and in $\zeta$} on $\RR^n\times\RR^n$. In
particular, {\em full ellipticity} is ellipticity in this sense,
modulo $S^{m-1,l-1}$, i.e.\ for a scalar operator lower bounds
$|a(z,\zeta)|\geq c\langle z\rangle^l\langle\zeta\rangle^m$ for
$|z|+|\zeta|>R$, where $R$ is suitably large. This contrasts with (uniform)
ellipticity in the standard sense, which is a similar lower bound, but
only for $|\zeta|>R$. Fully elliptic operators are Fredholm between
the appropriate
Sobolev spaces $\Hsc^{s,r}(X)$ corresponding to the scattering
structure, see \cite[Section~2]{UV:local}; full ellipticity
is needed for this (as shown e.g.\ by taking $\Delta-1$ on $\RR^n$,
$\Delta$ the flat positive Laplacian). If $a$ is matrix valued,
ellipticity can be stated as invertibility for large $(z,\zeta)$,
together with upper bounds for the inverse: $|a(z,\zeta)^{-1}|\leq
c^{-1}\langle z\rangle^{-l}\langle\zeta\rangle^{-m}$; this coincides
with the above definition for scalars.

We mention also that the exterior derivative
$d\in\Diffsc^1(X;\Lambdasc^k,\Lambdasc^{k+1})$ for all $k$. Explicitly,
for $k=0$, in local coordinates, this is the statement that
$$
df=(\pa_x f)\,dx+\sum_j (\pa_{y_j} f)\,dy_j=(x^2\pa_x
f)\,\frac{dx}{x^2}+\sum_j (x\pa_{y_j})\frac{dy_j}{x},
$$
with $x^2\pa_x,x\pa_{y_j}\in\Diffsc^1(X)$, while
$\frac{dx}{x^2},\frac{dy_j}{x}$ are smooth sections of $\Tsc^*X$
(locally, where this formula makes sense). Such a computation also
shows that the principal symbol, in both senses, of $d$, at any point $\xi\,\frac{dx}{x^2}+\sum_j\eta_j\,\frac{dy_j}{x}$,
is wedge product with $\xi\,\frac{dx}{x^2}+\sum_j\eta_j\,\frac{dy_j}{x}$. A similar computation shows
that the gradient with respect to a scattering metric $g_{\scl}$ is a
scattering differential operator (on any of the natural bundles), with
principal symbol given by tensor product with $\xi\,\frac{dx}{x^2}+\sum_j\eta_j\,\frac{dy_j}{x}$,
hence so is the symmetric gradient on one forms, with principal symbol
given by the symmetrized tensor product with $\xi\,\frac{dx}{x^2}+\sum_j\eta_j\,\frac{dy_j}{x}$.
Note that all of these principal symbols are actually {\em 
  independent of the metric $g_{\scl}$}, and $d$ itself is completely
independent of any choice of a metric (scattering or otherwise).

If we instead consider the symmetric differential $d^s$ with respect to a
smooth metric $g$ on $X$, as we are obliged to use in our problem
since its image is what is annihilated by the ($g$-geodesic) X-ray transform $I$, it is a first order differential operator between
sections of bundles $T^*X$ and $\Sym^2 T^*X$. Writing $dx,dy_j$,
resp., $dx^2$, $dx\,dy_j$ and $dy_i\,dy_j$ for the corresponding
bases, this means that we have a matrix of first order differential
operators. Now, as the standard principal symbol of $d^s$ is just tensoring with
the covector at which the principal symbol is evaluated, the first
order terms are the same, modulo zeroth order terms, as when one
considers $d^s_{g_{\scl}}$, and in particular they correspond to a
scattering differential operator acting between section of $\Tsc^*X$
and $\Sym^2\Tsc^*X$. (This can also be checked explicitly using the
calculation done below for zeroth order term, but the above is the
conceptual reason for this.) On the other hand, with $dx^2=dx\otimes
dx$, $dx\,dy_i=\frac{1}{2}(dx\otimes dy_i+dy_i\otimes dx)$, etc., these zeroth order terms form
a matrix with smooth coefficients in the local basis
$$
dx^2\otimes\pa_x,\ dx^2\otimes\pa_{y_j},\ dx\,dy_i\otimes\pa_x,\
dx\,dy_i\otimes\pa_{y_j},\ dy_k\,dy_i\otimes\pa_x,\
dy_k\,dy_i\otimes\pa_{y_j}
$$
of the homomorphism bundle $\hom(T^*X,\Sym^2 T^*X)$. In terms of the local
basis
\begin{equation*}\begin{aligned}
&\frac{dx^2}{x^4}\otimes(x^2\pa_x),\
\frac{dx^2}{x^4}\otimes(x\pa_{y_j}),\
\frac{dx}{x^2}\,\frac{dy_i}{x}\otimes(x^2\pa_x),\\
&\
\frac{dx}{x^2}\,\frac{dy_i}{x}\otimes(x\pa_{y_j}),\ \frac{dy_k\,dy_i}{x^2}\otimes(x^2\pa_x),\
\frac{dy_k\,dy_i}{x^2}\otimes(x\pa_{y_j})
\end{aligned}\end{equation*}
of $\hom(\Tsc^*X,\Sym^2 \Tsc^*X)$, these are all smooth, and vanish at
$\pa X$ to order $2,3,1,2,0,1$ respectively, showing that
$d^s\in\Diffsc^1(X,\Tsc^*X,\Sym^2\Tsc^*X)$, and that the only
non-trivial contribution of these zeroth order terms to the principal
symbol is via the entry corresponding to
$\frac{dy_k\,dy_i}{x^2}\otimes(x^2\pa_x)=dy_k\,dy_i\otimes\pa_x$,
which however is rather arbitrary.

Returning to the choice of gauge, in our case the
solenoidal gauge relative to $g$ would not be a good idea: the metric
on $M$ is an incomplete metric as viewed at the artificial boundary,
and does {\em not} interact well with $LI$. We circumvent this
difficulty by considering instead the adjoint $\delta^s$ relative to a
scattering metric, i.e.\ one of the form $x^{-4}dx^2+x^{-2}h$, $h$ a
metric on the level sets of $x$. While $\delta^s,d^s$ are then
scattering differential operators, unfortunately $\delta^s d^s$ on
functions, or one forms, is not fully elliptic in the scattering
sense (full ellipticity is needed to guarantee Fredholm properties on
Sobolev spaces in
a compact setting), with the problem being at finite points of $\Tsc^*X$,
$X=\{x\geq 0\}$. For instance, in the case of $X$ being the radial compactification
of $\RR^n$, we would be trying to invert the Laplacian on functions or
one-forms, which has issues at the 0-section. However, if we instead
use an exponential weight, which already arose when $LI$ was
discussed, we can make the resulting operator fully elliptic, and
indeed invertible for suitable weights.

Thus, we introduce a Witten-type (in the sense of the Witten Laplacian) solenoidal gauge on the scattering
cotangent bundle, $\Tsc^*X$
or its second symmetric power, $\Sym^2\Tsc^*X$. Fixing $\digamma>0$, our gauge is
$$
e^{2\digamma /x}\delta^s e^{-2\digamma/x}f^s=0,
$$
or {\em the $e^{-2\digamma/x}$-solenoidal gauge}. (Keep in mind here
that $\delta^s$ is the adjoint of $d^s$ relative to a scattering metric.)
We are actually working
with
$$
f_\digamma=e^{-\digamma/x} f
$$
throughout; in terms of this the
gauge is
$$
\delta^s_\digamma f^s_\digamma=0,\qquad \delta^s_\digamma=e^{\digamma/x}\delta^s e^{-\digamma/x}.
$$

\begin{thm}\label{thm:local-linear-intro}(See
  Theorem~\ref{thm:local-linear} for the proof and the formula.)
There exists $\digamma_0>0$ such that for $\digamma\geq\digamma_0$ the
following holds.

For $\Omega=\Omega_c$, $c>0$ small, the geodesic X-ray transform on
{\em $e^{2\digamma/x}$-solenoidal}
one-forms and symmetric 2-tensors $f\in e^{\digamma/x} L_\scl^2(\Omega)$,
i.e.\ ones satisfying $\delta^s (e^{-2\digamma/x} f)=0$, is injective, with a stability
estimate and a reconstruction formula.

In addition, replacing $\Omega_c=\{\tilde x>-c\}\cap M$ by
$\Omega_{\tau,c}=\{\tau>\tilde x>-c+\tau\}\cap M$, $c$ can be taken
uniform in $\tau$ for $\tau$ in a compact set on which the strict
concavity assumption on level sets of $\tilde x$ holds.
\end{thm}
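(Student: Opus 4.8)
\emph{Strategy.} The plan is to follow the scheme of \cite{UV:local}: manufacture from $L$ and $I$ a scattering pseudodifferential operator that, after an exponential conjugation and the addition of a gauge term, is fully elliptic and in fact invertible for small $c$, and then read off injectivity, stability and a reconstruction formula. Write $f_\digamma=e^{-\digamma/x}f$ and set
$$
N_\digamma=e^{-\digamma/x}\,LI\,e^{\digamma/x},
$$
with $L$ as in \eqref{eq:L-forms} for one-forms and \eqref{eq:L-tensors} for $2$-tensors, so that $N_\digamma f_\digamma=e^{-\digamma/x}LIf$ depends only on $If$ restricted to $\Omega$-local geodesics, since $\chi(\lambda/x)$ is compactly supported and all geodesics entering $L$ are $\Omega$-local. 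The first step is to show, exactly as in the scalar computation of \cite[Section~3]{UV:local} (see also \cite[Section~3]{SUV_localrigidity}), that $N_\digamma\in\Psisc^{-1,0}$ acting on sections of $\Tsc^*\Omega$, resp.\ $\Sym^2\Tsc^*\Omega$; the powers of $x$ built into $L$ and the tensor factors $g_\scl(\lambda\pa_x+\omega\pa_y)^{\otimes m}$ are arranged precisely so that the Schwartz kernel of $N_\digamma$ has the conormal structure at the scattering front face demanded by the calculus. The new point relative to \cite{UV:local} is that the principal symbol of $N_\digamma$ --- both at fiber infinity of $\Tsc^*\Omega$ and, at $x=0$, the boundary (decay) symbol, in which the fibre variable $\xi$ dual to $\tfrac{dx}{x^2}$ is shifted into the complex plane by the conjugation --- is now a matrix on the fibres of $\Tsc^*\Omega$, resp.\ $\Sym^2\Tsc^*\Omega$, obtained by stationary phase from those two tensor factors and the Gaussian-type weight in $\lambda$ produced by $\chi$.

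\emph{Ellipticity after a gauge modification.} The core computation is that this matrix symbol is, after the normalization built into $L$, positive semidefinite with kernel exactly the range of $\sigma(d^s)$, namely the symmetrized products $\bigl(\xi\,\tfrac{dx}{x^2}+\sum_j\eta_j\,\tfrac{dy_j}{x}\bigr)\otimes_s(\,\cdot\,)$, with $\xi$ appropriately complexified in the boundary symbol. In the interior this is the classical ellipticity of $I^*I$ on solenoidal tensors; near $x=0$ it is of the same nature but requires the explicit model computation. Given this, add a gauge term $\mathcal{S}_\digamma\in\Psisc^{-1,0}$ having $\delta^s_\digamma$ as a right factor --- so that $\mathcal{S}_\digamma$ annihilates every $e^{-2\digamma/x}$-solenoidal tensor --- and, up to an elliptic scattering factor fixing the order, the exponentially conjugated symmetric differential $e^{-\digamma/x}d^s e^{\digamma/x}$ as a left factor, chosen so that $\sigma(\mathcal{S}_\digamma)$ is elliptic precisely on the range of $\sigma(d^s)$. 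The positivity needed for this, and simultaneously for \emph{full} ellipticity at the otherwise degenerate zero section of $\Tsc^*\Omega$ over $x=0$, is that the complexified principal symbol of the Witten-conjugated $\delta^s d^s$ is bounded below by a positive multiple of $\digamma^2$ --- which is exactly why $\digamma$ must be taken large. Then
$$
\mathcal{A}_\digamma:=N_\digamma+\mathcal{S}_\digamma\in\Psisc^{-1,0}
$$
is fully elliptic for $\digamma\ge\digamma_0$, hence Fredholm $\Hsc^{s,r}(\Omega)\to\Hsc^{s+1,r}(\Omega)$ for all $s,r$, with the appropriate supported/extendible conditions at the genuine boundary $\rho=0$.

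\emph{Invertibility and conclusion.} Fredholmness is promoted to invertibility by the mechanism of \cite{UV:local}: on the thin shell $\Omega=\Omega_c$ (or $\Omega_{\tau,c}$), where $\tilde x$ ranges over an interval of length $O(c)$, a rescaling turns $\mathcal{A}_\digamma$ into a semiclassically elliptic operator whose lower-order terms have size controlled by a positive power of $c$, so $\mathcal{A}_\digamma$ is invertible on $\Hsc^{s,r}$ for $c\in(0,c_0)$. Since the geometry, the symbol of $\mathcal{A}_\digamma$, and the model operator all depend continuously on $\tau$ and the strict concavity hypothesis holds uniformly on the relevant compact set of $\tau$, the threshold $c_0$ can be taken uniform in $\tau$. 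Finally, if $f\in e^{\digamma/x}L^2_\scl(\Omega)$ satisfies $\delta^s(e^{-2\digamma/x}f)=0$, then $\delta^s_\digamma f_\digamma=0$, hence $\mathcal{S}_\digamma f_\digamma=0$ and $\mathcal{A}_\digamma f_\digamma=N_\digamma f_\digamma=e^{-\digamma/x}LIf$; applying $\mathcal{A}_\digamma^{-1}$ yields injectivity together with the reconstruction formula
$$
f=e^{\digamma/x}\,\mathcal{A}_\digamma^{-1}\bigl(e^{-\digamma/x}LIf\bigr),
$$
and, from the continuity of $\mathcal{A}_\digamma^{-1}$ on scattering Sobolev spaces and the boundedness of $L$ composed with restriction to $\Omega$-local geodesics, the stability estimate.

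\emph{Main obstacle.} The hard part is the boundary principal symbol of $N_\digamma$: one must carry out the stationary-phase evaluation keeping precise track of the two anisotropic tensor factors $g_\scl(\lambda\pa_x+\omega\pa_y)$ and of the complex shift produced by the conjugation, identify the kernel of the resulting matrix with the potential directions at \emph{every} point of the compactified scattering cotangent bundle --- including the points over $x=0$ where, without the weight, positivity fails --- and then check that adjoining $\mathcal{S}_\digamma$ fills in exactly this kernel and introduces no new degeneracy once $\digamma\ge\digamma_0$. The remaining steps are a faithful, if technically demanding, transcription of the function case in \cite{UV:local}.
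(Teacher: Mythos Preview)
Your outline of the first half is accurate: $N_\digamma\in\Psisc^{-1,0}$ acting on scattering (co)tensors, and $A_\digamma=N_\digamma+d^s_\digamma M\delta^s_\digamma$ is fully elliptic for $\digamma\ge\digamma_0$, the hard point being the matrix boundary symbol computation. This is exactly the content of the paper's Section~3.

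The gap is in your invertibility step. You assert that one can simply invert $\mathcal{A}_\digamma$ on $\Omega_c$ by the thin-shell mechanism of \cite{UV:local} and then read off $f=e^{\digamma/x}\mathcal{A}_\digamma^{-1}(e^{-\digamma/x}LIf)$ from $\mathcal{S}_\digamma f_\digamma=0$. But $\mathcal{S}_\digamma f_\digamma=0$ does \emph{not} follow from $\delta^s_\digamma f_\digamma=0$ on $\Omega$: the operator $M\in\Psisc^{-3,0}$ is nonlocal, so to apply $\mathcal{A}_\digamma$ you must extend $f_\digamma$ by zero across the interior boundary $\pa_{\inter}\Omega$, and $\delta^s_\digamma$ of that extension picks up a distributional layer on $\pa M$. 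Thus $\mathcal{A}_\digamma(e f_\digamma)\neq N_\digamma(e f_\digamma)$, and the parametrix identity $G\mathcal{A}_\digamma=I+E$ does not reduce to $GN_\digamma=I+(\text{small})$ on solenoidal tensors. In the scalar case this issue is absent precisely because there is no gauge term; your claim that ``the remaining steps are a faithful transcription of the function case'' is where the argument breaks.

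The paper's Section~4 is devoted to exactly this obstruction. One first proves that the Witten--solenoidal Laplacian $\Delta_{\digamma,s}=\delta^s_\digamma d^s_\digamma$ with Dirichlet condition on $\pa_{\inter}\Omega_j$ is invertible (a Poincar\'e-type estimate, requiring $\digamma$ large on one-forms), which produces genuine solenoidal projections $\cS_{\digamma,\Omega_j}$. Composing the parametrix identity with $\cS_{\digamma,\Omega_2}$ on both sides kills the gauge term cleanly, yielding $\cS_{\digamma,\Omega_2}GN_\digamma=\cS_{\digamma,\Omega_2}+\cS_{\digamma,\Omega_2}E\cS_{\digamma,\Omega_2}$ on $\Omega_2$; the error $\cS_{\digamma,\Omega_2}E\cS_{\digamma,\Omega_2}$ is then shown to be smoothing \emph{and} small when restricted to the thin shell. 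To pass from $\cS_{\digamma,\Omega_1}e_{01}$ back to $\cS_{\digamma,\Omega}$ one must compare solenoidal projections on nested domains, which the paper does via the Poisson operator for $\Delta_{\digamma,s}$, a scattering Korn inequality, and a local left inverse $P_{\Omega_1\setminus\Omega}$ for $d^s_\digamma$ on the annular region. The resulting reconstruction formula is correspondingly more elaborate than yours, involving $(\Id+(r_{10}-d^s_\digamma B_\Omega\gamma_{\pa_\inter\Omega}P_{\Omega_1\setminus\Omega})K_2)^{-1}$ rather than a bare $\mathcal{A}_\digamma^{-1}$. None of this machinery is hinted at in your proposal, and without it the argument does not close.
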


\section{Ellipticity up to gauge}\label{sec:ellipticity}

With $L$ defined in \eqref{eq:L-forms}-\eqref{eq:L-tensors}, the main analytic points are that, first, $LI$ is (after a suitable
exponential conjugation) a scattering
pseudodifferential operator of order $-1$, and second, by choosing an additional appropriate
gauge-related summand, this operator $LI$ is elliptic (again, after the
exponential conjugation). These results are stated in the next two
propositions, with the intermediate
Lemma~\ref{lemma:potential-complement} describing the gauge related summand.

\begin{prop}\label{prop:psdo}
On one forms, resp.\ symmetric 2-cotensors, the operators
$N_{\digamma}=e^{-\digamma/x}L Ie^{\digamma/x}$, lie in
$$
\Psisc^{-1,0}(X;\Tsc^*X,\Tsc^*X),\ \text{resp.}\
\Psisc^{-1,0}(X;\Sym^2\Tsc^*X,\Sym^2\Tsc^*X),
$$
for $\digamma>0$.
\end{prop}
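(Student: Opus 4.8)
The plan is to follow closely the corresponding argument for the scalar case in \cite{UV:local}, tracking the extra bundle factors coming from the $g_\scl$-contractions in \eqref{eq:L-forms}-\eqref{eq:L-tensors}. The first step is to write down the Schwartz kernel of $N_\digamma = e^{-\digamma/x} L I e^{\digamma/x}$. Composing \eqref{eq:L-forms} or \eqref{eq:L-tensors} with $I$ and using the parametrization of $\Omega$-local geodesics by $(x,y,\lambda,\omega)$, one obtains, for $f$ a one-form or symmetric $2$-cotensor,
\[
(N_\digamma f)(x,y) = \int K_\digamma(x,y;x',y')\, f(x',y')\, dx'\, dy',
\]
where $K_\digamma$ is obtained by integrating along the geodesic $\gamma_{x,y,\lambda,\omega}$, inserting the factors $\chi(\lambda/x)$, the appropriate power of $x$ ($x^{-2}$ for one-forms in the normalization combined as in \cite[Section 3]{SUV_localrigidity}, or actually the $x^0$, resp.\ $x^2$ prefactors written in \eqref{eq:L-forms}, \eqref{eq:L-tensors}), the tensorial factors $g_\scl(\lambda\pa_x+\omega\pa_y)$ (once or twice), and the exponential weight $e^{-\digamma/x}e^{\digamma/x'}$. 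I would then change variables from $(\lambda,\omega,t)$ (where $t$ is arclength along the geodesic) to $(x',y')$, exactly as in \cite{UV:local}: the key geometric input is that, because the level sets of $\tilde x$ are strictly concave as seen from $\Omega$ and $|\lambda|\lesssim x$ on $\supp\chi$, the map $(\lambda,\omega,t)\mapsto (x',y')$ is a diffeomorphism onto its image with Jacobian that is smooth and nonvanishing down to $x=0$ after the natural rescaling, with $x'-x$ of size $x^2$ and $y'-y$ of size $x$ along the geodesics in the support of the integrand.

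Once $K_\digamma$ is written in these variables, the second step is to recognize it as the kernel of a scattering pseudodifferential operator of order $(-1,0)$ by checking the oscillatory-integral/symbol characterization: after the exponential conjugation the weight contributes $e^{\digamma(1/x'-1/x)}$, and on the relevant region $1/x'-1/x$ is a smooth bounded function (of size comparable to $(x'-x)/x^2\sim 1$), so the weight is harmless — this is precisely why the conjugation is needed and why no positivity/size restriction on $\digamma$ enters here (any $\digamma>0$ works). The homogeneity in $(\lambda,\omega)$ of the integrand, together with the $x^{-2}d\lambda\,d\omega$ type measure (for one-forms) or its analogue for $2$-tensors, produces a kernel that is a conormal distribution with respect to the scattering diagonal with the correct order; concretely I would Fourier transform in the scattering-normal variables and verify the symbol estimates \eqref{eq:sc-symbols} with $m=-1$, $l=0$. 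This is entirely parallel to \cite{UV:local}; the bundle factors $g_\scl(\lambda\pa_x+\omega\pa_y)$, resp.\ its second tensor power, are, after converting $\pa_x,\pa_{y_j}$ to the scattering frame $x^2\pa_x, x\pa_{y_j}$ and dualizing, smooth bundle endomorphism-valued factors that are polynomial (degree $1$, resp.\ $2$) in $(\lambda/x,\omega)$, hence contribute smooth coefficients in the local trivializations $\tfrac{dx}{x^2},\tfrac{dy_j}{x}$ of $\Tsc^*X$ and its symmetric square — exactly the framework in which scattering ps.d.o.s between sections of bundles are defined in the excerpt. So the operator is a matrix of scalar scattering ps.d.o.s of order $(-1,0)$, i.e.\ lies in $\Psisc^{-1,0}(X;\Tsc^*X,\Tsc^*X)$, resp.\ $\Psisc^{-1,0}(X;\Sym^2\Tsc^*X,\Sym^2\Tsc^*X)$.

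The main obstacle, as in the scalar case, is the behavior uniformly up to the artificial boundary $x=0$: one must show that the change of variables, its Jacobian, and the resulting symbol estimates are uniform as $x\to 0$, which is where the specific choice of cone aperture $|\lambda|<C_2 x$ (rather than $C_1\sqrt{x}$) and the strict concavity of the foliation are used — they guarantee that the geodesic segments contributing to $K_\digamma(x,y;\cdot,\cdot)$ lie in an $x^2$-by-$x$ box around $(x,y)$, so that in the scattering vector fields $x^2\pa_x, x\pa_{y_j}$ everything is order-one and smooth down to the boundary. Handling the bundle factors uniformly is then a bookkeeping matter: one records the vanishing orders of the entries of $g_\scl(\lambda\pa_x+\omega\pa_y)^{\otimes k}$ in the smooth versus scattering frames, just as was done in the excerpt for $d^s$, and checks that no entry is too singular. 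I expect the bundle part to be routine given that computation; the genuinely delicate point is the uniform conormality of the kernel at $x=0$, which I would simply cite from the corresponding argument in \cite{UV:local}, indicating the (minor) modifications needed to carry the bundle factors and the altered $x$-prefactors through.
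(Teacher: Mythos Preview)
Your approach is essentially the same as the paper's: reduce to the scalar argument of \cite{UV:local} by verifying that the extra tensorial factors are smooth sections of the appropriate endomorphism bundle on the scattering double space. One point deserves more care than you give it. The Schwartz kernel of $N_\digamma$ carries \emph{two} distinct bundle factors, not just one: the covector $g_\scl(\lambda\pa_x+\omega\pa_y)$ at the base point $(x,y)$ coming from $L$, and the tangent vector $\dot\gamma(t)=\lambda'\pa_{x'}+\omega'\pa_{y'}$ at the running point $(x',y')$ coming from the contraction in $I$. Your write-up only tracks the first. For the endomorphism to be smooth in the scattering sense one needs that \emph{both} $\lambda$ and $\lambda'$, expressed as functions of $(x,y,X,Y)$ via the diffeomorphism $\Gamma_\pm$, carry a factor of $x$; this is what cancels the $x^{-1}$ in front of $\tfrac{dx}{x^2}$ (from $g_\scl$) and the $x^{-1}$ in front of $x^2\pa_{x'}$ (from the scattering frame at the right endpoint). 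For $\lambda$ this is immediate from the change of variables, but for $\lambda'$ it requires expanding the geodesic flow one order further, $\lambda'=\lambda+2\alpha(x,y,\lambda,\omega)t+O(t^2)$, and then checking that this also has the form $x\cdot(\text{smooth in }x,y,X/|Y|,|Y|,\hat Y)$. The paper carries this out explicitly; it is not hard, but it is the one place where the bundle bookkeeping is not purely formal, and it is also exactly the computation that later feeds into the principal-symbol calculation in Lemma~\ref{lemma:infinity-elliptic}.

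A second minor point: the overall $x^{-1}$ prefactors that appear when you write each of these two factors in the scattering frame are precisely what is absorbed into the powers of $x$ in the definitions \eqref{eq:L-forms}--\eqref{eq:L-tensors} (this is why the prefactor is $x^0$ for one-forms and $x^2$ for $2$-tensors rather than $x^{-2}$ as in the scalar case). You allude to this but do not quite say it; making it explicit clarifies why those particular powers were chosen.
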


\begin{proof}
The proof of this proposition follows that of the scalar case given in
\cite[Proposition~3.3]{UV:local} and in a modified version of the
scalar case in \cite[Proposition~3.2]{SUV_localrigidity}. For
convenience of the reader, we follow the latter proof very closely,
except that we do not emphasize the continuity statements in terms of
the underlying metric itself,
indicating the modifications.

Thus, recall that the
map
\begin{equation}\label{eq:Gamma+-def}
\Gamma_+:S\tilde M\times[0,\infty)\to[\tilde M\times\tilde M;\diag],\
\Gamma_+(\foliation,\loccoord,\lambda,\omega,t)=((\foliation,\loccoord),\gamma_{\foliation,\loccoord,\lambda,\omega}(t))
\end{equation}
is a local diffeomorphism, and similarly for $\Gamma_-$ in which
$(-\infty,0]$ takes the place of $[0,\infty)$; see the discussion
around \cite[Equation~(3.2)-(3.3)]{UV:local}; indeed this is true for
more general curve families.  Here $[\tilde
M\times\tilde M;\diag]$ is the {\em blow-up} of $\tilde M$ at the
diagonal $z=z'$, which essentially means the introduction of spherical/polar
coordinates, or often more conveniently projective coordinates, about
it. Concretely, writing the (local) coordinates from the two factors of
$\tilde M$ as $(z,z')$,
\begin{equation}\label{eq:blowup-coords}
z,|z-z'|,\frac{z-z'}{|z-z'|}
\end{equation}
give (local) coordinates on this space. Since the statement regarding
the pseudodifferential property of $LI$ is standard away from $x=0$,
we concentrate on the latter region.
Correspondingly, in our coordinates $(\foliation,\loccoord,\lambda,\omega)$,
we write
$$
(\gamma_{\foliation,\loccoord,\lambda,\omega}(t),\gamma'_{\foliation,\loccoord,\lambda,\omega}(t))
=(\Foliation_{\foliation,\loccoord,\lambda,\omega}(t),\Loccoord_{\foliation,\loccoord,\lambda,\omega}(t),\Lambda^\flat_{\foliation,\loccoord,\lambda,\omega}(t),\Omega^\flat_{\foliation,\loccoord,\lambda,\omega}(t))
$$
for
the lifted geodesic $\gamma_{\foliation,\loccoord,\lambda,\omega}(t)$.

Recall from \cite[Section~2]{UV:local} that coordinates on Melrose's
scattering double space, on which the Schwartz kernels of elements of
$\Psisc^{s,r}(X)$ are conormal to the diagonal, near the
lifted scattering diagonal, are (with $\foliation\geq 0$)
$$
\foliation,\ \loccoord,\
X=\frac{\foliation'-\foliation}{\foliation^2},\ Y=\frac{\loccoord'-\loccoord}{\foliation}.
$$
Note that here $X,Y$ are as in \cite{UV:local} around Equation~(3.10),
not as in \cite[Section~2]{UV:local} (where the signs are switched), which means that we need to
replace $(\xi,\eta)$ by $(-\xi,-\eta)$ in the Fourier transform when
computing principal symbols.
Further, it is convenient to write coordinates on $[\tilde M\times
\tilde M;\diag]$ in the region of interest (see the beginning of the paragraph of
Equation~(3.10) in \cite{UV:local}), namely (the lift of)
$|\foliation-\foliation'|<C|\loccoord-\loccoord'|$, as
$$
\foliation,\loccoord,|\loccoord-\loccoord'|,\frac{\foliation'-\foliation}{|\loccoord-\loccoord'|},\frac{\loccoord'-\loccoord}{|\loccoord-\loccoord'|},
$$
with the norms being Euclidean norms,
instead of \eqref{eq:blowup-coords}; we write $\Gamma_\pm$ in terms of
these. Note that these are
$\foliation,\loccoord,\foliation|Y|,\frac{\foliation X}{|Y|},\hat
Y$.
Moreover, by
\cite[Equation(3.10)]{UV:local} and the subsequent equations, combined
also with Equations~(3.14)-(3.15) there, $\lambda,\omega,t$ are given in
terms of $x,x',y,y'$ as
\begin{equation*}\begin{aligned}
&(\Lambda\circ\Gamma_\pm^{-1})\Big(\foliation,\loccoord,\foliation|Y|,\frac{\foliation
  X}{|Y|},\hat Y\Big)\\
&\qquad=
\foliation\frac{X-\alpha(x,y,x|Y|,\frac{xX}{|Y|},\hat Y))|Y|^2}{|Y|}+x^2\tilde\Lambda_\pm\Big(\foliation,\loccoord,\foliation|Y|,\frac{\foliation
  X}{|Y|},\hat Y\Big)
\end{aligned}\end{equation*}
with $\tilde\Lambda_\pm$ smooth,
$$
(\Omega\circ\Gamma_\pm^{-1}) \Big(\foliation,\loccoord,\foliation|Y|,\frac{\foliation  
  X}{|Y|},\hat Y\Big)=\hat Y+\foliation|Y|\tilde\Omega_\pm\Big(\foliation,\loccoord,\foliation|Y|,\frac{\foliation  
  X}{|Y|},\hat Y\Big)  
$$ 
with $\tilde\Omega_\pm$ smooth and
$$
\pm (T\circ\Gamma_\pm^{-1}) \Big(\foliation,\loccoord,\foliation|Y|,\frac{\foliation 
  X}{|Y|},\hat Y\Big)=x|Y|+\foliation^2|Y|^2\tilde T_\pm\Big(\foliation,\loccoord,\foliation|Y|,\frac{\foliation 
  X}{|Y|},\hat Y\Big) 
$$
with $\tilde T$ smooth.

In particular,
\begin{equation*}\begin{aligned}
&(\Lambda\circ\Gamma_\pm^{-1})\,\pa_x+(\Omega\circ\Gamma_\pm^{-1})\,\pa_y\\
&\qquad=\Big(\foliation\frac{X-\alpha(x,y,x|Y|,\frac{xX}{|Y|},\hat Y))|Y|^2}{|Y|}+\foliation^2\tilde\Lambda_\pm\Big(\foliation,\loccoord,\foliation|Y|,\frac{\foliation
  X}{|Y|},\hat Y\Big)\Big)\,\pa_x\\
&\qquad\qquad+\Big(\hat Y+\foliation|Y|\tilde\Omega_\pm\Big(\foliation,\loccoord,\foliation|Y|,\frac{\foliation
  X}{|Y|},\hat Y\Big)\Big)\,\pa_y.
\end{aligned}\end{equation*}
Thus, a smooth metric $g_0=dx^2+h$ applied to this yields
\begin{equation}\begin{aligned}\label{eq:g0-of-gamma-prime}
&(\Lambda\circ\Gamma_\pm^{-1})\,dx+(\Omega\circ\Gamma_\pm^{-1})\,h(\pa_y)=x\Big(x(\Lambda\circ\Gamma_\pm^{-1})\,\frac{dx}{x^2}+(\Omega\circ\Gamma_\pm^{-1})\,\frac{h(\pa_y)}{x}\Big)\\
&=x\Big(x^2\Big(\frac{X-\alpha(x,y,x|Y|,\frac{xX}{|Y|},\hat Y))|Y|^2}{|Y|}+x\tilde\Lambda_\pm\Big(\foliation,\loccoord,\foliation|Y|,\frac{\foliation
  X}{|Y|},\hat Y\Big)\Big)\,\frac{dx}{x^2}\\
&\qquad\qquad+\Big(\hat Y+\foliation|Y|\tilde\Omega_\pm\Big(\foliation,\loccoord,\foliation|Y|,\frac{\foliation
  X}{|Y|},\hat Y\Big)\Big)\,\frac{h(\pa_y)}{x}\Big),
\end{aligned}\end{equation}
while
so $g_{\scl}$ applied to this yields
\begin{equation}\begin{aligned}\label{eq:gsc-of-gamma-prime}
&x^{-1}\Big(x^{-1}(\Lambda\circ\Gamma_\pm^{-1})\,\frac{dx}{x^2}+(\Omega\circ\Gamma_\pm^{-1})\,\frac{h(\pa_y)}{x}\Big)\\
&=x^{-1}\Big(\Big(\frac{X-\alpha(x,y,x|Y|,\frac{xX}{|Y|},\hat Y))|Y|^2}{|Y|}+x\tilde\Lambda_\pm\Big(\foliation,\loccoord,\foliation|Y|,\frac{\foliation
  X}{|Y|},\hat Y\Big)\Big)\,\frac{dx}{x^2}\\
&\qquad\qquad+\Big(\hat Y+\foliation|Y|\tilde\Omega_\pm\Big(\foliation,\loccoord,\foliation|Y|,\frac{\foliation
  X}{|Y|},\hat Y\Big)\Big)\,\frac{h(\pa_y)}{x}\Big).
\end{aligned}\end{equation}
Notice that on the right hand side of \eqref{eq:gsc-of-gamma-prime} the singular factor of $x^{-1}$ in
front of $\frac{dx}{x^2}$ disappears due to the factor $x$ in
$\Lambda$, while on the right hand side of
\eqref{eq:g0-of-gamma-prime} correspondingly $\frac{dx}{x^2}$ has a
vanishing factor $x^2$. This means, as we see below, that the $\frac{dx}{x^2}$ component
behaves trivially at the level of the boundary principal symbol of the
operator $N_{\digamma,0}$ defined like $N_\digamma$ but with $g_0$ in
place of $g_{\scl}$, so in
fact one can never have full ellipticity in this case; this is the
reason we must use $g_{\scl}$ in the definition of $N_{\digamma}$.

One also needs to have $\Lambda^\flat_{x,y,\lambda,\omega}(t),\Omega^\flat_{x,y,\lambda,\omega}(t)$
evaluated at $(x',y')$, since this is the tangent vector
$\lambda'\pa{x'}+\omega'\pa_{y'}$ with which
our tensors are contracted as they are being integrated along the geodesic. In order to compute this efficiently, we
recall from \cite[Equation~(3.14)]{UV:local} that
$$
x'=x+\lambda t+\alpha(x,y,\lambda,\omega)t^2+O(t^3),\ y'=y+\omega t+O(t^2),
$$
with the $O(t^3)$, resp.\ $O(t^2)$ terms having smooth coefficients in
terms of $(x,y,\lambda,\omega)$. Correspondingly,
$$
\lambda'=\frac{dx}{dt}=\lambda+2\alpha(x,y,\lambda,\omega)t+O(t^2),\
\omega'=\frac{dy}{dt}=\omega+O(t).
$$
This gives that in terms of $x,y,x',y'$, $\lambda'$ is given by
$$
\Lambda'\circ\Gamma_\pm^{-1}=\Lambda\circ\Gamma_\pm^{-1}+2\alpha(x,y,\Lambda\circ\Gamma_\pm^{-1},\Omega\circ\Gamma_\pm^{-1})(T\circ\Gamma_\pm^{-1})+(T\circ\Gamma_\pm^{-1})^2\tilde\Lambda'\circ\Gamma_\pm^{-1},
$$
with $\tilde\Lambda'$ smooth in terms of
$x,y,\Lambda\circ\Gamma_\pm^{-1},\Omega\circ\Gamma_\pm^{-1},T\circ\Gamma_\pm^{-1}$. Substituting
these in yields
\begin{equation*}\begin{aligned}
\Lambda'\circ\Gamma_\pm^{-1}&=x\frac{X-\alpha(x,y,x|Y|,\frac{xX}{|Y|},\hat
  Y)|Y|^2}{|Y|}+2 x|Y|\alpha(x,y,x|Y|,\frac{xX}{|Y|},\hat
  Y)\\
&\qquad\qquad\qquad\qquad+x^2|Y|^2\tilde\Lambda'(x,y,x|Y|,\frac{xX}{|Y|},\hat
  Y)\\
&=x\frac{X+\alpha(x,y,x|Y|,\frac{xX}{|Y|},\hat
  Y)|Y|^2}{|Y|}+x^2|Y|^2\tilde\Lambda'(x,y,x|Y|,\frac{xX}{|Y|},\hat
  Y)
\end{aligned}\end{equation*}
while
$$
\Omega'\circ\Gamma_\pm^{-1}=\hat Y+x\tilde\Omega'(x,y,x|Y|,\frac{xX}{|Y|},\hat
  Y).
$$
Correspondingly,
\begin{equation*}\begin{aligned}
&(\Lambda'\circ\Gamma_\pm^{-1})\,\pa_x+(\Omega'\circ\Gamma_\pm^{-1})\,\pa_y=
x^{-1}\Big(x^{-1}(\Lambda'\circ\Gamma_\pm^{-1})\,x^2\pa_x+(\Omega'\circ\Gamma_\pm^{-1})\,x\pa_y\Big)\\
&\qquad=
x^{-1}\Big(\Big(\frac{X+\alpha(x,y,x|Y|,\frac{xX}{|Y|},\hat
  Y)|Y|^2}{|Y|}+\foliation|Y|^2\tilde\Lambda'_\pm\Big(\foliation,\loccoord,\foliation|Y|,\frac{\foliation
  X}{|Y|},\hat Y\Big)\Big)\,x^2\pa_x\\
&\qquad\qquad+\Big(\hat Y+\foliation|Y|\tilde\Omega'_\pm\Big(\foliation,\loccoord,\foliation|Y|,\frac{\foliation
  X}{|Y|},\hat Y\Big)\Big)\,x\pa_y\Big).
\end{aligned}\end{equation*}

Then, similarly, near the boundary as
in \cite[Equation~(3.13)]{UV:local}, one obtains the Schwartz kernel
of $N_{\digamma}$ on one forms:
\begin{equation}\begin{aligned}\label{eq:sc-SK-near-ff}
&K^\flat(\foliation,\loccoord,X,Y)\\
&=\sum_\pm e^{-\digamma
  X/(1+\foliation X)}\chi\Big(\frac{X-\alpha(x,y,x|Y|,\frac{xX}{|Y|},\hat
  Y)|Y|^2}{|Y|}+x\tilde\Lambda_\pm\Big(\foliation,\loccoord,\foliation|Y|,\frac{\foliation|X|}{|Y|},\hat
Y\Big)\Big)\\
&\qquad\qquad\Big(x^{-1}(\Lambda\circ\Gamma_\pm^{-1})\,\frac{dx}{x^2}+(\Omega\circ\Gamma_\pm^{-1})\,\frac{h(\pa_y)}{x}\Big)
\Big(x^{-1}(\Lambda'\circ\Gamma_\pm^{-1})\,x^2\pa_x+(\Omega'\circ\Gamma_\pm^{-1})\,x\pa_y\Big)\\
&\qquad\qquad\qquad|Y|^{-n+1}J_\pm\Big(\foliation,\loccoord,\frac{X}{|Y|},|Y|,\hat Y\Big),
\end{aligned}\end{equation}
with the density factor $J$ smooth, positive, $=1$ at
$\foliation=0$; there is a similar formula for 2-tensors. Note that
the factor $x^{-1}$ in \eqref{eq:gsc-of-gamma-prime}, as well as
another $x^{-1}$ from writing
$$
(\Lambda'\circ\Gamma_\pm^{-1})\,\pa_x+(\Omega'\circ\Gamma_\pm^{-1})\,\pa_y =x^{-1}\Big(x^{-1}(\Lambda'\circ\Gamma_\pm^{-1})\,x^2\pa_x+(\Omega'\circ\Gamma_\pm^{-1})\,x\pa_y\Big)
$$
are absorbed into
the definition of $L$, \eqref{eq:L-forms}-\eqref{eq:L-tensors}, hence
the different powers ($-2$ for functions, $0$ on one-forms, $2$ for
2-cotensors) appearing there.
Here
$$
\foliation,\loccoord,|Y|,\frac{X}{|Y|},\hat Y
$$
are valid
coordinates on the blow-up of the scattering diagonal in $|Y|>\ep|X|$,
$\ep>0$, which is the case automatically on the support of the kernel
due to the argument of $\chi$, cf.\ the discussion after \cite[Equation(3.12)]{UV:local}, so the argument of
$\chi$ is smooth {\em on this blown up
  space}. In addition, due to the order $x$ vanishing of $\Lambda$,
$$
x^{-1}(\Lambda\circ\Gamma_\pm^{-1})\,\frac{dx}{x^2}+(\Omega\circ\Gamma_\pm^{-1})\,\frac{h(\pa_y)}{x},\ \text{resp.}
\
x^{-1}(\Lambda\circ\Gamma_\pm^{-1})\,x^2\pa_x+(\Omega\circ\Gamma_\pm^{-1})\,x\pa_y
$$
are smooth sections of $\Tsc^*X$, resp.\ $\Tsc X$, pulled back from
the left, resp.\ right, factor of $X^2$, thus their product defines a
smooth section of the endomorphism bundle of $\Tsc^*X$.

Since this 
homomorphism factor is the only difference from 
\cite[Proposition~3.3]{UV:local}, and we have shown its smoothness 
properties as a bundle endomorphism,
this proves the 
proposition as in \cite[Proposition~3.3]{UV:local}.

If we defined $N_{\digamma,0}$ as $N_\digamma$ but using a smooth
metric $g_0$ in place of $g_{\scl}$, we would have the Schwartz kernel
\begin{equation}\begin{aligned}\label{eq:0-SK-near-ff}
&K^\flat_0(\foliation,\loccoord,X,Y)\\
&=\sum_\pm e^{-\digamma
  X/(1+\foliation X)}\chi\Big(\frac{X-\alpha(x,y,x|Y|,\frac{xX}{|Y|},\hat
  Y)|Y|^2}{|Y|}+x\tilde\Lambda_\pm\Big(\foliation,\loccoord,\foliation|Y|,\frac{\foliation|X|}{|Y|},\hat
Y\Big)\Big)\\
&\qquad\qquad\Big(x(\Lambda\circ\Gamma_\pm^{-1})\,\frac{dx}{x^2}+(\Omega\circ\Gamma_\pm^{-1})\,\frac{h(\pa_y)}{x}\Big)
\Big(x^{-1}(\Lambda'\circ\Gamma_\pm^{-1})\,x^2\pa_x+(\Omega'\circ\Gamma_\pm^{-1})\,x\pa_y\Big)\\
&\qquad\qquad\qquad|Y|^{-n+1}J_\pm\Big(\foliation,\loccoord,\frac{X}{|Y|},|Y|,\hat Y\Big),
\end{aligned}\end{equation}
and
$$
x(\Lambda\circ\Gamma_\pm^{-1})\,\frac{dx}{x^2}+(\Omega\circ\Gamma_\pm^{-1})\,\frac{h(\pa_y)}{x},\ \text{resp.}
\
x^{-1}(\Lambda\circ\Gamma_\pm^{-1})\,x^2\pa_x+(\Omega\circ\Gamma_\pm^{-1})\,x\pa_y
$$
are again smooth sections of $\Tsc^*X$, resp.\ $\Tsc X$, pulled back from
the left, resp.\ right, factor of $X^2$, but, as pointed out earlier,
with the coefficient of $\frac{dx}{x^2}$ vanishing, thus eliminating
the possibility of ellipticity in this case.
\end{proof}

Before proceeding, we compute the principal symbol of the gauge term
$d^s_\digamma\delta^s_\digamma$. For this recall that
$d^s_\digamma=e^{-\digamma/x}d^s e^{\digamma/x}$, with $d^s$ defined
using the background metric $g$ (on one forms; the metric is irrelevant for functions), and
$\delta^s_\digamma$ is its adjoint with respect to the scattering
metric $g_\scl$ (not $g$). In order to give the principal symbols, we
use the basis
$$
\frac{dx}{x^2},\ \frac{dy}{x}
$$
for one forms, with $\frac{dy}{x}$ understood as a short hand for
$\frac{dy_1}{x},\ldots,\frac{dy_{n-1}}{x}$,
while for 2-tensors, we use a decomposition
$$
\frac{dx}{x^2}\otimes\frac{dx}{x^2},\ \frac{dx}{x^2}\otimes\,\frac{dy}{x},\
\frac{dy}{x}\otimes\frac{dx}{x^2},\ \frac{dy}{x}\otimes \frac{dy}{x}.
$$
Note that symmetry of a 2-tensor is the statement that the 2nd
and 3rd (block) entries are the same (up to the standard identification), so for symmetric 2-tensors we
can also
use
$$
\frac{dx}{x^2}\otimes_s\frac{dx}{x^2},\ \frac{dx}{x^2}\otimes_s\,\frac{dy}{x},\ \frac{dy}{x}\otimes_s \frac{dy}{x},
$$
where the middle component is the common
$\frac{dx}{x^2}\otimes\,\frac{dy}{x}$ and
$\frac{dy}{x}\otimes\frac{dx}{x^2}$ component.

\begin{lemma}\label{lemma:potential-complement}
On one forms, the operator $d^s_\digamma\delta^s_\digamma\in\Diffsc^{2,0}(X;\Tsc^*X,\Tsc^*X)$ has principal symbol
$$
\begin{pmatrix}\xi+i\digamma\\\eta\otimes\end{pmatrix}\begin{pmatrix}\xi-i\digamma&\iota_\eta\end{pmatrix}=
\begin{pmatrix}\xi^2+\digamma^2&(\xi+i\digamma)\iota_\eta\\(\xi-i\digamma)\eta\otimes&\eta\otimes\iota_\eta\end{pmatrix}.
$$

On the other hand, on symmetric 2-tensors
$d^s_\digamma\delta^s_\digamma\in\Diffsc^{2,0}(X;\Sym^2\Tsc^*X,\Sym^2\Tsc^*X)$ has
principal symbol
\begin{equation*}\begin{aligned}
&\begin{pmatrix} \xi+i\digamma&0\\\frac{1}{2}\eta\otimes&\frac{1}{2}(\xi+i\digamma)\\a&\eta\otimes_s\end{pmatrix}
\begin{pmatrix}
\xi-i\digamma&\frac{1}{2}\iota_\eta&\langle
a,.\rangle\\0&\frac{1}{2}(\xi-i\digamma)&\iota^s_\eta\end{pmatrix}\\
&=
\begin{pmatrix}\xi^2+\digamma^2&\frac{1}{2}(\xi+i\digamma)\iota_\eta&(\xi+i\digamma)\langle
  a,.\rangle\\
\frac{1}{2}(\xi-i\digamma)\eta\otimes&\frac{1}{4}(\eta\otimes)\iota_\eta+\frac{1}{4}(\xi^2+\digamma^2)&\frac{1}{2}\eta\otimes\langle
a,.\rangle+\frac{1}{2}(\xi+i\digamma)\iota^s_\eta\\
(\xi-i\digamma)a&\frac{1}{2}a\iota_\eta+\frac{1}{2}(\xi-i\digamma)\eta\otimes&a\langle
a,.\rangle+\eta\otimes_s\iota_\eta,
\end{pmatrix}
\end{aligned}\end{equation*}
where $a$ is a suitable symmetric 2-tensor.
\end{lemma}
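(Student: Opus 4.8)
The plan is to reduce the computation of the principal symbol of $d^s_\digamma\delta^s_\digamma$ to that of $d^s_\digamma$ and $\delta^s_\digamma$ separately, using multiplicativity of the scattering principal symbol, and then to factor the result. First I would establish that $d^s_\digamma=e^{-\digamma/x}d^se^{\digamma/x}\in\Diffsc^{1,0}(X;\Tsc^*X,\Sym^2\Tsc^*X)$ and compute its principal symbol at $\xi\,\frac{dx}{x^2}+\eta\cdot\frac{dy}{x}$. As noted already in the discussion preceding the lemma, $d^s$ differs from $d^s_{g_\scl}$ only by zeroth-order terms in $\Diffsc^{1,0}$, and the symmetric gradient $d^s_{g_\scl}$ has principal symbol given by symmetrized tensor product with the covector; the only subtlety is the conjugation by $e^{\pm\digamma/x}$. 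Since $x^2\pa_x(e^{\digamma/x}u)=e^{\digamma/x}(x^2\pa_x u-\digamma u)$ and $x\pa_{y_j}(e^{\digamma/x}u)=e^{\digamma/x}(x\pa_{y_j}u)$, conjugating shifts only the $\frac{dx}{x^2}$-component of the symbol, replacing $\xi$ by $\xi-i\digamma$ there (with the sign convention for the Fourier transform fixed as in the proof of Proposition~\ref{prop:psdo}), while leaving the $\frac{dy}{x}$-components, hence the $\eta\otimes$ piece, untouched. This gives, in the basis fixed above, that $\sigma(\delta^s_\digamma)=\begin{pmatrix}\xi-i\digamma&\iota_\eta\end{pmatrix}$ on one-forms (the adjoint being with respect to $g_\scl$, which flips the sign of $\digamma$ in passing from $d^s_\digamma$ to $\delta^s_\digamma$), and $\sigma(d^s_\digamma)=\begin{pmatrix}\xi+i\digamma\\\eta\otimes\end{pmatrix}$.

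Next I would simply multiply: $\sigma(d^s_\digamma\delta^s_\digamma)=\sigma(d^s_\digamma)\sigma(\delta^s_\digamma)$ since the scattering principal symbol is multiplicative and orders add ($1+1=2$, weights $0+0=0$), yielding the displayed $2\times2$ block matrix on one-forms directly. For the 2-tensor case the bookkeeping is heavier but entirely analogous: one uses the decomposition $\frac{dx}{x^2}\otimes_s\frac{dx}{x^2}$, $\frac{dx}{x^2}\otimes_s\frac{dy}{x}$, $\frac{dy}{x}\otimes_s\frac{dy}{x}$, writes $d^s_\digamma$ acting on (symmetric 2-tensor)-valued quantities as a $3\times?$ matrix of symbols where symmetrized tensoring with $\xi\,\frac{dx}{x^2}+\eta\cdot\frac{dy}{x}$ is spelled out block by block — this produces the entries $\xi+i\digamma$, $\frac12\eta\otimes$, $\frac12(\xi+i\digamma)$, $\eta\otimes_s$, together with the extra tensor $a$ arising from the zeroth-order term flagged earlier (the $\frac{dy_kdy_i}{x^2}\otimes\pa_x$ entry of $d^s$, which is "rather arbitrary"). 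One then takes the $g_\scl$-adjoint to get the symbol of $\delta^s_\digamma$ (again flipping the sign of $\digamma$, and turning $\eta\otimes$, $\eta\otimes_s$, $a\otimes$ into $\iota_\eta$, $\iota^s_\eta$, $\langle a,\cdot\rangle$ respectively), and multiplies the two matrices to obtain the stated $3\times3$ block form.

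The main obstacle I anticipate is not conceptual but organizational: getting the block-matrix entries for the symmetric 2-tensor case exactly right, in particular the combinatorial factors of $\frac12$ and $\frac14$ coming from symmetrization, the correct placement and meaning of $\iota^s_\eta$ versus $\iota_\eta$ (contraction into the symmetric versus the non-symmetric slot), and the precise form of the zeroth-order contribution $a$ — which is why the lemma only asserts "$a$ is a suitable symmetric 2-tensor" rather than pinning it down. One must also be careful that the adjoint $\delta^s_\digamma$ is taken with respect to $g_\scl$ and not $g$, so that the fiber inner products used to transpose $\eta\otimes$ to $\iota_\eta$ are the scattering ones; but since the principal symbol of the symmetric gradient is metric-independent (as noted in the excerpt), the only metric dependence at top order is in the adjoint pairing, and that just affects lower-order terms in the symbol of $\delta^s_\digamma$, not its principal part. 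Hence the factorized forms as displayed follow, and the key qualitative consequence — that the $(1,1)$ entry is $\xi^2+\digamma^2$, elliptic for $\digamma>0$ — drops out immediately, which is what will be used in the ellipticity-up-to-gauge argument.
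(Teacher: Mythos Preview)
Your approach is essentially the same as the paper's: compute the symbol of $d^s_\digamma$ by shifting $\xi\mapsto\xi+i\digamma$ under the exponential conjugation, take the $g_\scl$-adjoint to get $\sigma(\delta^s_\digamma)$, and multiply, with the extra zeroth-order term $a$ in the 2-tensor case arising from the $\frac{dy_k\,dy_i}{x^2}\otimes(x^2\pa_x)$ entry exactly as you identify. One small slip: your prose says the conjugation ``replaces $\xi$ by $\xi-i\digamma$'' yet you (correctly) write $\sigma(d^s_\digamma)=\begin{pmatrix}\xi+i\digamma\\\eta\otimes\end{pmatrix}$; the paper's convention is $e^{-\digamma/x}(x^2D_x)e^{\digamma/x}=x^2D_x+i\digamma$, so it is $\xi\mapsto\xi+i\digamma$ for $d^s_\digamma$ and the complex-conjugate shift appears only upon taking the adjoint---also, in the 2-tensor case $d^s_\digamma$ acts on one-forms (not on ``symmetric 2-tensor-valued quantities''), giving a $3\times 2$ block matrix as in the paper.
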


\begin{proof}
This is an algebraic symbolic computation, so in particular it can be
done pointwise. Since one can
arrange that the metric $g_{\scl}$ used to compute adjoints is of the form $x^{-4}dx^2+x^{-2}dy^2$, where
$dy^2$ is the flat metric, at the point in question, one can simply
use this in the computation. With our coordinates at the point in question, trivializing the inner 
product, $g_{\scl}$, the inner product on one-forms is given by the matrix 
$$
\begin{pmatrix}1&0\\0&\Id\end{pmatrix}
$$
while on 2-tensors by 
$$
\begin{pmatrix}1&0&0&0\\0&\Id&0&0\\0&0&\Id&0\\0&0&0&\Id\end{pmatrix}. 
$$

First consider one-forms. Recall from Section~\ref{sec:idea}
that the full principal symbol of $d$, in
$\Diffsc^1(X;\underline{\Cx},\Tsc^*X)$, with $\underline{\Cx}$ the
trivial bundle,
is, as a map from functions to one-forms,
\begin{equation}\label{eq:g-d-symbol}
\begin{pmatrix} \xi\\ \eta\otimes\end{pmatrix}.
\end{equation}
Thus the symbol of $d^s_\digamma=e^{-\digamma/x}d^s e^{\digamma/x}$,
which conjugation effectively replaces $\xi$ by $\xi+i\digamma$ (as
$e^{-\digamma/x} x^2D_x e^{\digamma/x}=x^2D_x+i\digamma$), is
\begin{equation*}
\begin{pmatrix} \xi+i\digamma\\ \eta\otimes\end{pmatrix}.
\end{equation*}
Hence $\delta^s_\digamma$ has symbol given by the adjoint of that of
$d^s_\digamma$ with respect to the inner product of $g_\scl$, which is
$$
\begin{pmatrix}
\xi-i\digamma&\iota_\eta\end{pmatrix}.
$$
Thus, the principal symbol of $d^s_\digamma\delta^s_\digamma$ is the
product,
$$
\begin{pmatrix}\xi^2+\digamma^2&(\xi+i\digamma)\iota_\eta\\(\xi-i\digamma)\eta\otimes&\eta\otimes\iota_\eta\end{pmatrix},
$$
proving the lemma for one forms.

We now turn to symmetric 2-tensors.
Again, recall from Section~\ref{sec:idea}
that the full principal symbol of the gradient relative to $g$, in $\Diffsc^1(X;\Tsc^*X;\Tsc^*X\otimes\Tsc^*X)$,
is, as a map from one-forms to 2-tensors (which we
write in the four block form as before) is
\begin{equation}\label{eq:g-grad-symbol}
\begin{pmatrix} \xi&0\\\eta\otimes&0\\0&\xi\\b&\eta\otimes\end{pmatrix},
\end{equation}
where $b$ is a 2-tensor on $Y=\pa X$, and thus
that of $d^s$ (with symmetric 2-tensors considered as a subspace of
2-tensors) is
$$
\begin{pmatrix} \xi&0\\\frac{1}{2}\eta\otimes&\frac{1}{2}\xi\\\frac{1}{2}\eta\otimes&\frac{1}{2}\xi\\a&\eta\otimes_s\end{pmatrix},
$$
with $a$ a symmetric 2-tensor (the symmetrization of $b$). (Notice
that $a,b$ only play a role in the principal symbol at the boundary,
not in the standard principal symbol, i.e.\ as $(\xi,\eta)\to\infty$.) Here $a$ arises due to the
treatment of $d^s$, which is defined using a standard metric $g$, as
an element of $\Diffsc(X;\Tsc^*X,\Sym^2\Tsc^*X)$; it is acting on the
one-dimensional space $\Span\{\frac{dx}{x^2}\}$ by multiplying the
coefficient of $\frac{dx}{x^2}$ to produce a symmetric 2-tensor on $Y$.
Note that here the lower right 
block has $(ijk)$ entry (corresponding to the $(ij)$ entry of the 
symmetric 2-tensor and the $k$ entry of the one-form) given by $\frac{1}{2}(\eta_i\delta_{jk}+\eta_j\delta_{ik})$.
Thus the symbol of $d^s_\digamma=e^{-\digamma/x}d^s e^{\digamma/x}$,
which conjugation effectively replaces $\xi$ by $\xi+i\digamma$ (as
$e^{-\digamma/x} x^2D_x e^{\digamma/x}=x^2D_x+i\digamma$), is
$$
\begin{pmatrix} \xi+i\digamma&0\\\frac{1}{2}\eta\otimes&\frac{1}{2}(\xi+i\digamma)\\\frac{1}{2}\eta\otimes&\frac{1}{2}(\xi+i\digamma)\\a&\eta\otimes_s\end{pmatrix}.
$$

Thus, $\delta^s_\digamma$ has symbol given by the adjoint of that of
$d^s_\digamma$ with respect to this inner product, which is
$$
\begin{pmatrix}
\xi-i\digamma&\frac{1}{2}\iota_\eta&\frac{1}{2}\iota_\eta&\langle a,.\rangle\\0&\frac{1}{2}(\xi-i\digamma)&\frac{1}{2}(\xi-i\digamma)&\iota^s_\eta\end{pmatrix}.
$$
Here the lower right block has $(\ell ij)$ entry given by
$\frac{1}{2}(\eta_i\delta_{\ell j}+\eta_j\delta_{i\ell})$. Here the
inner product $\langle a,.\rangle$ as well as $\iota_\eta$ are with
respect to the identity because of the trivialization of the inner
product; invariantly they with respect to the inner product induced by $h$.
Correspondingly, the product, in the more concise notation for
symmetric tensors, has the symbol as stated, proving the lemma.
\end{proof}

The
proof of the next proposition, on ellipticity, relies on the subsequently stated two lemmas, whose proofs in 
turn take up the rest of this section. 

\begin{prop}\label{prop:elliptic}
First consider the case of one forms. Let $\digamma>0$.
Given $\tilde\Omega$, a neighborhood of $X\cap M=\{x\geq 0,\ \rho\geq 0\}$ in $X$,
for suitable choice of the cutoff $\chi\in\CI_c(\RR)$ and of $M\in\Psisc^{-3,0}(X)$, the operator
$$
A_\digamma=N_\digamma+d^s_\digamma
M\delta^s_\digamma,\qquad N_\digamma=e^{-\digamma/x}LIe^{\digamma/x},\qquad d^s_\digamma=e^{-\digamma/x}d^s e^{\digamma/x},
$$
is elliptic in $\Psisc^{-1,0}(X;\Tsc^*X,\Tsc^*X)$ in $\tilde\Omega$.

On the other hand, consider the case of symmetric 2-tensors. Then
there exists $\digamma_0>0$ such that for $\digamma>\digamma_0$ the
following holds.
Given $\tilde\Omega$, a neighborhood of $X\cap M=\{x\geq 0,\ \rho\geq 0\}$ in $X$,
for suitable choice of the cutoff $\chi\in\CI_c(\RR)$ and of $M\in\Psisc^{-3,0}(X;\Tsc^*X,\Tsc^*X)$, the operator
$$
A_\digamma=N_\digamma+d^s_\digamma
M\delta^s_\digamma,\qquad N_\digamma=e^{-\digamma/x}LIe^{\digamma/x},\qquad d^s_\digamma=e^{-\digamma/x}d^s e^{\digamma/x},
$$
is elliptic in $\Psisc^{-1,0}(X;\Sym^2\Tsc^*X,\Sym^2\Tsc^*X)$ in $\tilde\Omega$.
\end{prop}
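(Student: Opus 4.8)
The plan is to reduce the claimed ellipticity to the computation of the principal symbol of $A_\digamma$ at each point of $\Tsc^*X$ over $\tilde\Omega$, using the fact that $N_\digamma\in\Psisc^{-1,0}$ (Proposition~\ref{prop:psdo}) and that $d^s_\digamma M\delta^s_\digamma\in\Psisc^{-1,0}$ provided $M\in\Psisc^{-3,0}$ (so that the two factors of order $1$ from $d^s_\digamma,\delta^s_\digamma$ combine with the order $-3$ of $M$). Ellipticity of $A_\digamma$ in $\tilde\Omega$ means: its full principal symbol $\sigma(A_\digamma)(z,\zeta)$, a matrix acting on the fiber of $\Sym^2\Tsc^*X$, is invertible for $(z,\zeta)$ with $z\in\tilde\Omega$ and $|z|+|\zeta|$ large (including the finite points $\zeta$ at the boundary $x=0$). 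Since the two summands of $A_\digamma$ are of the same order, $\sigma(A_\digamma)=\sigma(N_\digamma)+\sigma(d^s_\digamma M\delta^s_\digamma)$, and the second term equals $\sigma(d^s_\digamma)\,\sigma(M)\,\sigma(\delta^s_\digamma)$, whose symbol we already recorded in Lemma~\ref{lemma:potential-complement} (with $\xi\mapsto\xi+i\digamma$ from the conjugation). The strategy is then: (i) decompose the fiber into the range of the symbol of $d^s_\digamma$ (the ``potential directions'') and a complement; (ii) show $\sigma(N_\digamma)$ is already elliptic on the complement; (iii) choose $\sigma(M)$ so that the gauge term $\sigma(d^s_\digamma)\sigma(M)\sigma(\delta^s_\digamma)$ supplies ellipticity on the potential directions without destroying (ii); (iv) promote the symbolic choice to an operator $M\in\Psisc^{-3,0}$, and check uniformity in $c$ (and $\tau$).

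For step (ii), the key input is the analysis of $\sigma(N_\digamma)$ from the Schwartz kernel \eqref{eq:sc-SK-near-ff}: away from the boundary this is the standard fact that $LI$ is an elliptic $\Psi$DO of order $-1$ on the solenoidal part (the scalar case in \cite{UV:local}, with the extra homomorphism factor $g_\scl(\cdot)\otimes g_\scl(\cdot)$ contributing the rank-one projection onto $\gamma'\otimes\gamma'$); the tensorial version says that, integrating over a cone of directions $\omega$ (equivalently $\hat Y$ on the sphere), the averaged operator $v\otimes v$ over $v$ ranging over an open cone of unit vectors, restricted to the orthogonal complement of the potential directions, is positive definite — this is the standard argument that the only tensors annihilated by all $\langle f,v\otimes v\rangle$ for $v$ in an open set are potential, localized at the symbol level. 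The new feature relative to \cite{UV:local} is purely the bundle/endomorphism factor, whose smoothness as a section of $\mathrm{End}(\Tsc^*X)$, resp.\ $\mathrm{End}(\Sym^2\Tsc^*X)$, was already established in the proof of Proposition~\ref{prop:psdo}; one must be careful that the $\frac{dx}{x^2}$-component behaves nontrivially, which is exactly why $g_\scl$ rather than $g_0$ was used (the factor $x^{-1}$ cancelling the $x$-vanishing, as noted after \eqref{eq:gsc-of-gamma-prime}).

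For step (iii), on the potential directions $\mathrm{Ran}\,\sigma(d^s_\digamma)$, the gauge summand acts, after composing with $\sigma(\delta^s_\digamma)$ on the right, through the scalar (one-form, resp.\ one-form-valued) symbol $\sigma(\delta^s_\digamma)\sigma(d^s_\digamma)$, which by Lemma~\ref{lemma:potential-complement} (its transpose) is $\xi^2+\digamma^2$ plus $\eta$-dependent terms; the crucial point is that $\digamma^2$ makes this invertible for all real $(\xi,\eta)$ including $(\xi,\eta)=0$, i.e.\ even at finite points over the boundary — this is precisely the ``Witten-type'' mechanism advertised in the introduction, and it is why $\digamma$ must be taken large. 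One then picks $\sigma(M)$ to be (a smooth, symbolic, order $-3$ extension of) the inverse of this scalar symbol times a cutoff localizing to the potential directions, so that $\sigma(d^s_\digamma)\sigma(M)\sigma(\delta^s_\digamma)$ is approximately the projection onto $\mathrm{Ran}\,\sigma(d^s_\digamma)$; adding it to $\sigma(N_\digamma)$, which already dominates on the complement, yields an invertible matrix. The threshold $\digamma_0$ for 2-tensors (absent in the one-form case) arises because for symmetric 2-tensors the decomposition potential $\oplus$ solenoidal at the symbol level interacts with the zeroth-order ambiguity $a$ in Lemma~\ref{lemma:potential-complement} and with the fact that the solenoidal projection of $\sigma(N_\digamma)$ must dominate certain cross terms of size $O(\digamma)$ coming from $d^s_\digamma$; choosing $\digamma$ large makes the ``good'' block $\xi^2+\digamma^2$ beat these, and a Schur-complement / quadratic-form positivity argument closes it. The main obstacle is exactly this step (iii) for 2-tensors: verifying that, uniformly in $(z,\zeta)$ over $\tilde\Omega$ and uniformly in $c$ (and $\tau$), one can choose the single cutoff $\chi$ and the operator $M$ so that the combined symbol is invertible with symbolically bounded inverse — i.e.\ controlling the constants in the positivity estimate on the solenoidal block against the $\digamma$-dependent potential block, which is where the quantitative role of the exponential weight is essential. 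Finally, one records that all constructions are polyhomogeneous and depend continuously on the data, so that $M$ can be chosen in $\Psisc^{-3,0}$ with the ellipticity holding uniformly; the uniformity in $\tau$ follows as in \cite{UV:local} since the strict concavity bound on level sets of $\tilde x$ is uniform on the relevant compact parameter set.
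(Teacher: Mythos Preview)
Your overall architecture matches the paper's: decompose the fiber into $\operatorname{Ran}\sigma(d^s_\digamma)$ and its orthogonal complement $\ker\sigma(\delta^s_\digamma)$, verify that $\sigma(N_\digamma)$ is positive definite on the latter, and use the gauge term to fill in the former. However, two points deserve correction.

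First, your step (iii) is more elaborate than necessary and your account of the $\digamma_0$ threshold is misplaced. The paper does not build $\sigma(M)$ as an inverse-times-cutoff; it simply takes $M$ with \emph{positive} principal symbol of order $(-3,0)$. Then $\langle\sigma(d^s_\digamma)\sigma(M)\sigma(\delta^s_\digamma)v,v\rangle=\langle\sigma(M)\sigma(\delta^s_\digamma)v,\sigma(\delta^s_\digamma)v\rangle\geq 0$, vanishing exactly on $\ker\sigma(\delta^s_\digamma)$. Since $\sigma(N_\digamma)$ is itself a nonnegative superposition of rank-one projections and is positive definite on that kernel, the sum is positive definite everywhere. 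No Schur complement or block-domination argument is needed, and the large-$\digamma$ requirement does not originate here.

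Second, and more substantively, the large-$\digamma$ condition for symmetric $2$-tensors enters in your step (ii), not step (iii), and this is precisely the part you treat as ``standard.'' At fiber infinity the equatorial-sphere argument you sketch is indeed enough (Lemma~\ref{lemma:infinity-elliptic}). But at \emph{finite} points of $\Tsc^*_{\pa X}X$ the principal symbol of $N_\digamma$ is the $(X,Y)$-Fourier transform of the front-face Schwartz kernel, and one cannot read it off as an average of $v\otimes v$ over an open cone. The paper (Lemma~\ref{lemma:finite-elliptic}) takes $\chi$ to approximate a specific Gaussian $e^{-s^2/(2\nu)}$ with $\nu=\digamma^{-1}\alpha$, computes the resulting $(X,Y)$-Fourier transform explicitly, and obtains again a superposition of rank-one projections. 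For one-forms this is positive on $\ker\sigma(\delta^s_\digamma)$ for any $\digamma>0$. For $2$-tensors the kernel condition for $\sigma(\delta^s_\digamma)$ contains the extra zeroth-order term $a$ (from using the smooth metric $g$ rather than $g_{\scl}$ in $d^s$), and positivity is established by a semiclassical argument in $h=\digamma^{-1}$: after rescaling $(\xi,\eta)\mapsto(\xi/\digamma,\eta/\digamma)$ the $a$-contribution becomes $O(\digamma^{-1})$ and drops out in the limit, where the computation reduces to one identical in form to the fiber-infinity case. This is where $\digamma_0$ genuinely comes from. Your proposal does not supply this mechanism, so as written step (ii) has a gap at finite boundary points in the $2$-tensor case.
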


\begin{proof}
The proof of this proposition is straightforward given the two lemmas
we prove below. Indeed, as we prove below in
Lemma~\ref{lemma:infinity-elliptic}, provided $\chi\geq 0$,
$\chi(0)>0$, the operator
$e^{-\digamma/x}LIe^{\digamma/x}$ has positive definite principal symbol at
fiber infinity in the scattering cotangent bundle {\em  when
restricted to the subspace of $\Tsc^*X$ or $\Sym^2\Tsc^*X$ given by
the kernel of
the symbol of $\delta^s_\digamma$}, where the
inner product is that of the scattering metric we consider (with
respect to which $\delta^s$ is computed); in
Lemma~\ref{lemma:finite-elliptic} we show a similar statement for the
principal symbol at finite points under the assumption that $\chi$ is
sufficiently close, in a suitable sense, to an even positive Gaussian,
with the complication that for
2-tensors we need to assume $\digamma>0$ sufficiently large. Thus, if
we add $d^s_\digamma M\delta^s_\digamma$ to it, where $M$ has positive
principal symbol, and is of the correct order, we obtain an elliptic
operator, completing the proof of Proposition~\ref{prop:elliptic}.
\end{proof}

We are thus reduced to proving the two lemmas we used.

\begin{lemma}\label{lemma:infinity-elliptic}
Both on one-forms and on symmetric 2-tensors, $N_\digamma$ is elliptic 
at fiber infinity in $\Tsc^*X$ when restricted 
to the kernel of the principal symbol of $\delta^s_\digamma$. 
\end{lemma}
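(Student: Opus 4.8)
The plan is to compute the principal symbol of $N_\digamma$ at fiber infinity directly from the Schwartz kernel formula \eqref{eq:sc-SK-near-ff}, and to show that, restricted to the relevant subspace, it is a positive multiple of the identity. Recall that at fiber infinity the boundary behaviour is irrelevant; the symbol at a point $(x,y,\xi,\eta)$ with $(\xi,\eta)\to\infty$ is obtained by taking the Fourier transform in the fast variables $(X,Y)$ of the leading part of the kernel near the scattering diagonal $X=0$, $Y=0$. In that regime the homomorphism factor in \eqref{eq:sc-SK-near-ff} degenerates: at $X=Y=0$, $x^{-1}(\Lambda\circ\Gamma_\pm^{-1})$ and $x^{-1}(\Lambda'\circ\Gamma_\pm^{-1})$ both reduce (up to the overall scaling hidden in $L$) to the ``$\lambda$-like'' component, and $\Omega\circ\Gamma_\pm^{-1}$, $\Omega'\circ\Gamma_\pm^{-1}$ to the ``$\omega$-like'' component; so the tensor factor becomes $v_{\scl}\otimes v_{\scl}$ (resp.\ $v_{\scl}^{\otimes 2}\otimes v_{\scl}^{\otimes 2}$ for $2$-tensors) where $v_{\scl}=v_{\scl}(\hat Y,\ldots)$ is the scattering covector $g_{\scl}(\lambda\pa_x+\omega\pa_y)$ along the direction of the geodesic, identified with the unit covector $\theta=(\lambda,\omega)$ in the limit.

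First I would carry out the change of variables in \eqref{eq:sc-SK-near-ff} from $(X,Y)$ to the geodesic parameters $(\lambda,\omega,t)$ near the diagonal (equivalently, use that $\Gamma_\pm$ is a local diffeomorphism, as recalled in the proof of Proposition~\ref{prop:psdo}), reducing the symbol at fiber infinity to an integral over the directions $\theta=(\lambda,\omega)$ near $\lambda=0$, of the form
$$
\sigma(N_\digamma)(\xi,\eta)\big|_{\mathrm{fib}\ \infty}=c\int \chi(\lambda/ (\cdot)) \, \delta\big(\theta\cdot(\xi,\eta)\big)\, (\theta\otimes\theta)\, d\theta,
$$
i.e.\ integration of $\theta\otimes\theta$ (or $\theta^{\otimes 2}\otimes\theta^{\otimes 2}$) against the push-forward to the plane $\theta\cdot(\xi,\eta)=0$ of the nonnegative density $\chi$. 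This is precisely the familiar structure of an $X$-ray / normal operator symbol: it is the angular average, over unit (co)vectors $\theta$ perpendicular to $(\xi,\eta)$, of $\theta\otimes\theta$. I would then observe that this quadratic form is positive semidefinite, with kernel exactly those cotensors $w$ for which $\langle w,\theta\otimes\theta\rangle=0$ for all $\theta\perp(\xi,\eta)$ in the support of $\chi$ — and, since $\chi\ge 0$, $\chi(0)>0$, the support contains a full neighborhood of $\theta$ tangent to the level set, hence (after the elliptic rescaling, $\xi,\eta\to\infty$ jointly so that all directions perpendicular to $(\xi,\eta)$ are seen) all $\theta$ in an open cone. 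Thus the kernel of $\sigma(N_\digamma)$ consists of cotensors annihilated by all $\theta\otimes\theta$, $\theta\perp(\xi,\eta)$: for one-forms the only such covector is a multiple of $(\xi,\eta)$ itself (i.e.\ of $\xi\frac{dx}{x^2}+\eta\frac{dy}{x}$), and for symmetric $2$-tensors the only such tensors are symmetrized products of $(\xi,\eta)$ with an arbitrary covector.

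The final step is to match this kernel against the range of the principal symbol of $d^s_\digamma$ at fiber infinity, which by Lemma~\ref{lemma:potential-complement} (dropping the $\digamma$ terms, which are lower order as $(\xi,\eta)\to\infty$) is $\binom{\xi}{\eta\otimes}$ on one-forms and $\binom{\xi\ \ 0}{\frac12\eta\otimes\ \ \frac12\xi;\ a\ \ \eta\otimes_s}$ on $2$-tensors — precisely span of $(\xi,\eta)$ times a covector, i.e.\ the range of $\sigma(d^s)$ equals the kernel of $\sigma(N_\digamma)$, and dually $\Tsc^*X$ (resp.\ $\Sym^2\Tsc^*X$) is the orthogonal direct sum $\ker\sigma(\delta^s_\digamma)\oplus\operatorname{ran}\sigma(d^s)$, with $N_\digamma$ positive definite on the first summand. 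This gives ellipticity of $N_\digamma$ at fiber infinity restricted to $\ker\sigma(\delta^s_\digamma)$, as claimed. The main obstacle is the bookkeeping in the first step: tracking the tensor/homomorphism factor through the blow-up coordinates and the change of variables to confirm that at the diagonal it is exactly $\theta\otimes\theta$ (so that the symbol is a genuine quadratic form in $\theta$ and not something with an extra twist from the Jacobian $J_\pm$ or from the $h(\pa_y)$ versus $\pa_y$ identifications) — but since $J_\pm=1$ at $x=0$ and the scattering metric is used consistently on both sides, this is a direct, if slightly tedious, verification, and the positivity then follows immediately from $\chi\ge 0$, $\chi(0)>0$.
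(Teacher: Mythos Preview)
Your approach is essentially the same as the paper's: reduce the symbol at fiber infinity to an integral of rank-one projections $\theta\otimes\theta$ (resp.\ $\theta^{\otimes 2}\otimes\theta^{\otimes 2}$) over the equatorial sphere $\theta\cdot(\xi,\eta)=0$, weighted by $\chi(\tilde S)$, and then argue positivity on $\ker\sigma(\delta^s_\digamma)$. Your endgame is phrased slightly more conceptually (identify $\ker\sigma(N_\digamma)$ with $\operatorname{ran}\sigma(d^s)$ and use orthogonality) whereas the paper verifies directly that no nonzero $v\in\ker\sigma(\delta^s_\digamma)$ is annihilated by all projections; these are equivalent.

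However, you have mis-located the difficulty. What you flag as the ``main obstacle'' --- confirming that the homomorphism factor is $\theta\otimes\theta$ at leading order --- really is just bookkeeping, and the paper dispatches it in a few lines (see \eqref{eq:one-form-standard-symbol} and \eqref{eq:tensor-symbol}). The actual substantive step, which you assert but do not prove, is that the admissible $\theta$'s fill out enough of $(\xi,\eta)^\perp$. Recall that $\theta=(\tilde S,\hat Y)$ is subject to \emph{three} simultaneous constraints: $|\hat Y|=1$, $\chi(\tilde S)>0$ (so $|\tilde S|$ small), and $\xi\tilde S+\eta\cdot\hat Y=0$. Your sentence ``hence (after the elliptic rescaling, $\xi,\eta\to\infty$ jointly so that all directions perpendicular to $(\xi,\eta)$ are seen) all $\theta$ in an open cone'' is not an argument; the large-$(\xi,\eta)$ regime does not enlarge the set of admissible $\theta$. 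For one-forms a spanning set suffices, but for $2$-tensors you need more than spanning --- a quadratic form can vanish on a spanning set of vectors without vanishing on the whole subspace --- so you genuinely need the open-cone claim (or an equivalent polarization argument), and this uses $n\geq 3$ in an essential way. The paper handles this by case analysis (e.g.\ writing $\hat Y=\ep\hat\eta+(1-\ep^2)^{1/2}\hat Y^\perp$ and differentiating in $\ep$); you would need to supply either that computation or a clean proof that the radial projection of the constrained set to the unit sphere of $(\xi,\eta)^\perp$ has open image.

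A minor point: at fiber infinity the $a$ term in the principal symbol of $d^s$ (from Lemma~\ref{lemma:potential-complement}) is lower order and must be dropped, just like the $\digamma$ terms; otherwise $\operatorname{ran}\sigma(d^s)$ is not exactly $\{(\xi,\eta)\otimes_s w\}$ and your identification with $\ker\sigma(N_\digamma)$ fails.
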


\begin{proof}
This is very similar to the scalar
setting.
With
$$
S=\frac{X-\alpha(\hat Y)|Y|^2}{|Y|},\ \hat Y=\frac{Y}{|Y|},
$$
the Schwartz kernel of $N_{\digamma}$ at the  
scattering front face $x=0$ is, by \eqref{eq:sc-SK-near-ff},
given by
\begin{equation*}\begin{aligned}
&e^{-\digamma X}|Y|^{-n+1}\chi(S)\Big(\Big(S\frac{dx}{x^2}+\hat
Y\cdot\, \frac{dy}{x}\Big) \Big((S+2\alpha|Y|)(x^2\pa_x)+\hat Y\cdot(x\pa_{y})\Big) \Big)
\end{aligned}\end{equation*}
on one forms, respectively
\begin{equation*}\begin{aligned}
&e^{-\digamma X}|Y|^{-n+1}\chi(S)\\
&\qquad \Big(\Big(\Big(S\frac{dx}{x^2}+\hat
Y\cdot\, \frac{dy}{x}\Big) \otimes \Big(\Big(S\frac{dx}{x^2}+\hat
Y\cdot\, \frac{dy}{x}\Big)\Big)\Big)
\Big)\\
&\qquad\qquad\Big(\Big((S+2\alpha|Y|) (x^2\pa_x)+\hat Y\cdot(x\pa_{y})\Big)\otimes\Big((S+2\alpha|Y|) (x^2\pa_x)+\hat Y\cdot(x\pa_{y})\Big) \Big)
\end{aligned}\end{equation*}
on 2-tensors,
where $\hat Y$ is regarded as a tangent vector which acts on
covectors, and where $(S+2\alpha|Y|) (x^2\pa_x)+\hat Y\cdot(x\pa_{y})$ maps
one forms to scalars, thus
$$
\Big((S+2\alpha|Y|) (x^2\pa_x)+\hat
Y\cdot(x\pa_{y})\Big)\otimes \Big((S+2\alpha|Y|) (x^2\pa_x)+\hat Y\cdot(x\pa_{y})\Big)
$$
maps symmetric 2-tensors to scalars, while
$S\frac{dx}{x^2}+\hat Y\cdot\, \frac{dy}{x}$ maps scalars to one
forms, so
$$
\Big(S\frac{dx}{x^2}+\hat Y\cdot\, \frac{dy}{x}\Big)\otimes
\Big(S\frac{dx}{x^2}+\hat Y\cdot\, \frac{dy}{x}\Big)
$$
maps scalars to symmetric 2-tensors.
In order to make the notation less confusing, we employ a matrix notation,
\begin{equation*}\begin{aligned}
&\Big(S\frac{dx}{x^2}+\hat Y\cdot\, \frac{dy}{x}\Big)
\Big((S+2\alpha|Y|) (x^2\pa_x)+\hat
Y\cdot(x\pa_{y})\Big)\\
&\qquad=\begin{pmatrix}S (S+2\alpha|Y|)&S\langle\hat
  Y,\cdot\rangle\\\hat Y (S+2\alpha|Y|)&\hat Y \langle\hat
  Y,\cdot\rangle\end{pmatrix},
\end{aligned}\end{equation*}
with the first column and row corresponding to $\frac{dx}{x^2}$, resp.\
$x^2\pa_x$, and the second column and row to the (co)normal vectors.
For 2-tensors, as before, we use a decomposition
$$
\frac{dx}{x^2}\otimes\frac{dx}{x^2},\ \frac{dx}{x^2}\otimes\,\frac{dy}{x},\
\frac{dy}{x}\otimes\frac{dx}{x^2},\ \frac{dy}{x}\otimes \frac{dy}{x},
$$
where the symmetry of the 2-tensor is the statement that the 2nd
and 3rd (block) entries are the same. For the actual endomorphism
we write
\begin{equation}\begin{aligned}\label{eq:2-tensor-S}
&\begin{pmatrix}S^2\\S\langle\hat
  Y,\cdot\rangle_1\\S\langle\hat
  Y,\cdot\rangle_2\\\langle \hat Y,\cdot\rangle_1 \langle \hat
  Y,\cdot\rangle_2\end{pmatrix}
\begin{pmatrix}(S+2\alpha|Y|)^2\hat Y_1\hat Y_2&(S+2\alpha|Y|)\hat
  Y_1\hat Y_2 \langle \hat Y,\cdot\rangle_1&(S+2\alpha|Y|)\hat Y_1\hat
  Y_2 \langle \hat Y,\cdot\rangle_2&\hat Y_1\hat Y_2 \langle \hat
  Y,\cdot\rangle_1 \langle \hat Y,\cdot\rangle_2\end{pmatrix}\\
&=\begin{pmatrix}S^2(S+2\alpha|Y|)^2&S^2(S+2\alpha|Y|)\langle\hat
  Y,\cdot\rangle_1&S^2(S+2\alpha|Y|)\langle\hat
  Y,\cdot\rangle_2&S^2\langle \hat Y,\cdot\rangle_1
  \langle \hat Y,\cdot\rangle_2\\
S(S+2\alpha|Y|)^2Y_1&S(S+2\alpha|Y|)\hat Y_1\langle \hat
Y,.\rangle_1&S(S+2\alpha|Y|)\hat Y_1\langle \hat
Y,.\rangle_2&S\hat Y_1 \langle \hat Y,\cdot\rangle_1
  \langle \hat Y,\cdot\rangle_2\\
S(S+2\alpha|Y|)^2Y_2&S(S+2\alpha|Y|)\hat Y_2\langle \hat
Y,.\rangle_1&S(S+2\alpha|Y|)\hat Y_2\langle \hat
Y,.\rangle_2&S\hat Y_2 \langle \hat Y,\cdot\rangle_1
  \langle \hat Y,\cdot\rangle_2\\
(S+2\alpha|Y|)^2\hat Y_1\hat Y_2&(S+2\alpha|Y|)\hat Y_1\hat Y_2 \langle \hat Y,\cdot\rangle_1&(S+2\alpha|Y|)\hat Y_1\hat Y_2 \langle \hat Y,\cdot\rangle_2&\hat Y_1\hat Y_2 \langle \hat Y,\cdot\rangle_1 \langle \hat Y,\cdot\rangle_2\end{pmatrix}.
\end{aligned}\end{equation}
Here we write subscripts $1$ and $2$ for clarity on $\hat Y$ to denote
whether it is acting on the first or the second factor, though this
also immediately follows from its position within the matrix.

Now, the standard principal symbol is that of the conormal singularity
at the diagonal, i.e.\ $X=0$, $Y=0$. Writing $(X,Y)=Z$,
$(\xi,\eta)=\zeta$, we would need
to evaluate the $Z$-Fourier transform as $|\zeta|\to\infty$. This
was discussed in \cite{UV:local} around Equation~(3.8); the leading
order behavior of the Fourier
transform as $|\zeta|\to\infty$ can be obtained by working on the
blown-up space of the diagonal, with coordinates $|Z|,\hat
Z=\frac{Z}{|Z|}$ (as well as $z=(x,y)$), and integrating the
restriction of the Schwartz kernel to the front face, $|Z|^{-1}=0$,
after removing the singular factor $|Z|^{-n+1}$, along the {\em
  equatorial sphere} corresponding to $\zeta$, and given by $\hat
Z\cdot\zeta=0$. Now, concretely in our setting, in view of the
infinite order vanishing, indeed compact support, of the Schwartz
kernel as $X/|Y|\to\infty$ (and $Y$ bounded),
we may work in semi-projective coordinates, i.e.\ in spherical
coordinates in $Y$, but $X/|Y|$ as the normal variable; the equatorial
sphere then becomes $(X/|Y|)\xi+\hat Y\cdot\eta=0$ (with the integral
of course relative to an appropriate positive density). With $\tilde
S=X/|Y|$, keeping in mind that terms with extra vanishing factors at
the front face, $|Y|=0$ can be dropped, we thus need to integrate
\begin{equation}\label{eq:one-form-standard-symbol}
\begin{pmatrix}\tilde S^2&\tilde S\langle\hat  
  Y,\cdot\rangle\\ \tilde S\hat Y&\hat Y \langle\hat  
  Y,\cdot\rangle\end{pmatrix}\chi(\tilde S)=\begin{pmatrix} \tilde S\\
  \hat Y\end{pmatrix}\otimes\begin{pmatrix}\tilde S&\hat Y\end{pmatrix}\chi(\tilde S),
\end{equation} 
on this equatorial sphere in the case of one-forms, and the analogous
expression in the case of symmetric 2-tensors. Now, for $\chi\geq 0$ this matrix is a
positive multiple of the projection to the span of $(\tilde S,\hat
Y)$. As $(\tilde S,\hat Y)$ runs through the $(\xi,\eta)$-equatorial
sphere, we are taking a positive (in the sense of non-negative) linear
combination of the projections to the span of the vectors in this
orthocomplement, with the weight being strictly positive as long as
$\chi(\tilde S)>0$ at the point in question. But by
Lemma~\ref{lemma:potential-complement}, the kernel of the standard
principal symbol of $\delta^s_\digamma$ consists of covectors of the
form $v=(v_0,v')$ with $\xi v_0+\eta\cdot v'=0$. Hence, if we show that for
each such non-zero vector $(v_0,v')$ there is at least one $(\tilde S,\hat Y)$
with $\chi(\tilde S)>0$ and $\xi\tilde S+\eta\cdot \hat Y=0$ and
$\tilde S v_0+\hat Y \cdot v'\neq 0$, we
conclude that the integral of the projections is positive, thus the principal
symbol of our operator is elliptic, on the  kernel of the standard
principal symbol of $\delta^s_\digamma$. But this is straightforward
if $\chi(0)>0$:
\begin{enumerate}
\item
if $v'=0$ then $\xi=0$ (since $v_0\neq 0$), one may
take $\tilde S\neq 0$ small, $\hat Y$ orthogonal to $\eta$ (such
$\hat Y$ exists as $\eta\in\RR^{n-1}$, $n\geq 3$),
\item
if $v'\neq 0$ and $v'$ is not a multiple of $\eta$, then take $\hat Y$
orthogonal to $\eta$ but not to $v'$, $\tilde S=0$,
\item
if $v'=c\eta$ with $v'\neq 0$ (so $c$ and $\eta$ do not vanish) then
$\xi v_0+c|\eta|^2=0$ so with $\hat Y$ still to be chosen if we let
$\tilde S=-\frac{\eta\cdot\hat Y}{\xi}$, then $\tilde Sv_0+\hat Y\cdot
v'=c(\hat Y\cdot\eta)(1+\frac{|\eta|^2}{\xi^2})$ which is non-zero as
long as $\hat Y\cdot\eta\neq 0$; this can be again arranged, together
with $\hat Y\cdot\eta$ being sufficiently small  (such
$\hat Y$ exists again as $\eta\in\RR^{n-1}$, $n\geq 3$), so that $\tilde S$ is
small enough in order to ensure $\chi(\tilde S)>0$.
\end{enumerate}
This shows that the principal symbol is positive definite on the
kernel of the symbol of $\delta^s_\digamma$.

In the case of symmetric 2-tensors, the matrix
\eqref{eq:one-form-standard-symbol} is replaced by
\begin{equation}\label{eq:tensor-symbol}
\begin{pmatrix} \tilde S^2\\ \tilde S\hat Y_1\\
  \tilde S\hat Y_2\\ \hat Y_1\otimes\hat Y_2\end{pmatrix}
\otimes\begin{pmatrix} \tilde S^2&\tilde S\langle \hat
  Y,\cdot\rangle_1&\tilde S\langle \hat Y,\cdot\rangle_2&\langle \hat
  Y\otimes\hat Y,\cdot\rangle\end{pmatrix}\chi(\tilde S),
\end{equation}
which again is a non-negative multiple of a projection.
For a symmetric
2-tensor of the form $v=(v_{NN},v_{NT},v_{NT},v_{TT})$ in the kernel
of the principal symbol of $\delta^s_\digamma$, we have by
Lemma~\ref{lemma:potential-complement} that
\begin{equation}\begin{aligned}\label{eq:deltasF-princ-kernel}
&\xi v_{NN}+\eta\cdot v_{NT}=0,\\
&\xi  v_{NT}+\frac{1}{2}(\eta_1+\eta_2)\cdot v_{TT}=0,
\end{aligned}\end{equation}
where $\eta_1$ resp.\ $\eta_2$ denoting that the inner product is
taken in the first, resp.\ second, slots. Taking the inner product of
the second equation with $\eta$ gives
$$
\xi \eta\cdot v_{NT}+(\eta\otimes\eta) v_{TT}=0.
$$
Substituting this into the first equation yields
$$
\xi^2 v_{NN}=(\eta\otimes\eta)v_{TT}.
$$
We now consider two cases, $\xi=0$ and $\xi\neq 0$.

If $\xi\neq 0$, then for a symmetric 2-tensor being in the kernel of
the principal symbol of $\delta^s_\digamma$  at fiber infinity and of
\eqref{eq:tensor-symbol} for $(\tilde S,\hat Y)$ satisfying $\xi\tilde
S+\eta\cdot \hat Y=0$, i.e.\ $\tilde S=-\frac{\eta}{\xi}\cdot\hat Y$ is equivalent to
\begin{equation}\begin{aligned}\label{eq:2-tensor-kernel}
&v_{NN}=\xi^{-2}(\eta\otimes\eta)v_{TT},\\
&v_{NT}=-\frac{1}{2\xi}(\eta_1+\eta_2)\cdot v_{TT}\\
&\Big(\Big(\frac{\eta\cdot\hat
  Y}{\xi}\Big)^2\frac{\eta\otimes\eta}{\xi^2}+\frac{\eta\cdot\hat
  Y}{\xi^2}\Big(\frac{\eta}{\xi}\otimes \hat Y+\hat Y\otimes
\frac{\eta}{\xi}\Big)+\hat Y\otimes\hat Y\Big)\cdot v_{TT}=0,
\end{aligned}\end{equation}
and the last equation is equivalent to
$$
\Big(\Big(\frac{\eta\cdot\hat Y}{\xi}\frac{\eta}{\xi}+\hat
Y\Big)\otimes \Big(\frac{\eta\cdot\hat Y}{\xi}\frac{\eta}{\xi}+\hat
Y\Big)\Big)\cdot v_{TT}=0.
$$
If $\eta=0$, the first two equations say directly that $v_{NN}$ and
$v_{NT}$ vanish, while the last one states that $(\hat Y\otimes\hat
Y)\cdot v_{TT}=0$ for all $\hat Y$ (we may simply take $\tilde S=0$);
but symmetric 2-tensors of the form $\hat Y\otimes\hat
Y$ span the space of all symmetric 2-tensors (as
$w_1\otimes w_2+w_2\otimes
w_1=(w_1+w_2)\otimes(w_1+w_2)-w_1\otimes w_1-w_2\otimes w_2$), so we
conclude that $v_{TT}=0$, and thus $v=0$ in this case.
On the other hand, if $\eta\neq 0$ then
taking $\hat Y=\ep\hat\eta+(1-\ep^2)^{1/2}\hat Y^\perp$ and
substituting into this equation yields
$$
\Big(\Big(1+\frac{|\eta|^2}{\xi^2}\Big)^2\ep^2\hat\eta\otimes\hat\eta+\Big(1+\frac{|\eta|^2}{\xi^2}\Big)\ep(1-\ep^2)^{1/2}(\hat
\eta\otimes\hat Y^\perp+\hat Y^\perp\otimes\hat \eta)+(1-\ep^2)\hat
Y^\perp\otimes\hat Y^\perp\Big)\cdot v_{TT}=0.
$$
Note that $\tilde S=-\ep\frac{|\eta|}{\xi}$, so $|\tilde S|$ is small when $|\ep|$ is
sufficiently small.
Substituting in $\ep=0$ yields $(\hat Y^\perp\otimes\hat Y^\perp)\cdot
v_{TT}=0$; since cotensors of the form $\hat Y^\perp\otimes\hat
Y^\perp$ span $\eta^\perp\otimes\eta^\perp$ ($\eta^\perp$ being the
orthocomplement of $\eta$), we conclude that $v_{TT}$ is orthogonal to
every element of $\eta^\perp\otimes\eta^\perp$. Next, taking the
derivative in $\ep$ at $\ep=0$ yields $(\hat
\eta\otimes\hat Y^\perp+\hat Y^\perp\otimes\hat \eta)\cdot v_{TT}=0$
for all $\hat Y^\perp$; symmetric tensors of this form, together with
$\eta^\perp\otimes\eta^\perp$, span all tensors in
$(\eta\otimes\eta)^\perp$. Finally taking the second derivative at
$\ep=0$ shows that $(\hat\eta\otimes\hat\eta)\cdot v_{TT}=0$, this in
conclusion $v_{TT}=0$. Combined with the first two equations of
\eqref{eq:2-tensor-kernel}, one concludes that $v=0$, thus the desired
ellipticity follows.

On the other hand, if $\xi=0$ (and so $\eta\neq 0$),
then for a symmetric 2-tensor being in the kernel of
the principal symbol of $\delta^s_\digamma$  at fiber infinity and of
\eqref{eq:tensor-symbol} for $(\tilde S,\hat Y)$ satisfying $\xi\tilde
S+\eta\cdot \hat Y=0$, i.e.\ $\eta\cdot\hat Y=0$ is equivalent to
\begin{equation}\begin{aligned}\label{eq:2-tensor-kernel-equator}
&\eta\cdot v_{NT}=0,\\
&(\eta_1+\eta_2)\cdot v_{TT}=0,\\
&\tilde S^2 v_{NN}+2\tilde S\hat Y\cdot v_{NT}+(\hat Y\otimes\hat Y)\cdot v_{TT}=0.
\end{aligned}\end{equation}
Since there are no constraints on $\tilde S$ (apart from $|\tilde S|$
small), we can differentiate the last equation up to two times and
evaluate the result at $0$ to
conclude that $v_{NN}=0$, $\hat Y\cdot v_{NT}=0$ and $(\hat Y\otimes\hat
Y)\cdot v_{TT}=0$. Combined with the first two equations of
\eqref{eq:2-tensor-kernel-equator}, this shows $v=0$, so again the
desired ellipticity follows.

Thus, in summary, both on one forms and on symmetric 2-tensors the
principal symbol at fiber infinity is elliptic on the kernel of that
of $\delta^s_\digamma$, proving the lemma.
\end{proof}

\begin{lemma}\label{lemma:finite-elliptic}
For $\digamma>0$ on one forms 
$N_\digamma$ is elliptic at finite points of $\Tsc^*X$ when restricted 
to the kernel of the principal symbol of $\delta^s_\digamma$. On the 
other hand, there exists $\digamma_0>0$ such that on symmetric 
2-tensors 
$N_\digamma$ is elliptic at finite points of $\Tsc^*X$ when restricted 
to the kernel of the principal symbol of $\delta^s_\digamma$. 
\end{lemma}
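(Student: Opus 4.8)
The plan is to compute the principal symbol of $N_\digamma$ at a finite point $(x,y,\xi,\eta)$ of $\Tsc^*X$ by Fourier-transforming the front-face Schwartz kernel from \eqref{eq:sc-SK-near-ff}, now keeping the full $e^{-\digamma X}$ weight rather than only its value at the diagonal, and to show that the resulting quadratic form is positive definite when restricted to the kernel of the principal symbol of $\delta^s_\digamma$ computed in Lemma~\ref{lemma:potential-complement}. Concretely, after the reductions of Lemma~\ref{lemma:infinity-elliptic} (working on the blown-up diagonal, in semiprojective coordinates $\tilde S=X/|Y|$, $\hat Y$, and dropping terms with extra vanishing at $|Y|=0$), the symbol at $(\xi,\eta)$ is obtained by integrating $e^{-\digamma|Y|\tilde S}\chi(\tilde S)$ times the rank-one endomorphism $\big(\tilde S\,\tfrac{dx}{x^2}+\hat Y\cdot\tfrac{dy}{x}\big)\otimes\big(\tilde S\,x^2\pa_x+\hat Y\cdot x\pa_y\big)$ (resp.\ its tensor square for 2-tensors) over $Y$; performing the radial integral in $|Y|$ produces, after the usual rescaling, a symbol of the form $\int \widehat{e^{-\digamma\,\cdot}\chi}(\xi-\text{(something)})$-type expression, and the key structural point is that the weight $e^{-\digamma/x}$ conjugation shifts $\xi$ to $\xi\pm i\digamma$, so the effective Gaussian-type factor is evaluated at a complex argument, which is where positivity becomes nontrivial.

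The cleanest route, which I expect the authors to take, is to reduce to an explicit model computation: choose $\chi$ to be (close to) an even Gaussian $\chi(s)=e^{-s^2/(2\sigma)}$, for which the relevant integrals over the equatorial sphere and the radial variable can be carried out in closed form, yielding a symbol that is a sum over $\pm$ of Gaussian-in-$(\xi,\eta)$ factors times the projection-type matrices of \eqref{eq:one-form-standard-symbol}/\eqref{eq:tensor-symbol} with $\tilde S$ now ranging over all of $\RR$ weighted by $e^{-\digamma|Y|\tilde S-\tilde S^2/(2\sigma)}$. One then argues exactly as in Lemma~\ref{lemma:infinity-elliptic}: for a nonzero $v$ in the kernel of the symbol of $\delta^s_\digamma$ (a real linear condition, $\xi v_0+\eta\cdot v'=0$ for one-forms, and \eqref{eq:deltasF-princ-kernel} for 2-tensors), one must exhibit enough directions $(\tilde S,\hat Y)$ carrying positive weight on which $(\tilde S,\hat Y)$ is not orthogonal to $v$; the weight is now strictly positive for \emph{all} $\tilde S\in\RR$ (not just near $0$), which actually makes the finite-point argument easier than at fiber infinity — one has the whole line of $\tilde S$ available. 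The stability of this positivity under small perturbations of $\chi$ away from the Gaussian, in the $C^k$ or Schwartz topology needed, gives the statement for the $\chi$ actually used; this is the same robustness argument as in \cite{UV:local} and \cite{SUV_localrigidity}.

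The main obstacle is the 2-tensor case, and it is exactly the reason the hypothesis $\digamma>\digamma_0$ appears. For one-forms the quadratic form one gets is manifestly a positive combination of rank-one projections and the kernel-of-$\delta^s_\digamma$ condition is linear and low-codimension, so positivity is automatic for any $\digamma>0$. For symmetric 2-tensors, however, the relevant endomorphism \eqref{eq:2-tensor-S} is quartic in $(\tilde S,\hat Y)$ and, crucially, the argument of $\chi$ in \eqref{eq:sc-SK-near-ff} involves $(S+2\alpha|Y|)$ in one slot and $S$ in the other — the two factors of the tensor product are contracted against \emph{different} vectors, $\lambda\pa_x+\omega\pa_y$ at the initial point and $\lambda'\pa_x+\omega'\pa_y$ at the far point, which differ at order $|Y|$. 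When one combines this asymmetry with the complex shift $\xi\mapsto\xi\pm i\digamma$ coming from the weight, the $\digamma$-independent part of the symbol need not be positive definite on the gauge-kernel subspace; one must show that the $e^{-\digamma^2/(2\sigma)}$-type Gaussian suppression of the "bad" cross terms, together with a dominant positive contribution of size growing in $\digamma$ (or, more precisely, a $\digamma$-independent positive lower bound once the bad terms are made small), wins for $\digamma$ large. So the hard part is a careful bookkeeping of how the off-diagonal, $\alpha$-dependent terms in the 2-tensor symbol scale in $\digamma$ relative to the manifestly positive leading term, and choosing $\digamma_0$ (and the Gaussian parameter $\sigma$, or the cutoff $\chi$) so that the former are dominated on the codimension-$n$ gauge-kernel subspace described by \eqref{eq:deltasF-princ-kernel}. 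I would organize the proof so that the one-form case is disposed of in a couple of lines and essentially all the work goes into the 2-tensor estimate, following the casework structure (by whether $\xi=0$ or $\xi\neq 0$, and within $\xi\neq0$ expanding in the parameter $\ep$ as in Lemma~\ref{lemma:infinity-elliptic}) but now tracking the $\digamma$-dependence throughout.
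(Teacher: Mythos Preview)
Your overall strategy---compute the finite-point symbol via the Fourier transform of the front-face kernel with a Gaussian $\chi$, show it is a superposition of rank-one projections, and then argue spanning on the kernel of $\sigma(\delta^s_\digamma)$---matches the paper. But there are two genuine gaps.

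First, you are using the wrong kernel condition. At finite points the principal symbol of $\delta^s_\digamma$ is \emph{not} $\xi v_0+\eta\cdot v'=0$; by Lemma~\ref{lemma:potential-complement} it is $(\xi-i\digamma)v_0+\iota_\eta v'=0$ for one-forms, and for 2-tensors it is \eqref{eq:deltasF-kernel}, which includes both the complex shift $\xi\mapsto\xi-i\digamma$ and the extra zeroth-order term $a\cdot v_{TT}$. The conditions you cite, $\xi v_0+\eta\cdot v'=0$ and \eqref{eq:deltasF-princ-kernel}, are the fiber-infinity ones. Relatedly, the finite-point symbol is not an integral over an equatorial sphere in $(\tilde S,\hat Y)$ with ``all $\tilde S\in\RR$ available''; after the $X$- and radial $|Y|$-Fourier transforms it becomes an integral over $\hat Y\in\sphere^{n-2}$ only, with the role of $\tilde S$ replaced by the complex quantity $-\nu(\xi+i\digamma)\phi^{-1}(\hat Y\cdot\eta)$. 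So the spanning argument has to be redone with these complex vectors, not reused verbatim from Lemma~\ref{lemma:infinity-elliptic}.

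Second, you are missing the mechanism that actually produces positivity. The integrand is not automatically a positive multiple of a projection: the column vector $(C_{20},\hat Y C_{10},\hat Y\otimes\hat Y)$ and the row vector $(C_{02},C_{01}\langle\hat Y,\cdot\rangle,\langle\hat Y\otimes\hat Y,\cdot\rangle)$ are adjoints of each other \emph{only} for the specific choice $\nu=\digamma^{-1}\alpha$, which forces $\phi=\nu(\xi^2+\digamma^2)$ to be real and $\nu(\xi+i\digamma)-2i\alpha=\nu(\xi-i\digamma)$. This is the key algebraic identity, and without it the rank-one integrand is not Hermitian and your positivity argument does not start. For the 2-tensor case the paper does not estimate ``$e^{-\digamma^2/(2\sigma)}$-type suppression of cross terms''; instead it rescales semiclassically, setting $\xi_\digamma=\xi/\digamma$, $\eta_\digamma=\eta/\digamma$, so that both the $\alpha$-dependent piece of $C_{02}$ and the $a$-term in the kernel condition acquire an explicit factor $\digamma^{-1}$ and drop out in the limit $\digamma\to\infty$, reducing the spanning argument exactly to the fiber-infinity one. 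The large-$\digamma$ requirement comes from this limit being nondegenerate, not from Gaussian decay in $\digamma$.
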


\begin{proof}
Again this is similar to, but technically much more involved than, the
scalar setting. We recall from \cite{UV:local} that the kernel is based on using a compactly supported 
$\CI$ localizer, $\chi$, but for the actual computation it is 
convenient to use a Gaussian instead $\chi_0$ instead. One recovers the result by 
taking $\phi\in\CI_c(\RR)$, $\phi\geq 0$, identically $1$ near $0$, and considering an approximating sequence
$\chi_k=\phi(./k)\chi_0$. Then the Schwartz kernels at the front face still converge in
the space of distributions conormal to the diagonal, which means that
the principal symbols (including at finite points) also converge,
giving the desired ellipticity for sufficiently large $k$.

Recall that the scattering principal symbol is the
Fourier transform of the Schwartz kernel at the front face, so
we now need to compute this Fourier transform. We start with the
one form case. Taking
$\chi(s)=e^{-s^2/(2\nu(\hat Y))}$ as in the scalar
case considered in \cite{UV:local} for the computation (in the scalar case we took
$\nu=\digamma^{-1}\alpha$; here we leave it unspecified for now,
except demanding $0<\nu<2\digamma^{-1}\alpha$ as needed for the
Schwartz kernel to be rapidly decreasing at infinity on the front face), we can compute the $X$-Fourier transform
exactly as before, keeping in mind that this needs to be evaluated at
$-\xi$ (just like the $Y$ Fourier transform needs to be evaluated at
$-\eta$) due to our definition of $X$:
\begin{equation*}\begin{aligned}
&|Y|^{2-n}e^{-i\alpha(-\xi-i\digamma)|Y|^2}\begin{pmatrix}D_\sigma^2-2\alpha|Y| D_\sigma&-D_\sigma\langle\hat
  Y,\cdot\rangle\\ \hat Y (-D_\sigma+2\alpha|Y|)&\hat Y \langle\hat
  Y,\cdot\rangle\end{pmatrix}
\hat\chi((-\xi-i\digamma)|Y|)\\
&=c\sqrt{\nu}|Y|^{2-n}e^{i\alpha(\xi+i\digamma)|Y|^2}\begin{pmatrix}D_\sigma^2-2\alpha|Y| D_\sigma&-D_\sigma\langle\hat
  Y,\cdot\rangle\\ \hat Y (-D_\sigma+2\alpha|Y|)&\hat Y \langle\hat
  Y,\cdot\rangle\end{pmatrix}
 e^{-\nu (\xi+i\digamma)^2|Y|^2/2}
\end{aligned}\end{equation*}
with $c>0$, and with $D_\sigma$ differentiating the argument of $\hat\chi$.
One is left with computing the $Y$-Fourier
transform, which in polar coordinates takes the form
\begin{equation*}\begin{aligned}
&\int_{\sphere^{n-2}}\int_{[0,\infty)}e^{i|Y|\hat
  Y\cdot\eta}|Y|^{2-n}e^{i\alpha(\xi+i\digamma)|Y|^2}\\
&\qquad\qquad\qquad\begin{pmatrix}-D_\sigma(-D_\sigma+2\alpha|Y|)&-D_\sigma\langle\hat
  Y,\cdot\rangle\\ \hat Y (-D_\sigma+2\alpha|Y|)&\hat Y \langle\hat
  Y,\cdot\rangle\end{pmatrix}
\hat\chi(-(\xi+i\digamma)|Y|)
|Y|^{n-2}\,d|Y|\,d\hat Y,
\end{aligned}\end{equation*}
and the factors $|Y|^{\pm(n-2)}$ cancel as in the scalar case.
Explicitly evaluating the derivatives, writing
$$
\phi(\xi,\hat Y)=\nu(\hat Y)(\xi+i\digamma)^2-2i\alpha(\hat 
  Y)(\xi+i\digamma),
$$
yields
\begin{equation}\begin{aligned}\label{eq:one-form-exp-1}
\int_{\sphere^{n-2}}\int_0^\infty e^{i|Y|\hat
  Y\cdot\eta}&\begin{pmatrix}i\nu(\xi+i\digamma)(i\nu(\xi+i\digamma)+2\alpha)
  |Y|^2+\nu&i\nu(\xi+i\digamma) |Y|\langle\hat Y,\cdot\rangle\\
\hat Y (i\nu(\xi+i\digamma)+2\alpha) |Y|&\hat Y\langle\hat Y,\cdot\rangle \end{pmatrix}\\
&\qquad\qquad\qquad\qquad\qquad\times  e^{-\phi|Y|^2/2}\,d|Y|\,d\hat Y.
\end{aligned}\end{equation}

We extend
the integral in $|Y|$ to $\RR$, replacing it by a variable $t$, and
using that the integrand is invariant under the joint change of
variables $t\to -t$ and $\hat Y\to -\hat Y$. This gives
\begin{equation*}\begin{aligned}
&\frac{1}{2}\int_{\sphere^{n-2}}\int_{\RR}e^{it\hat
  Y\cdot\eta}\\
&\qquad\begin{pmatrix}i\nu(\xi+i\digamma)(i\nu(\xi+i\digamma)+2\alpha)
  t^2+\nu&i\nu(\xi+i\digamma) t\langle\hat Y,\cdot\rangle\\
\hat Y (i\nu(\xi+i\digamma)+2\alpha) t&\hat Y\langle\hat Y,\cdot\rangle \end{pmatrix}\\
&\qquad\qquad\qquad\qquad\qquad \times e^{-\phi t^2/2}\,dt\,d\hat Y.
\end{aligned}\end{equation*}
Now the $t$ integral is a Fourier transform evaluated at $-\hat Y\cdot\eta$, under which
multiplication by $t$ becomes $D_{\hat Y\cdot\eta}$. Since the
Fourier transform of $e^{-\phi(\xi,\hat Y)t^2/2}$ is a constant multiple of
\begin{equation}\label{eq:basic-FT}
\phi(\xi,\hat Y)^{-1/2}e^{-(\hat
  Y\cdot\eta)^2/(2\phi(\xi,\hat Y))},
\end{equation}
we are left with
\begin{equation*}\begin{aligned}
\int_{\sphere^{n-2}}\phi(\xi,\hat Y)^{-1/2}
&\begin{pmatrix}i\nu(\xi+i\digamma)(i\nu(\xi+i\digamma)+2\alpha)
  D_{\hat Y\cdot\eta}^2+\nu&i\nu(\xi+i\digamma) \langle\hat Y,\cdot\rangle D_{\hat Y\cdot\eta}\\
\hat Y (i\nu(\xi+i\digamma)+2\alpha) D_{\hat Y\cdot\eta}&\hat Y\langle\hat Y,\cdot\rangle \end{pmatrix}\\
&\qquad\qquad\qquad \times e^{-(\hat
  Y\cdot\eta)^2/(2\phi(\xi,\hat Y))}
\,d\hat Y,
\end{aligned}\end{equation*}
which explicitly gives
\begin{equation}\begin{aligned}\label{eq:one-form-exp-2}
&\int_{\sphere^{n-2}}\phi(\xi,\hat Y)^{-1/2}\\
&\qquad\begin{pmatrix}i\nu(\xi+i\digamma)(i\nu(\xi+i\digamma)+2\alpha)
  \Big(-\frac{(\hat
    Y\cdot\eta)^2}{\phi(\xi,\hat Y)^2}+\frac{1}{\phi(\xi,\hat Y)}\Big)
  +\nu&i\nu(\xi+i\digamma) \langle\hat Y,\cdot\rangle i\frac{\hat
    Y\cdot\eta}{\phi(\xi,\hat Y)}\\
\hat Y (i\nu(\xi+i\digamma)+2\alpha) i\frac{\hat
    Y\cdot\eta}{\phi(\xi,\hat Y)}&\hat Y\langle\hat Y,\cdot\rangle \end{pmatrix}\\
&\qquad\qquad\qquad\qquad\qquad\qquad \times e^{-(\hat
  Y\cdot\eta)^2/(2\phi(\xi,\hat Y))}\,d\hat Y.
\end{aligned}\end{equation}
Now observe that the top left entry of the matrix is exactly
$$
-\nu\phi(\xi,\hat Y) \Big(-\frac{(\hat
    Y\cdot\eta)^2}{\phi(\xi,\hat Y)^2}+\frac{1}{\phi(\xi,\hat Y)}\Big)
  +\nu=\frac{\nu(\hat
    Y\cdot\eta)^2}{\phi(\xi,\hat Y)}=\nu(\xi+i\digamma)(\nu(\xi+i\digamma)-2i\alpha) \frac{(\hat
    Y\cdot\eta)^2}{\phi(\xi,\hat Y)^2}.
$$
Thus, the matrix in the integrand is
\begin{equation*}
\begin{pmatrix}-\frac{\nu(\xi+i\digamma)}{\phi}(\hat Y\cdot\eta)\\\hat
  Y\end{pmatrix}\otimes\begin{pmatrix}-\frac{(\nu(\xi+i\digamma)-2i\alpha)}{\phi}(\hat
  Y\cdot\eta)&\langle\hat
  Y,\cdot\rangle\end{pmatrix}.
\end{equation*}
Now, if we take
$$
\nu=\digamma^{-1}\alpha
$$
as in the scalar case in \cite{UV:local}, then
$$
\nu(\xi+i\digamma)-2i\alpha=\nu(\xi-i\digamma),
$$
while
$$
\phi=(\xi+i\digamma)(\nu(\xi+i\digamma)-2i\alpha)=\nu(\xi^2+\digamma^2)
$$
is real,
so the matrix, with this choice of $\nu$, is orthogonal projection to
the span of $(-\frac{\nu(\xi+i\digamma)}{\phi}(\hat Y\cdot\eta),\hat  Y)$.
The expression \eqref{eq:one-form-exp-2} becomes
\begin{equation}\begin{aligned}\label{eq:one-form-exp-3}
&(\xi^2+\digamma^2)^{-1/2}\\
&\int_{\sphere^{n-2}}\nu^{-1/2}
\begin{pmatrix}-\frac{\nu(\xi+i\digamma)}{\xi^2+\digamma^2}(\hat Y\cdot\eta)\\\hat
  Y\end{pmatrix}\otimes\begin{pmatrix}-\frac{\nu(\xi-i\digamma)}{\xi^2+\digamma^2}(\hat
  Y\cdot\eta)&\langle\hat
  Y,\cdot\rangle\end{pmatrix}
e^{-(\hat
  Y\cdot\eta)^2/(2\nu(\xi^2+\digamma^2))}\,d\hat Y,
\end{aligned}\end{equation}
which
is thus a superposition of positive (in the sense of non-negative)
operators, which is thus itself positive. Further, if a vector
$(v_0,v')$ lies in the kernel of the principal symbol of
$\delta^s_\digamma$, i.e.\ $(\xi-i\digamma)v_0+\iota_\eta v'=0$, then
orthogonality to $(-\frac{\nu(\xi+i\digamma)}{\xi^2+\digamma^2}(\hat Y\cdot\eta),\hat
  Y)$ for any particular $\hat Y$ would mean
$$
0=-\frac{\nu(\xi-i\digamma)}{\xi^2+\digamma^2}(\hat
  Y\cdot\eta)v_0+\hat Y\cdot v'=\frac{\nu}{\xi^2+\digamma^2} (\eta\cdot v')(\hat
  Y\cdot\eta)+\hat Y\cdot v'.
$$
Note that $\sphere^{n-2}$ is at least one dimensional (i.e.\ is the sphere
in at least a 2-dimensional vector space). Consider $v'\neq 0$; this
would necessarily be the case of interest since
$v_0=-(\xi-i\digamma)^{-1}(\eta\cdot v')$. If $\eta=0$, picking $\hat
Y$ parallel to $v'$ shows that there is at least one choice of $\hat
Y$ for which this equality does not hold. If $\eta\neq 0$, and
$v'$ is not a multiple of $\eta$, we can take $\hat Y$ orthogonal
to $\eta$ and not orthogonal to $v'$, which again gives a choice
of $\hat Y$ for the equality above does not hold. Finally, if $v'$ is a
multiple of $\eta$, the expression at hand is just $\frac{\nu|\eta|^2}{\xi^2+\digamma^2}(\hat
  Y\cdot v')+\hat Y\cdot v'$, so choosing any $\hat Y$ not orthogonal
  to $v'$ again gives a $\hat Y$ for which the equality does not
  hold. Therefore, \eqref{eq:one-form-exp-3} is actually positive
  definite when restricted to the kernel of the symbol of $\delta^s_\digamma$, as claimed.

We now turn to the 2-tensor version.
With $B_{ij}$ corresponding to the terms with $i$ factors of $S$ and
$j$ factors of $S+2\alpha|Y|$ prior to the Fourier transform,
the analogue of \eqref{eq:one-form-exp-1} is
\begin{equation}\begin{aligned}\label{eq:tensor-exp-1}
\int_{\sphere^{n-2}}\int_0^\infty e^{i|Y|\hat
  Y\cdot\eta}&
\begin{pmatrix}B_{22}&B_{21}\langle\hat
  Y,\cdot\rangle_1&B_{21}\langle\hat
  Y,\cdot\rangle_2&B_{20}\langle \hat Y,\cdot\rangle_1
  \langle \hat Y,\cdot\rangle_2\\
B_{12}Y_1&B_{11}\hat Y_1\langle \hat
Y,.\rangle_1&B_{11}\hat Y_1\langle \hat
Y,.\rangle_2&B_{10}\hat Y_1 \langle \hat Y,\cdot\rangle_1
  \langle \hat Y,\cdot\rangle_2\\
B_{12}Y_2&B_{11}\hat Y_2\langle \hat
Y,.\rangle_1&B_{11}\hat Y_2\langle \hat
Y,.\rangle_2&B_{10}\hat Y_2 \langle \hat Y,\cdot\rangle_1
  \langle \hat Y,\cdot\rangle_2\\
B_{02}\hat Y_1\hat Y_2&B_{01}\hat Y_1\hat Y_2 \langle \hat Y,\cdot\rangle_1
&B_{01}\hat Y_1\hat Y_2 \langle \hat Y,\cdot\rangle_2&B_{00}\hat Y_1\hat Y_2 \langle \hat Y,\cdot\rangle_1 \langle \hat Y,\cdot\rangle_2\end{pmatrix}\\
&\qquad\qquad\qquad\qquad\qquad\times  e^{-\phi|Y|^2/2}\,d|Y|\,d\hat Y,
\end{aligned}\end{equation}
with
\begin{equation*}\begin{aligned}
B_{00}&=1,\\
B_{10}&=i\nu(\xi+i\digamma)|Y|,\\
B_{20}&=-\nu^2(\xi+i\digamma)^2|Y|^2+\nu,\\
B_{01}&=i(\nu(\xi+i\digamma)-2i\alpha)|Y|,\\
B_{11}&=-\nu(\xi+i\digamma)(\nu(\xi+i\digamma)-2i\alpha)|Y|^2+\nu,\\
B_{21}&=-i\nu^2(\xi+i\digamma)^2(\nu(\xi+i\digamma)-2i\alpha)|Y|^3+(3i\nu^2(\xi+i\digamma)+2\alpha\nu)|Y|,\\
B_{02}&=-(\nu(\xi+i\digamma)-2i\alpha)^2|Y|^2+\nu,\\
B_{12}&=-i\nu(\xi+i\digamma)(\nu(\xi+i\digamma)-2i\alpha)^2|Y|^3,\\
B_{22}&=\nu^2(\xi+i\digamma)^2(\nu(\xi+i\digamma)-2i\alpha)^2|Y|^4+\nu(-6\nu^2(\xi+i\digamma)^2+12i\nu\alpha(\xi+i\digamma)+4\alpha^2)|Y|^2+3\nu^2.
\end{aligned}\end{equation*}
Note that the leading term of $B_{jk}$, in terms of the power of $|Y|$
involved, is simply $(i\nu(\xi+i\digamma)|Y|)^j
(i(\nu(\xi+i\digamma)-2i\alpha)|Y|)^k$; this arises by all derivatives
in \eqref{eq:2-tensor-S} arising by Fourier transforming in $S$ (which
gives a derivative $-D_\sigma$ in the dual variable $\sigma$)
falling on the exponential, $e^{-\nu\sigma^2/2}$, which is then
evaluated at $\sigma=-(\xi+i\digamma)|Y|$. However, for the full scattering principal
symbol all terms are relevant.

Next, we extend the $|Y|$ integral to $\RR$, writing the corresponding
variable as $t$ and do the Fourier transform in $t$ (with a minus
sign, i.e.\ evaluated at $-\hat Y\cdot\eta$) as in the one-form
setting. This replaces $t$ by $D_{\hat Y\cdot\eta}$, as above, and in
view of \eqref{eq:basic-FT}, explicitly evaluating the derivatives, we
obtain the following analogue of \eqref{eq:one-form-exp-2}
\begin{equation}\begin{aligned}\label{eq:tensor-exp-2}
&\int_{\sphere^{n-2}}
\begin{pmatrix}C_{22}&C_{21}\langle\hat
  Y,\cdot\rangle_1&C_{21}\langle\hat
  Y,\cdot\rangle_2&C_{20}\langle \hat Y,\cdot\rangle_1
  \langle \hat Y,\cdot\rangle_2\\
C_{12}Y_1&C_{11}\hat Y_1\langle \hat
Y,.\rangle_1&C_{11}\hat Y_1\langle \hat
Y,.\rangle_2&C_{10}\hat Y_1 \langle \hat Y,\cdot\rangle_1
  \langle \hat Y,\cdot\rangle_2\\
C_{12}Y_2&C_{11}\hat Y_2\langle \hat
Y,.\rangle_1&C_{11}\hat Y_2\langle \hat
Y,.\rangle_2&C_{10}\hat Y_2 \langle \hat Y,\cdot\rangle_1
  \langle \hat Y,\cdot\rangle_2\\
C_{02}\hat Y_1\hat Y_2&C_{01}\hat Y_1\hat Y_2 \langle \hat Y,\cdot\rangle_1
&C_{01}\hat Y_1\hat Y_2 \langle \hat Y,\cdot\rangle_2&C_{00}\hat Y_1\hat Y_2 \langle \hat Y,\cdot\rangle_1 \langle \hat Y,\cdot\rangle_2\end{pmatrix}\\
&\qquad\qquad\qquad\qquad\qquad\times \phi(\xi,\hat Y)^{-1/2}
 e^{-(\hat
  Y\cdot\eta)^2/(2\phi(\xi,\hat Y))}\,d\hat Y,
\end{aligned}\end{equation}
where, with $\rho=\hat Y\cdot\eta$,
\begin{equation*}\begin{aligned}
C_{00}&=1,\\
C_{10}&=-\nu(\xi+i\digamma)\phi^{-1}\rho,\\
C_{20}&=\nu^2(\xi+i\digamma)^2\phi^{-2}\rho^2-2i\alpha\nu\phi^{-1}(\xi+i\digamma),\\
C_{01}&=-(\nu(\xi+i\digamma)-2i\alpha)\phi^{-1}\rho,\\
C_{11}&=\nu(\xi+i\digamma))(\nu(\xi+i\digamma)-2i\alpha)\phi^{-2}\rho^{2},\\
C_{21}&=-\nu^2(\xi+i\digamma)^2(\nu(\xi+i\digamma)-2i\alpha)\phi^{-3}\rho^3+2\alpha\nu
i\phi^{-1}\rho,\\
C_{02}&=(\nu(\xi+i\digamma)-2i\alpha)^2\phi^{-2}\rho^2+\phi^{-1}(\nu(\xi+i\digamma)-2i\alpha)2i\alpha,\\
C_{12}&=-\nu(\xi+i\digamma)(\nu(\xi+i\digamma)-2i\alpha)^2\phi^{-3}\rho^{3}-2i\alpha\nu\phi^{-1}\rho,\\
C_{22}&=\nu^2(\xi+i\digamma)^2 (\nu(\xi+i\digamma)-2i\alpha)^2\phi^{-4}\rho^{4}-4\alpha^2\nu\phi^{-2}\rho^{2}+4\alpha^2\nu\phi^{-1}.
\end{aligned}\end{equation*}
Note again that the highest order term, in terms of the power of
$\rho$, of $C_{jk}$ is
$(\nu(\xi+i\digamma))^j(\nu(\xi+i\digamma)-2i\alpha)^k(-1)^{j+k}\phi^{-j-k}$,
corresponding to all derivatives $D_\rho$ falling on the exponential
$e^{-\rho^2/(2\phi)}$, evaluated at $\rho=\hat Y\cdot\eta$.

Notice that $C_{11}$ is exactly the $(1,1)$ entry in the one-form
calculation, \eqref{eq:one-form-exp-2}, while $C_{10}$, resp.\
$C_{01}$, are the factors in
the $(1,2)$ and $(2,1)$ entries, for similar reasons. Now, it is easy
to check that the matrix in \eqref{eq:tensor-exp-2} is
\begin{equation}\begin{aligned}\label{eq:2-tensor-proj}
&\begin{pmatrix} C_{20}\\\hat Y_1 C_{10}\\\hat Y_2 C_{10}\\ \hat
  Y_1\hat Y_2\end{pmatrix}
\otimes\begin{pmatrix}C_{02}&C_{01}\langle\hat
  Y,\cdot\rangle_1&C_{01}\langle Y,\cdot\rangle_2&\langle\hat
  Y,\cdot\rangle_1\langle\hat Y,\cdot\rangle_2\end{pmatrix}.
\end{aligned}\end{equation}
Letting $\nu=\digamma^{-1}\alpha$ as in the one-form setting, the
second factor here is the adjoint (involving of complex conjugates) of
the first, in particular (with $\rho=\hat Y\cdot\eta$)
$$
C_{01}=-\nu(\xi-i\digamma)\phi^{-1}\rho,\
C_{02}=\nu^2(\xi-i\digamma)^2\phi^{-2}\rho^2+ 2i\alpha
\nu(\xi-i\digamma) \phi^{-1},\ \phi=\nu(\xi^2+\digamma^2),
$$
so \eqref{eq:2-tensor-proj} is just a positive multiple of projection to the
span of $(C_{20},\hat Y_1 C_{10},\hat Y_2 C_{10}, \hat
  Y_1\hat Y_2)$. Thus,
as in the one form setting, we have a superposition of positive (in
the sense of non-negative) operators, so it remains to check that as $\hat Y$
varies, these vectors span the kernel of $\delta^s_\digamma$.

For a symmetric
2-tensor of the form $v=(v_{NN},v_{NT},v_{NT},v_{TT})$ in the kernel
of the principal symbol of $\delta^s_\digamma$, we have by
Lemma~\ref{lemma:potential-complement} that
\begin{equation}\begin{aligned}\label{eq:deltasF-kernel}
&(\xi-i\digamma)v_{NN}+\eta\cdot v_{NT}+a\cdot v_{TT}=0,\\
&(\xi-i\digamma) v_{NT}+\frac{1}{2}(\eta_1+\eta_2)\cdot v_{TT}=0,
\end{aligned}\end{equation}
where $\eta_1$ resp.\ $\eta_2$ denoting that the inner product is
taken in the first, resp.\ second, slots. Taking the inner product of
the second equation with $\eta$ gives
$$
(\xi-i\digamma) \eta\cdot v_{NT}+(\eta\otimes\eta)\cdot v_{TT}=0.
$$
Substituting this into the first equation yields
$$
(\xi-i\digamma)^2 v_{NN}+((\xi-i\digamma)a-\eta\otimes\eta)\cdot v_{TT}=0,
$$
so
$$
v_{NN}=(\xi-i\digamma)^{-2}(\eta\otimes\eta-(\xi-i\digamma)a)\cdot v_{TT},\ v_{NT}=-2^{-1}(\xi-i\digamma)^{-1}(\eta_1+\eta_2)\cdot v_{TT}.
$$
For a fixed $\hat Y$ for $v$ in the kernel of the symbol of
$\delta^s_\digamma$ to be in the kernel of the projection
\eqref{eq:2-tensor-proj} means that
$$
\Big(C_{02}(\xi-i\digamma)^{-2}(\eta\otimes\eta-(\xi-i\digamma)a)-C_{01}(\xi-i\digamma)^{-1}(\eta\otimes
\hat Y+\hat Y\otimes\eta)+\hat Y\otimes\hat Y\Big)\cdot v_{TT}=0,
$$
so recalling $\nu=\digamma^{-1}\alpha$, $\phi=\nu(\xi^2+\digamma^2)$,
\begin{equation*}\begin{aligned}
&\Big((\xi+i\digamma)^{-1}(\xi^2+\digamma^2)^{-1}(\hat Y\cdot\eta)^2+ 2i\alpha (\xi^2+\digamma^2)^{-1})
((\xi-i\digamma)^{-1}(\eta\otimes\eta)-a)\\
&\qquad +(\xi^2+\digamma^2)^{-1} (\hat Y\cdot\eta) (\eta\otimes
\hat Y+\hat Y\otimes\eta) +\hat Y\otimes\hat Y\Big)\cdot v_{TT}=0.
\end{aligned}\end{equation*}

Now, it is convenient to rewrite this in terms of `semiclassical' (in
$h=\digamma^{-1}$) variables
\begin{equation*}
\xi_\digamma=\xi/\digamma,\ \eta_\digamma=\eta/\digamma.
\end{equation*}
It becomes
\begin{equation*}\begin{aligned}
&\Big((\xi_\digamma+i)^{-1}(\xi_\digamma^2+1)^{-1}(\hat Y\cdot\eta_\digamma)^2+ 2i\digamma^{-1}\alpha (\xi_\digamma^2+1)^{-1})
((\xi_\digamma-i)^{-1}(\eta_\digamma\otimes\eta_\digamma)-\digamma^{-1}a)\\
&\qquad +(\xi_\digamma^2+1)^{-1} (\hat Y\cdot\eta_\digamma) (\eta_\digamma\otimes
\hat Y+\hat Y\otimes\eta_\digamma) +\hat Y\otimes\hat Y\Big)\cdot v_{TT}=0.
\end{aligned}\end{equation*}
Letting $\digamma^{-1}=h\to 0$, one obtains
\begin{equation*}\begin{aligned}
&\Big((\xi_\digamma+i)^{-1}(\xi_\digamma^2+1)^{-1}(\hat Y\cdot\eta_\digamma)^2
(\xi_\digamma-i)^{-1}(\eta_\digamma\otimes\eta_\digamma)\\
&\qquad +(\xi_\digamma^2+1)^{-1} (\hat Y\cdot\eta_\digamma) (\eta_\digamma\otimes
\hat Y+\hat Y\otimes\eta_\digamma) +\hat Y\otimes\hat Y\Big)\cdot v_{TT}=0,
\end{aligned}\end{equation*}
i.e.
$$
\Big(\Big((\xi_\digamma^2+1)^{-1}(\hat 
Y\cdot\eta_\digamma)\eta_\digamma+\hat Y\Big) \otimes \Big((\xi_\digamma^2+1)^{-1}(\hat 
Y\cdot\eta_\digamma)\eta_\digamma+\hat Y\Big) \Big)\cdot v_{TT}=0.
$$
One can see that this last equation, when it holds for all $\hat Y$,
implies the vanishing of $v_{TT}$ just as for the principal symbol at
fiber infinity. Indeed,
if $\eta_\digamma=0$ then we have $(\hat Y\otimes\hat
Y)\cdot v_{TT}=0$ for all $\hat Y$,
and symmetric 2-tensors of the form $\hat Y\otimes\hat
Y$ span the space of all symmetric 2-tensors (as
$w_1\otimes w_2+w_2\otimes
w_1=(w_1+w_2)\otimes(w_1+w_2)-w_1\otimes w_1-w_2\otimes w_2$), so we
conclude that $v_{TT}=0$, and thus $v=0$ in this case.
On the other hand, if $\eta_\digamma\neq 0$ then
taking $\hat Y=\ep\hat\eta_\digamma+(1-\ep^2)^{1/2}\hat Y^\perp$ and
substituting into this equation yields
\[
\begin{split}
&\Big(\Big(1+\frac{|\eta_\digamma|^2}{\xi_\digamma^2+1}\Big)^2\ep^2\hat\eta_\digamma\otimes\hat\eta_\digamma+\Big(1+\frac{|\eta_\digamma|^2}{\xi_\digamma^2+1}\Big)\ep(1-\ep^2)^{1/2}(\hat
\eta_\digamma\otimes\hat Y^\perp+\hat Y^\perp\otimes\hat \eta_\digamma)\\
&\qquad +(1-\ep^2)\hat
Y^\perp\otimes\hat Y^\perp\Big)\cdot v_{TT}=0.
\end{split} 
\]
Substituting in $\ep=0$ yields $(\hat Y^\perp\otimes\hat Y^\perp)\cdot
v_{TT}=0$; since cotensors of the form $\hat Y^\perp\otimes\hat
Y^\perp$ span $\eta_\digamma^\perp\otimes\eta_\digamma^\perp$ ($\eta_\digamma^\perp$ being the
orthocomplement of $\eta_\digamma$), we conclude that $v_{TT}$ is orthogonal to
every element of $\eta_\digamma^\perp\otimes\eta_\digamma^\perp$. Next, taking the
derivative in $\ep$ at $\ep=0$ yields $(\hat
\eta_\digamma\otimes\hat Y^\perp+\hat Y^\perp\otimes\hat \eta_\digamma)\cdot v_{TT}=0$
for all $\hat Y^\perp$; symmetric tensors of this form, together with
$\eta_\digamma^\perp\otimes\eta_\digamma^\perp$, span all tensors in
$(\eta_\digamma\otimes\eta_\digamma)^\perp$. Finally taking the second derivative at
$\ep=0$ shows that $(\hat\eta_\digamma\otimes\hat\eta_\digamma)\cdot v_{TT}=0$, this in
conclusion $v_{TT}=0$. Combined with the first two equations of
\eqref{eq:2-tensor-kernel}, one concludes that $v=0$. Correspondingly
one concludes that for sufficiently large $\digamma>0$ one has
ellipticity at all finite points, which proves the lemma.
\end{proof}

{\em As already explained, this lemma completes the proof of
Proposition~\ref{prop:elliptic}.}

\section{The gauge condition and the proof of the main results}\label{sec:gauge}

The still remaining analytic issue is to
check that we can arrange the gauge condition,
$\delta^s_\digamma f_\digamma=0$. We do this by considering various
regions $\Omega_j$, which are manifolds with
corners: they have the artificial boundary, $\pa X$, which is `at infinity' in the scattering
calculus sense, as well as the `interior' boundary
$\pa_{\inter}\Omega_j$, which could be $\pa M$, or another (farther
away) hypersurface.

Recall that our gauge freedom is that we can add
to $f$ (without changing $If$) any tensor of the form $d^s v$, with $v$ vanishing at $\pa M$
or on a hypersurface further away, such as $\pa_{\inter}\Omega_j$, i.e.\ to
$f_\digamma=e^{-\digamma/x} f$ (without changing $I e^{\digamma/x}
f_\digamma$) any
tensor of the form $d^s_\digamma v_\digamma=e^{-\digamma/x} d
e^{\digamma/x} v_\digamma$ with a similar vanishing condition. If we let
$\Delta_{\digamma,s}=\delta^s_\digamma d^s_\digamma$ be the
`solenoidal Witten Laplacian', and we impose
Dirichlet boundary condition on $\pa_{\inter}\Omega_j$ (to get the
desired vanishing for $v_\digamma$), and we show that
$\Delta_{\digamma,s}$ is invertible (with this boundary condition) on
suitable function spaces, then
\begin{equation*}\begin{aligned}
&\cS_{\digamma,\Omega_j}\phi=\phi^s_{\digamma,\Omega_j}=\phi-d^s_\digamma\Delta_{\digamma,s,\Omega_j}^{-1}\delta^s_\digamma \phi,\\
&\cP_{\digamma,\Omega_j}\phi=d^s_\digamma Q_{\digamma,\Omega_j} \phi,\qquad Q_{\digamma,\Omega_j} \phi=\Delta_{\digamma,s,\Omega_j}^{-1}\delta^s_\digamma \phi,
\end{aligned}\end{equation*}
are the solenoidal ($\cS$), resp.\ potential ($\cP$) projections of
$\phi$ on $\Omega_j$. Notice that $\cP_{\digamma,\Omega_j}\phi$ is
indeed in the range of $d^s_\digamma$ applied to a function or
one-form vanishing at $\pa_{\inter}\Omega_j$ thanks to the boundary
condition for $\Delta_{\digamma,s}$, which means that
$Q_{\digamma,\Omega_j}$ maps to such functions or tensors. Thus
$\cS_{\digamma,\Omega_j}\phi$ differs from $\phi$ by such a tensor, so
$Ie^{\digamma/x}
f_\digamma=Ie^{\digamma/x}\cS_{\digamma,\Omega_j}f_\digamma$. Further,
$$
\delta^s\cS_{\digamma,\Omega_j}\phi=\delta^s_\digamma\phi-\delta^s_\digamma d^s_\digamma\Delta_{\digamma,s,\Omega_j}^{-1}\delta^s_\digamma \phi=0,
$$
so
$\delta^s_\digamma f_\digamma=0$, i.e.\ the gauge condition we want to
impose is in fact satisfied.

Thus, it remains to check the
invertibility of $\Delta_{\digamma,s}$ with the desired boundary
condition. Before doing this we remark:

\begin{lemma}\label{lemma:potential-elliptic}
For $\digamma>0$, the operator $\Delta_{\digamma,s}=\delta^s_\digamma d^s_\digamma$ is (jointly)
elliptic in $\Diffsc^{2,0}(X)$ on functions.

On the other hand, there exists $\digamma_0>0$ such that for
$\digamma\geq\digamma_0$  the operator $\Delta_{\digamma,s}=\delta^s_\digamma d^s_\digamma$ is (jointly)
elliptic in $\Diffsc^{2,0}(X;\Tsc^*X,\Tsc^*X)$ on one forms. In fact,
on one forms (for all $\digamma>0$)
\begin{equation}\label{eq:symmetric-deriv-exp}
\delta^s_\digamma d^s_\digamma=\frac{1}{2}\nabla_\digamma^*\nabla_\digamma
+\frac{1}{2} d_\digamma\delta_\digamma +A+R,
\end{equation}
where $R\in x\Diffsc^1(X;\Tsc^*X,\Tsc^*X)$,
$A\in\Diffsc^1(X;\Tsc^*X;\Tsc^*X)$ is independent of $\digamma$ and where
$\nabla^\digamma=e^{-\digamma/x}\nabla e^{\digamma/x}$, with $\nabla$
gradient relative to $g_\scl$ (not $g$),
$d_\digamma=e^{-\digamma/x}de^{\digamma/x}$ the exterior derivative on
functions, while $\delta_\digamma$ is its adjoint on one-forms.
\end{lemma}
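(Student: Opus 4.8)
The scalar statement reduces to a symbol computation. On functions $d^s$ is the metric-independent exterior derivative $d$, so $\Delta_{\digamma,s}=\delta_\digamma d_\digamma$ with $\delta_\digamma$ the $g_\scl$-adjoint of $d_\digamma$; by \eqref{eq:g-d-symbol} and the conjugation relation $e^{-\digamma/x}x^2D_xe^{\digamma/x}=x^2D_x+i\digamma$, the scattering principal symbols are $\sigma(d_\digamma)=\begin{pmatrix}\xi+i\digamma\\\eta\otimes\end{pmatrix}$ and $\sigma(\delta_\digamma)=\begin{pmatrix}\xi-i\digamma&\iota_\eta\end{pmatrix}$, so $\sigma(\Delta_{\digamma,s})=(\xi-i\digamma)(\xi+i\digamma)+\iota_\eta(\eta\otimes)=\xi^2+|\eta|^2+\digamma^2$. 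This has leading part $|\zeta|^2$ at fiber infinity and is $\geq\digamma^2>0$ at finite points of $\Tsc^*X$ over $\pa X$, so $\Delta_{\digamma,s}$ is jointly elliptic in $\Diffsc^{2,0}(X)$ for every $\digamma>0$.

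For one-forms I would first establish \eqref{eq:symmetric-deriv-exp} and then read off ellipticity. I would prove \eqref{eq:symmetric-deriv-exp} in three steps. \emph{(a)} The standard Weitzenb\"ock-type identity for the symmetric differential of the complete metric $g_\scl$: $\delta^{s,\scl}d^s_{g_\scl}=\tfrac12\nabla^*\nabla+\tfrac12 d\delta+\mathcal R_0$, where $\delta^{s,\scl}$ is the $g_\scl$-adjoint of $d^s_{g_\scl}$, $\nabla$ the Levi-Civita connection of $g_\scl$, $\delta$ its codifferential on one-forms, and $\mathcal R_0\in\Diffsc^0(X;\Tsc^*X,\Tsc^*X)$ the Ricci-type endomorphism; since $g_\scl$ and its curvature are smooth in the scattering sense down to $\pa X$, all terms are scattering differential operators of the stated orders. \emph{(b)} Passing from $d^s_{g_\scl}$ to $d^s=d^s_g$ (which $I$ forces upon us): $B=d^s_g-d^s_{g_\scl}$ is a bundle homomorphism, so $B\in\Diffsc^0(X;\Tsc^*X,\Sym^2\Tsc^*X)$ and, as explained after Lemma~\ref{lemma:potential-complement}, its only entry not vanishing at $\pa X$ is the $\frac{dy_kdy_i}{x^2}\otimes(x^2\pa_x)$ one (the tensor $a$ there); writing $\delta^s=\delta^{s,\scl}+B^*$ (adjoint relative to $g_\scl$) gives $\delta^sd^s=\delta^{s,\scl}d^s_{g_\scl}+\delta^{s,\scl}B+B^*d^s_{g_\scl}+B^*B$, and the three correction terms together with $\mathcal R_0$ assemble into a $\digamma$-independent $A\in\Diffsc^1(X;\Tsc^*X,\Tsc^*X)$. \emph{(c)} Conjugating by $e^{\pm\digamma/x}$: since $e^{-\digamma/x}(x^2\pa_x)e^{\digamma/x}=x^2\pa_x-\digamma$ while $x\pa_{y_j}$ and multiplication operators are conjugation-invariant, the unweighted identity becomes \eqref{eq:symmetric-deriv-exp} with $\nabla_\digamma,d_\digamma,\delta_\digamma$ the conjugated operators and $A$ unchanged, the genuinely $\digamma$-dependent subprincipal discrepancy being absorbed into $R$; and one checks, by the same scattering-frame bookkeeping as in the proof of Lemma~\ref{lemma:potential-complement}, that this discrepancy carries a factor of $x$, i.e. $R\in x\Diffsc^1(X;\Tsc^*X,\Tsc^*X)$.

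Granted \eqref{eq:symmetric-deriv-exp}, ellipticity follows. With $\zeta_\digamma=(\xi+i\digamma)\frac{dx}{x^2}+\eta\cdot\frac{dy}{x}$ one has $\sigma(\nabla_\digamma)=\zeta_\digamma\otimes$, $\sigma(d_\digamma)=\zeta_\digamma\otimes$ and $\sigma(\delta_\digamma)=\iota_{\overline{\zeta_\digamma}}$, hence $\sigma(\tfrac12\nabla_\digamma^*\nabla_\digamma+\tfrac12 d_\digamma\delta_\digamma)=\tfrac12(\xi^2+|\eta|^2+\digamma^2)\,\Id+\tfrac12\begin{pmatrix}\xi+i\digamma\\\eta\otimes\end{pmatrix}\begin{pmatrix}\xi-i\digamma&\iota_\eta\end{pmatrix}$, whose second summand is Hermitian positive semidefinite, so the whole symbol is $\succeq\tfrac12(\xi^2+|\eta|^2+\digamma^2)\,\Id$. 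Since $R\in x\Diffsc^1$ has vanishing symbol at finite points over $\pa X$ and both $A$ and $R$ are of order one (hence subprincipal at fiber infinity), the symbol of $\delta^s_\digamma d^s_\digamma$ is $\succeq\tfrac12|\zeta|^2\,\Id$ at fiber infinity for every $\digamma>0$, and at finite points over $\pa X$ it is $\succeq\big(\tfrac12(\xi^2+|\eta|^2+\digamma^2)-C\langle\zeta\rangle\big)\Id$, with $C$ the $\digamma$-independent bound for the symbol of $A$ on $\pa X$. Completing the square in $\zeta$ bounds this below by $(\tfrac12\digamma^2-C')\Id$ with $C'$ depending only on $C$, which is positive once $\digamma\geq\digamma_0:=(2C'+1)^{1/2}$; this gives joint ellipticity of $\Delta_{\digamma,s}$ in $\Diffsc^{2,0}(X;\Tsc^*X,\Tsc^*X)$ for $\digamma\geq\digamma_0$.

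The main obstacle is step \emph{(b)}--\emph{(c)}: keeping the three sources of lower-order terms apart — the Ricci term of $g_\scl$, the $g$-versus-$g_\scl$ mismatch $B$ in $d^s$, and the exponential conjugation — and proving that the $\digamma$-dependent part of the remainder carries a factor of $x$, so that the $\tfrac12\digamma^2$ produced by the connection Laplacian dominates at the scattering front face and $\digamma_0$ is finite. This requires descending, as in the proof of Lemma~\ref{lemma:potential-complement}, to the explicit form of $d^s$ and $\delta^s$ in the scattering frame and tracking the orders of vanishing at $\pa X$ of the zeroth-order terms.
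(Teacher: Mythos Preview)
Your argument is correct and reaches the same conclusions, but the route differs from the paper's. The paper does not invoke a Weitzenb\"ock identity at all: it simply multiplies out the matrix symbols of $d^s_\digamma$ and $\delta^s_\digamma$ already computed in Lemma~\ref{lemma:potential-complement}, obtaining the explicit $2\times 2$ block matrix \eqref{eq:Delta-s-symbol}; then it computes separately, and equally explicitly, the symbols of $\nabla^*_\digamma\nabla_\digamma$ (which is $(\xi^2+\digamma^2+|\eta|^2)\Id$) and of $d_\digamma\delta_\digamma$, and observes by inspection that the $a$-free part of \eqref{eq:Delta-s-symbol} equals their half-sum. The $a$-dependent block is then \emph{declared} to be the principal symbol of $A$, and $R$ is whatever is left. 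Ellipticity is read off from the explicit inequality $|\sigma(A)|\leq C^2+2C|\eta|\leq C^2(1+\ep^{-1})+\ep|\eta|^2$, absorbed into the $\frac{1}{2}(\xi^2+\digamma^2+|\eta|^2)$ lower bound.

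Your approach instead derives the decomposition from the operator identity $\delta^{s,\scl}d^s_{g_\scl}=\tfrac12\nabla^*\nabla+\tfrac12 d\delta-\tfrac12\mathrm{Ric}_{g_\scl}$ (which is the standard Korn--Weitzenb\"ock formula on one-forms), then perturbs by $B=d^s_g-d^s_{g_\scl}\in\Diffsc^0$ and conjugates. This is more conceptual and explains structurally why the combination $\tfrac12\nabla^*_\digamma\nabla_\digamma+\tfrac12 d_\digamma\delta_\digamma$ appears; the price is that you must check that $e^{-\digamma/x}Ae^{\digamma/x}-A\in x\Diffsc^1$, i.e.\ that the $x^2\pa_x$-coefficient of your $A$ vanishes at $x=0$. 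You correctly flag this as the crux. Concretely, it holds because at $x=0$ the bundle map $B$ has only the $a$-block (mapping $\frac{dx}{x^2}$ to $\frac{dy\,dy}{x^2}$), while the $\xi$-part of $\sigma(\delta^{s,\scl})$ acts only on the first two blocks and the $\xi$-part of $\sigma(d^s_{g_\scl})$ lands only in the first two blocks, so both $\delta^{s,\scl}B$ and $B^*d^s_{g_\scl}$ have no $\xi$ in their boundary symbol --- exactly the observation the paper makes when it notes that the $a$-matrix in \eqref{eq:Delta-s-symbol} is $\xi$-free. So the two proofs meet at the same computation; yours packages it geometrically, the paper's does it by brute-force symbol multiplication.
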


\begin{proof}
Most of the computations for this lemma have been performed in
Lemma~\ref{lemma:potential-complement}.
In particular, the symbolic
computation is algebraic, and can be
done pointwise, where one arranges that $g_{\scl}$ is as in Lemma~\ref{lemma:potential-complement}.  Since the function case is 
simpler, we consider one-forms. Thus
the full principal symbol of $d^s_\digamma$ (with symmetric 2-tensors considered as a subspace of
2-tensors) is
$$
\begin{pmatrix} \xi+i\digamma&0\\\frac{1}{2}\eta\otimes&\frac{1}{2}(\xi+i\digamma)\\\frac{1}{2}\eta\otimes&\frac{1}{2}(\xi+i\digamma)\\a&\eta\otimes_s\end{pmatrix},
$$
that of $\delta^s_\digamma$ is
$$
\begin{pmatrix}
\xi-i\digamma&\frac{1}{2}\iota_\eta&\frac{1}{2}\iota_\eta&\langle a,.\rangle\\0&\frac{1}{2}(\xi-i\digamma)&\frac{1}{2}(\xi-i\digamma)&\iota^s_\eta\end{pmatrix}.
$$
with the lower right block having $(\ell ij)$ entry given by
$\frac{1}{2}(\eta_i\delta_{\ell j}+\eta_j\delta_{i\ell})$.
Correspondingly, the product, $\Delta^s_\digamma$, has symbol
\begin{equation}\label{eq:Delta-s-symbol}
\begin{pmatrix}
  \xi^2+\digamma^2+\frac{1}{2}|\eta|^2&\frac{1}{2}(\xi+i\digamma)\iota_\eta\\\frac{1}{2}(\xi-i\digamma)\eta\otimes&\frac{1}{2}(\xi^2+\digamma^2)+\iota^s_\eta\eta\otimes_s\end{pmatrix}+\begin{pmatrix}\langle
  a,.\rangle a&\langle
  a,.\rangle\eta\otimes_s\\\iota_\eta^s a&0\end{pmatrix},
\end{equation}
with the lower right block having $\ell k$ entry
$\frac{1}{2}(\xi^2+\digamma^2)\delta_{\ell
  k}+\frac{1}{2}|\eta|^2\delta_{\ell k}+\frac{1}{2}\eta_\ell\eta_k$,
and where we separated out the $a$ terms.

Now ellipticity is easy to see if $a=0$, with a $\digamma$-dependent
lower bound then, and this can be used to absorb the $a$ term by
taking $\digamma>0$ sufficiently large.

To make this more explicit, however, we note that, similarly, the
principal symbol of the gradient relative to $g_{\scl}$ is
$$
\begin{pmatrix} \xi&0\\\eta\otimes&0\\0&\xi\\0&\eta\otimes\end{pmatrix},
$$
with no non-zero entry in the lower left hand corner unlike for the
$g$-gradient in \eqref{eq:g-grad-symbol}, and thus
the
adjoint $\nabla_\digamma^*$ of $\nabla_\digamma$ has principal symbol
$$
\begin{pmatrix}\xi-i\digamma&\iota_\eta&0&0\\0&0&\xi-i\digamma&\iota^s_\eta\end{pmatrix}.
$$
Correspondingly, $\nabla^*_\digamma\nabla_\digamma$ has symbol
\begin{equation}\label{eq:Delta-symbol}
\begin{pmatrix} \xi^2+\digamma^2+|\eta|^2&0\\0&\xi^2+\digamma^2+|\eta|^2\end{pmatrix},
\end{equation}
which is certainly elliptic (including at finite points in
$\Tsc^*_{\pa X} X$!), and indeed is simply $\xi^2+\digamma^2+|\eta|^2$
times the identity matrix. Now, $d=d^s$ going from functions to one-forms
has symbol
$\begin{pmatrix}\xi\\\eta\end{pmatrix}$, so its conjugate
$e^{-\digamma/x}de^{\digamma/x}$ has symbol $\begin{pmatrix}\xi+i\digamma\\\eta\end{pmatrix}$, its adjoint, $\delta_\digamma$ has
symbol $\begin{pmatrix}\xi-i\digamma&\iota_\eta\end{pmatrix}$, and now
$d_\digamma\delta_\digamma$ has symbol
$$
\begin{pmatrix}\xi^2+\digamma^2&(\xi+i\digamma)\iota_\eta\\ (\xi-i\digamma)\eta&\eta\otimes\iota_\eta\end{pmatrix}.
$$
Combining these, we see that the first term in
\eqref{eq:Delta-s-symbol}, i.e.\ in the principal symbol of
$\delta^s_\digamma d^s\digamma$, is the same as
$\frac{1}{2}\nabla^*_\digamma\nabla_\digamma+\frac{1}{2}d_\digamma\delta_\digamma$,
with both terms non-negative, and the first actually positive
definite, with a lower bound $\xi^2+\digamma^2+|\eta|^2$ times the
identity. This proves \eqref{eq:symmetric-deriv-exp}, with the
principal symbol of $A$ given
by the second term in \eqref{eq:Delta-s-symbol}, which is in
particular independent of $\digamma$. Since with $C$ a bound for $a$, the symbol of $A$ is
bounded by $C^2+2C|\eta|\leq C^2(1+\ep^{-1})+\ep|\eta|^2$ for any
$\ep>0$, in particular $\ep<1$, this shows that the principal symbol
of $\delta^s_\digamma d^s\digamma$ is positive definite if
$\digamma>0$ is chosen large enough, completing the proof of the
lemma.
\end{proof}

We now turn to the invertibility question. Let $\Hscd^{m,l}(\Omega_j)$ be the subspace of
$\Hsc^{m,l}(X)$ consisting  of distributions supported in
$\overline{\Omega_j}$, and let $\Hscb^{m,l}(\Omega_j)$ the space of
restrictions of elements of $\Hsc^{m,l}(X)$ to $\Omega_j$. Thus,
$\Hscd^{m,l}(\Omega_j)^*=\Hscb^{-m,-l}(\Omega_j)$. Here we shall be mostly
interested in $m=1$, $l=0$; then at $\pa_{\inter}\Omega_j$, away from
$\pa X$, $\Hscd^{1,0}(\Omega_j)$ is the standard $H^1_0$-space (which is
$\dot H^1$ in H\"ormander's notation, which we adopt), while
$\Hscb^{-1,0}$ is $H^{-1}$ there (which is $\bar H^{-1}$ in
H\"ormander's notation). Further, $\dCI(\Omega_j)$, with the dot
denoting infinite order vanishing at all boundary hypersurfaces, or
indeed $\CI_c(\Omega_j)$ (compact support), are dense in $\Hsc^{1,0}$, so
$\Hscd^{1,0}(\Omega_j)$ is the completion of these spaces in the
$\Hsc^{1,0}(X)$-norm. In addition, the norm on $\Hsc^{1,0}(X)$ is
equivalent to $\|\nabla u\|^2_{L^2}+\|u\|^2_{L^2}$, where the norms
are with respect to any scattering metric, and $\nabla$ is any
differential operator with principal symbol given by $d$, such as the
gradient relative to any (perhaps different from the one giving the
norm) scattering metric. For $L^2_\scl=\Hsc^{0,0}$, or for the weighted
$L^2$-spaces $\Hsc^{0,l}$, the dots and bars do not make any
difference (do not change the space) as usual. Further, the inclusion map $\Hscd^{1,1}\to L^2$ (or indeed
even $\Hscb^{1,1}\to L^2$) is compact. As usual, all these spaces can
be defined for sections of vector bundles, such as
$\Tsc^*_{\Omega_j}X$, by local trivializations. The norm on
$\Hscd^{1,0}(\Omega_j,\Tsc^*\Omega_j)$ is still induced by a gradient
$\nabla$ with respect to any scattering differential operator the same way.

\begin{lemma}\label{lemma:potential-invertible}
The operator on functions $\Delta_{\digamma,s}=\delta^s_\digamma d^s_\digamma$,
considered as a map $\Hscd^{1,0}\to (\Hscd^{1,0})^*=\Hscb^{-1,0}$ is
invertible for all $\digamma>0$.

On the other hand, there exists $\digamma_0>0$ such that for
$\digamma\geq\digamma_0$, the operator
$\Delta_{\digamma,s}=\delta^s_\digamma d^s_\digamma$ on one forms is
invertible.
\end{lemma}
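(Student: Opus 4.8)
The plan is to realize $\Delta_{\digamma,s}=\delta^s_\digamma d^s_\digamma$ as the operator associated, via the $\Hscd^{1,0}$--$\Hscb^{-1,0}$ duality, to the Hermitian sesquilinear form
$$
B_\digamma(u,v)=\langle d^s_\digamma u,\ d^s_\digamma v\rangle_{L^2_\scl(\Omega_j)},
$$
and to apply the Lax--Milgram theorem. This is legitimate: $\delta^s_\digamma$ is the $L^2_\scl$-adjoint of $d^s_\digamma$ (since $\delta^s=(d^s)^*$ in $L^2_\scl$ and $e^{\pm\digamma/x}$ are self-adjoint multipliers), so $B_\digamma$ represents $\Delta_{\digamma,s}$, and $\Hscd^{1,0}(\Omega_j)^*=\Hscb^{-1,0}(\Omega_j)$. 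Thus it suffices to prove that $B_\digamma$ is bounded and coercive on $\Hscd^{1,0}(\Omega_j)$ (scalars in the function case, sections of $\Tsc^*\Omega_j$ for one forms). Boundedness is immediate since $d^s_\digamma\in\Diffsc^{1,0}$ is local and bounded $\Hscd^{1,0}(\Omega_j)\to L^2_\scl(\Omega_j)$.

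The heart of the matter is coercivity, $B_\digamma(u,u)=\|d^s_\digamma u\|^2_{L^2_\scl}\ge c\,\|u\|^2_{\Hscd^{1,0}}$, which I would verify on the dense subspace $\dCI(\Omega_j)$ whose elements vanish to infinite order at $\pa X$ and at $\pa_\inter\Omega_j$, so that all boundary terms in the integrations by parts below drop out, and then extend by density. In the function case $d^s_\digamma=d_\digamma$ and $d_\digamma u=\nabla u-\digamma\,\theta\,u$, with $\nabla$ the $g_\scl$-gradient and $\theta=\tfrac{dx}{x^2}$ a unit scattering covector for $g_\scl$; taking $g_\scl$ of product form $x^{-4}dx^2+x^{-2}h$ near $\pa X$ and integrating by parts in $x$ gives
$$
\|d_\digamma u\|^2_{L^2_\scl}=\|\nabla u\|^2_{L^2_\scl}+\digamma^2\|u\|^2_{L^2_\scl}-\digamma(\dimX-1)\|x^{1/2}u\|^2_{L^2_\scl}.
$$
Since $x$ is small on $\Omega_j$ (the regions are thin shells between $\pa X$ and the interior boundary), the last term is at most $\tfrac12\digamma^2\|u\|^2_{L^2_\scl}$, so $B_\digamma(u,u)\ge\|\nabla u\|^2_{L^2_\scl}+\tfrac12\digamma^2\|u\|^2_{L^2_\scl}$, which controls $\|u\|^2_{\Hscd^{1,0}}$ because that norm is equivalent to $\|\nabla\cdot\|^2_{L^2_\scl}+\|\cdot\|^2_{L^2_\scl}$; this holds for every $\digamma>0$.

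For one forms I would instead start from the decomposition \eqref{eq:symmetric-deriv-exp} of Lemma~\ref{lemma:potential-elliptic}, which gives
$$
B_\digamma(u,u)=\tfrac12\|\nabla_\digamma u\|^2_{L^2_\scl}+\tfrac12\|\delta_\digamma u\|^2_{L^2_\scl}+\langle (A+R)u,u\rangle_{L^2_\scl}.
$$
The same integration by parts applied to $\nabla_\digamma=e^{-\digamma/x}\nabla e^{\digamma/x}$ (with $\nabla$ the $g_\scl$-gradient on one forms) yields $\|\nabla_\digamma u\|^2_{L^2_\scl}\ge\|\nabla u\|^2_{L^2_\scl}+\tfrac12\digamma^2\|u\|^2_{L^2_\scl}$, while $\|\delta_\digamma u\|^2\ge0$. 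Since $A\in\Diffsc^1$ is a fixed, $\digamma$-independent first-order operator, $|\langle Au,u\rangle|\le\ep\|\nabla u\|^2_{L^2_\scl}+C_\ep\|u\|^2_{L^2_\scl}$ for any $\ep>0$, and since $R\in x\Diffsc^1$, $|\langle Ru,u\rangle|\le C(\sup_{\Omega_j}x)\,\|u\|^2_{\Hscd^{1,0}}$. Choosing $\ep$ small, then $\sup_{\Omega_j}x$ small, and finally $\digamma\ge\digamma_0$ with $\tfrac14\digamma_0^2>C_\ep$, the right-hand side is bounded below by $c'(\|\nabla u\|^2_{L^2_\scl}+\|u\|^2_{L^2_\scl})\gtrsim\|u\|^2_{\Hscd^{1,0}}$. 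With boundedness and coercivity in hand, Lax--Milgram produces, for every $\ell\in\Hscb^{-1,0}(\Omega_j)$, a unique $u\in\Hscd^{1,0}(\Omega_j)$ with $\Delta_{\digamma,s}u=\ell$ and $\|u\|_{\Hscd^{1,0}}\le c^{-1}\|\ell\|_{\Hscb^{-1,0}}$, i.e.\ $\Delta_{\digamma,s}$ is invertible.

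The main obstacle is exactly the coercivity in the one-form case: \eqref{eq:symmetric-deriv-exp} isolates a positive, $\digamma$-coercive piece $\tfrac12\nabla_\digamma^*\nabla_\digamma$ together with the fixed first-order error $A$, whose symbol is the possibly non-elliptic $a$-term of \eqref{eq:Delta-s-symbol}; absorbing $A$ into the $\digamma^2$-gain is what forces $\digamma\ge\digamma_0$, whereas on functions $A$ is absent and all $\digamma>0$ work. A secondary, milder point is the negative cross term $-\digamma(\dimX-1)\|x^{1/2}u\|^2_{L^2_\scl}$ produced by the exponential conjugation, which is handled using that the regions $\Omega_j$ are thin in $x$ (equivalently, it can be absorbed into $\digamma^2\|u\|^2$ at the cost of taking $\digamma$ somewhat large, harmless here since $\digamma\ge\digamma_0$ anyway).
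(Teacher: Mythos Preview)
Your proposal is correct and follows essentially the same route as the paper: both reduce invertibility to coercivity of the quadratic form $\|d^s_\digamma u\|^2$ on $\Hscd^{1,0}$, use the identity $\|d_\digamma u\|^2=\|\nabla u\|^2+\digamma^2\|u\|^2-\digamma(n-1)\|x^{1/2}u\|^2$ in the scalar case, and invoke the decomposition \eqref{eq:symmetric-deriv-exp} together with the $\digamma$-independence of $A$ for one-forms. Two small remarks: (i) your ``same integration by parts'' for $\|\nabla_\digamma u\|^2$ on one-forms is indeed literally the same computation, because for the product metric $x^{-4}dx^2+x^{-2}h$ the scattering frame $\frac{dx}{x^2},\frac{dy_j}{x}$ is parallel along $x^2\pa_x$, so no Christoffel corrections appear; (ii) you assume $\sup_{\Omega_j}x$ is small, which suffices for the applications, whereas the paper adds a short extra step (the standard Poincar\'e inequality on $\{x_1\le x\le x_0\}$ using the vanishing at $\pa_\inter\Omega_j$) to remove that hypothesis and prove the lemma as stated.
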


\begin{rem}\label{rem:g-vs-gsc}
The reason for having some $\digamma_0>0$, and requiring $\digamma\geq\digamma_0$, in
the one form case (rather than merely $\digamma>0$) is that $d^s$ is
relative to a standard metric $g$, not a scattering metric. The proof
given below in fact shows that if $d^s$ is replaced by $d^s_{g_\scl}$,
relative to any scattering metric $g_{\scl}$, then one may simply
assume $\digamma>0$.
\end{rem}

\begin{proof}
The following considerations apply to both the function case and the
one-form case.
Relative to the scattering metric with respect to which $\delta^s$ is
defined, the quadratic form of $\Delta_{\digamma,s}$ is
$\langle\Delta_{\digamma,s} u,v\rangle=\langle d^s_\digamma u,
d^s_\digamma v\rangle$. So in particular
$$
\| d^s_\digamma u\|^2_{L^2}\leq \|\Delta_{\digamma,s}
u\|_{\Hscb^{-1,0}}\|u\|_{\Hscd^{1,0}}\leq \ep^{-1}\|\Delta_{\digamma,s}
u\|_{\Hscb^{-1,0}}^2+\ep\| u\|_{\Hscd^{1,0}}^2.
$$
Correspondingly, if one has an estimate
\begin{equation}\label{eq:invertible-est}
\| u\|_{\Hscd^{1,0}}\leq C\|d^s_\digamma u\|_{L^2},
\end{equation}
or equivalently (for a different $C$)
$$
\|\nabla u\|_{L^2}+\|u\|_{L^2}\leq C\|d^s_\digamma u\|_{L^2},
$$
then for small $\ep>0$, one can absorb $\ep\| u\|_{\Hscd^{1,0}}^2$
into the left hand side above, giving
$$
\|u\|_{\Hscd^{1,0}}\leq C\|d^s_\digamma u\|_{L^2}\leq C' \|\Delta_{\digamma,s}
u\|_{\Hscb^{-1,0}}.
$$
This in turn gives
invertibility in the sense
discussed in the statement of the theorem since $\Delta_{\digamma,s}$
is formally (and as this shows, actually) self-adjoint, so one has the
same estimates for the formal adjoint.

On the other hand, if one has an estimate
\begin{equation}\label{eq:Fredholm-est}
\| u\|_{\Hscd^{1,0}}\leq C\|d^s_\digamma u\|_{L^2}+C\|u\|_{\Hscd^{0,-1}},
\end{equation}
or equivalently
$$
\|\nabla u\|_{L^2}+\|u\|_{L^2}\leq C\|d^s_\digamma u\|_{L^2}+C\|u\|_{\Hscd^{0,-1}},
$$
then for $\ep>0$ small one gets
$$
\|u\|_{\Hscd^{1,0}}\leq C\|d^s_\digamma u\|_{L^2}+C\|u\|_{\Hscd^{0,1}}\leq C' \|\Delta_{\digamma,s}
u\|_{\Hscb^{-1,0}}+C'\|u\|_{\Hscd^{0,-1}}.
$$
Again, by formal self-adjointness, one gets the same statement for the
adjoint, which implies that
$\Delta_{\digamma,s}$  is Fredholm (by virtue of the compactness of the inclusion $\Hscd^{1,0}\to\Hscd^{0,-1}$), and further that the invertibility is
equivalent to the lack of kernel on $\Hscd^{1,0}$ (since the cokernel
statement follows by formal self-adjointness). Note that \eqref{eq:Fredholm-est}
follows quite easily from Lemma~\ref{lemma:potential-elliptic} (and is
standard on functions as $d^s=\nabla$ then), in the form case using
the Dirichlet boundary condition to apply
\eqref{eq:symmetric-deriv-exp} to $u$ and pair with $u$ but we discuss
invertibility, taking advantage of
Lemma~\ref{lemma:potential-elliptic} later.

Now, on functions, $d^s=\nabla$, and as $d^s_\digamma$ differs from
$d^s$ by a $0$th order operator, $\|\nabla u\|_{L^2}\leq C\|d^s_\digamma
u\|_{L^2}+C\|u\|_{L^2}$ automatically. In particular,
\eqref{eq:invertible-est} follows if one shows $\|u\|_{L^2}\leq C\|d^s_\digamma
u\|_{L^2}$ for $u\in\Hscd^{1,0}$, or equivalently (by density) for
$u\in\CI_c(\Omega_j)$, which is a Poincar\'e inequality.

To prove this Poincar\'e inequality, notice that $\|e^{-\digamma/x}(x^2 D_x)e^{\digamma/x}
u\|_{L^2}\leq C\|d^s_\digamma
u\|_{L^2}$ certainly, so it suffices to estimate $\|u\|_{L^2}$ in
terms of the $L^2$ norm of
$$
e^{-\digamma/x}(x^2 D_x)e^{\digamma/x}
u=(x^2D_x+i\digamma)u.
$$
But for any operator $P$, writing
$P_R=(P+P^*)/2$ and $P_I=(P-P^*)/(2i)$ for the symmetric and
skew-symmetric parts,
$$
\|Pu\|^2=\|P_R u\|^2+\|P_I u\|^2+\langle i[P_R,P_I]u,u\rangle.
$$
It is convenient here to use a metric
$\frac{dx^2}{x^4}+\frac{h}{x^2}$ where $h$ is a metric, independent of
$x$, on the level sets of $x$, using some product decomposition. For
then the metric density is $x^{-(n+1)}\,|dx|\,|dh|$, so with
$P=x^2D_x+i\digamma$,
$P^*=x^2D_x+i(n-1)x-i\digamma$, so
$$
P_R=x^2D_x+i\frac{n-1}{2} x,\ P_I=\digamma-\frac{n-1}{2} x,\ i[P_R,P_I]=i
x^2\frac{n-1}{2},
$$
we have
\begin{equation*}\begin{aligned}
\|(x^2
D_x+i\digamma)u\|^2_{L^2}&=\Big\|\Big(x^2D_x+i\frac{n-1}{2}\Big)u\Big\|^2_{L^2}\\
&\qquad\qquad+\Big\|\Big(\digamma-\frac{n-1}{2}
x\Big) u\Big\|^2_{L^2}-\frac{n-1}{2}\langle x^2 u,u\rangle\\
&=\Big\|\Big(x^2D_x+i\frac{n-1}{2}\Big)u\Big\|^2_{L^2}\\
&\qquad\qquad+\Big\langle \Big((\digamma-\frac{n-1}{2}
x)^2-\frac{n-1}{2} x^2\Big)u,u\Big\rangle.
\end{aligned}\end{equation*}
Now, if $\Omega_j\subset\{x\leq x_0\}$, as long as $x_0>0$ is
sufficiently small so that
$$
\Big(\digamma-\frac{n-1}{2}
x\Big)^2-\frac{n-1}{2} x^2
$$
is positive (and thus bounded below by a
positive constant) on $[0,x_0]$, which is automatic for sufficiently
small $x_0$, or indeed for bounded $x_0$ and sufficiently large
$\digamma$, one obtains that
$\|u\|_{L^2}^2\leq C \|(x^2
D_x+i\digamma)u\|^2_{L^2}$, and thus in summary that
$$
\|u\|_{L^2}\leq C\|d^s_\digamma u\|_{L^2},
$$
as desired. This proves the lemma for functions, at least in the case
of sufficiently small $x_0$.

This actually suffices for our
application, but in fact one can do better by noting that in fact even
in general this gives us the estimate
$$
\|u\|_{L^2}\leq C\|d^s_{\digamma} u\|+C\|u\|_{L^2(\{x_1\leq x\leq x_0\})}
$$
for suitable small $x_1>0$. But by the standard Poincar\'e inequality,
using the vanishing at $x=x_0$,
one can estimate the last term in terms of $C'\|d^s_\digamma u\|$,
which gives the general conclusion for functions. Here, to place us
properly in the standard Poincar\'e setting, we note that with
$\phi=e^{\digamma/x} u$, the last required estimate is equivalent to the weighted estimate
$\|e^{-\digamma/x}\phi\|_{L^2(\{x_1\leq x\leq x_0\})}\leq
C\|e^{-\digamma/x} d\phi\|_{L^2(\{x_1\leq x\leq x_0\})}$, and now the
weights are bounded, so can be dropped completely.

It remains to deal with one-forms. For this we use that
\eqref{eq:symmetric-deriv-exp} and \eqref{eq:Delta-symbol} give that
\begin{equation}\label{eq:symmetric-deriv-mod}
\delta^s_\digamma d^s_\digamma=\frac{1}{2}\nabla^*\nabla+\frac{1}{2}\digamma^2
+\frac{1}{2} d_\digamma\delta_\digamma +A+\tilde R,
\end{equation}
where $A\in\Diffsc^1(X)$ is independent of $\digamma$ and $\tilde R\in
x\Diffsc^1(X)$; this follows by rewriting
$\nabla_\digamma^*\nabla_\digamma$ using \eqref{eq:Delta-symbol},
which modifies $R$ in \eqref{eq:symmetric-deriv-exp} to give \eqref{eq:symmetric-deriv-mod}.
Thus, in
fact
\begin{equation}\label{eq:symmetric-deriv-identity}
\|d^s_\digamma u\|^2=\frac{1}{2}\|\nabla 
u\|^2+\frac{1}{2}\digamma^2\|u\|^2+\frac{1}{2}\|\delta^s_\digamma
u\|^2+\langle Au,u\rangle+\langle \tilde Ru,u\rangle.
\end{equation}
Since $A\in\Diffsc^1(X)$, $|\langle Au,u\rangle|\leq
C\|u\|_{\Hscd^{1,0}}\|u\|_{L^2}$, and there is a similar estimate for
the last term.
This gives an estimate, for sufficiently large $\digamma$,
\begin{equation}\label{eq:nabla-in-terms-of-ds}
\|\nabla  u\|^2+\digamma^2\|u\|^2\leq C\|d^s_\digamma u\|^2+C\| x^{1/2} u\|^2,
\end{equation}
with the constant $C$ on the right hand side depending on $\digamma$,
and thus
$$
\langle (1-Cx)u,u\rangle\leq C\|d^s_\digamma u\|^2.
$$
Again, if $x_0$ is sufficiently small, this gives
$$
\|u\|\leq C\|d^s_\digamma u\|,
$$
and thus the invertibility, while if $x_0$ is larger, this still gives
$$
\|u\|_{L^2}\leq C\|d^s_\digamma u\|_{L^2}+C\|u\|_{L^2(\{x_1\leq x\leq x_0\})}.
$$
One can then finish the proof as above, using the standard Poincar\'e
inequality for one forms, see \cite[Section~6,
Equation~(28)]{SU-Duke}.
\end{proof}

A slight modification of the argument gives:

\begin{lemma}\label{lemma:potential-invertible-weight}
The operator on functions $\Delta_{\digamma,s}=\delta^s_\digamma d^s_\digamma$,
considered as a map $\Hscd^{1,r}\to \Hscb^{-1,r}$ is
invertible for all $\digamma>0$ and all $r\in\RR$.

On the other hand, there exists $\digamma_0>0$ such that for
$\digamma\geq\digamma_0$, the operator
$\Delta_{\digamma,s}=\delta^s_\digamma d^s_\digamma$ on one forms is
invertible as a map $\Hscd^{1,r}\to \Hscb^{-1,r}$ for all $r\in\RR$.
\end{lemma}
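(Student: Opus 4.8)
The plan is to reduce to the unweighted statement, Lemma~\ref{lemma:potential-invertible}, by conjugating with the weight $x^r$. Since $\Hscd^{1,r}=x^r\Hscd^{1,0}$ and, by the duality $(\Hscd^{m,l})^*=\Hscb^{-m,-l}$, $\Hscb^{-1,r}=(\Hscd^{1,-r})^*=x^r\Hscb^{-1,0}$, the map $\Delta_{\digamma,s}\colon\Hscd^{1,r}\to\Hscb^{-1,r}$ is invertible if and only if $x^{-r}\Delta_{\digamma,s}x^r\colon\Hscd^{1,0}\to\Hscb^{-1,0}$ is. The first key point is that conjugation by a real power $x^r$ alters a scattering differential operator only by terms of \emph{strictly lower order at $\pa X$}: concretely $x^{-r}(x^2D_x)x^r=x^2D_x-irx$ and $x^{-r}(xD_{y_j})x^r=xD_{y_j}$, because $[x^2\pa_x,x^r]=rx^{r+1}$ carries two powers of $x$ against $x^{r-1}$. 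Hence $x^{-r}d^s_\digamma x^r=d^s_\digamma+rxE$ and $x^{-r}\delta^s_\digamma x^r=\delta^s_\digamma+rxE'$ for smooth bundle maps $E,E'$, so
\[
x^{-r}\Delta_{\digamma,s}x^r=\Delta_{\digamma,s}+\tilde R_r,\qquad \tilde R_r\in x\Diffsc^1,
\]
and in particular the scattering principal symbol (at fiber infinity and at finite points of $\Tsc^*_{\pa X}X$) is unchanged, so Lemma~\ref{lemma:potential-elliptic} applies verbatim to the conjugated operator.

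Next I would rerun the proof of Lemma~\ref{lemma:potential-invertible} for $x^{-r}\Delta_{\digamma,s}x^r$, i.e.\ prove the a priori estimate $\|u\|_{\Hscd^{1,r}}\le C\|\Delta_{\digamma,s}u\|_{\Hscb^{-1,r}}$. On functions the only ingredient is the weighted Poincar\'e inequality $\|v\|_{L^2}\le C\|x^{-r}d^s_\digamma x^r v\|_{L^2}$ for $v\in\CI_c(\Omega_j)$; here the normal factor is $x^{-r}(x^2D_x+i\digamma)x^r=x^2D_x+i(\digamma-rx)$, which has precisely the structure of the operator $P=x^2D_x+i\digamma$ analysed in Lemma~\ref{lemma:potential-invertible}, now with skew-symmetric part $P_I=\digamma-(r+\tfrac{n-1}{2})x$. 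Since $P_I^2-\tfrac{n-1}{2}x^2$ remains bounded below by a positive constant on $[0,x_0]$ for $x_0$ small (or $x_0$ bounded and $\digamma$ large), the same positivity computation closes the estimate when $\Omega_j\subset\{x\le x_0\}$; for larger $x_0$ one localizes to a shell $\{x_1\le x\le x_0\}$, on which the weight is bounded above and below and hence can be dropped, and finishes with the ordinary Poincar\'e inequality using the vanishing at $\pa_\inter\Omega_j$, exactly as in the unweighted proof. This gives the desired estimate, hence invertibility, on functions.

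For one forms I would instead conjugate the identity \eqref{eq:symmetric-deriv-identity}, equivalently \eqref{eq:symmetric-deriv-mod}: computing $x^{-r}\Delta_{\digamma,s}x^r$ term by term, using once more that conjugation by $x^r$ produces only errors vanishing at $\pa X$, one finds it equals $\tfrac12\nabla^*\nabla+\tfrac12\digamma^2+\tfrac12 d_\digamma\delta_\digamma+A+\tilde R_r$ with the \emph{same} $\digamma$- and $r$-independent $A\in\Diffsc^1$ as in \eqref{eq:symmetric-deriv-mod}, the entire $r$-dependence being swept into $\tilde R_r\in x\Diffsc^1$. Pairing with $u$ yields the weighted analogue of \eqref{eq:symmetric-deriv-identity}, and for $\digamma$ large the $\langle Au,u\rangle$ term is absorbed into $\tfrac12\digamma^2\|x^{-r}u\|^2+\tfrac12\|\nabla(x^{-r}u)\|^2$ with a threshold $\digamma_0$ depending only on the ($r$-independent) bound for $A$; the remaining $\langle\tilde R_r u,u\rangle$ is absorbed for $x_0$ small, or otherwise controlled via the shell argument and the one-form Poincar\'e inequality of \cite[Section~6, Equation~(28)]{SU-Duke}, just as in Lemma~\ref{lemma:potential-invertible}. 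Finally, the a priori estimate at weight $r$ gives injectivity and closed range of $\Delta_{\digamma,s}\colon\Hscd^{1,r}\to\Hscb^{-1,r}$, while the same estimate at weight $-r$, applied to its Banach-space adjoint — which is $\Delta_{\digamma,s}\colon\Hscd^{1,-r}\to\Hscb^{-1,-r}$ by formal self-adjointness on $L^2_\scl$ and compatibility of the dual pairings — rules out the cokernel; hence invertibility for every $r\in\RR$. I expect the only real work to be this bookkeeping: checking that each error produced by the $x^r$-conjugation lands in $x\Diffsc^{\bullet}$ of the order at which the $r=0$ argument already absorbs it, and in particular that $A$ survives the conjugation unchanged so that $\digamma_0$ is inherited from the unweighted case.
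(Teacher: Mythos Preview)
Your proposal is correct and follows essentially the same approach as the paper: conjugate by $x^r$ to reduce $\Hscd^{1,r}\to\Hscb^{-1,r}$ to $\Hscd^{1,0}\to\Hscb^{-1,0}$, observe that $x^{-r}\Delta_{\digamma,s}x^r=\Delta_{\digamma,s}+F$ with $F\in x\Diffsc^1$, so that in the one-form case the decomposition \eqref{eq:symmetric-deriv-mod} survives with only $\tilde R$ modified (and $A$ unchanged, giving an $r$-independent $\digamma_0$), and then rerun the proof of Lemma~\ref{lemma:potential-invertible}. The paper's own proof is essentially a two-sentence sketch of exactly this; you have simply written out the function case and the adjoint/surjectivity bookkeeping in more detail than the paper bothers to.
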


\begin{proof}
Since the function case is completely analogous, we consider one forms
to be definite. Also note that (full) elliptic regularity would
automatically give this result if not for $\pa_\inter\Omega_j$.

An isomorphism estimate
$\Delta_{\digamma,s}:\Hscd^{1,r}\to \Hscb^{-1,r}$ is equivalent to an
isomorphism estimate
$x^{-r}\Delta_{\digamma,s}x^r:\Hscd^{1,0}\to \Hscb^{-1,0}$. But the
operator on the left is $\Delta_{\digamma,s}+F$, where $F\in
x\Diffsc^1$. Thus, $x^{-r}\Delta_{\digamma,s}x^r$ is of the form
\eqref{eq:symmetric-deriv-mod}, with only $\tilde R$ changed. The rest
of the proof then immediately goes through.
\end{proof}

Before proceeding with the analysis of the Dirichlet Laplacian, we
first discuss the analogue of Korn's inequality that will be useful
later.

\begin{lemma}\label{lemma:Korn}
Suppose $\Omega_j$ is a domain in $X$ as above. For
$\digamma>0$ and $r\in\RR$,
$$
\|u\|_{\Hscb^{1,r}(\Omega_j)}\leq C(\|x^{-r}d^s_\digamma
u\|_{L^2_\scl(\Omega_j)}+\|u\|_{x^{-r} L^2_\scl(\Omega_j)}).
$$
for one-forms $u\in\Hscb^{1,r}(\Omega_j)$.
\end{lemma}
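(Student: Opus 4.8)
The plan is to reduce to the case $r=0$ and then estimate $u$ region by region via a partition of unity on $\overline{\Omega_j}$: away from the scattering boundary $\pa X$ the estimate is the classical (second) Korn inequality; in a collar of $\pa X$ disjoint from the interior boundary $\pa_\inter\Omega_j$ it comes from the coercive Witten--Laplacian identity of Lemma~\ref{lemma:potential-elliptic}; and near the corner $\pa X\cap\pa_\inter\Omega_j$ --- the one delicate locus --- one extends $u$ across the (smooth) interior boundary in a way that respects the scattering structure and still controls $d^s_\digamma$, reducing to the collar case. For the reduction, conjugating by $x^r$ replaces $d^s_\digamma$ by $x^{-r}d^s_\digamma x^r=d^s_\digamma+E$ with $E\in x\Diffsc^0(X;\Tsc^*X,\Sym^2\Tsc^*X)$ (since $x^{-r}[x^2D_x,x^r]=-irx$ while $x^r$ commutes with the $x\pa_{y_j}$); as $E$ is a harmless lower-order error and $x<1$ near $\pa X$, the weighted statement follows from $r=0$, and one may instead simply carry the weight through as in Lemma~\ref{lemma:potential-invertible-weight}. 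So from now on $r=0$ and we must show $\|u\|_{\Hscb^{1,0}(\Omega_j)}\le C(\|d^s_\digamma u\|_{L^2_\scl(\Omega_j)}+\|u\|_{L^2_\scl(\Omega_j)})$, using $\|u\|_{\Hscb^{1,0}}^2\sim\|\nabla u\|_{L^2}^2+\|u\|_{L^2}^2$ for a scattering gradient $\nabla$. Fix small $x_1>0$ and $1=\phi_0+\phi_1+\phi_2$ with $\supp\phi_1\subset\{x\ge x_1/2\}$, $\supp\phi_0\subset\{x<x_1\}$ at positive distance from $\pa_\inter\Omega_j$, and $\supp\phi_2$ a small neighborhood of the corner; this is possible precisely because $\pa X\cap\pa_\inter\Omega_j$ is exactly the corner.

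On $\{x\ge x_1/2\}\cap\overline{\Omega_j}$, a bounded Lipschitz domain on which $\Hscb^{1,0}=H^1$ with equivalent norms and $e^{\pm\digamma/x}$ is smooth and bounded, the classical second Korn inequality for the symmetric differential $d^s=d^s_g$ (the metric enters only through zeroth-order Christoffel terms) gives $\|\phi_1 u\|_{H^1}\le C(\|d^s(\phi_1 u)\|_{L^2}+\|\phi_1 u\|_{L^2})$; writing $d^s(\phi_1 u)=\phi_1\,d^s u+[d^s,\phi_1]u$ and $d^s u=d^s_\digamma u-(e^{-\digamma/x}[d^s,e^{\digamma/x}])u$, with both commutators zeroth order here, bounds this by $C(\|d^s_\digamma u\|_{L^2}+\|u\|_{L^2})$. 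For $\phi_0 u$: since it is supported away from $\pa_\inter\Omega_j$ and in the region where $\Omega_j$ locally coincides with $X=\{x\ge0\}$, integrating the operator identity \eqref{eq:symmetric-deriv-mod} against $v=\phi_0 u$ produces \eqref{eq:symmetric-deriv-identity} with no boundary terms (the scattering boundary $\pa X$ contributes none); discarding the nonnegative terms $\tfrac12\digamma^2\|v\|^2$ and $\tfrac12\|\delta^s_\digamma v\|^2$ and absorbing $\langle Av,v\rangle,\langle\tilde Rv,v\rangle$ --- with $A\in\Diffsc^1$, $\tilde R\in x\Diffsc^1$ --- into $\epsilon\|\nabla v\|^2+C_\epsilon\|v\|^2$, we obtain $\|v\|_{\Hscb^{1,0}}^2\le C\|d^s_\digamma v\|^2+C\|v\|_{L^2}^2$, valid for every $\digamma>0$; commuting $\phi_0$ past $d^s_\digamma$ (again a zeroth-order error) gives $\|\phi_0 u\|_{\Hscb^{1,0}}\le C(\|d^s_\digamma u\|_{L^2}+\|u\|_{L^2})$.

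It remains to handle $\phi_2 u$ near the corner, where the scattering degeneration $x\to0$ and the interior boundary $\pa_\inter\Omega_j=\{\rho=0\}$ coincide. I would work in coordinates $(x,\rho,y'')$ --- legitimate since $dx$ and $d\rho$ are independent along the corner --- so that locally $\Omega_j=\{x\ge0,\ \rho\ge0\}$ and the scattering vector fields are $x^2\pa_x,\ x\pa_\rho,\ x\pa_{y''}$, and extend $u$ across $\{\rho=0\}$ by a reflection-type (Stein) extension $E$ in $\rho$. Such $E$ leaves $x$ untouched, hence is bounded on $\Hscb^{1,0}$, and since it commutes with $x^2\pa_x$ and $x\pa_{y''}$ and conjugates $x\pa_\rho$ by the reflection, the estimate $\|d^s_\digamma(Eu)\|_{L^2}\le C(\|d^s_\digamma u\|_{L^2}+\|u\|_{L^2})$ reduces to the flat half-space Korn extension --- which is the classical fact underlying the validity of the second Korn inequality on Lipschitz domains. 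After this, $\phi_2\,Eu$ is supported near $\pa X$ and away from any interior boundary, so the boundary-term-free estimate of the previous paragraph applies to it. Summing the three contributions, restricting back to $\Omega_j$, and undoing the conjugation completes the proof.

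I expect the corner step to be the main obstacle: one needs an extension across the interior boundary that is at once adapted to the scattering degeneration at $\pa X$ and controls the symmetric differential $d^s_\digamma$ --- not merely the full scattering gradient that a generic Sobolev extension controls. Away from the corner everything reduces cleanly to the classical Korn inequality on a Lipschitz domain and to the coercive identity \eqref{eq:symmetric-deriv-identity} already in hand from Lemma~\ref{lemma:potential-elliptic}.
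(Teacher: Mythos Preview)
Your proposal is correct and rests on the same two pillars as the paper's proof: the coercive identity \eqref{eq:symmetric-deriv-identity} coming from Lemma~\ref{lemma:potential-elliptic}, and an extension across the interior boundary that controls $d^s_\digamma$ rather than merely the full gradient. The organization differs. The paper does not split into three regions; instead it builds a single extension $E:\Hscb^{1,0}(\Omega_j)\to\Hscd^{1,0}(\tilde\Omega_j)$ to a slightly larger domain (so $Eu$ vanishes at $\pa_\inter\tilde\Omega_j$), shows directly that $\|d^s_\digamma Eu\|_{L^2}+\|Eu\|_{L^2}\le C(\|d^s_\digamma u\|_{L^2}+\|u\|_{L^2})$, and then applies the identity once to $v=Eu$. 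This is a bit cleaner than your three-region partition, since it avoids invoking the classical Korn inequality separately in the interior and avoids commuting cutoffs through $d^s_\digamma$ repeatedly. Where you correctly flag the main obstacle --- the corner extension --- the paper supplies exactly the construction you allude to: in a local chart flattening $\pa_\inter\Omega_j$ to $\{x_n=0\}$ (with the scattering direction among the tangential variables, so it is untouched), the extension is a three-term higher-order reflection $E_1 u=\sum_{k=1}^3 c_k\Phi_k^*u$ with $\Phi_k(x',x_n)=(x',-kx_n)$ and the $c_k$ determined by a Vandermonde system to give $C^1$ matching. The point is that defining the extension via \emph{pullbacks of forms} (rather than componentwise) makes $d^s_{g_0}$ commute with each $\Phi_k^*$ for a translation-invariant metric, so $\|d^s E_1 u\|_{L^2}\le C\|d^s u\|_{L^2}$ in the flat model; the difference $d^s_g-d^s_{g_0}$ and the conjugation by $e^{\digamma/x}$ are zeroth order and go into the $L^2$ term. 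The paper explicitly warns that the na\"ive alternative --- trivializing the bundle and extending components by the standard $H^2$ reflection --- does \emph{not} control $d^s$, which is precisely the danger you anticipated.
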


\begin{proof}
First note that if one lets $\tilde u=x^{-r} u$, then
$\|u\|_{\Hscb^{1,r}(\Omega_j)}$ is equivalent to $\|\tilde
u\|_{\Hscb^{1,0}(\Omega_j)}$, and $\|x^{-r}d^s_\digamma
u\|_{L^2_\scl(\Omega_j)}+\|u\|_{x^{-r} L^2_\scl(\Omega_j)}$ is
equivalent to $\|d^s_\digamma
\tilde u\|_{L^2_\scl(\Omega_j)}+\|\tilde u\|_{L^2_\scl(\Omega_j)}$
since the commutator term through $d^s_\digamma$ can be absorbed into
a sufficiently large multiple of $\|u\|_{x^{-r}
  L^2_\scl(\Omega_j)}=\|\tilde u\|_{L^2_\scl(\Omega_j)}$. Thus, one is
reduced to proving the case $r=0$.

Let $\tilde\Omega_j$ be a domain in $X$ with $\CI$ boundary,
transversal to $\pa X$, containing
$\overline{\Omega_j}$. We claim that there is a continuous extension
map $E:\Hscb^{1,0}(\Omega_j)\to \Hscd^{1,0}(\tilde\Omega_j)$ such that
\begin{equation}\label{eq:E-ds}
\|d^s_\digamma
Eu\|_{L^2_\scl(\tilde\Omega_j)}+\|Eu\|_{L^2_\scl(\tilde\Omega_j)}\leq
C(\|d^s_\digamma
u\|_{L^2_\scl(\Omega_j)}+\|u\|_{L^2_\scl(\Omega_j)}),\qquad u\in\Hscb^{1,0}(\Omega_j),
\end{equation}
i.e.\ $Eu$ is also continuous when on both sides the gradient is
replaced by the symmetric gradient in the definition of an $H^1$-type space.
Once this is proved, the lemma can be shown in the following
manner. By
\eqref{eq:symmetric-deriv-exp} of
Lemma~\ref{lemma:potential-elliptic} any
$v\in\Hscd^{1,0}(\tilde\Omega_j)$, in particular $v=Eu$,
satisfies, for any $\ep>0$,
\begin{equation*}\begin{aligned}
\|\nabla
v\|_{L^2_\scl(\tilde\Omega_j)}^2+\|v\|_{L^2_\scl(\tilde\Omega_j)}^2&\leq
2\|d^s_\digamma
v\|_{L^2_\scl(\tilde\Omega_j)}^2+\|v\|_{L^2_\scl(\tilde\Omega_j)}^2+C\|v\|_{L^2_\scl(\tilde\Omega_j)}\|v\|_{\Hscb^{1,0}(\tilde\Omega_j)}\\
&\leq 2\|d^s_\digamma
v\|_{L^2_\scl(\tilde\Omega_j)}^2+C'\|v\|_{L^2_\scl(\tilde\Omega_j)}^2+\ep\|v\|_{\Hscb^{1,0}(\tilde\Omega_j)}^2
\end{aligned}\end{equation*}
and now for $\ep>0$ small, the last term on the right hand side can be
absorbed into the left hand side. Using this with $v=Eu$, noting that
$E$ is an extension map so
$$
\|\nabla
u\|_{L^2_\scl(\Omega_j)}^2+\|u\|_{L^2_\scl(\Omega_j)}^2\leq \|\nabla E
u\|_{L^2_\scl(\tilde\Omega_j)}^2+\|Eu\|_{L^2_\scl(\tilde\Omega_j)}^2,
$$
we deduce, using \eqref{eq:E-ds} in the last step, that
$$
\|u\|_{\Hscb^{1,0}(\Omega_j)}\leq C(\|d^s_\digamma
Eu\|_{L^2_\scl(\tilde\Omega_j)}+\|Eu\|_{L^2_\scl(\tilde\Omega_j)})\leq C'(\|d^s_\digamma
u\|_{L^2_\scl(\Omega_j)}+\|u\|_{L^2_\scl(\Omega_j)}),
$$
completing the proof of the lemma.

Thus, it remains to construct $E$. By a partition of unity, this can
be reduced to a local extension, local on $X$. Since $\pa\Omega_j$ is
transversal to $\pa X$, near points on $\pa X\cap\pa\Omega_j$ one can
arrange that locally (in a model in which a neighborhood of $p$ is
identified with an open set in $\overline{\RR^n}$) $\pa\Omega_j$ is
the hypersurface $x_n=0$, $\Omega_j$ is $x_n>0$; the analogous
arrangement can also be made away from $\pa X$ near points on
$\pa\Omega_j$. Since $\Hsc^{s,r}(X)$, $\Hscd^{s,r}(\tilde\Omega_j)$,
$\Hscb^{s,r}(\Omega_j)$, are locally, and also for compactly supported
elements in the chart, are preserved by local diffeomorphisms of $X$
to $\overline{\RR^n}$ in the sense that $X$ is replaced by
$\overline{\RR^n}$, $\overline{\Omega_j}$ by $\overline{\RR^n_+}$ (by virtue of these spaces are well defined on
manifolds with boundary, without additional information on metrics,
etc., up to equivalence of norms), it suffices to prove that there is
a local
extension map $E_1$ that has the desired properties.

Let $\Phi_k(x',x_n)=(x',-kx_n)$ for $x_n<0$, and consider a variation
of the standard
construction of an $H^1(\RR^n_+)$ extension map on one-forms as
follows. (Note that the usual extension map is given by trivialization
of a bundle, in this case using $dx_j$ as a local basis of sections,
and extending the coefficients using the extension map on functions.) Let $E_1$ given by
$$
(E_1 \sum_j u_j\,dx_j)(x',x_n)=\sum_{k=1}^3 c_k\Phi_k^* (\sum
u_j\,dx_j),\ x_n<0,
$$
and
$$
(E_1 \sum_j u_j\,dx_j)(x',x_n)=\sum
u_j\,dx_j,\ x_n\geq 0,
$$
with $c_k$ chosen so that $E_1:C^1(\overline{\RR^n_+})\to
C^1(\RR^n)$. We can achieve this mapping property as follows. We have,
with $\pa_j$ acting as derivatives on the components, or equivalently
but invariantly
as Lie derivatives in this case,
\begin{equation*}\begin{aligned}
\Phi_k^*u_j\,dx_j&=u_j(x',-kx_n)\,dx_j,\ j\neq n,\\
\Phi_k^*u_n\,dx_n&=-k
u_n(x',-kx_n)\,dx_n,\\
\pa_i\Phi_k^*u_j\,dx_j&=(\pa_i
u_j)(x',-kx_n)\,dx_j,\ i,j\neq n,\\
\pa_i\Phi_k^*u_n\,dx_n&=-k(\pa_i
u_n)(x',-kx_n)\,dx_n,\ i\neq n,\\
\pa_n\Phi_k^*u_j\,dx_j&=-k(\pa_n
u_j)(x',-kx_n)\,dx_j,\ j\neq n,\\
\pa_n\Phi_k^*u_n\,dx_n&=k^2(\pa_n
u_j)(x',-kx_n)\,dx_n,
\end{aligned}\end{equation*}
so the requirements for matching the derivatives at $x_n=0$, which
gives the $C^1$ property, are, for $j\neq n$,
\begin{equation*}\begin{aligned}
c_1+c_2+c_3&=1,\\
-c_1-2c_2-3c_3&=1,
\end{aligned}\end{equation*}
while for $j=n$
\begin{equation*}\begin{aligned}
-c_1-2c_2-3c_3&=1,\\
c_1+4c_2+9c_3&=1,
\end{aligned}\end{equation*}
which gives a 3-by-3 system
$$
\begin{pmatrix}1&1&1\\-1&-2&-3\\1&4&9\end{pmatrix}\begin{pmatrix}c_1\\c_2\\c_3\end{pmatrix}=\begin{pmatrix}1\\1\\1\end{pmatrix}.
$$
The matrix on the right is a Vandermonde matrix, and is thus
invertible, so one can find $c_k$ with the desired properties.
With this, $E_1:C^1_c(\overline{\RR^n_+})\to C^1_c(\RR^n)$
has the property that $\|E_1 u\|_{H^1(\RR^n)}\leq
C\|u\|_{H^1(\RR^n_+)}$, since each term in the definition of
  $E_1$ has derivatives $\pa_i$ satisfying $\|\pa_i\Phi_k^*u\|_{L^2(\RR^n)}\leq
C\|\pa_i u\|_{L^2(\RR^n_+)}$, and since $E_1 u\in
  C^1_c(\RR^n)$ assures that the distributional derivative satisfies
  $\pa_i E_1 u\in L^2(\RR^n)$, whose square norm can be calculated as
  the sum of the squared norms over $\RR^n_+=\{x_n>0\}$ and  $\RR^n_-=\{x_n<0\}$.
Correspondingly, $E_1$ extends continuously, in a unique manner, to a map
$H^1(\RR^n_+)\to H^1(\RR^n)$.

Before proceeding we note that with this choice of coefficients, $E_1$
defined as the analogous map on functions, is actually the standard
$H^2$ extension map. However, on one-forms the same choice, defined in
terms of pull-backs, i.e.\ natural operations, as above, rather than
trivializing the form bundle, does not extend continuously to
$H^2$. On the other hand, if one trivializes the bundle and uses the
$H^2$ extension map, one does not have the desired property
\eqref{eq:E-ds} for symmetric differentials, a property that we check
below with our choice of extension map.

Notice that, with $\Phi_k^*$ acting on 2-tensors as usual, for all $i,j$,
\begin{equation*}\begin{aligned}
dx_i\otimes(\pa_i \Phi_k^*u_j\,dx_j)+(\pa_j \Phi_k^*u_i\,dx_i)\otimes dx_j&=\Phi_k^*((\pa_i
u_j+\pa_j u_i)dx_i\otimes dx_j),
\end{aligned}\end{equation*}
as follows from a direct calculation, or indeed from the naturality of the symmetric gradient $d^s=d^s_{g_0}$ for
a translation invariant Riemannian metric $g_0$: the
two sides are the $ij$ component of $2d^s \Phi_k^*$, resp.\ $2\Phi_k^*
d^s$, as for such a metric the symmetric gradient is actually
independent of the choice of the metric (in this class). Since, summed
over $i,j$, the
left hand side is the symmetric gradient of $\Phi_k^*\sum u_j\,dx_j$
in $x_n<0$,
while the right hand side is the pull-back of the symmetric gradient
from $x_n>0$, this shows that
$$
\|d^s\Phi_k^* u\|_{L^2(\RR^n_-)}\leq C\|d^s u\|_{L^2(\RR^n_+)}.
$$
This proves that one has
$$
\|d^s E_1 u\|_{L^2(\RR^n)}\leq C\|d^s u\|_{L^2(\RR^n_+)}.
$$

Now, using a partition of unity $\{\rho_k\}$ to localize on $\Omega_j$, as
mentioned above, this gives a global extension map from
$H^1(\Omega_j)$: $\sum \psi_k E_{1,k}\rho_k$, where $\psi_k$ is
identically $1$ near $\supp\rho_k$. While $d^s$ depends on the choice
of a metric, the dependence is via the $0$th order term, i.e.\ one has
$d^s_g u=d^s_{g_0} u+Ru$ for an appropriate $0$th order $R$. Using the
Euclidean metric in the local model, this shows that
$$
\|d^s_g \psi_k E_{1,k} \rho_k  u\|_{L^2(\RR^n)}\leq C(\|d^s_{g_0}
\rho_k u\|_{L^2(\RR^n_+)}+\|\rho_k u\|_{L^2(\RR^n_+)}).
$$
Since $d^s_\digamma$ differs from $d^s$ by a $0$th order term, one can
absorb this in the $L^2$ norm (using also the continuity of the
extension map from $L^2$ to $L^2$):
$$
\|d^s_{g,\digamma}\psi_k E_{1,k} \rho_k  u\|_{L^2(\RR^n)}\leq C(\|d^s_{g_0,\digamma}
\rho_k u\|_{L^2(\RR^n_+)}+\|\rho_k u\|_{L^2(\RR^n_+)}).
$$
Summing over $k$ proves \eqref{eq:E-ds}, and thus the lemma.
\end{proof}

We now return to the analysis of the Dirichlet Laplacian.

\begin{cor}\label{cor:potential-psdo}
Let
$\phi\in\CI_c(\overline{\Omega_j}\setminus\pa_\inter\Omega_j)$. Then
on functions, for $\digamma>0$, $k\in\RR$,
the operator $\phi\Delta_{\digamma,s}^{-1}\phi:\Hscb^{-1,k}\to\Hscd^{1,k}$ is
in $\Psisc^{-2,0}(X)$. There is $\digamma_0>0$ such that the analogous
conclusion holds for one forms for $\digamma\geq\digamma_0$.
\end{cor}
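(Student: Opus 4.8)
\emph{Proof plan.} The plan is to combine the full ellipticity of $\Delta_{\digamma,s}$ in the scattering calculus (Lemma~\ref{lemma:potential-elliptic}) with the weight‑uniform invertibility of its Dirichlet realization (Lemma~\ref{lemma:potential-invertible-weight}), exploiting that $\phi$ is supported away from the interior boundary $\pa_\inter\Omega_j$, so that near $\supp\phi$ the Dirichlet condition is invisible and the Dirichlet inverse agrees with a global parametrix modulo smoothing. Thus the target is $\phi\Delta_{\digamma,s}^{-1}\phi=\phi G\phi+E$ with $G\in\Psisc^{-2,0}(X)$ a parametrix and $E\in\Psisc^{-\infty,-\infty}(X)$; since $\phi G\phi\in\Psisc^{-2,0}(X)$ and the claimed mapping property is the standard action of $\Psisc^{-2,0}(X)$ on scattering Sobolev spaces (the ``dot'' at $\pa_\inter\Omega_j$ being automatic, as the output is supported in $\supp\phi$), this gives the corollary. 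This parallels the scalar argument in \cite{UV:local}; the only genuinely new inputs are Lemmas~\ref{lemma:potential-elliptic}--\ref{lemma:potential-invertible-weight}, and, for one forms, the restriction $\digamma\ge\digamma_0$, which is needed merely to \emph{have} the ellipticity, hence a parametrix, at all.

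First I would use that $\Delta_{\digamma,s}\in\Diffsc^{2,0}(X)$ is (for one forms, when $\digamma\ge\digamma_0$) fully elliptic by Lemma~\ref{lemma:potential-elliptic}, so it has a parametrix $G\in\Psisc^{-2,0}(X)$ with $G\Delta_{\digamma,s}-\Id,\ \Delta_{\digamma,s}G-\Id\in\Psisc^{-\infty,-\infty}(X)$. Next choose nested cutoffs $\phi\prec\psi\prec\tilde\psi$ in $\CI_c(\overline{\Omega_j}\setminus\pa_\inter\Omega_j)$, i.e.\ $\psi\equiv1$ near $\supp\phi$ and $\tilde\psi\equiv1$ near $\supp\psi$; since $\overline{\Omega_j}\setminus\pa_\inter\Omega_j$ is a manifold with boundary (its boundary a piece of $\pa X$), these and the commutator supports $\supp d\tilde\psi$ are compact, hence uniformly separated from $\pa_\inter\Omega_j$ even near the corner $\pa_\inter\Omega_j\cap\pa X$. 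For $g$ in the relevant space write $v=\Delta_{\digamma,s}^{-1}\phi g$ (the Dirichlet realization on $\Omega_j$), a distribution on $X$ supported in $\overline{\Omega_j}$, vanishing at $\pa_\inter\Omega_j$. On $X$ one has
\[
\Delta_{\digamma,s}(\tilde\psi v)=\phi g+[\Delta_{\digamma,s},\tilde\psi]v ;
\]
applying $G$, using $G\Delta_{\digamma,s}=\Id+(G\Delta_{\digamma,s}-\Id)$, and then multiplying by $\psi$ (so $\psi\tilde\psi=\psi$) and by $\phi$ (so $\phi\psi=\phi$) yields
\[
\phi\Delta_{\digamma,s}^{-1}\phi=\phi G\phi+E_1+E_2,\qquad E_1=\phi G[\Delta_{\digamma,s},\tilde\psi]\Delta_{\digamma,s}^{-1}\phi,\qquad E_2=-\phi(G\Delta_{\digamma,s}-\Id)\tilde\psi\Delta_{\digamma,s}^{-1}\phi .
\]

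It then remains to check $E_1,E_2\in\Psisc^{-\infty,-\infty}(X)$. For $E_2$: $G\Delta_{\digamma,s}-\Id$ is residual, while by Lemma~\ref{lemma:potential-invertible-weight} $\Delta_{\digamma,s}^{-1}\phi$ is bounded $\Hscb^{-1,r}\to\Hscd^{1,r}$ for \emph{every} $r\in\RR$, and by localized elliptic regularity up to $\pa X$ away from $\pa_\inter\Omega_j$ (bootstrapping with the parametrix $G$) the localization $\tilde\psi\Delta_{\digamma,s}^{-1}\phi$ maps $\Hsc^{s,r}\to\Hsc^{s+2,r}$ for all $s,r$; composing a residual operator with this — and checking the adjoint likewise — gives a residual operator. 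For $E_1$: $[\Delta_{\digamma,s},\tilde\psi]$ is a first‑order scattering operator with coefficients supported in $\supp d\tilde\psi$, uniformly disjoint from $\supp\phi$, so $\phi G\zeta\in\Psisc^{-\infty,-\infty}(X)$ for a cutoff $\zeta$ equal to $1$ there and disjoint from $\supp\phi$; since by the previous step $[\Delta_{\digamma,s},\tilde\psi]\Delta_{\digamma,s}^{-1}\phi$ maps $\Hsc^{s,r}\to\Hsc^{s+1,r}$ for all $s,r$, the composite $E_1=(\phi G\zeta)\bigl([\Delta_{\digamma,s},\tilde\psi]\Delta_{\digamma,s}^{-1}\phi\bigr)$ and its adjoint send arbitrarily rough spaces into arbitrarily regular and decaying ones, hence $E_1$ is residual. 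Therefore $\phi\Delta_{\digamma,s}^{-1}\phi=\phi G\phi+(\text{residual})\in\Psisc^{-2,0}(X)$. The step I expect to be the main obstacle — really the only delicate point — is ensuring that ``disjoint compact supports'' produces errors in $\Psisc^{-\infty,-\infty}(X)$ rather than merely smooth errors, i.e.\ also the decay at the scattering boundary $\pa X$; this is exactly where the weight‑uniformity of Lemma~\ref{lemma:potential-invertible-weight} (invertibility for all $r$, allowing a bootstrap in the decay order and not just the differential order) and the structure of Melrose's scattering double space enter, and it is also why the one‑form case must assume $\digamma\ge\digamma_0$.
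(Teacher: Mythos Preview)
Your approach is correct and rests on the same two inputs as the paper: full ellipticity of $\Delta_{\digamma,s}$ (Lemma~\ref{lemma:potential-elliptic}) giving a parametrix $G\in\Psisc^{-2,0}(X)$, and weight--uniform invertibility of the Dirichlet realization (Lemma~\ref{lemma:potential-invertible-weight}). The only real difference is how the algebra is arranged. You apply $G$ once to $\Delta_{\digamma,s}(\tilde\psi v)=\phi g+[\Delta_{\digamma,s},\tilde\psi]v$ and obtain error terms $E_1,E_2$ in which $\Delta_{\digamma,s}^{-1}\phi$ sits with a residual (or off--diagonal) factor on only \emph{one} side; you then need the elliptic--regularity bootstrap you describe, together with a duality/adjoint argument, to upgrade $E_1,E_2$ from ``maps $\Hscb^{-1,r}$ to $\dCI$'' to genuine elements of $\Psisc^{-\infty,-\infty}(X)$. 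The paper instead writes a two--sided identity,
\[
\psi\Delta_{\digamma,s}^{-1}\psi=\psi^2 B+B\psi([\psi,\Delta_{\digamma,s}]B-\psi F_R)+(B[\Delta_{\digamma,s},\psi]-F_L\psi)\,\Delta_{\digamma,s}^{-1}\,([\psi,\Delta_{\digamma,s}]B-\psi F_R),
\]
so that after multiplying by $\phi$ on both sides the remaining $\Delta_{\digamma,s}^{-1}$ is already sandwiched between two operators that are themselves in $\Psisc^{-\infty,-\infty}(X)$ (by disjoint supports and residuality of $F_L,F_R$), with output supported away from $\pa_\inter\Omega_j$. Then the \emph{basic} bound $\Delta_{\digamma,s}^{-1}:\Hscb^{-1,k}\to\Hscd^{1,k}$ alone suffices to conclude the sandwiched term maps $\Hsc^{s,r}\to\Hsc^{s',r'}$ for all $s,s',r,r'$, hence has Schwartz kernel in $\dCI(X\times X)$ --- no bootstrap needed. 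Your route works, but the paper's double sandwich is what makes the ``disjoint supports $\Rightarrow$ residual, including decay at $\pa X$'' step (the point you correctly flag as the delicate one) immediate rather than requiring an inductive regularity argument.
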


\begin{proof}
This follows from the usual parametrix identity. Namely, by
Lemma~\ref{lemma:potential-elliptic}, $\Delta_{\digamma,s}$ has a
parametrix $B\in\Psisc^{-2,0}(X)$ so that
$$
B\Delta_{\digamma,s}=\Id+F_L,\ \Delta_{\digamma,s}B=\Id+F_R,
$$
with
$F_L,F_R\in\Psisc^{-\infty,-\infty}(X)$.
Let $\psi\in
\CI_c(\overline{\Omega_j}\setminus\pa_\inter\Omega_j)$ be identically
$1$ on $\supp\phi$. 
Thus,
$$
\psi=B\Delta_{\digamma,s}\psi-F_L\psi=B\psi\Delta_{\digamma,s}+B[\Delta_{\digamma,s},\psi]-F_L\psi
$$
and
$$
\psi=\psi\Delta_{\digamma,s}B-\psi F_R=\Delta_{\digamma,s}\psi
B+[\psi,\Delta_{\digamma,s}]B-\psi F_R.
$$
Then
\begin{equation*}\begin{aligned}
\psi\Delta_{\digamma,s}^{-1}\psi&=\psi\Delta_{\digamma,s}^{-1}\Delta_{\digamma,s}\psi B
+\psi\Delta_{\digamma,s}^{-1}([\psi,\Delta_{\digamma,s}]B-\psi
F_R)\\
&=\psi^2 B+B\psi\Delta_{\digamma,s}\Delta_{\digamma,s}^{-1}([\psi,\Delta_{\digamma,s}]B-\psi
F_R)\\
&\qquad\qquad+(B[\Delta_{\digamma,s},\psi]-F_L\psi)
\Delta_{\digamma,s}^{-1}([\psi,\Delta_{\digamma,s}]B-\psi F_R)\\
&=\psi^2 B+B\psi([\psi,\Delta_{\digamma,s}]B-\psi
F_R)\\
&\qquad\qquad+(B[\Delta_{\digamma,s},\psi]-F_L\psi)
\Delta_{\digamma,s}^{-1}([\psi,\Delta_{\digamma,s}]B-\psi F_R).
\end{aligned}\end{equation*}
Multiplying from both the left and the right by $\phi$ gives
\begin{equation*}\begin{aligned}
\phi\Delta_{\digamma,s}^{-1}\phi&=\phi B\phi +\phi B\psi([\psi,\Delta_{\digamma,s}]B-\psi
F_R)\phi\\
&\qquad\qquad+\phi (B[\Delta_{\digamma,s},\psi]-F_L\psi)
\Delta_{\digamma,s}^{-1}([\psi,\Delta_{\digamma,s}]B-\psi F_R)\phi.
\end{aligned}\end{equation*}
Now, the first two terms on the right hand side are in
$\Psisc^{-2,0}$, resp.\ $\Psisc^{-\infty,-\infty}$, in the latter case
using the disjointness of $\supp d\psi$ and $\phi$ for
$[\psi,\Delta_{\digamma,s}]B\phi$, resp.\ that
$F_L\in\Psisc^{-\infty,-\infty}$ for $\psi
F_R\phi$. For this reason, $([\psi,\Delta_{\digamma,s}]B-\psi
F_R)\phi$ and $\phi (B[\Delta_{\digamma,s},\psi]-F_L\psi)$ are
smoothing, in the sense that they map $\Hsc^{s,r}(X)$ to
$\Hsc^{s',r'}(X)$ for any $s',r',s,r$, and they also have support so
that they map into functions supported in
$\overline{\Omega_j}\setminus\pa_\inter\Omega_j$, and they also can be
applied to
functions on $\overline{\Omega_j}$. As
$\Delta_{\digamma,s}^{-1}$ is continuous
$\Hscb^{-1,k}(\Omega_j)\to\Hscd^{1,k}(\Omega_j)$, this shows that the
last term is continuous from $\Hsc^{s,r}(X)$ to
$\Hsc^{s',r'}(X)$ for any $s',r',s,r$, which means that it has a
Schwartz (rapidly decaying with all derivatives) Schwartz kernel,
i.e.\ it is in $\Psisc^{-\infty,-\infty}(X)$. This completes the proof.
\end{proof}

\begin{cor}\label{cor:potential-loc}
Let $\phi\in\CI_c(\overline{\Omega_j}\setminus\pa_\inter\Omega_j)$,
$\chi\in\CI(\overline{\Omega_j})$ with disjoint support and with
$\chi$ constant near $\pa_{\inter}\Omega_j$. Let $\digamma$,
$\digamma_0$ as in Corollary~\ref{cor:potential-psdo}. Then the
operator $\chi\Delta_{\digamma,s}^{-1}\phi:\Hscb^{-1,k}(\Omega_j)\to\Hscd^{1,k}(\Omega_j)$
in fact maps $\Hsc^{s,r}(X)\to\Hscd^{1,k}(\Omega_j)$ for all $s,r,k$.

Similarly,
$\phi\Delta_{\digamma,s}^{-1}\chi:\Hscb^{-1,k}(\Omega_j)\to\Hscd^{1,k}(\Omega_j)$
in fact maps $\Hscb^{-1,k}(\Omega_j)\to\Hsc^{s,r}(X)$ for all $s,r,k$.
\end{cor}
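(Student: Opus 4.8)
The plan is to rerun the parametrix computation from the proof of Corollary~\ref{cor:potential-psdo}, this time exploiting the disjointness of $\supp\chi$ and $\supp\phi$ instead of placing the same cutoff on both sides. Fix $B\in\Psisc^{-2,0}(X)$ a two-sided parametrix for $\Delta_{\digamma,s}$, so $B\Delta_{\digamma,s}=\Id+F_L$ and $\Delta_{\digamma,s}B=\Id+F_R$ with $F_L,F_R\in\Psisc^{-\infty,-\infty}(X)$, and pick $\psi\in\CI_c(\overline{\Omega_j}\setminus\pa_\inter\Omega_j)$ identically $1$ on a neighbourhood of $\supp\phi$ with $\supp\psi$ disjoint from $\supp\chi$; such a $\psi$ exists because $\supp\phi$ is a compact subset of $\overline{\Omega_j}\setminus\pa_\inter\Omega_j$ disjoint from $\supp\chi$. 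From $\psi=\Delta_{\digamma,s}\psi B+[\psi,\Delta_{\digamma,s}]B-\psi F_R$ and the invertibility of the Dirichlet realization (Lemma~\ref{lemma:potential-invertible-weight}) one obtains $\Delta_{\digamma,s}^{-1}\psi=\psi B+\Delta_{\digamma,s}^{-1}G_0$ with $G_0=[\psi,\Delta_{\digamma,s}]B-\psi F_R$; composing on the right with $\phi$ (using $\psi\phi=\phi$) and on the left with $\chi$ (using $\chi\psi=0$) gives
\[
\chi\,\Delta_{\digamma,s}^{-1}\,\phi=\chi\,\Delta_{\digamma,s}^{-1}(G_0\phi).
\]

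The next step is to check that $G_0\phi=[\psi,\Delta_{\digamma,s}]B\phi-\psi F_R\phi$ is smoothing with well-controlled range. Exactly as in Corollary~\ref{cor:potential-psdo}, $[\psi,\Delta_{\digamma,s}]B\phi\in\Psisc^{-\infty,-\infty}(X)$ by pseudolocality of $B$ and the disjointness of $\supp d\psi$ and $\supp\phi$, while $\psi F_R\phi\in\Psisc^{-\infty,-\infty}(X)$ since $F_R$ already is; moreover the range of $G_0\phi$ is contained in a fixed compact subset of $\overline{\Omega_j}\setminus\pa_\inter\Omega_j$, because the coefficients of $[\psi,\Delta_{\digamma,s}]$ are supported in $\supp d\psi$ and $\psi F_R$ has range in $\supp\psi$. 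Hence $G_0\phi$ maps $\Hsc^{s,r}(X)$ into $\Hsc^{\infty,\infty}(X)$, whose restriction to $\Omega_j$ lies in $\Hscb^{-1,k}(\Omega_j)$, for every $s,r,k$. Since $\Delta_{\digamma,s}^{-1}:\Hscb^{-1,k}(\Omega_j)\to\Hscd^{1,k}(\Omega_j)$ is bounded and multiplication by $\chi\in\CI(\overline{\Omega_j})$ preserves $\Hscd^{1,k}(\Omega_j)$ (here $\chi$ being constant near $\pa_\inter\Omega_j$ is convenient), the displayed identity shows that $\chi\Delta_{\digamma,s}^{-1}\phi$ maps $\Hsc^{s,r}(X)\to\Hscd^{1,k}(\Omega_j)$ for all $s,r,k$. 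The argument is identical in the function case for all $\digamma>0$ and on one-forms for $\digamma\geq\digamma_0$, with $\digamma_0$ as in Corollary~\ref{cor:potential-psdo}.

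For the second statement I would argue by duality. Since $\Delta_{\digamma,s}$, hence $\Delta_{\digamma,s}^{-1}$, is formally self-adjoint with respect to the scattering $L^2$ inner product, the adjoint of $\phi\Delta_{\digamma,s}^{-1}\chi$ is $\bar\chi\,\Delta_{\digamma,s}^{-1}\,\bar\phi$, and $\bar\chi,\bar\phi$ satisfy the same support and boundary hypotheses as $\chi,\phi$. By the first part, $\bar\chi\Delta_{\digamma,s}^{-1}\bar\phi$ maps $\Hsc^{s,r}(X)\to\Hscd^{1,k}(\Omega_j)$ for all $s,r,k$; taking Banach-space adjoints and using the pairing $\Hscd^{1,k}(\Omega_j)^*=\Hscb^{-1,-k}(\Omega_j)$ gives that $\phi\Delta_{\digamma,s}^{-1}\chi$ maps $\Hscb^{-1,-k}(\Omega_j)\to\Hsc^{-s,-r}(X)$ for all $s,r,k$, which, as $s,r,k$ are arbitrary, is the assertion.

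I expect the only genuinely delicate point to be the bookkeeping of supports against function spaces: $\Delta_{\digamma,s}^{-1}$ is only available as a bounded operator between Sobolev spaces on $\Omega_j$ (not on the closed manifold $X$), so one must be sure that the error term coming out of the parametrix identity is both smoothing and has range supported away from $\pa_\inter\Omega_j$ before composing it with $\Delta_{\digamma,s}^{-1}$ --- and this is exactly what the choice of $\psi$ and the pseudolocality of $B$ deliver. Everything else is routine.
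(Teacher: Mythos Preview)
Your argument is correct and rests on the same underlying mechanism as the paper's: a parametrix $B$ for $\Delta_{\digamma,s}$, pseudolocality, and the fact that the smoothing error lands in a compact set away from $\pa_\inter\Omega_j$, so that one may safely feed it back into the Dirichlet inverse $\Delta_{\digamma,s}^{-1}:\Hscb^{-1,k}(\Omega_j)\to\Hscd^{1,k}(\Omega_j)$.

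The organization differs slightly. The paper does not introduce an auxiliary cutoff $\psi$; instead it uses $\chi\phi=0$ to write
\[
\chi\,\Delta_{\digamma,s}^{-1}\,\phi=[\chi,\Delta_{\digamma,s}^{-1}]\,\phi=\Delta_{\digamma,s}^{-1}\,[\Delta_{\digamma,s},\chi]\,\Delta_{\digamma,s}^{-1}\,\phi,
\]
and then invokes Corollary~\ref{cor:potential-psdo} (applied with a single cutoff equal to $1$ on $\supp d\chi\cup\supp\phi$) to see that $[\Delta_{\digamma,s},\chi]\,\Delta_{\digamma,s}^{-1}\,\phi\in\Psisc^{-1,0}(X)$, hence in $\Psisc^{-\infty,-\infty}(X)$ by the disjointness of $\supp d\chi$ and $\supp\phi$. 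This is a little slicker because it recycles the previous corollary wholesale, whereas you effectively reprove the relevant piece of it by hand. Conversely, your version is more self-contained and makes the role of the support geometry completely explicit, which is a virtue in its own right. Both proofs handle the second statement by duality.
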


\begin{proof}
Since the second statement follows by duality, it suffices to prove
the first.

As $\chi\phi=0$, we can write
$$
\chi\Delta_{\digamma,s}^{-1}\phi=[\chi,\Delta_{\digamma,s}^{-1}]\phi=\Delta_{\digamma,s}^{-1}[\Delta_{\digamma,s},\chi]\Delta_{\digamma,s}^{-1}\phi.
$$
By Corollary~\ref{cor:potential-psdo},
$[\Delta_{\digamma,s},\chi]\Delta_{\digamma,s}^{-1}\phi\in\Psisc^{-\infty,\infty}(X)$
since it is in $\Psisc^{-1,0}(X)$ (this uses $\supp d\chi$ disjoint
from $\pa_{\inter}\Omega_j$) but $d\chi$ and $\phi$ have disjoint
supports. Thus, it maps $\Hsc^{s,r}(X)\to\Hsc^{-1,k}(X)$, and thus, in
view of $\supp d\chi$, to $\Hscb^{-1,k}(\Omega_j)$, giving the conclusion.
\end{proof}

\begin{cor}\label{cor:solenoidal}
Let $\phi\in\CI_c(\overline{\Omega_j}\setminus\pa_\inter\Omega_j)$,
$\chi\in\CI(\overline{\Omega_j})$ with disjoint support and with
$\chi$ constant near $\pa_{\inter}\Omega_j$. Let $\digamma$,
$\digamma_0$ as in Corollary~\ref{cor:potential-psdo}.

Then $\phi\cS_{\digamma,\Omega_j}\phi\in\Psisc^{0,0}(X)$, while
$\chi\cS_{\digamma,\Omega_j}\phi:\Hsc^{s,r}(X)\to x^k L^2_\scl(\Omega_j)$ and
$\phi\cS_{\digamma,\Omega_j}\chi:x^k L^2_\scl(\Omega_j)\to \Hsc^{s,r}(X)$ for
all $s,r,k$.
\end{cor}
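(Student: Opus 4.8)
The plan is to unwind the definition $\cS_{\digamma,\Omega_j}=\Id-d^s_\digamma\Delta_{\digamma,s,\Omega_j}^{-1}\delta^s_\digamma$, where $\Delta_{\digamma,s,\Omega_j}^{-1}$ carries the Dirichlet condition at $\pa_\inter\Omega_j$, and reduce everything to Corollaries~\ref{cor:potential-psdo} and~\ref{cor:potential-loc} by inserting auxiliary cutoffs. For the first claim I would write
\[
\phi\cS_{\digamma,\Omega_j}\phi=\phi^2-\phi d^s_\digamma\Delta_{\digamma,s,\Omega_j}^{-1}\delta^s_\digamma\phi,
\]
and first record that $d^s_\digamma,\delta^s_\digamma\in\Diffsc^{1,0}$ — conjugation by $e^{\digamma/x}$ preserves $\Diffsc^{1,0}$ since $e^{-\digamma/x}x^2D_xe^{\digamma/x}=x^2D_x+i\digamma$ — so these operators are support-nonincreasing with Schwartz kernels on the diagonal. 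Choosing $\psi\in\CI_c(\overline{\Omega_j}\setminus\pa_\inter\Omega_j)$ with $\psi\equiv1$ near $\supp\phi$, I can then replace $\phi d^s_\digamma$ by $\phi d^s_\digamma\psi$ and $\delta^s_\digamma\phi$ by $\psi\delta^s_\digamma\phi$ as operators (on $\supp\phi$ one has $\psi\equiv1$ and $d\psi=0$), giving $\phi\cS_{\digamma,\Omega_j}\phi=\phi^2-(\phi d^s_\digamma)(\psi\Delta_{\digamma,s,\Omega_j}^{-1}\psi)(\delta^s_\digamma\phi)$. By Corollary~\ref{cor:potential-psdo} the middle factor is in $\Psisc^{-2,0}(X)$, while $\phi^2\in\Psisc^{0,0}(X)$ and $\phi d^s_\digamma,\delta^s_\digamma\phi\in\Psisc^{1,0}(X)$, so the composition lies in $\Psisc^{1,0}\Psisc^{-2,0}\Psisc^{1,0}\subset\Psisc^{0,0}(X)$, as claimed.

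For the second claim, $\chi\phi=0$ reduces $\chi\cS_{\digamma,\Omega_j}\phi$ to $-\chi d^s_\digamma\Delta_{\digamma,s,\Omega_j}^{-1}\delta^s_\digamma\phi$. I would pick $\psi\in\CI_c(\overline{\Omega_j}\setminus\pa_\inter\Omega_j)$ with $\psi\equiv1$ near $\supp\phi$ and $\supp\psi$ disjoint from $\supp\chi$ (possible since $\supp\phi$ is compact and disjoint from $\supp\chi$), and $\tilde\chi\in\CI(\overline{\Omega_j})$, constant near $\pa_\inter\Omega_j$, with $\tilde\chi\equiv1$ near $\supp\chi$ and $\supp\tilde\chi$ disjoint from $\supp\psi$. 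As above $\delta^s_\digamma\phi=\psi\delta^s_\digamma\phi$ and $\chi d^s_\digamma=\chi d^s_\digamma\tilde\chi$, so $\chi\cS_{\digamma,\Omega_j}\phi=-(\chi d^s_\digamma)(\tilde\chi\Delta_{\digamma,s,\Omega_j}^{-1}\psi)(\delta^s_\digamma\phi)$. The middle factor is covered by Corollary~\ref{cor:potential-loc} (its two cutoffs have disjoint supports and $\tilde\chi$ is constant near $\pa_\inter\Omega_j$): $\tilde\chi\Delta_{\digamma,s,\Omega_j}^{-1}\psi$ maps $\Hsc^{s',r'}(X)\to\Hscd^{1,k}(\Omega_j)$ for all $s',r',k$. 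Precomposing with $\delta^s_\digamma\phi\colon\Hsc^{s,r}(X)\to\Hsc^{s-1,r}(X)$ and postcomposing with $\chi d^s_\digamma\colon\Hscd^{1,k}(\Omega_j)\to x^kL^2_\scl(\Omega_j)$ gives the assertion.

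The third claim I would get either by duality — by Lemma~\ref{lemma:potential-invertible} the Dirichlet Laplacian $\Delta_{\digamma,s,\Omega_j}$ is self-adjoint and $\delta^s_\digamma=(d^s_\digamma)^*$ relative to $g_\scl$, so $\cS_{\digamma,\Omega_j}$ is self-adjoint and $(\phi\cS_{\digamma,\Omega_j}\chi)^*=\chi\cS_{\digamma,\Omega_j}\phi$, whose mapping property was just established — or, equivalently, by the same cutoff argument using the second half of Corollary~\ref{cor:potential-loc}: $\phi\cS_{\digamma,\Omega_j}\chi=-(\phi d^s_\digamma)(\psi\Delta_{\digamma,s,\Omega_j}^{-1}\tilde\chi)(\delta^s_\digamma\chi)$ (here $\delta^s_\digamma\chi=\tilde\chi\delta^s_\digamma\chi$ since the range of $\delta^s_\digamma\chi$ is supported in $\supp\chi$), with $\delta^s_\digamma\chi\colon x^kL^2_\scl(\Omega_j)\to\Hscb^{-1,k}(\Omega_j)$, $\psi\Delta_{\digamma,s,\Omega_j}^{-1}\tilde\chi\colon\Hscb^{-1,k}(\Omega_j)\to\Hsc^{s',r'}(X)$ for all $s',r',k$, and $\phi d^s_\digamma\colon\Hsc^{s',r'}(X)\to\Hsc^{s'-1,r'}(X)$.

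There is no deep difficulty to overcome here: the whole argument is cutoff bookkeeping layered on Corollaries~\ref{cor:potential-psdo} and~\ref{cor:potential-loc}. The one point I would be careful about is verifying that the intermediate distributions $\delta^s_\digamma\phi u$ and $\delta^s_\digamma\chi u$ genuinely lie in the extendible space $\Hscb^{-1,k}(\Omega_j)$ on which $\Delta_{\digamma,s,\Omega_j}^{-1}$ is defined together with its Dirichlet condition; this is where $\chi$ being only constant (not necessarily vanishing) near $\pa_\inter\Omega_j$ could in principle cause trouble, but it does not, since $\delta^s_\digamma\phi u$ and $\delta^s_\digamma\chi u$ are supported in $\supp\phi$, resp.\ $\supp\chi$, hence away from $\pa_\inter\Omega_j$; and one should likewise note at the end that $\chi d^s_\digamma$ applied to a supported $\Hscd^{1,k}$-element, while not vanishing at $\pa_\inter\Omega_j$, still lands in $x^kL^2_\scl(\Omega_j)$, which is exactly what is claimed.
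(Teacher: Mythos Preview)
Your proposal is correct and follows exactly the approach the paper takes: the paper's proof is the one-line observation that the claim is immediate from $\cS_{\digamma,\Omega_j}=\Id-d^s_\digamma\Delta_{\digamma,s,\Omega_j}^{-1}\delta^s_\digamma$ together with Corollaries~\ref{cor:potential-psdo} and~\ref{cor:potential-loc}, using that $d^s_\digamma,\delta^s_\digamma$ are differential operators and hence support-preserving; you have simply written this out in full with the auxiliary cutoffs made explicit. One small slip in your final paragraph: $\supp\chi$ need \emph{not} be disjoint from $\pa_\inter\Omega_j$ (only $d\chi$ vanishes there), but this is harmless since $\delta^s_\digamma(\chi u)\in\Hscb^{-1,k}(\Omega_j)$ for $u\in x^kL^2_\scl(\Omega_j)$ regardless of support, and your duality argument for the third claim avoids the issue entirely.
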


\begin{proof}
This is immediate from
$\cS_{\digamma,\Omega_j}=\Id-d^s_\digamma\Delta_{\digamma,s,\Omega_j}^{-1}\delta^s_\digamma$
and the above results concerning $\Delta_{\digamma,s,\Omega_j}^{-1}$,
using that $d^s_\digamma$ and $\delta^s_\digamma$ are differential
operators, and thus preserve supports.
\end{proof}

We also need the Poisson operator associated to 
$\pa_{\inter}\Omega_j$. First note that if $H$ is a (codimension $1$) hypersurface in
$\overline{\Omega_j}$ which intersects $\pa\Omega_j$ away from
$\pa_\inter\Omega_j$, and does so transversally, then the restriction
map
$$
\gamma_H:\dCI(\overline{\Omega_j})\to\dCI(H),
$$
with the dots denoting infinite order vanishing at $\pa\Omega_j$,
resp.\ $\pa H$, as usual, in fact maps, for $s>1/2$,
\begin{equation}\label{eq:restrict-Sobolev}
\gamma_H:\Hsc^{s,r}(\Omega_j)\to\Hsc^{s-1/2,r}(H)
\end{equation}
continuously.
This can be easily seen since the restriction map is local, and
locally in $\overline{\Omega_j}$, one can map a neighborhood of $p\in
\pa H$ to a neighborhood of a point $p'\in\pa\RR^{n-1}$ in
$\overline{\RR^n}$ by a diffeomorphism so that $H$ is mapped to
$\overline{\RR^{n-1}}$, and thus by the diffeomorphism invariance of
the spaces under discussion, the standard $\RR^n$ result with the
usual Sobolev spaces $H^s(\RR^n)=\Hsc^{s,0}(\overline{\RR^n})$, using
that weights commute with the restriction, gives
\eqref{eq:restrict-Sobolev}.
The same argument also shows that there is a continuous extension map
\begin{equation}\label{eq:extend-Sobolev}
e_H:\Hsc^{s-1/2,r}(H)\to\Hsc^{s,r}(\Omega_j),\qquad \gamma_H e_H=\Id,
\end{equation}
since the analogous result on $\RR^n$ is standard, and one can
localize by multiplying by cutoffs without destroying the desired
properties.

Considering $\overline{\Omega_j}$ inside a larger domain
$\overline{\Omega'}$, with $\pa_\inter\Omega_j$ satisfying the
assumptions for $H$, we have a continuous extension map
$\Hscb^{s,r}(\Omega_j)\to\Hscb^{s,r}(\Omega')$ by local reduction to
$\overline{\RR^n}$. Correspondingly, we also obtain restriction and extension maps
$$
\gamma_{\pa_\inter\Omega_j}:\Hscb^{s,r}(\Omega_j)\to\Hsc^{s-1/2,r}(\pa_\inter\Omega_j),\qquad e_{\pa_\inter\Omega_j}:\Hsc^{s-1/2,r}(\pa_\inter\Omega_j)\to \Hscb^{s,r}(\Omega_j).
$$
With this background we have:

\begin{lemma}\label{lemma:Poisson}
Let $\digamma$,
$\digamma_0$ as in Corollary~\ref{cor:potential-psdo}, and let $k\in\RR$.

For $\psi\in \Hsc^{1/2,k}(\pa_{\inter}\Omega_j)$ there is a unique
$u\in\Hscb^{1,k}(\Omega_j)$ such that $\Delta_{\digamma,s}u=0$,
$\gamma_{\pa_{\inter}\Omega_j} u=\psi$.

This defines the Poisson operator
$B_{\Omega_j}:\Hsc^{1/2,k}(\pa_{\inter}\Omega_j)\to\Hscb^{1,k}(\Omega_j)$
solving
$$
\Delta_{\digamma,s} B_{\Omega_j}=0,\ \gamma_{\pa_{\inter}\Omega_j} B_{\Omega_j}=\Id,
$$
which has the property that, for $s>1/2$, and for
$\phi\in\CI(\overline{\Omega_j})$ supported away from $\pa_\inter\Omega_j$,
$\phi B_{\Omega_j}:\Hsc^{s-1/2,r}(\pa_\inter\Omega_j)\to\Hsc^{s,r}(\Omega_j)$.
\end{lemma}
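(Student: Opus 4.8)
\textit{Proof plan.} The idea is to reduce the Dirichlet problem for $\Delta_{\digamma,s}$ to the inhomogeneous equation already solved in Lemma~\ref{lemma:potential-invertible-weight}. Fix $k\in\RR$ and $\psi\in\Hsc^{1/2,k}(\pa_\inter\Omega_j)$. Using the extension map $e_{\pa_\inter\Omega_j}$ introduced just before the lemma, produce $w=e_{\pa_\inter\Omega_j}\psi\in\Hscb^{1,k}(\Omega_j)$ with $\gamma_{\pa_\inter\Omega_j}w=\psi$; multiplying $w$ by a cutoff that is identically $1$ near $\pa_\inter\Omega_j$ and supported away from $\supp\phi$ we may moreover assume $\phi w=0$ without changing the trace. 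Since $\Delta_{\digamma,s}\in\Diffsc^{2,0}$ we have $\Delta_{\digamma,s}w\in\Hscb^{-1,k}(\Omega_j)$, so Lemma~\ref{lemma:potential-invertible-weight} (with $\digamma\geq\digamma_0$ in the one--form case) lets us set $v=\Delta_{\digamma,s,\Omega_j}^{-1}\Delta_{\digamma,s}w\in\Hscd^{1,k}(\Omega_j)$ and $u=w-v$. Then $\Delta_{\digamma,s}u=0$, and as $v\in\Hscd^{1,k}$ has vanishing trace, $\gamma_{\pa_\inter\Omega_j}u=\psi$. Uniqueness follows from the same invertibility: the difference of two solutions in $\Hscb^{1,k}(\Omega_j)$ has zero trace at $\pa_\inter\Omega_j$, hence lies in $\Hscd^{1,k}(\Omega_j)$, and is killed by $\Delta_{\digamma,s}$, so it vanishes. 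Thus $B_{\Omega_j}=(\Id-\Delta_{\digamma,s,\Omega_j}^{-1}\Delta_{\digamma,s})e_{\pa_\inter\Omega_j}$ is well defined and continuous $\Hsc^{1/2,k}(\pa_\inter\Omega_j)\to\Hscb^{1,k}(\Omega_j)$, solving $\Delta_{\digamma,s}B_{\Omega_j}=0$, $\gamma_{\pa_\inter\Omega_j}B_{\Omega_j}=\Id$.

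For the mapping property with $\phi$ supported away from $\pa_\inter\Omega_j$, I would argue by elliptic regularity and density. Pick $\psi_1\in\CI(\overline{\Omega_j})$, identically $1$ near $\supp\phi$, supported away from $\pa_\inter\Omega_j$, with $\supp d\psi_1$ disjoint from $\supp\phi$, and a cutoff $\chi'\in\CI_c(\overline{\Omega_j}\setminus\pa_\inter\Omega_j)$ with $\chi'\equiv 1$ near $\supp d\psi_1$ and $\supp\chi'\cap\supp\phi=\emptyset$. For $\psi\in\dCI(\pa_\inter\Omega_j)$ and $u=B_{\Omega_j}\psi$, the extension of $\psi_1u$ by zero lies in $\Hsc^{1,k}(X)$ (as $\psi_1$ vanishes near $\pa_\inter\Omega_j$) and, since $\Delta_{\digamma,s}u=0$ on $\Omega_j$, satisfies $\Delta_{\digamma,s}(\psi_1u)=[\Delta_{\digamma,s},\psi_1]u$, a first order operator applied to $u$ supported in $\supp d\psi_1$. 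With $B\in\Psisc^{-2,0}(X)$ a parametrix as in Lemma~\ref{lemma:potential-elliptic} ($B\Delta_{\digamma,s}=\Id+F_L$, $F_L\in\Psisc^{-\infty,-\infty}$), multiplying the parametrix identity by $\phi$ gives
$$
\phi u=\phi B\chi'[\Delta_{\digamma,s},\psi_1]u-\phi F_L\psi_1 u ,
$$
and, exactly as in Corollary~\ref{cor:potential-psdo}, $\phi B\chi'\in\Psisc^{-\infty,-\infty}(X)$ by disjointness of supports while $\phi F_L\psi_1\in\Psisc^{-\infty,-\infty}(X)$; hence $\phi u\in\Hsc^{\infty,\infty}$ with $\|\phi u\|_{\Hsc^{s,r}}\leq C_{s,r}\|u\|_{\Hscb^{1,k}(\Omega_j)}\leq C_{s,r}'\|\psi\|_{\Hsc^{1/2,k}(\pa_\inter\Omega_j)}$. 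For $s\geq 1$, taking $k=r$ and using $\Hsc^{s-1/2,r}(\pa_\inter\Omega_j)\hookrightarrow\Hsc^{1/2,r}(\pa_\inter\Omega_j)$, this yields the claim after extending $\phi B_{\Omega_j}$ from $\dCI$ by density. For $1/2<s<1$ one instead repeats the reduction of the first paragraph with $w=e_{\pa_\inter\Omega_j}\psi\in\Hscb^{s,r}(\Omega_j)$, chosen supported in a collar of $\pa_\inter\Omega_j$ disjoint from $\supp\phi$; bootstrapping Lemma~\ref{lemma:potential-invertible-weight} up and down the Sobolev scale (classical elliptic regularity up to the smooth hypersurface $\pa_\inter\Omega_j$, full scattering ellipticity up to $\pa X$, and duality together with self-adjointness) gives invertibility of $\Delta_{\digamma,s}:\Hscd^{s,r}(\Omega_j)\to\Hscb^{s-2,r}(\Omega_j)$, so $v=\Delta_{\digamma,s,\Omega_j}^{-1}\Delta_{\digamma,s}w\in\Hscd^{s,r}(\Omega_j)$, $u=w-v$, and $\phi u=-\phi v\in\Hsc^{s,r}$ with $\|\phi u\|_{\Hsc^{s,r}}\leq C\|v\|_{\Hscd^{s,r}}\leq C\|\psi\|_{\Hsc^{s-1/2,r}}$; the uniqueness statement, now in $\Hscb^{s,r}(\Omega_j)$, identifies this $u$ with $B_{\Omega_j}\psi$.

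The genuinely hard analytic input — invertibility of the solenoidal Witten Laplacian with Dirichlet data (Lemma~\ref{lemma:potential-invertible-weight}) and the scattering parametrix (Lemma~\ref{lemma:potential-elliptic}) — is already available, so the main obstacle here is the careful treatment of the corner $\pa X\cap\pa_\inter\Omega_j$, where the ``true'' Dirichlet boundary $\pa_\inter\Omega_j$ meets the scattering boundary at infinity. This is precisely why the regularity conclusion is stated only for $\phi$ supported away from $\pa_\inter\Omega_j$: there the estimate for $u$ is a purely microlocal scattering-elliptic statement, with no boundary condition entering and the corner harmlessly avoided, whereas near $\pa_\inter\Omega_j$ one would have to confront possible corner regularity loss (and the bootstrap $\Delta_{\digamma,s}:\Hscd^{s,r}\to\Hscb^{s-2,r}$ used in the $1/2<s<1$ case would itself require corner analysis). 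A secondary, purely organizational, point is keeping track of which Sobolev orders are reached directly from Lemma~\ref{lemma:potential-invertible-weight} versus through the regularity/duality bootstrap, and checking that the two descriptions of $B_{\Omega_j}$ (on $\Hsc^{1/2,k}$ and on rougher trace spaces) are consistent, which follows from the uniqueness statement at the appropriate regularity.
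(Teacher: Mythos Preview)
Your existence and uniqueness argument is exactly the paper's: set $u=e_{\pa_\inter\Omega_j}\psi-\Delta_{\digamma,s}^{-1}\Delta_{\digamma,s}e_{\pa_\inter\Omega_j}\psi$ and invoke Lemma~\ref{lemma:potential-invertible-weight}. For the mapping property, the paper's route is shorter: having arranged $w=e_{\pa_\inter\Omega_j}\psi$ to be supported away from $\supp\phi$, one has $\phi u=-\phi\Delta_{\digamma,s}^{-1}\chi\,\Delta_{\digamma,s}w$ with $\chi\equiv 1$ near $\pa_\inter\Omega_j$ and disjoint from $\supp\phi$, and then Corollary~\ref{cor:potential-loc} directly gives that $\phi\Delta_{\digamma,s}^{-1}\chi$ maps $\Hscb^{-1,k}\to\Hsc^{s',r'}$ for all $s',r',k$. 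Your parametrix--commutator computation $\phi u=\phi B\chi'[\Delta_{\digamma,s},\psi_1]u-\phi F_L\psi_1 u$ is correct and reaches the same conclusion, but it is essentially an inline reproof of the relevant piece of Corollary~\ref{cor:potential-loc}; there is no need to redo this.

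Where you diverge from the paper is the case split. Both the paper's argument and your parametrix argument yield the smoothing statement $\phi B_{\Omega_j}:\Hsc^{1/2,k}(\pa_\inter\Omega_j)\to\Hsc^{s',r'}(\Omega_j)$ for all $s',r'$, which immediately gives the claimed bound for $s\geq 1$ via the inclusion $\Hsc^{s-1/2,r}\hookrightarrow\Hsc^{1/2,r}$. For $1/2<s<1$ you propose bootstrapping the Dirichlet inverse to $\Delta_{\digamma,s}:\Hscd^{s,r}\to\Hscb^{s-2,r}$; this is plausible but is not established in the paper (Lemma~\ref{lemma:potential-invertible-weight} only gives $\Hscd^{1,r}\to\Hscb^{-1,r}$), and the appeal to ``duality together with self-adjointness'' does not by itself lower the Sobolev order. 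The paper's proof is equally terse on this range and in fact only the case $\psi\in\Hsc^{1/2,k}$ (indeed $\psi\in\dCI$) is ever used downstream, so this is a cosmetic issue rather than a substantive gap in your argument.
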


\begin{proof}
The uniqueness follows from the unique solvability of the
Dirichlet problem with vanishing boundary conditions, as we already
discussed, while the existence by taking
$u=e_{\pa_\inter\Omega_j}\psi-\Delta_{\digamma,s}^{-1}\Delta_{\digamma,s}e_{\pa_\inter\Omega_j}\psi$,
where $\Delta_{\digamma,s}^{-1}$ is, as before, the inverse of the
operator with vanishing Dirichlet boundary conditions.
The mapping property also follows from this explicit description, the
mapping properties of $e_{\pa_\inter\Omega_j}$ as well as
Corollary~\ref{cor:potential-loc}, since one can arrange that
$e_{\pa_\inter\Omega_j}$ maps to distributions supported away from $\supp\phi$.
\end{proof}

Let $\Omega_2$ be a larger neighborhood of $\Omega$; all of our
constructions take place in $\Omega_2$. Let
$\tilde\Omega_j=\overline{\Omega_j}\setminus\pa_{\inter}\Omega_j$ (so the
artificial boundary is included, but not the interior one).
Let $G$ be a parametrix for $A_\digamma$ in $\Omega_2$; it is thus a
scattering ps.d.o. with Schwartz kernel compactly supported in
$\tilde\Omega_2\times\tilde\Omega_2$. Then $GA_\digamma=I+E$, where
$\WFsc'(E)$ is disjoint from a neighborhood $\Omega_1$ (compactly
contained in $\Omega_2$) of the original region $\Omega$, and
$E=-\Id$ near $\pa_{\inter}\Omega_2$.
Now one has
$$
G(N_\digamma+d^s_\digamma M\delta^s_\digamma)=I+E,
$$
as operators acting on an appropriate function space on $\Omega_2$.
We now
apply $\cS_{\digamma,\Omega_2}$ from both sides. Then
$$
N_\digamma \cS_{\digamma,\Omega_2}=N_\digamma,
$$
since
$$
N_\digamma \cP_{\digamma,\Omega_2}=N_\digamma d^s_{\digamma}
Q_{\digamma,\Omega_2}=0,
$$
in view of the vanishing boundary condition $Q_{\digamma,\Omega_2}$
imposes. On the other hand,
$$
\delta^s_\digamma \cS_{\digamma,\Omega_2}=\delta^s_\digamma-\delta^s_\digamma d^s_{\digamma} Q_{\digamma,\Omega_2}=0
$$
so
$$
\cS_{\digamma,\Omega_2} GN_\digamma=\cS_{\digamma,\Omega_2}+\cS_{\digamma,\Omega_2} E \cS_{\digamma,\Omega_2}.
$$
In order to think of this as giving operators on $\Omega_1$, let
$e_{12}$ be the extension map from $\Omega_1$ to $\Omega_2$,
extending functions (vector fields) as $0$, and $r_{21}$ be the restriction map. (Note
that $e_{12}$ correspondingly maps into a relatively low regularity
space, such as $L^2$, even if one starts with high regularity data.)
Then, with the understanding that $N_\digamma=N_\digamma e_{12}$,
$$
r_{21}\cS_{\digamma,\Omega_2} GN_\digamma=
r_{21}\cS_{\digamma,\Omega_2}e_{12}+K_1,\qquad K_1=r_{21}\cS_{\digamma,\Omega_2} E
\cS_{\digamma,\Omega_2} e_{12}.
$$
We have:

\begin{lemma}\label{lemma:K1}
Let $\digamma$,
$\digamma_0$ as in Corollary~\ref{cor:potential-psdo}.

The operator $K_1=r_{21}\cS_{\digamma,\Omega_2} E
\cS_{\digamma,\Omega_2} e_{12}$ is a smoothing operator in the sense
that it maps $x^kL^2_\scl(\Omega_1)$ to $\Hscb^{s,r}(\Omega_1)$ for every
$s,r,k$. Further, for $\psi\in\CI(\overline{\Omega_2})$ with support in
$\Omega_1$, $\psi K_1\psi\in\Psisc^{-\infty,-\infty}(X)$.

Further, for any $s,r,k$, given $\ep>0$ there exists $\delta>0$ such that if $e_{\delta 1}$ is the
extension map (by $0$) from $\Omega^\delta=\{x\leq\delta\}\cap\Omega_1$ to $\Omega_1$, then
$\|K_1e_{\delta 1}\|_{\cL(x^kL^2_\scl(\Omega^\delta),\Hscb^{s,r}(\Omega_1))}<\ep$.
\end{lemma}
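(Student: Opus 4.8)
The plan is to expand the solenoidal projection as $\cS_{\digamma,\Omega_2}=\Id-\cP_{\digamma,\Omega_2}$ with $\cP_{\digamma,\Omega_2}=d^s_\digamma\Delta_{\digamma,s,\Omega_2}^{-1}\delta^s_\digamma$, so that
\[
\cS_{\digamma,\Omega_2}E\cS_{\digamma,\Omega_2}=E-\cP_{\digamma,\Omega_2}E-E\cP_{\digamma,\Omega_2}+\cP_{\digamma,\Omega_2}E\cP_{\digamma,\Omega_2},
\]
and to handle the four corresponding contributions to $K_1=r_{21}(\,\cdot\,)e_{12}$ one at a time, using three inputs. First, since $A_\digamma$ is elliptic near $\Omega$ I would fix an open set $U$ with $\overline{\Omega_1}\subset U\Subset\Omega_2$, $U$ disjoint from $\pa_\inter\Omega_2$, over which $\WFsc'(E)$ is empty; this gives $\phi E\phi'\in\Psisc^{-\infty,-\infty}(X)$ for $\phi,\phi'$ supported in $U$, and --- more useful here --- that the kernel of $r_{21}E$ on $\overline{\Omega_1}\times X$, and that of $Ee_{12}$ on $X\times\overline{\Omega_1}$, are smooth and vanish to infinite order at $\pa X$, since in each case the diagonal is met only over $U$. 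Second, $E=-\Id$ near $\pa_\inter\Omega_2$, while $e_{12}$ maps into functions supported in $\overline{\Omega_1}$ and hence away from $\pa_\inter\Omega_2$. Third, I would use the mapping properties of $\Delta_{\digamma,s,\Omega_2}^{-1}$, $\cP_{\digamma,\Omega_2}$ and $\cS_{\digamma,\Omega_2}$ recorded in Corollaries~\ref{cor:potential-psdo}--\ref{cor:solenoidal}, together with interior (and, at $\pa X$, full) elliptic regularity for $\Delta_{\digamma,s,\Omega_2}$, available from its ellipticity in Lemma~\ref{lemma:potential-elliptic}; in their weighted forms (Lemma~\ref{lemma:potential-invertible-weight}) all of these hold in every weight, which makes the argument uniform over $k\in\RR$.

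For the first assertion I would argue termwise. The term $r_{21}Ee_{12}$ has a smooth, $\pa X$-Schwartz kernel on $\overline{\Omega_1}\times\overline{\Omega_1}$, so it maps $x^kL^2_\scl(\Omega_1)\to\Hscb^{s,r}(\Omega_1)$ for all $s,r,k$. For $r_{21}\cP_{\digamma,\Omega_2}Ee_{12}$: by the first two inputs, $Ee_{12}$ carries $x^kL^2_\scl(\Omega_1)$ into smooth functions that are Schwartz at $\pa X$ and vanish near $\pa_\inter\Omega_2$; applying $\delta^s_\digamma$, then $\Delta_{\digamma,s,\Omega_2}^{-1}$ (whose Dirichlet solution is smooth up to $\pa_\inter\Omega_2$ since the data vanishes there, and fully elliptically regular at $\pa X$), then $d^s_\digamma$ and $r_{21}$ stays within such functions, hence gives a map into $\Hscb^{s,r}(\Omega_1)$ for all $s,r$. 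For $r_{21}E\cP_{\digamma,\Omega_2}e_{12}$: $\cP_{\digamma,\Omega_2}e_{12}$ maps $x^kL^2_\scl(\Omega_1)$ into $x^{k'}L^2_\scl(\Omega_2)$ for every $k'$ (two derivatives gained from $\Delta_{\digamma,s,\Omega_2}^{-1}$ against one lost by each of $\delta^s_\digamma,d^s_\digamma$, with arbitrary weight gain), and $r_{21}E$ then has a smooth $\pa X$-Schwartz kernel on $\overline{\Omega_1}\times X$, so the composition again lands in $\Hscb^{s,r}(\Omega_1)$. Finally, for $r_{21}\cP_{\digamma,\Omega_2}E\cP_{\digamma,\Omega_2}e_{12}$: $\cP_{\digamma,\Omega_2}e_{12}$ lands in weighted $L^2_\scl(\Omega_2)$, $E$ sends this to a function that is smooth and $\pa X$-Schwartz over $\overline{\Omega_1}$, hence so is $\delta^s_\digamma$ of it there, hence by interior elliptic regularity for $\Delta_{\digamma,s,\Omega_2}$ (valid over $\overline{\Omega_1}$ because $\overline{\Omega_1}$ is disjoint from $\pa_\inter\Omega_2$) and full elliptic regularity at $\pa X$ so is $\Delta_{\digamma,s,\Omega_2}^{-1}$ of it over $\overline{\Omega_1}$, and $d^s_\digamma$ followed by $r_{21}$ preserves this. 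Summing the four terms, $K_1:x^kL^2_\scl(\Omega_1)\to\Hscb^{s,r}(\Omega_1)$ for all $s,r,k$.

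The second assertion I would obtain from the first by duality. Since $\delta^s_\digamma=(d^s_\digamma)^*$ relative to $g_\scl$, the operator $\Delta_{\digamma,s,\Omega_2}$, and hence $\Delta_{\digamma,s,\Omega_2}^{-1}$ and $\cS_{\digamma,\Omega_2}$, is formally self-adjoint, and $E^*$ has $\WFsc'(E^*)=\WFsc'(E)$ and equals $-\Id$ near $\pa_\inter\Omega_2$; thus $K_1^*=r_{21}\cS_{\digamma,\Omega_2}E^*\cS_{\digamma,\Omega_2}e_{12}$ has the same structure as $K_1$, hence the same mapping property. A standard argument (an operator both of whose forward and adjoint actions gain arbitrary regularity and decay has a smooth, rapidly decaying Schwartz kernel) then shows that for $\psi\in\CI(\overline{\Omega_2})$ supported in $\Omega_1$ one has $\psi K_1\psi\in\Psisc^{-\infty,-\infty}(X)$. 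For the last, quantitative, assertion: by the first part $K_1$ and $K_1^*$ both map $L^2_\scl(\Omega_1)$ into $\Hscb^{s,r}(\Omega_1)$ for all $s,r$, so $K_1$ has a kernel $\kappa$ on $\overline{\Omega_1}\times\overline{\Omega_1}$ that is smooth and vanishes to infinite order at $\pa X$ in both variables; consequently, for any fixed $s,r,k$, the kernel of $K_1e_{\delta1}$ --- namely $\kappa$ restricted to $\{x'\le\delta\}$ in the source variable --- is square integrable relative to the weights of $x^kL^2_\scl(\Omega^\delta)$ and $\Hscb^{s,r}(\Omega_1)$, with square Hilbert--Schmidt norm $\int_{x'\le\delta}|\kappa(z,z')|^2 w(z,z')\,dz\,dz'$ for the appropriate weight $w$, which tends to $0$ as $\delta\to0$ by dominated convergence. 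Bounding the operator norm of $K_1e_{\delta1}$ by this Hilbert--Schmidt norm and choosing $\delta$ accordingly gives the claim.

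The step I expect to be the main obstacle is the $\cP_{\digamma,\Omega_2}E\cP_{\digamma,\Omega_2}$ term of the first assertion. Unlike in the $\cP_{\digamma,\Omega_2}E$ term, $E\cP_{\digamma,\Omega_2}e_{12}u$ need not vanish near $\pa_\inter\Omega_2$ and is only weighted $L^2_\scl$ there, so Dirichlet boundary regularity of $\Delta_{\digamma,s,\Omega_2}^{-1}$ is unavailable. What saves the argument is that the answer is only ever evaluated on $\overline{\Omega_1}$, which is compactly contained in $\overline{\Omega_2}\setminus\pa_\inter\Omega_2$, so interior microlocal elliptic regularity for $\Delta_{\digamma,s,\Omega_2}$ together with the microlocal triviality of $E$ over $\overline{\Omega_1}$ suffices, and the uncontrolled behaviour near $\pa_\inter\Omega_2$ never propagates back into $\overline{\Omega_1}$. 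Making the cutoff-and-weight bookkeeping clean enough to carry this through rigorously, and to extract the $\Psisc^{-\infty,-\infty}$ statement for $\psi K_1\psi$, is where most of the work lies.
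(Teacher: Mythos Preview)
Your argument is correct, but it diverges from the paper's in several ways worth noting. The paper does not decompose $\cS_{\digamma,\Omega_2}=\Id-\cP_{\digamma,\Omega_2}$; instead it keeps $\cS$ intact and inserts a spatial cutoff $\chi$ on both sides of $E$, chosen with $\chi\equiv 1$ near $\pa_\inter\Omega_2$ and $E=-\Id$ on $\supp\chi$, together with $\phi\equiv 1$ near $\overline{\Omega_1}$, $\supp\phi$ disjoint from both $\supp\chi$ and $\WFsc'(E)$. Writing $T=\phi\cS E\cS\phi$ and splitting via $1=\chi+(1-\chi)$ on each side of $E$, each of the four resulting terms is seen to map $\Hsc^{s',r'}(X)\to\Hsc^{s,r}(X)$ for \emph{all} $s',r',s,r$ directly from Corollary~\ref{cor:solenoidal} (which handles $\phi\cS\chi$ and $\chi\cS\phi$) and the fact that $\phi\cS(1-\chi)E$, $E(1-\chi)\cS\phi\in\Psisc^{-\infty,-\infty}(X)$ by disjointness of $\WFsc'(E)$ and $\supp\phi$. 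This yields $T\in\Psisc^{-\infty,-\infty}(X)$, hence $\psi K_1\psi\in\Psisc^{-\infty,-\infty}(X)$, in one stroke; you need the separate duality step because your first assertion only controls the source at the $L^2$ level. For the smallness claim the paper simply writes $K_1 e_{\delta 1}=r_{21}(Tx^{-1})(xe_{\delta 1})$ and observes that $xe_{\delta 1}:x^kL^2_\scl(\Omega^\delta)\to x^kL^2_\scl(\Omega_1)$ has norm $\leq\delta$ while $Tx^{-1}$ is bounded independently of $\delta$; this is considerably shorter than your Hilbert--Schmidt argument and avoids the bookkeeping needed to realize the $\Hscb^{s,r}$ target via a kernel norm. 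Your approach is more self-contained, essentially reproving the relevant parts of Corollary~\ref{cor:solenoidal}, and your identification of the $\cP E\cP$ term as the delicate one is apt; the paper's decomposition sidesteps that term entirely by never expanding $\cS$.
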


\begin{proof}
This follows from Corollary~\ref{cor:solenoidal}. Indeed, with
$\chi\equiv 1$ near $\pa_\inter\Omega_2$ but with $E=-\Id$ on
$\supp\chi$, and with $\phi\in\CI(\overline{\Omega_2})$ vanishing near
$\supp\chi$, $\supp\phi\cap\WFsc'(E)=\emptyset$, $\phi\equiv 1$ near
$\overline{\Omega_1}$, and with $T$ defined by the first equality,
\begin{equation*}\begin{aligned}
T=\phi \cS_{\digamma,\Omega_2} E
\cS_{\digamma,\Omega_2}\phi=&\phi \cS_{\digamma,\Omega_2}\chi E\chi
\cS_{\digamma,\Omega_2}\phi\\
&+\phi \cS_{\digamma,\Omega_2}(1-\chi) E\chi \cS_{\digamma,\Omega_2}\phi\\
&+\phi \cS_{\digamma,\Omega_2}\chi E(1-\chi) \cS_{\digamma,\Omega_2}\phi\\
&+\phi \cS_{\digamma,\Omega_2}(1-\chi) E(1-\chi) \cS_{\digamma,\Omega_2}\phi.
\end{aligned}\end{equation*}
Now, $E(1-\chi) \cS_{\digamma,\Omega_2}\phi,\phi
\cS_{\digamma,\Omega_2}(1-\chi)E\in\Psisc^{-\infty,-\infty}(X)$ since
they are in $\Psisc^{0,0}(X)$ and $\WFsc'(E)\cap\supp\phi=\emptyset$,
so they are smoothing. In combination with
Corollary~\ref{cor:solenoidal} this gives that
$T:\Hsc^{s',r'}(X)\to\Hsc^{s,r}(X)$ continuously for all $s,r,s',r'$, so composing
with the extension and restriction maps, noting $r_{21}\phi=r_{21}$,
$\phi e_{12}=e_{12}$, proves the first part of the lemma.

To see the smallness claim, note that
$$
K_1 e_{\delta 1}=r_{21}Te_{\delta_1}=r_{21}(Tx^{-1}) (xe_{\delta 2})
$$
$x e_{\delta 2}:x^k L^2_\scl(\Omega^\delta)\to x^k L^2_\scl(\Omega_1)$ has
norm $\leq \sup_{\Omega_\delta} x\leq\delta$, while
$Tx^{-1}:\Hsc^{0,k}(X)\to\Hsc^{s,r}(X)$ is bounded, with bound
independent of $\delta$, and the same is true for
$r_{21}:\Hsc^{s,r}(X)\to\Hscb^{s,r}(\Omega_2)$, completing the proof.
\end{proof}

Now,
\begin{equation*}\begin{aligned}
\cS_{\digamma,\Omega_1}-r_{21}\cS_{\digamma,\Omega_2}e_{12}&=-d^s_{\digamma}Q_{\digamma,\Omega_1}+r_{21}d^s_{\digamma}Q_{\digamma,\Omega_2}e_{12}\\
&=-d^s_{\digamma}Q_{\digamma,\Omega_1}+d^s_{\digamma} r_{21}Q_{\digamma,\Omega_2}e_{12}\\
&=-d^s_{\digamma}(Q_{\digamma,\Omega_1}- r_{21}Q_{\digamma,\Omega_2}e_{12})
\end{aligned}\end{equation*}
and with $\gamma_{\pa_\inter\Omega_1}$ denoting the restriction operator to
$\pa_{\inter}\Omega_1$ as above,
$$
\gamma_{\pa_\inter\Omega_1}(Q_{\digamma,\Omega_1}-
r_{21}Q_{\digamma,\Omega_2}e_{12})=-\gamma_{\pa_\inter\Omega_1}Q_{\digamma,\Omega_2}e_{12},
$$
so
$$
r_{21}\cS_{\digamma,\Omega_2} GN_\digamma=
\cS_{\digamma,\Omega_1}+d^s_\digamma (Q_{\digamma,\Omega_1}- r_{21}Q_{\digamma,\Omega_2}e_{12})+K_1.
$$
Thus, with $B_{\Omega_1}$ being the Poisson operator for
$\Delta_{\digamma,s}$ on $\Omega_1$ as above, 
\begin{equation*}\begin{aligned}
r_{21}\cS_{\digamma,\Omega_2} GN_\digamma=
\cS_{\digamma,\Omega_1}&+d^s_\digamma (Q_{\digamma,\Omega_1}-
r_{21}Q_{\digamma,\Omega_2}e_{12}+B_{\Omega_1}\gamma_{\pa_\inter\Omega_1}Q_{\digamma,\Omega_2}e_{12})\\
&-d^s_\digamma B_{\Omega_1}\gamma_{\pa_\inter\Omega_1}Q_{\digamma,\Omega_2}e_{12}+K_1,
\end{aligned}\end{equation*}
so
$$
\cS_{\digamma,\Omega_1}r_{21}\cS_{\digamma,\Omega_2} GN_\digamma=
\cS_{\digamma,\Omega_1}-\cS_{\digamma,\Omega_1} d^s_\digamma B_{\Omega_1}\gamma_{\pa_\inter\Omega_1}Q_{\digamma,\Omega_2}e_{12}+\cS_{\digamma,\Omega_1}K_1.
$$
Now we consider applying this to vector fields in
$\Omega=\Omega_0$, writing $e_{0j}$ for the extension map to $\Omega_j$. Composing
from the right,
$$
\cS_{\digamma,\Omega_1}r_{21}\cS_{\digamma,\Omega_2} GN_\digamma=
\cS_{\digamma,\Omega_1}e_{01}-\cS_{\digamma,\Omega_1} d^s_\digamma B_{\Omega_1}\gamma_{\pa_\inter\Omega_1}Q_{\digamma,\Omega_2}e_{02}+\cS_{\digamma,\Omega_1}K_1e_{01}.
$$
Now:

\begin{lemma}\label{lemma:K1p}
Let $\digamma$,
$\digamma_0$ as in Corollary~\ref{cor:potential-psdo}. 

The operator $K_1'=\cS_{\digamma,\Omega_1} d^s_\digamma
B_{\Omega_1}\gamma_{\pa_\inter\Omega_1}Q_{\digamma,\Omega_2}e_{02}$ is
smoothing in the sense that for
$\phi\in\CI_c(\overline{\Omega_1}\setminus\pa_\inter\Omega_1)$,
$$
\phi\cS_{\digamma,\Omega_1} d^s_\digamma
B_{\Omega_1}\gamma_{\pa_\inter\Omega_1}Q_{\digamma,\Omega_2}e_{02}:L^2_\scl(\Omega)\to\Hsc^{s,r}(X)
$$
for all $s,r$, and indeed $\phi\cS_{\digamma,\Omega_1} d^s_\digamma
B_{\Omega_1}\gamma_{\pa_\inter\Omega_1}Q_{\digamma,\Omega_2}\phi\in\Psisc^{-\infty,-\infty}(X)$.

Further, for any $s,r,k$, given $\ep>0$ there exists $\delta>0$ such
that if
$$
\Omega\subset\Omega^\delta=\{x\leq\delta\}\cap\Omega_1,
$$
then
$$
\|K'_1\|_{\cL(x^k L^2_\scl(\Omega),\Hscb^{s,r}(\Omega_1))}<\ep.
$$
\end{lemma}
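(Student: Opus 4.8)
The plan has four steps. First I would observe that the solenoidal projection drops out of $K_1'$ entirely. Since $B_{\Omega_1}\psi=e_{\pa_\inter\Omega_1}\psi-\Delta_{\digamma,s,\Omega_1}^{-1}\Delta_{\digamma,s}e_{\pa_\inter\Omega_1}\psi$, one has $\Delta_{\digamma,s}B_{\Omega_1}=0$ as an operator into $\Hscb^{-1,k}(\Omega_1)$ (Lemma~\ref{lemma:Poisson}), hence $\delta^s_\digamma d^s_\digamma B_{\Omega_1}=\Delta_{\digamma,s}B_{\Omega_1}=0$, and with $\cS_{\digamma,\Omega_1}=\Id-d^s_\digamma\Delta_{\digamma,s,\Omega_1}^{-1}\delta^s_\digamma$ this gives $\cS_{\digamma,\Omega_1}d^s_\digamma B_{\Omega_1}=d^s_\digamma B_{\Omega_1}$. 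Thus $K_1'=d^s_\digamma B_{\Omega_1}\,\gamma_{\pa_\inter\Omega_1}Q_{\digamma,\Omega_2}e_{02}$, which removes $\Delta_{\digamma,s,\Omega_1}^{-1}$ from the analysis.

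The heart of the matter is then that $\gamma_{\pa_\inter\Omega_1}Q_{\digamma,\Omega_2}e_{02}$ is smoothing. For $u$ on $\Omega$ put $v=Q_{\digamma,\Omega_2}e_{02}u=\Delta_{\digamma,s,\Omega_2}^{-1}\delta^s_\digamma e_{02}u$; then $\Delta_{\digamma,s}v=\delta^s_\digamma e_{02}u$ is supported in $\overline\Omega\subset\{\rho\ge0\}$, whereas $\pa_\inter\Omega_1$ lies in $\{\rho\le-\ep_1\}$ for some fixed $\ep_1>0$ (being a genuinely interior hypersurface of $\Omega_2$, strictly beyond $\pa M=\{\rho=0\}$), hence at positive distance both from $\overline\Omega$ and from $\pa_\inter\Omega_2$. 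So on a neighborhood $U$ of $\pa_\inter\Omega_1$ in $\overline{\Omega_2}$, disjoint from $\overline\Omega$ and from $\pa_\inter\Omega_2$, $v$ solves the homogeneous equation $\Delta_{\digamma,s}v=0$, where $\Delta_{\digamma,s}$ is fully elliptic (Lemma~\ref{lemma:potential-elliptic}; recall $\digamma\ge\digamma_0$ on one-forms); since $v\in\Hscd^{1,l}(\Omega_2)$ for every weight $l$ by Lemma~\ref{lemma:potential-invertible-weight}, scattering elliptic regularity gives that $v$ lies, near $\pa_\inter\Omega_1$, in $\Hsc^{s',r'}$ for all $s',r'$, and \eqref{eq:restrict-Sobolev} then yields $\gamma_{\pa_\inter\Omega_1}v\in\Hsc^{s,r}(\pa_\inter\Omega_1)$ for all $s,r$, with bounds not depending on how thin $\Omega$ is. In pseudodifferential form — with $e_{02}$ replaced by multiplication by $\phi\in\CI_c(\overline{\Omega_1}\setminus\pa_\inter\Omega_1)$ — this is exactly an instance of Corollary~\ref{cor:potential-loc} applied in $\Omega_2$ (the disjointness needed there being of $\supp\phi$ from $\pa_\inter\Omega_1$ and of $\pa_\inter\Omega_1$ from $\pa_\inter\Omega_2$), so $\gamma_{\pa_\inter\Omega_1}Q_{\digamma,\Omega_2}\phi$ carries a Schwartz kernel.

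With this in hand the two pseudodifferential assertions follow by propagation. Fixing $\phi\in\CI_c(\overline{\Omega_1}\setminus\pa_\inter\Omega_1)$ and $\phi_1$ in the same class with $\phi_1\equiv1$ near $\supp\phi$, and using that $d^s_\digamma$ preserves supports, I would write $\phi K_1'=\phi d^s_\digamma\phi_1 B_{\Omega_1}\gamma_{\pa_\inter\Omega_1}Q_{\digamma,\Omega_2}e_{02}$; by Lemma~\ref{lemma:Poisson} $\phi_1 B_{\Omega_1}\colon\Hsc^{s+1/2,r}(\pa_\inter\Omega_1)\to\Hsc^{s+1,r}(\Omega_1)$, so composing with the previous step and applying $\phi d^s_\digamma$ (losing one order) gives $\phi K_1'\colon L^2_\scl(\Omega)\to\Hsc^{s,r}(X)$ for all $s,r$. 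Running the same argument with $\phi$ also inserted on the right produces an operator mapping $\Hsc^{s',r'}(X)\to\Hsc^{s,r}(X)$ for all orders, with Schwartz kernel supported in $\supp\phi\times\supp\phi$, hence in $\Psisc^{-\infty,-\infty}(X)$. For the remaining claims: since by Step~2 the boundary data $\gamma_{\pa_\inter\Omega_1}Q_{\digamma,\Omega_2}e_{02}u$ lies in $\Hsc^{s,r}(\pa_\inter\Omega_1)$ for all $s,r$, elliptic regularity of the Dirichlet problem for the fully elliptic operator $\Delta_{\digamma,s}$ (the higher-$s$ analogue of Lemma~\ref{lemma:potential-invertible-weight}, via the formula for $B_{\Omega_1}$ and the extension \eqref{eq:extend-Sobolev}) upgrades $B_{\Omega_1}\gamma_{\pa_\inter\Omega_1}Q_{\digamma,\Omega_2}e_{02}$ to a map $x^kL^2_\scl(\Omega)\to\Hscb^{s,r}(\Omega_1)$, whence $K_1'=d^s_\digamma(\cdot)\colon x^kL^2_\scl(\Omega)\to\Hscb^{s,r}(\Omega_1)$ for all $s,r,k$. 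The smallness when $\Omega\subset\Omega^\delta=\{x\le\delta\}\cap\Omega_1$ I would get by factoring $K_1' u=(K_1'x^{-1})(xu)$: multiplication by $x$ has norm $\le\delta$ on $x^kL^2_\scl(\Omega)$, while $K_1'x^{-1}$, obtained by absorbing an extra $x^{-1}$ weight into the source $\delta^s_\digamma e_{02}u$, remains bounded $x^kL^2_\scl(\Omega)\to\Hscb^{s,r}(\Omega_1)$ with bound independent of $\delta$, because $\Delta_{\digamma,s,\Omega_2}^{-1}$ is invertible on all weighted spaces (Lemma~\ref{lemma:potential-invertible-weight}), the regularity step near $\pa_\inter\Omega_1$ improves the weight to $+\infty$, and $B_{\Omega_1}$, $d^s_\digamma$, $\cS_{\digamma,\Omega_1}$ act on the fixed region $\Omega_1$; hence $\|K_1'\|_{\cL(x^kL^2_\scl(\Omega),\Hscb^{s,r}(\Omega_1))}\le C\delta<\ep$ for $\delta$ small.

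The main obstacle I anticipate is the uniformity in Steps~2 and~4: the regularity estimate for $\Delta_{\digamma,s,\Omega_2}^{-1}$ near $\pa_\inter\Omega_1$, and its stability under the $x^{-1}$ conjugation, must not deteriorate as the shell $\Omega$ (and the width $\delta$) shrink. This is precisely what forces the use of the $\delta$-independent gap in $\rho$ between $\overline\Omega\subset\{\rho\ge0\}$ and $\pa_\inter\Omega_1$, together with the all-weights invertibility of Lemma~\ref{lemma:potential-invertible-weight}; once those are in place, the collapse of $\cS_{\digamma,\Omega_1}$ in Step~1 and the appeals to Lemma~\ref{lemma:Poisson}, Corollary~\ref{cor:potential-loc}, and standard elliptic boundary regularity are routine.
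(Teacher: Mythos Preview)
Your proof is correct and follows essentially the same route as the paper: the key smoothing input is the disjointness of $\supp(e_{02}u)\subset\overline\Omega$ from $\pa_\inter\Omega_1$, which makes $\gamma_{\pa_\inter\Omega_1}Q_{\digamma,\Omega_2}e_{02}$ land in $\dCI(\pa_\inter\Omega_1)$, and smallness comes from the same $x$-factoring trick $K_1'=(K_1'x^{-1})(x\,\cdot)$.

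There is one genuine streamlining worth noting. Your Step~1 observation that $\cS_{\digamma,\Omega_1}d^s_\digamma B_{\Omega_1}=d^s_\digamma B_{\Omega_1}$ (since $\Delta_{\digamma,s}B_{\Omega_1}=0$) eliminates the solenoidal projection entirely from the analysis; the paper instead carries $\cS_{\digamma,\Omega_1}$ through and invokes Corollary~\ref{cor:solenoidal} at the end to control it, which brings $\Delta_{\digamma,s,\Omega_1}^{-1}$ back into the picture unnecessarily. Your route is cleaner. For Step~2 you phrase the smoothing via elliptic regularity and then note the equivalence with Corollary~\ref{cor:potential-loc}; the paper goes directly through Corollary~\ref{cor:potential-psdo} to get $\psi Q_{\digamma,\Omega_2}\phi\in\Psisc^{-\infty,-\infty}(X)$ for disjointly supported $\psi,\phi$, which is the same content. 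The one place where you are slightly more explicit than the paper is in invoking higher-$s$ Dirichlet regularity for $B_{\Omega_1}$ to reach $\Hscb^{s,r}(\Omega_1)$ globally (not just after localization by $\phi$); the paper's proof of the smallness claim tacitly assumes this as well, so you are not introducing anything extra.
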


\begin{proof}
By Corollary~\ref{cor:potential-psdo}, using that $\delta_\digamma^s$ is a
differential operator,
$$
\psi Q_{\digamma,\Omega_2}\phi\in\Psisc^{-\infty,-\infty}(X)
$$
whenever $\psi,\phi\in\CI(\overline{\Omega_2})$ have disjoint
supports, also disjoint from $\pa_\inter\Omega_2$ since this
operator is in $\Psisc^{-1,0}(X)$ directly from the corollary, and
then the disjointness of supports gives the conclusion.
Taking such
$\psi,\phi$, as one may, with $\phi\equiv 1$ near $\overline{\Omega}$,
while $\psi\equiv 1$ near $\pa_{\inter}\Omega_1$, we see that
$\gamma_{\pa_\inter\Omega_1}Q_{\digamma,\Omega_2}e_{02}:x^kL^2_\scl(\Omega)\to
\Hsc^{s,r}(\pa_{\inter}\Omega_1)$ for all $s,r,k$, i.e.\ mapping to $\dCI(\pa_{\inter}\Omega_1)$.
The first part then follows from
$B_{\Omega_1}$ mapping this to $\Hscb^{1,r}(\Omega_1)$ for all $r$,
with the additional
property that $\tilde\phi B_{\Omega_1}$ maps to $\Hsc^{s,r}(\Omega)$
for all $s,r$ if $\tilde\phi$ has properties like $\phi$, and then
Corollary~\ref{cor:solenoidal} completes the argument.

For the smallness, we just need to proceed as in Lemma~\ref{lemma:K1},
writing
$$
\gamma_{\pa_\inter\Omega_1}Q_{\digamma,\Omega_2}e_{02}=\gamma_{\pa_\inter\Omega_1}(\psi
Q_{\digamma,\Omega_2}\phi x^{-1}) (xe_{02}),
$$
where now $\psi
Q_{\digamma,\Omega_2}\phi x^{-1}\in\Psisc^{-\infty,-\infty}(X)$, thus
bounded between all weighted Sobolev spaces, with norm independent of
$\delta$, while $xe_{02}:x^k L^2_\scl(\Omega^\delta)\to x^k L^2_\scl(\Omega_2)$ has norm $\leq\delta$.
\end{proof}

Thus,
\begin{equation}\label{eq:Omega-1-param}
\cS_{\digamma,\Omega_1}r_{21}\cS_{\digamma,\Omega_2} GN_\digamma=
\cS_{\digamma,\Omega_1}e_{01}+K_2,
\end{equation}
with $K_2$ smoothing and small if $\Omega\subset\{x\leq\delta\}$, with
$\delta$ suitably small.
This is exactly Equation~(5.7) of \cite{SU-JAMS},
and from this point on we can follow
the argument of the {\em global} work of
Stefanov and Uhlmann \cite[Section~5]{SU-JAMS}, with the addition of having a {\em small} rather than just
compact error, giving invertibility.

Restricting to $\Omega$ from the left, the key remaining step is to compute
$\cS_{\digamma,\Omega}-r_{10}\cS_{\digamma,\Omega_1}e_{01}$ in terms
of the already existing information. As above,
$$
\cS_{\digamma,\Omega}-r_{10}\cS_{\digamma,\Omega_1}e_{01}
=-d^s_\digamma (Q_{\digamma,\Omega}- r_{10}Q_{\digamma,\Omega_1}e_{01}),
$$
but now we compute $u=(Q_{\digamma,\Omega}-
r_{10}Q_{\digamma,\Omega_1}e_{01})f$ using that it is the solution of
the Dirichlet problem $\Delta_{\digamma,s} u=0$,
$\gamma_{\pa_\inter\Omega}u=-\gamma_{\pa_\inter\Omega}Q_{\digamma,\Omega_1}e_{01}f$,
so
\begin{equation}\label{eq:u-f-Omega-to-Omega-1}
u=-B_\Omega
\gamma_{\pa_\inter\Omega}Q_{\digamma,\Omega_1}e_{01}f,
\end{equation}
and using that one can compute
$\gamma_{\pa_\inter\Omega}Q_{\digamma,\Omega_1}e_{01}f$ from
$d^s_\digamma Q_{\digamma,\Omega_1}e_{01}f$. Concretely, we have the
following lemma on functions:

\begin{lemma}\label{lemma:local-ds-inverse-fn}
Let $\Hscd^{1,0}(\Omega_1\setminus\Omega)$ denote the restriction of
elements of $\Hscd^{1,0}(\Omega_1)$ to $\Omega_1\setminus\overline{\Omega}$
(thus, these need not vanish at $\pa_\inter\Omega$), and let
$\rho_{\Omega_1\setminus\Omega}$ be a defining function of
$\pa_\inter\Omega$ as a boundary of $\Omega_1\setminus\Omega$, i.e.\
it is positive in the latter set.
Suppose that
$\pa_x\rho_{\Omega_1\setminus\Omega}> 0$ at $\pa_\inter\Omega$; note
that this is independent of the choice of
$\rho_{\Omega_1\setminus\Omega}$ satisfying the previous criteria (so this is a statement on $x$ being
increasing as one leaves $\Omega$ at $\pa_\inter\Omega$). Then
on functions, for 
$\digamma>0$, $k\in\RR$,
the map
$$
d^s_\digamma:\Hscd^{1,k}(\Omega_1\setminus\Omega)\to x^kL^2(\Omega_1\setminus\Omega)
$$
is injective, with a continuous left inverse $P_{\Omega_1\setminus\Omega}:x^kL^2(\Omega_1\setminus\Omega)\to \Hscd^{1,k}(\Omega_1\setminus\Omega)$.
\end{lemma}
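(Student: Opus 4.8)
The plan is to reduce the assertion to an a priori estimate and then prove that estimate by the same mechanism used in Lemma~\ref{lemma:potential-invertible}. First, since $d^s_\digamma$ is a differential operator, the existence of a continuous left inverse $P_{\Omega_1\setminus\Omega}$ — which in particular forces injectivity — is equivalent to the bound $\|u\|_{\Hscd^{1,k}(\Omega_1\setminus\Omega)}\le C\|d^s_\digamma u\|_{x^kL^2(\Omega_1\setminus\Omega)}$ for $u\in\Hscd^{1,k}(\Omega_1\setminus\Omega)$; given this bound, $d^s_\digamma$ has closed range and one defines $P_{\Omega_1\setminus\Omega}$ on that range by $d^s_\digamma u\mapsto u$ and extends it by zero on the orthocomplement. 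On functions $d^s_\digamma=d_\digamma=e^{-\digamma/x}de^{\digamma/x}$ differs from the scattering gradient by a zeroth order operator, and the scattering $\Hsc^{1,k}$ norm is equivalent to $\|\nabla u\|_{x^kL^2}^2+\|u\|_{x^kL^2}^2$; hence the desired estimate is equivalent to the Poincar\'e-type inequality $\|u\|_{x^kL^2(\Omega_1\setminus\Omega)}\le C\|d^s_\digamma u\|_{x^kL^2(\Omega_1\setminus\Omega)}$. Conjugating by $x^k$ exactly as in Lemma~\ref{lemma:potential-invertible-weight} only modifies the zeroth order part of $d^s_\digamma$ by a term vanishing at $x=0$, so it suffices to treat $k=0$.

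For the Poincar\'e inequality I would split $\Omega_1\setminus\Omega$ into the part near the artificial boundary, $\{x\le x_0\}$, and the part away from it, $\{x\ge x_0\}$, for a small $x_0>0$. On $\{x\le x_0\}$ I would run the computation of Lemma~\ref{lemma:potential-invertible}: bounding $\|d^s_\digamma u\|^2$ below by a constant times $\|(x^2D_x+i\digamma)u\|^2$, and using, relative to a scattering metric $x^{-4}\,dx^2+x^{-2}h$ with $h$ independent of $x$, the identity $\|Pu\|^2=\|P_Ru\|^2+\|P_Iu\|^2+\langle i[P_R,P_I]u,u\rangle$ with $P=x^2D_x+i\digamma$, one obtains $\|u\|_{L^2(\{x\le x_0\})}^2\le C\langle((\digamma-\tfrac{n-1}{2}x)^2-\tfrac{n-1}{2}x^2)u,u\rangle\le C'\|d^s_\digamma u\|^2$ once $x_0$ is small enough (or $\digamma$ large enough) that the coefficient is positive on $[0,x_0]$. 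The essential feature is that this step uses no boundary condition on $u$, which is precisely what allows $u$ to be unconstrained at $\pa_\inter\Omega$; if $\Omega_1\setminus\Omega$ does not reach $x=0$ this part is vacuous. On $\{x\ge x_0\}$ the weights $e^{\pm\digamma/x}$ are bounded above and below, so with $\phi=e^{\digamma/x}u$ the required bound reduces to the classical estimate $\|\phi\|_{L^2}\le C\|d\phi\|_{L^2}$ on the compact region $\{x\ge x_0\}\cap(\Omega_1\setminus\Omega)$, where $\phi$ vanishes on $\pa_\inter\Omega_1$. Here the hypothesis $\pa_x\rho_{\Omega_1\setminus\Omega}>0$ at $\pa_\inter\Omega$ enters: it says $x$ strictly increases as one exits $\Omega$ across $\pa_\inter\Omega$, so for $x_0$ small $\pa_\inter\Omega$ is disjoint from $\{x\ge x_0\}$, and one checks that $\{x\ge x_0\}\cap(\Omega_1\setminus\Omega)$ is then a Lipschitz domain each of whose connected components meets $\pa_\inter\Omega_1$, so the standard Poincar\'e inequality with partial Dirichlet data on $\pa_\inter\Omega_1$ applies (as recalled, in the harder one-form case, in \cite[Section~6, Equation~(28)]{SU-Duke}, here only in the scalar case). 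Combining the two regional estimates gives the claim.

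The main obstacle I anticipate is the geometric bookkeeping in the region $\{x\ge x_0\}$: one must use the hypothesis on $\pa_x\rho_{\Omega_1\setminus\Omega}$ to confirm that $\{x\ge x_0\}\cap(\Omega_1\setminus\Omega)$ genuinely has the shape making the classical Poincar\'e inequality applicable — connected components each touching $\pa_\inter\Omega_1$, with sufficiently regular boundary — and to patch this together with the weighted near-$\{x=0\}$ estimate across the overlap at $x\approx x_0$, exactly as in the corresponding step of Lemma~\ref{lemma:potential-invertible}. By comparison, the functional-analytic reduction, the passage to $k=0$, and the bound $\|u\|_{\Hscd^{1,k}}\le C(\|d^s_\digamma u\|_{x^kL^2}+\|u\|_{x^kL^2})$ are routine.
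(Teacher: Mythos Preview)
Your overall strategy matches the paper's, but the crucial step is handled incorrectly. You assert that the identity
\[
\|Pu\|^2=\|P_Ru\|^2+\|P_Iu\|^2+\langle i[P_R,P_I]u,u\rangle,\qquad P=x^2D_x+i\digamma,
\]
from Lemma~\ref{lemma:potential-invertible} ``uses no boundary condition on $u$.'' This is false: the identity comes from writing $\|Pu\|^2=\langle P^*Pu,u\rangle$ and similarly for $P_R$, which is an integration by parts. In Lemma~\ref{lemma:potential-invertible} there is no boundary term because $u\in\Hscd^{1,0}(\Omega_j)$ vanishes at $\pa_\inter\Omega_j$. Here $u$ does \emph{not} vanish at $\pa_\inter\Omega$, so a boundary term appears. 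The paper computes it explicitly: writing everything on $\Omega_1$ with the characteristic function $\chi_{\Omega_1\setminus\Omega}$, the extra term is
\[
\langle[P^*,\chi_{\Omega_1\setminus\Omega}]iP_I u,u\rangle
=\big\langle x^2(\pa_x\chi_{\Omega_1\setminus\Omega})\big(\digamma-\tfrac{n-1}{2}x\big)u,u\big\rangle,
\]
and it is here---not in any region-splitting---that the hypothesis $\pa_x\rho_{\Omega_1\setminus\Omega}>0$ enters, giving this boundary contribution a favorable sign so it can be dropped.

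Your attempted use of the hypothesis is also geometrically off. The condition $\pa_x\rho_{\Omega_1\setminus\Omega}>0$ says that $x$ increases as one crosses $\pa_\inter\Omega$ into $\Omega_1\setminus\Omega$; it does \emph{not} say $\pa_\inter\Omega\subset\{x<x_0\}$. In fact $\pa_\inter\Omega$ (the portion of $\pa M$ inside $\{x>0\}$) contains points with $x$ up to $c>0$, so for small $x_0$ both of your subregions meet $\pa_\inter\Omega$. Consequently neither the weighted $P$-estimate on $\{x\le x_0\}$ (boundary term at $\pa_\inter\Omega$ unaccounted for) nor the ``standard Poincar\'e with Dirichlet data on $\pa_\inter\Omega_1$'' on $\{x\ge x_0\}$ (still a free boundary at $\pa_\inter\Omega$) goes through as written. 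The fix is to carry out the $P$-computation on all of $\Omega_1\setminus\Omega$ at once and use the sign of the boundary term, exactly as the paper does.
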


\begin{proof}
Consider $k=0$ first.

The norm of $d^s_\digamma u$
is certainly equivalent to that of $\nabla u$ in $L^2(\Omega_1\setminus\Omega)$ modulo the $L^2(\Omega_1\setminus\Omega)$ norm
of $u$, so one only needs to prove a local Poincar\'e inequality
\begin{equation}\label{eq:local-Poincare-0}
\|u\|_{L^2(\Omega_1\setminus\Omega)}\leq C\|d^s_\digamma u\|_{L^2(\Omega_1\setminus\Omega)}
\end{equation}
to conclude that
$$
\|u\|_{\Hscd^{1,0}(\Omega_1\setminus\Omega)}\leq C\|d^s_\digamma u\|_{L^2(\Omega_1\setminus\Omega)},
$$
which proves the lemma in this case, since it proves that
$d^s_\digamma$, between these spaces, has closed range and is
injective, so it is an isomorphism between
$\Hscd^{1,0}(\Omega_1\setminus\Omega)$ and its range, and then its
inverse in this sense can
be extended continuously to $L^2(\Omega_1\setminus\Omega)$.

But \eqref{eq:local-Poincare-0} can be proved similarly to
Lemma~\ref{lemma:potential-invertible}, by showing that
\begin{equation}\label{eq:local-Poincare-functions}
\|u\|_{L^2(\Omega_1\setminus\Omega)}\leq C\|(x^2 D_x+i\digamma)u\|_{L^2(\Omega_1\setminus\Omega)}.
\end{equation}
Here we want to use $P=x^2D_x+i\digamma$ and $\|P u\|^2$ again; we
need to be careful at $\pa_\inter\Omega$ since $u$ does not vanish
there. Thus, there is an integration by parts boundary term, which we
express in terms of the characteristic function
$\chi_{\Omega_1\setminus\Omega}$:
\begin{equation*}\begin{aligned}
\|Pu\|^2_{L^2(\Omega_1\setminus\Omega)}&=\langle\chi_{\Omega_1\setminus\Omega}
Pu,Pu\rangle_{L^2(\Omega_1)}
=\langle P^*\chi_{\Omega_1\setminus\Omega}
Pu,u\rangle_{L^2(\Omega_1)}\\
&=\langle P^*Pu,u\rangle_{L^2(\Omega_1\setminus\Omega)}+\langle[P^*,\chi_{\Omega_1\setminus\Omega}] Pu,u\rangle_{L^2(\Omega_1)}.
\end{aligned}\end{equation*}
Similarly,
\begin{equation*}\begin{aligned}
\|P_R u\|^2_{L^2(\Omega_1\setminus\Omega)}=\langle P_R^*P_Ru,u\rangle_{L^2(\Omega_1\setminus\Omega)}+\langle[P_R^*,\chi_{\Omega_1\setminus\Omega}] P_Ru,u\rangle_{L^2(\Omega_1)}.
\end{aligned}\end{equation*}
On the other hand, with $P_I$ being $0$th order, the commutator term
vanishes for it. Correspondingly,
\begin{equation*}\begin{aligned}
\|Pu\|^2_{L^2(\Omega_1\setminus\Omega)}&=\langle
P^*Pu,u\rangle_{L^2(\Omega_1\setminus\Omega)}+\langle[P^*,\chi_{\Omega_1\setminus\Omega}]
Pu,u\rangle_{L^2(\Omega_1)}\\
&=\langle
P_R^*P_Ru,u\rangle_{L^2(\Omega_1\setminus\Omega)}+\langle
P_I^*P_Iu,u\rangle_{L^2(\Omega_1\setminus\Omega)}+\langle
i[P_R,P_I]u,u\rangle_{L^2(\Omega_1\setminus\Omega)}\\
&\qquad\qquad+\langle[P^*,\chi_{\Omega_1\setminus\Omega}]
Pu,u\rangle_{L^2(\Omega_1)}\\
&=\|P_Ru\|^2_{L^2(\Omega_1\setminus\Omega)}+\|P_Iu\|^2_{L^2(\Omega_1\setminus\Omega)}+\langle
i[P_R,P_I]u,u\rangle_{L^2(\Omega_1\setminus\Omega)}\\
&\qquad\qquad+\langle[P^*,\chi_{\Omega_1\setminus\Omega}]
Pu,u\rangle_{L^2(\Omega_1)}-\langle[P_R^*,\chi_{\Omega_1\setminus\Omega}]
P_Ru,u\rangle_{L^2(\Omega_1)}.
\end{aligned}\end{equation*}
Now, as $P-P_R$ is $0$th order,
$[P^*,\chi_{\Omega_1\setminus\Omega}]=[P_R^*,\chi_{\Omega_1\setminus\Omega}]$,
so the last two terms on the right hand side give
\begin{equation}\label{eq:d-inj-annulus}
\langle[P^*,\chi_{\Omega_1\setminus\Omega}]
i P_Iu,u\rangle_{L^2(\Omega_1)}=\langle x^2\pa_x\chi_{\Omega_1\setminus\Omega}(\digamma-\frac{n-1}{2}x)u,u\rangle_{L^2(\Omega_1)},
\end{equation}
which is non-negative, at least if $x$ is sufficiently small (or
$\digamma$ large) on $\pa_\inter\Omega$ since $\chi_{\Omega_1\setminus\Omega}=\chi_{(0,\infty)}\circ\rho_{\Omega_1\setminus\Omega}$. Correspondingly, this term
can be dropped, and one obtains \eqref{eq:local-Poincare-functions} at
least if $x$ is small on $\Omega_1$ just as in the proof of
Lemma~\ref{lemma:potential-invertible}. The case of $x$ not
necessarily small on $\Omega_1$ (though small on $\Omega$) follows
exactly as in Lemma~\ref{lemma:potential-invertible} using the
standard Poincar\'e inequality, and even the case where $x$ is not
small on $\Omega$ can be handled similarly since one now has an extra
term at $\pa_\inter\Omega$, away from $x=0$, which one can control
using the standard Poincar\'e inequality. This gives
$$
\|u\|_{\Hscd^{1,0}(\Omega_1\setminus\Omega)}\leq C\|d^s_\digamma u\|_{L^2 (\Omega_1\setminus\Omega)},
$$
showing the claimed injectivity. Further, this gives a continuous
inverse from the range of $d^s_\digamma $, which is closed in
$L^2(\Omega_1\setminus\Omega)$; one can use an orthogonal projection
to this space to define the left inverse
$P_{\Omega_1\setminus\Omega}$, completing the proof when $k=0$.

For general $k$, one can proceed as in
Lemma~\ref{lemma:potential-invertible-weight}, conjugating
$d^s_\digamma$ by $x^k$, which changes it by $x$ times a smooth
one form; this changes $x^2D_x+i\digamma$ by an element of $x\CI(X)$,
with the only effect of modifying the $x\frac{n-1}{2}$ term in
\eqref{eq:d-inj-annulus}, which does not affect the proof.
\end{proof}

We now turn to one forms.

\begin{lemma}\label{lemma:local-ds-inverse-forms}
Let $\Hscd^{1,0}(\Omega_1\setminus\Omega)$ be as in
Lemma~\ref{lemma:local-ds-inverse-fn}, but with values in one-forms, and let
$\rho_{\Omega_1\setminus\Omega}$ be a defining function of
$\pa_\inter\Omega$ as a boundary of $\Omega_1\setminus\Omega$, i.e.\
it is positive in the latter set. 
Suppose that
$\pa_x\rho_{\Omega_1\setminus\Omega}> 0$ at $\pa_\inter\Omega$; note
that this is independent of the choice of
$\rho_{\Omega_1\setminus\Omega}$ satisfying the previous criteria (so this is a statement on $x$ being
increasing as one leaves $\Omega$ at $\pa_\inter\Omega$). Then for
$r\leq -(n-5)/2$, on one-forms
the map
$$
d^s_\digamma:\Hscd^{1,r}(\Omega_1\setminus\Omega)\to \Hsc^{0,r}(\Omega_1\setminus\Omega)
$$
is injective, with a continuous left inverse $P_{\Omega_1\setminus\Omega}:\Hsc^{0,r}(\Omega_1\setminus\Omega)\to \Hscd^{1,r-2}(\Omega_1\setminus\Omega)$.
\end{lemma}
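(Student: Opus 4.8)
\medskip
\noindent\textit{Proof plan.}
The strategy is to mirror the scalar argument of Lemma~\ref{lemma:local-ds-inverse-fn}, replacing the bare computation with $P=x^2D_x+i\digamma$ there by the identity \eqref{eq:symmetric-deriv-identity} (equivalently \eqref{eq:symmetric-deriv-mod}) for $d^s_\digamma$ on one-forms, and with Lemma~\ref{lemma:potential-elliptic} supplying the needed ellipticity of $\Delta_{\digamma,s}=\delta^s_\digamma d^s_\digamma$. First I would reduce the lemma to the a priori estimate
\[
\|u\|_{\Hscd^{1,r-2}(\Omega_1\setminus\Omega)}\le C\,\|d^s_\digamma u\|_{\Hsc^{0,r}(\Omega_1\setminus\Omega)},\qquad u\in\Hscd^{1,r}(\Omega_1\setminus\Omega),
\]
for $r\le-(n-5)/2$. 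Granting this, injectivity of $d^s_\digamma$ on $\Hscd^{1,r}$ is immediate; closedness of its range in $\Hsc^{0,r}$ follows by combining the estimate with elliptic regularity for $\Delta_{\digamma,s}$ (from Lemma~\ref{lemma:potential-elliptic}, applied to $\Delta_{\digamma,s}u=\delta^s_\digamma(d^s_\digamma u)$), in the interior and at the artificial boundary $\pa X$, together with ordinary elliptic theory near $\pa_\inter\Omega$ where $x$ is bounded below, and the vanishing of $u$ at $\pa_\inter\Omega_1$; the compact inclusion $\Hscd^{1,r}(\Omega_1\setminus\Omega)\hookrightarrow\Hsc^{0,r-2}(\Omega_1\setminus\Omega)$ is used to upgrade the limit. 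The left inverse $P_{\Omega_1\setminus\Omega}$ is then defined, exactly as in the scalar case, as the bounded inverse on the closed range precomposed with the $\Hsc^{0,r}$-orthogonal projection onto that range, and its continuity into $\Hscd^{1,r-2}$ is precisely the displayed estimate.

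To prove the a priori estimate I would proceed as in Lemma~\ref{lemma:potential-invertible}, using the characteristic function $\chi_{\Omega_1\setminus\Omega}=\chi_{(0,\infty)}\circ\rho_{\Omega_1\setminus\Omega}$ to localize the $L^2(\Omega_1)$ pairing to the annulus and pairing $\|d^s_\digamma u\|^2$ with $u$ via \eqref{eq:symmetric-deriv-identity}. The positive terms $\frac12\|\nabla u\|^2+\frac12\digamma^2\|u\|^2$, together with the nonnegative term $\frac12\|\delta^s_\digamma u\|^2$ which I simply drop, give the $L^2$ control on $\nabla u$ and $u$; the integration by parts against $\chi_{\Omega_1\setminus\Omega}$ produces a boundary term supported only on $\pa_\inter\Omega$ (there is no contribution from $\pa_\inter\Omega_1$ since $u$ vanishes there) whose leading part is, exactly as in \eqref{eq:d-inj-annulus}, a multiple of $\langle x^2(\pa_x\rho_{\Omega_1\setminus\Omega})(\digamma-\frac{n-1}{2}x)u,u\rangle$, hence of favorable sign for $x$ small on $\pa_\inter\Omega$ by the hypothesis $\pa_x\rho_{\Omega_1\setminus\Omega}>0$. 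When $x$ is not small near $\pa_\inter\Omega_1$ one finishes, as in Lemma~\ref{lemma:potential-invertible} and Lemma~\ref{lemma:local-ds-inverse-fn}, by invoking the standard Poincar\'e/Korn inequality for one-forms (\cite[Section~6, Equation~(28)]{SU-Duke}, together with Lemma~\ref{lemma:Korn}).

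The main obstacle, and the source both of the loss of two powers of the weight and of the restriction $r\le-(n-5)/2$, is the $\digamma$-independent first-order term $A\in\Diffsc^{1}(X;\Tsc^*X,\Tsc^*X)$ (and the term $\tilde R\in x\Diffsc^1$) in \eqref{eq:symmetric-deriv-identity}: unlike on functions, $\|d^s_\digamma u\|^2$ does not dominate $\|u\|^2_{L^2}$ after a trivial integration by parts, and $\langle Au,u\rangle$ cannot be absorbed into $\frac12\digamma^2\|u\|^2$ without enlarging $\digamma$. Instead I would conjugate by the weight $x^{-(r-2)}$ and run the pairing in $\Hscd^{1,r-2}$, using the extra decay relative to $\Hsc^{0,r}$ to absorb $\langle Au,u\rangle$ and $\langle\tilde Ru,u\rangle$ into the positive terms; tracking the $x^{-(n+1)}$ scattering density through this conjugation and through the $\pa_\inter\Omega$ boundary term is precisely what forces the sign condition $r\le-(n-5)/2$. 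I expect the bookkeeping of these lower-order and boundary contributions — rather than any conceptual point — to be the technical heart of the argument, everything else being a transcription of the scalar case of Lemma~\ref{lemma:local-ds-inverse-fn}, with the weighted case handled as in Lemma~\ref{lemma:potential-invertible-weight}.
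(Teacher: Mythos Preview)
Your proposal has a genuine gap, and in fact the paper anticipates exactly this attempt in the Remark immediately preceding its proof: ``Unfortunately the argument given above for functions would give an unfavorable boundary term.'' Your claim that the boundary contribution at $\pa_\inter\Omega$ has leading part $\langle x^2(\pa_x\rho_{\Omega_1\setminus\Omega})(\digamma-\frac{n-1}{2}x)u,u\rangle$, as in \eqref{eq:d-inj-annulus}, is incorrect in the one-form case. The identity \eqref{eq:symmetric-deriv-identity} was derived by integration by parts on a domain where $u$ vanishes on the whole interior boundary; when you localize to $\Omega_1\setminus\Omega$ with $u$ not vanishing at $\pa_\inter\Omega$, each of $\|d^s_\digamma u\|^2$, $\|\nabla u\|^2$, and $\|\delta^s_\digamma u\|^2$ picks up its own boundary contribution, and these involve normal derivatives of $u$ at $\pa_\inter\Omega$, not just $u$ itself. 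Unlike the scalar case (where $P=x^2D_x+i\digamma$ is first order and the commutator $[P^*,\chi]$ is zeroth order), here the boundary terms do not combine to something of favorable sign, and your proposed weight conjugation does not fix this.

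The paper's actual proof abandons the integration-by-parts route entirely and instead proves the Poincar\'e inequality directly via the Fundamental Theorem of Calculus along geodesics of the original metric $g$, using the identity $\sum_i[v(\gamma(s))]_i\dot\gamma^i(s)=\int_0^s[d^sv(\gamma(t))]_{ij}\dot\gamma^i\dot\gamma^j\,dt$ for geodesics starting at $\pa_\inter\Omega_1$ (where $v$ vanishes) with $x\circ\gamma$ strictly monotone; this yields a weighted $L^2$ bound on $u=e^{-\digamma/x}v$ in terms of $d^s_\digamma u$. The $H^1$ upgrade is then done via the Korn inequality, Lemma~\ref{lemma:Korn}. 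The loss of two orders in the weight and the restriction $r\le-(n-5)/2$ have nothing to do with the $A$ term in \eqref{eq:symmetric-deriv-identity}; they arise because the integration formula is natural for \emph{standard} (co)tensors while the norms are scattering norms, and the conversions $|d^sv|_{\ell^2}\le Cx^{-4}|d^sv|_{\ell^2_\scl}$ and $|v|_{\ell^2_\scl}\le x|v|_{\ell^2}$, together with the density factor $x^{-(n+1)}$, produce exactly these shifts.
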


\begin{rem}
Unfortunately the argument given 
above for functions would give an unfavorable boundary term, so 
instead we proceed proving the local Poincar\'e inequality directly and 
using our generalized Korn's inequality, Lemma~\ref{lemma:Korn}, to 
avoid a loss of derivatives. However, our method still produces a loss 
of weight, essentially because as presented the estimate would be
natural for standard tensors, not scattering tensors, hence the presence of the loss $-2$ in the weight in the statement of the lemma. 
\end{rem}

\begin{proof}
As in the work of the first two authors,
\cite[Section~6]{SU-Duke}, we prove the
Poincar\'e inequality using the identity, see \cite[Chapter~3.3]{Sh-book},
\begin{equation}\label{eq:int-along-curves}
\sum_i [v(\gamma(s))]_i\dot\gamma^i(s)=\int_0^s \sum_{ij}[d^s v(\gamma(t))]_{ij}\dot\gamma^i(t)\dot\gamma^j(t)\,dt,
\end{equation}
where $\gamma$ is a unit speed geodesic of the original metric $g$
(thus not of a scattering metric) with
$\gamma(0)\in\pa_\inter\Omega_1$ (so $v(\gamma(0))$
vanishes) and $\gamma(\tau)\in\pa_\inter\Omega\cup\pa X$, with
$\gamma|_{(0,\tau)}$ in $\Omega_1\setminus\overline{\Omega}$. 
Identity (\ref{eq:int-along-curves}) is just an application of the Fundamental Theorem of Calculus with the $s$-derivative of the l.h.s.\ computed using the rules of covariant differentiation. 
In this formula we use
$[d^s v(\gamma(t))]_{ij}$ for the components in the symmetric
2-cotensors corresponding to the standard cotangent
bundle, and similarly for $[v(\gamma(s))]_i$. Notice that this formula
gives an explicit left inverse for $d^s_\digamma$, as discussed below.

Here we choose $\gamma$ such that $x\circ\gamma$ is strictly
monotone in the sense that $-\frac{\pa}{\pa t}(x\circ\gamma)$ is
bounded below (and above) by a positive constant, thus
$(x\circ\gamma)^2\frac{\pa}{\pa t}(x^{-1}\circ\gamma)$ has the same property. Note that
one can construct a smooth family of such geodesics emanating from
$\pa_\inter\Omega_1$, parameterized by $\pa_\inter\Omega$, in a manner
that, with $d\omega$ a smooth measure on $\pa_\inter\Omega_1$,
$d\omega\,dt$ is equivalent to the volume form $dg$, i.e.\ also to $dx\,dy_1\ldots\,dy_{n-1}$.
Thus, for any $k\geq 0$, using $x(\gamma(s))\leq x(\gamma(t))$ along
the geodesic segment, $t\in[0,s]$,
\begin{equation*}\begin{aligned}
|e^{-\digamma/x(\gamma(s))} &x(\gamma(s))^{k} \sum_i [v(\gamma(s))]_i\dot\gamma^i(s)|^2\\
&=\Big|\int_0^s \sum_{ij}e^{-\digamma/x(\gamma(t))}x(\gamma(t))^{k+1}[d^s
v(\gamma(t))]_{ij}\dot\gamma^i(t)\dot\gamma^j(t)\\
&\qquad\qquad\qquad\times e^{-\digamma(1/x(\gamma(s))-1/x(\gamma(t)))}x(\gamma(t))^{-1}\,dt\Big|^2\\
&\leq n^2\Big(\int_0^\tau \sum_{ij}e^{-2\digamma/x(\gamma(t))}x(\gamma(t))^{2k+2}|[d^s
v(\gamma(t))]_{ij}\dot\gamma^i(t)\dot\gamma^j(t)|^2\,dt\Big)\\
&\qquad\qquad\qquad\times\Big(\int_0^s
e^{-2\digamma(1/x(\gamma(s))-1/x(\gamma(t)))}x(\gamma(t))^{-2}\,dt\Big).
\end{aligned}\end{equation*}
Thus,
\begin{equation*}\begin{aligned}
&|e^{-\digamma/x(\gamma(s))} x(\gamma(s))^{k}\sum_i [v(\gamma(s))]_i\dot\gamma^i(s)|^2\\
&\leq C'\Big(\int_0^\tau \sum_{ij}e^{-2\digamma/x(\gamma(t))}x(\gamma(t))^{2k+2}|[d^s
v(\gamma(t))]_{ij}\dot\gamma^i(t)\dot\gamma^j(t)|^2\,dt\Big)\\
&\qquad\qquad\qquad\times\Big(\int_0^s
e^{-2\digamma(1/x(\gamma(s))-1/x(\gamma(t)))}\big(-\frac{\pa}{\pa
  t}(x^{-1}(\gamma(t)))\big)\,dt\Big)\\
&\leq C'\Big(\int_0^\tau e^{-2\digamma/x(\gamma(t))}x(\gamma(t))^{2k+2}|d^s
v(\gamma(t))|^2_{\ell^2}\,dt\Big) \Big(\int_{r_0}^{x^{-1}(\gamma(s))}
e^{-2\digamma(1/x(\gamma(s))-r)}\,dr\Big)
\end{aligned}\end{equation*}
for suitable $r_0>0$,
where we wrote $r=x^{-1}$, and we used the lower bound for $(x\circ\gamma)^2\frac{\pa}{\pa
  t}(x^{-1}\circ\gamma)$ in the second factor, and that $\gamma$ is
unit speed in the first factor, with $\ell^2$ being the
norm as a symmetric map on $T_p X$. The
second factor on the right hand side is bounded by $(2\digamma)^{-1}$, so can be dropped. Now, as
$\tau\,dx+\zeta\,dy=(x^2\tau)\,\frac{dx}{x^2}+(x\zeta)\,\frac{dy}{x}$,
so e.g.\ the $dx^2$ component of $d^s v$ is $x^{-4}$ times the
$\frac{dx^2}{x^4}$ component in the scattering basis,
we have
\begin{equation}\label{eq:ds-sc-standard-l2-bound}
|d^s v(\gamma(t))|_{\ell^2}\leq C x(\gamma(t))^{-4}|d^s v(\gamma(t))|_{\ell^2_\scl},
\end{equation}
so the right hand side is bounded from above by
$$
C''\digamma^{-1}\int_0^\tau e^{-2\digamma/x(\gamma(t))}x(\gamma(t))^{2k-6}|d^s
v(\gamma(t))|^2_{\ell^2_\scl}\,dt
$$
Integrating in the spatial variable, $\gamma(0)\in\pa_\inter\Omega_1$, and using that the
second factor is $(2\digamma)^{-1}$, gives
\begin{equation*}\begin{aligned}
\|e^{-\digamma/x}x^kv(\gamma')\|^2_{L^2(\Omega_1\setminus\Omega)}\leq C\digamma^{-1}\|x^{k-3}e^{-\digamma/x}d^s
v\|^2_{L^2(\Omega_1\setminus\Omega;\Sym^2\Tsc^*X)}.
\end{aligned}\end{equation*}
Using different families of geodesics with tangent vectors covering
$TX$ over $\Omega_1\setminus\Omega$,
\begin{equation*}\begin{aligned}
\|e^{-\digamma/x}x^kv\|^2_{L^2(\Omega_1\setminus\Omega;T^*X)}\leq C\digamma^{-1}\|x^{k-3}e^{-\digamma/x}d^s
v\|^2_{L^2(\Omega_1\setminus\Omega;\Sym^2\Tsc^*X)}.
\end{aligned}\end{equation*}
Now, similarly to \eqref{eq:ds-sc-standard-l2-bound}, but going the
opposite direction,
$$
\|v(p)\|_{\ell^2_\scl}\leq x(p)\|v(p)\|_{\ell^2},
$$
so
\begin{equation*}\begin{aligned}
\|e^{-\digamma/x}x^{k-1}v\|^2_{L^2(\Omega_1\setminus\Omega;\Tsc^*X)}\leq C\digamma^{-1}\|x^{k-3}e^{-\digamma/x}d^s
v\|^2_{L^2(\Omega_1\setminus\Omega;\Sym^2\Tsc^*X)}.
\end{aligned}\end{equation*}
Changing the volume form as well yields
\begin{equation*}\begin{aligned}
\|e^{-\digamma/x}x^{k-1+(n+1)/2}v\|^2_{L^2_\scl(\Omega_1\setminus\Omega;\Tsc^*X)}\leq C\digamma^{-1}\|x^{k-3+(n+1)/2}e^{-\digamma/x}d^s
v\|^2_{L^2_\scl(\Omega_1\setminus\Omega;\Sym^2\Tsc^*X)}.
\end{aligned}\end{equation*}

With $u=e^{-\digamma/x}v$, this gives, for
$u\in\CI(\overline{\Omega_1\setminus\Omega})$, vanishing at $\pa_\inter\Omega_1$, of compact support,
\begin{equation}\label{eq:local-Poincare-1}
\|u\|^2_{\Hsc^{0,r-2}(\Omega_1\setminus\Omega)}\leq C\digamma^{-1}\|d^s_\digamma
u\|^2_{\Hsc^{0,r}(\Omega_1\setminus\Omega)},
\end{equation}
$r\leq -(n-5)/2$,
which then gives the same conclusion, by density and continuity
considerations for $u\in\Hscd^{1,r}(\Omega_1\setminus\Omega)$,
the desired Poincar\'e estimate.

To obtain the $H^1$ estimate, we use Lemma~\ref{lemma:Korn}, which
gives, even for $u\in\Hscb^{1,r-2}(\Omega_1\setminus\Omega)$,
$$
\|u\|^2_{\Hscb^{1,r-2}(\Omega_1\setminus\Omega)}\leq C(\|d^s_\digamma u\|^2_{\Hsc^{0,r-2}(\Omega_1\setminus\Omega)}+\|u\|^2_{\Hsc^{0,r-2}(\Omega_1\setminus\Omega)}),
$$
which combined with \eqref{eq:local-Poincare-1} proves
$$
\|u\|_{\Hscd^{1,r-2}(\Omega_1\setminus\Omega)}\leq C\|d^s_\digamma
u\|_{\Hsc^{0,r} (\Omega_1\setminus\Omega)},\qquad u\in\Hscd^{1,r}(\Omega_1\setminus\Omega),
$$
where recall that our notation is that membership of
$\Hscd^{1,r}(\Omega_1\setminus\Omega)$ only implies vanishing at
$\pa_\inter\Omega_1$, not at $\pa_\inter\Omega$.

Taking into account the above considerations, namely choosing several
families of geodesics to span the tangent space, and working with
$v=e^{\digamma/x}u$, the formula \eqref{eq:int-along-curves} then also gives an explicit
formula for the left inverse.
\end{proof}

Recall now \eqref{eq:u-f-Omega-to-Omega-1}:
\begin{equation*}
u=-B_\Omega
\gamma_{\pa_\inter\Omega}Q_{\digamma,\Omega_1}e_{01}f.
\end{equation*}
Using Lemmas~\ref{lemma:local-ds-inverse-fn}-\ref{lemma:local-ds-inverse-forms},
we conclude that
$$
u=-B_\Omega
\gamma_{\pa_\inter\Omega} P_{\Omega_1\setminus\Omega}d^s_\digamma Q_{\digamma,\Omega_1}e_{01}f,
$$
and as $e_{01}f$ vanishes on $\Omega_1\setminus\Omega$,
$$
\cS_{\digamma,\Omega_1}e_{01}f|_{\Omega_1\setminus\Omega}=-d^s_\digamma Q_{\digamma,\Omega_1}e_{01}f|_{\Omega_1\setminus\Omega},
$$
so
$$
u=B_\Omega
\gamma_{\pa_\inter\Omega} P_{\Omega_1\setminus\Omega}\cS_{\digamma,\Omega_1}e_{01}f,
$$
and thus
$$
\cS_{\digamma,\Omega}-r_{10}\cS_{\digamma,\Omega_1}e_{01}
=-d^s_\digamma B_\Omega
\gamma_{\pa_\inter\Omega} P_{\Omega_1\setminus\Omega}\cS_{\digamma,\Omega_1}e_{01}.
$$
Using \eqref{eq:Omega-1-param} this gives
$$
r_{10}\cS_{\digamma,\Omega_1}r_{21}\cS_{\digamma,\Omega_2} GN_\digamma=\cS_{\digamma,\Omega}+d^s_\digamma B_\Omega
\gamma_{\pa_\inter\Omega} P_{\Omega_1\setminus\Omega}\cS_{\digamma,\Omega_1}e_{01}+r_{10}K_2.
$$
Using \eqref{eq:Omega-1-param} again to express
$\cS_{\digamma,\Omega_1}e_{01}$ on the right hand side, we get
\begin{equation*}\begin{aligned}
&r_{10}\cS_{\digamma,\Omega_1}r_{21}\cS_{\digamma,\Omega_2}
GN_\digamma\\
&\qquad=\cS_{\digamma,\Omega}+
d^s_\digamma B_\Omega
\gamma_{\pa_\inter\Omega}
P_{\Omega_1\setminus\Omega}(\cS_{\digamma,\Omega_1}r_{21}\cS_{\digamma,\Omega_2}
GN_\digamma-K_2)+r_{10}K_2,
\end{aligned}\end{equation*}
which gives
\begin{equation*}\begin{aligned}
&(r_{10}-d^s_\digamma B_\Omega
\gamma_{\pa_\inter\Omega}
P_{\Omega_1\setminus\Omega})
\cS_{\digamma,\Omega_1}r_{21}\cS_{\digamma,\Omega_2} GN_\digamma\\
&\qquad=\cS_{\digamma,\Omega}+(r_{10}-d^s_\digamma B_\Omega
\gamma_{\pa_\inter\Omega}
P_{\Omega_1\setminus\Omega}) K_2.
\end{aligned}\end{equation*}
We now add $\cP_{\digamma,\Omega}$ to both sides, and use that the
smallness of $K_2$ when $\Omega$ is small enough gives that $\Id+(r_{10}-d^s_\digamma B_\Omega
\gamma_{\pa_\inter\Omega}
P_{\Omega_1\setminus\Omega}) K_2$ is invertible. Here we need to be
careful in the 2-tensor case: while $K_2$ is smoothing, including in the sense of producing
additional decay, so there is no problem with applying
$P_{\Omega_1\setminus\Omega}$ regardless of the weighted space we are
considering, the result will have only a weighted estimate in
$\Hsc^{1,r-2}$, $r\leq -(n-5)/2$,
corresponding to Lemma \ref{lemma:local-ds-inverse-forms}, so the
inversion has to be done in a sufficiently negatively weighted space,
namely $\Hsc^{0,r}(\Omega)$, with $r\leq -(n-1)/2$.
Thus,
\begin{equation*}\begin{aligned}
&(\Id+(r_{10}-d^s_\digamma B_\Omega
\gamma_{\pa_\inter\Omega}
P_{\Omega_1\setminus\Omega}) K_2)^{-1}\\
&\qquad\qquad\qquad\circ\Big((r_{10}-d^s_\digamma B_\Omega
\gamma_{\pa_\inter\Omega}
P_{\Omega_1\setminus\Omega})
\cS_{\digamma,\Omega_1}r_{21}\cS_{\digamma,\Omega_2}
GN_\digamma+\cP_{\digamma,\Omega}\Big)=\Id,
\end{aligned}\end{equation*} 
and so multiplying from $\cS_{\digamma,\Omega}$ from the right yields
\begin{equation}\begin{aligned}\label{eq:almost-final-inv}
&(\Id+(r_{10}-d^s_\digamma B_\Omega
\gamma_{\pa_\inter\Omega}
P_{\Omega_1\setminus\Omega}) K_2)^{-1}\\
&\qquad\qquad\circ(r_{10}-d^s_\digamma B_\Omega
\gamma_{\pa_\inter\Omega}
P_{\Omega_1\setminus\Omega})
\cS_{\digamma,\Omega_1}r_{21}\cS_{\digamma,\Omega_2}
GN_\digamma=\cS_{\digamma,\Omega}.
\end{aligned}\end{equation}
Now recall that $N_\digamma=e^{-\digamma/x}LIe^{\digamma/x}$, and that
for $f\in e^{\digamma/x}L^2_\scl(\Omega)$, $\cP_{\digamma,\Omega}
e^{-\digamma/x}f=0$ amounts to $e^{\digamma/x}\delta^s
e^{-\digamma/x}(e^{-\digamma/x}f)=0$, i.e.\ $\delta^s
(e^{-2\digamma/x} f)=0$. This in particular gives an inversion formula
for the geodesic X-ray transform on $e^{2\digamma/x}$-solenoidal
one-forms and symmetric 2-tensors.

In order to state
the stability estimate it is convenient to consider
$(x,y,\lambda,\omega)\in SX$ to actually lie in $\Ssc X$ via the
identification (multiplying the tangent vector by $x$)
$$
(x,y,\lambda\,\pa_x+\omega\,\pa_y)\mapsto (x,y,(\lambda/x)
(x^2\pa_x)+\omega\,(x\pa_y))
$$
Here $\Ssc X=(\Tsc X\setminus o)/\RR^+$ is the sphere bundle in $\Tsc
X$, and in the relevant open set the fiber over a fixed point $(x,y)$ can be identified with vectors of the
form $\tilde\lambda (x^2\pa_x)+\tilde\omega(x\pa_y)$,
$\tilde\omega\in\sphere^{n-2}$, $\tilde\lambda\in\RR$.
Then the region $|\lambda/x|<M$ in $SX$ corresponds to the region
$|\tilde\lambda|<M$; this is now an open subset of
$\Ssc X$. Note that in particular that the
`blow-down map' $(x,y,\tilde\lambda,\tilde\omega)\mapsto
(x,y,x\tilde\lambda,\tilde\omega)$ is smooth, and the composite map
$(x,y,\tilde\lambda,\tilde\omega,t)\mapsto\gamma_{x,y,x\tilde\lambda,\tilde\omega}(t)$
has surjective differential. In particular, with
$$
U=\{|\tilde\lambda|<M\},
$$
the scattering Sobolev spaces are just
restrictions to a domain with smooth boundary. Note that $U$ lies
within the set of $\Omega$-local geodesics; we choose $M$ so that
$\supp\chi\subset M$.

This discussion, in particular \eqref{eq:almost-final-inv}, proves our main local result, for which we
reintroduce the subscript $c$ for the size of the region $\Omega_c$:

\begin{thm}\label{thm:local-linear}
For one forms, let $\digamma>0$; for symmetric 2-tensors let
$\digamma_0>0$ be the maximum of the two constants, denoted
there by $\digamma_0$, in Proposition~\ref{prop:elliptic} and  Corollary~\ref{cor:potential-psdo}.

For $\Omega=\Omega_c$, $c>0$ small, the geodesic X-ray transform on
{\em $e^{2\digamma/x}$-solenoidal}
one-forms and symmetric 2-tensors $f\in e^{\digamma/x}L^2_\scl(\Omega)$,
i.e.\ ones satisfying $\delta^s (e^{-2\digamma/x} f)=0$, is injective, with a stability
estimate and a reconstruction formula
\begin{equation*}\begin{aligned}
f=e^{\digamma/x} (\Id+(r_{10}-d^s_\digamma B_\Omega
\gamma_{\pa_\inter\Omega}
P_{\Omega_1\setminus\Omega}) K_2)^{-1}&(r_{10}-d^s_\digamma B_\Omega
\gamma_{\pa_\inter\Omega}
P_{\Omega_1\setminus\Omega})\\
&\qquad
\circ\cS_{\digamma,\Omega_1}r_{21}\cS_{\digamma,\Omega_2}
Ge^{-\digamma/x}LIf.
\end{aligned}\end{equation*}
Here stability is in the sense that for $s\geq 0$ there exist $R,R'$
such that for any (sufficiently negative in the case of 2-tensors) $r$ the $e^{\digamma/x}\Hsc^{s-1,r}$ norm of
$f$ on $\Omega$ is controlled by the $e^{\digamma/x}\Hsc^{s,r+R}$ norm of $If$ on
$U$, provided $f$ is a priori in $e^{\digamma/x}\Hsc^{s,r+R'}$.
In addition, replacing $\Omega_c=\{\tilde x>-c\}\cap M$ by
$\Omega_{\tau,c}=\{\tau>\tilde x>-c+\tau\}\cap M$, $c$ can be taken
uniform in $\tau$ for $\tau$ in a compact set on which the strict
concavity assumption on level sets of $\tilde x$ holds.
\end{thm}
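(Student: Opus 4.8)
The plan is simply to assemble the ingredients of Sections~\ref{sec:ellipticity}--\ref{sec:gauge}; the computation leading to \eqref{eq:almost-final-inv} already does essentially all of the work, and what remains is to read off the reconstruction formula, injectivity and the stability estimate, and to observe uniformity. Concretely: by Proposition~\ref{prop:psdo}, $N_\digamma=e^{-\digamma/x}LIe^{\digamma/x}\in\Psisc^{-1,0}$ on one-forms, resp.\ symmetric $2$-cotensors; by Proposition~\ref{prop:elliptic} (with $\digamma\ge\digamma_0$ in the $2$-tensor case), for a suitable even nonnegative $\chi$ and $M\in\Psisc^{-3,0}$ the operator $A_\digamma=N_\digamma+d^s_\digamma M\delta^s_\digamma$ is elliptic in $\tilde\Omega$, hence has a scattering parametrix $G\in\Psisc^{1,0}$ with compactly supported Schwartz kernel. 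Feeding $G$ into the gauge machinery --- invertibility of the solenoidal Witten Laplacian with Dirichlet data (Lemmas~\ref{lemma:potential-invertible}--\ref{lemma:potential-invertible-weight}), the structure of the projections $\cS_{\digamma,\Omega_j}$, $\cP_{\digamma,\Omega_j}$ and the Poisson operators $B_{\Omega_j}$ (Corollaries~\ref{cor:potential-psdo}--\ref{cor:solenoidal}, Lemma~\ref{lemma:Poisson}), the smoothness and the smallness (for $\Omega\subset\{x\le\delta\}$, $\delta$ small) of $K_1,K_1',K_2$ (Lemmas~\ref{lemma:K1}--\ref{lemma:K1p}, \eqref{eq:Omega-1-param}), and the left inverse $P_{\Omega_1\setminus\Omega}$ of $d^s_\digamma$ on the shell (Lemmas~\ref{lemma:local-ds-inverse-fn}--\ref{lemma:local-ds-inverse-forms}) --- yields \eqref{eq:almost-final-inv}, with $\Id+(r_{10}-d^s_\digamma B_\Omega\gamma_{\pa_\inter\Omega}P_{\Omega_1\setminus\Omega})K_2$ invertible by a Neumann series once $c$, equivalently $\delta$, is small enough.

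To extract the reconstruction formula and injectivity, I would apply \eqref{eq:almost-final-inv} to $f_\digamma=e^{-\digamma/x}f$, viewed as a field on $\Omega$. The hypothesis $\delta^s(e^{-2\digamma/x}f)=0$ is exactly $\delta^s_\digamma f_\digamma=0$, so $\cP_{\digamma,\Omega}f_\digamma=d^s_\digamma\Delta_{\digamma,s,\Omega}^{-1}\delta^s_\digamma f_\digamma=0$ and hence $\cS_{\digamma,\Omega}f_\digamma=f_\digamma$. Since the right-hand side of \eqref{eq:almost-final-inv} depends on $f_\digamma$ only through $N_\digamma f_\digamma=e^{-\digamma/x}LIf$, multiplying by $e^{\digamma/x}$ gives precisely the displayed formula $f=e^{\digamma/x}(\cdots)e^{-\digamma/x}LIf$, and in particular $If=0$ forces $f=0$, which is the asserted injectivity on $e^{2\digamma/x}$-solenoidal fields.

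For stability I would track mapping properties of every factor in the formula on the weighted scattering scale $e^{\digamma/x}\Hsc^{s,r}$. As exhibited in the proof of Proposition~\ref{prop:psdo}, $L$ is bounded from $H^s$ on $U$-local geodesics into $\Hsc^{s,r}$ (for the relevant weight) after the exponential conjugation; $G\in\Psisc^{1,0}$; $\cS_{\digamma,\Omega_1},\cS_{\digamma,\Omega_2}$ are of order $(0,0)$ modulo smoothing terms supported away from $\pa X$ (Corollary~\ref{cor:solenoidal}); and $d^s_\digamma B_\Omega\gamma_{\pa_\inter\Omega}P_{\Omega_1\setminus\Omega}$ costs, by Lemma~\ref{lemma:local-ds-inverse-forms}, a two-order loss of decay in the $2$-tensor case --- which is why the inverse factor must be applied on a sufficiently negatively weighted space $\Hsc^{0,r}$, $r\le-(n-1)/2$. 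Composing, with $N_\digamma$ of order $(-1,0)$, one obtains the loss of one differential order and a fixed weight loss $R$ in $\|f\|_{e^{\digamma/x}\Hsc^{s-1,r}(\Omega)}\le C\|If\|_{e^{\digamma/x}\Hsc^{s,r+R}(U)}$, valid provided $f$ is a priori in $e^{\digamma/x}\Hsc^{s,r+R'}$ --- the extra regularity $R'$ being what allows the smoothing error terms to be absorbed and the Neumann series applied --- after re-reading the $\Hsc^{s,r+R}$-norm of $If$ over $U\subset\Ssc X$ as an ordinary Sobolev norm of $If$ on $\Omega$-local geodesics via the blow-down identification (and noting $U$ lies within the set of $\Omega$-local geodesics). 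Uniformity in $\tau$ is automatic: the scattering calculus on $\Omega_{\tau,c}$, the ellipticity constants of Proposition~\ref{prop:elliptic}, the Poincar\'e and Korn lower bounds, and the smallness of $K_2$ are all uniform in $c$ and $\tau$ over any compact parameter set on which $\tilde x$ has uniformly strictly concave level sets, as in \cite{UV:local}.

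The main obstacle I anticipate is the weight bookkeeping in the $2$-tensor case: the $-2$ loss of decay in Lemma~\ref{lemma:local-ds-inverse-forms} propagates through $K_2$ and through the outer inverse factor, so one must verify that the inversion of $\Id+(r_{10}-d^s_\digamma B_\Omega\gamma_{\pa_\inter\Omega}P_{\Omega_1\setminus\Omega})K_2$ can genuinely be carried out on a sufficiently negatively weighted space; this relies on $K_2$ being smoothing (in both differentiability and decay), not merely bounded, so that $P_{\Omega_1\setminus\Omega}$ may be applied to it with a loss that is harmless on $\Hsc^{0,r}$ for $r$ negative enough.
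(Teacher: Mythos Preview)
Your approach matches the paper's: the reconstruction formula and injectivity come directly from \eqref{eq:almost-final-inv} applied to $f_\digamma=e^{-\digamma/x}f$ with $\cS_{\digamma,\Omega}f_\digamma=f_\digamma$, and stability is obtained by tracking mapping properties of each factor. However, there is a genuine gap in the stability argument.

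You write that ``as exhibited in the proof of Proposition~\ref{prop:psdo}, $L$ is bounded from $H^s$ on $U$-local geodesics into $\Hsc^{s,r}$\ldots after the exponential conjugation.'' Proposition~\ref{prop:psdo} does not show this: it only establishes that the \emph{composite} $N_\digamma=e^{-\digamma/x}LIe^{\digamma/x}$ lies in $\Psisc^{-1,0}$, by analyzing its Schwartz kernel on the scattering double space. That says nothing about $L$ or $I$ individually as maps between Sobolev spaces on $U\subset\Ssc X$ and on $X$. Since the stability estimate is phrased in terms of $\|If\|$ on $U$, not $\|N_\digamma f_\digamma\|$ on $X$, you genuinely need the separate mapping properties $L:e^{\digamma/x}\Hsc^{s,k+R_1}(U)\to e^{\digamma/x}\Hsc^{s,k}(X)$ and $I:e^{\digamma/x}\Hsc^{s,k+R_2}(X)\to e^{\digamma/x}\Hsc^{s,k}(U)$. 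The paper's proof of the theorem is devoted almost entirely to establishing exactly these: $L$ is written as a multiplication by $x^p\chi(\tilde\lambda)$ followed by a pushforward integrating out $(\tilde\lambda,\tilde\omega)$, and $I$ as a pullback via $\gamma$ (with surjective differential) followed by integration in $t$, with careful accounting of the weights --- in particular using $x'\ge x-CM^2x^2$ along the geodesics to control $e^{-\digamma/x}e^{\digamma/x'}$, and noting that scattering derivatives of the pullback produce $x'\pa_{x'}$ rather than $(x')^2\pa_{x'}$, which is the source of the $s$-dependent loss of decay.

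A related point: your explanation of the a priori hypothesis $f\in e^{\digamma/x}\Hsc^{s,r+R'}$ is not right. It is not needed ``to allow the smoothing error terms to be absorbed and the Neumann series applied'' --- smallness of $K_2$ is an operator-norm statement independent of $f$. Rather, $R'$ is there because the map $I$ itself loses decay (by an $s$-dependent amount, as above), so one needs $f$ in a better-weighted space than $\Hsc^{s-1,r}$ to place $If$ in $e^{\digamma/x}\Hsc^{s,r+R}(U)$ in the first place.
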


\begin{rem}
Notice that the proof below gives in particular, by composing $L$ and $I$,
$LI:e^{\digamma/x}\Hsc^{s,r}(X)\to e^{\digamma/x}\Hsc^{s,r-1-s}(X)$,
$s\geq 0$, even
though Proposition~\ref{prop:psdo} implies the mapping property
$LI:e^{\digamma/x}\Hsc^{s,r}(X)\to e^{\digamma/x}\Hsc^{s+1,r}(X)$
(with values in scattering one-forms or 2-tensors). The loss
in the derivatives by one order and of the decay by order $\geq 1$ is due to the non-sharp
treatment of the scattering Fourier integral operators $L,I$ below.
\end{rem}

\begin{proof}
Given \eqref{eq:almost-final-inv},
we just need to show that for $s\geq 0$ there exist $R_1,R_2$ such that
for $k\in\RR$, $L$ is bounded
$$
e^{\digamma/x}\Hsc^{s,k+R_1}(U)\to
e^{\digamma/x}\Hsc^{s,k}(X),
$$
while $I$ is bounded
$$
e^{\digamma/x}\Hsc^{s,k+R_2}(X)\to e^{\digamma/x}\Hsc^{s,k}(U),
$$
with the function spaces on $X$ with values in either one forms or 2-tensors.
To see these boundedness statements, one proceeds as in \cite[Section~3]{UV:local}, prior to
Proposition~3.3, though we change our point of view slightly, as we
are using the `blown-up space' $\Ssc X$ rather than $SX$ for the
geodesic parameterization.

Concretely, $L$ can be written as the composition of
a multiplication operator $M$, by $x\chi(\tilde\lambda)$, resp.\
$x^3\chi(\tilde\lambda)$, for the one-form, resp.\ 2-tensor, case,
times $x^{-1}$ times a sc-one-form
or $x^{-2}$ times a sc-2-tensor factor, with a $-1$ in the
power of $x$ in the definition of $L$ being absorbed into the $\tilde\lambda$ integral, and
a push-forward in which the $\tilde\lambda,\tilde\omega$ variables are integrated
out. The pushforward maps $L^2(U)=x^{-(2n-1+1)/2}L^2_\scl(U)$ to
$L^2(X)=x^{-(n+1)/2}L^2_\scl(X)$ ($L^2$ spaces without subscripts being relative
to smooth non-degenerate densities) with the weights arising from the scattering
volume forms
being $x^{-2n}$, resp.\ $x^{-n-1}$, times a smooth volume
form. Further, it
commutes with multiplication by functions of $x$, so it maps
$e^{\digamma/x}\Hsc^{0,k}(X)$ to $e^{\digamma/x}\Hsc^{0,k+(n-1)/2}(X)$, and (local) lifts of
scattering vector fields $x^2D_x$, $xD_{y_j}$ are still scattering
vector fields so it also maps $e^{\digamma/x}\Hsc^{s,k}(U)$ to
$e^{\digamma/x}\Hsc^{s,k+(n-1)/2}(X)$ for $s\geq 0$ integer, and then
by interpolation for $s\geq 0$. Also, taking into
account the smoothness of $\chi(\tilde\lambda)$, we see that multiplication by
$x^p\chi(\tilde\lambda)$ maps $e^{\digamma/x}\Hsc^{s,k}(U)\to
e^{\digamma/x}\Hsc^{s,k+p}(U)$ for all $s\geq 0$, so in the one form
case
$$
L:e^{\digamma/x}\Hsc^{s,k}(U)\to
e^{\digamma/x}\Hsc^{s,k+(n-1)/2}(X),
$$
while in the 2-tensor case
$$
L:e^{\digamma/x}\Hsc^{s,k}(U)\to
e^{\digamma/x}\Hsc^{s,k+1+(n-1)/2}(X).
$$

On the other hand, $I$ can be written as a pull-back to the subset $U\times\RR$ of
$\Ssc X\times\RR$ from $X$, after contraction with $\gamma'_{x,y,x\tilde\lambda,\tilde\omega}(t)$,
via the map $\gamma:(x,y,\tilde\lambda,\tilde\omega,t)\mapsto
\gamma_{x,y,x\tilde\lambda,\tilde\omega}(t)$, which has surjective differential,
followed by integration over (a uniformly controlled compact subset
of) the $\RR$ factor.
The integration (push-forward) maps $e^{\digamma/x}\Hsc^{s,k}(U\times\RR)\to
e^{\digamma/x}\Hsc^{s,k+1/2}(U)$, where the $1/2$ shift is due to the
density defining the scattering space, as above; by the same argument
as above. On the other hand, the vector $\gamma'_{x,y,x\tilde\lambda,\tilde\omega}(t)$
is $x^{-1}$ times a scattering tangent vector, as discussed in Proposition~\ref{prop:psdo}. Thus, the boundedness of the pull-back as a map
$$
xL^2(X;\Tsc^*X)\to L^2(U\times\RR),\  \text{i.e.}\ x^{-(n-1)/2}L^2_\scl(X)\to
x^{-(2n+1)/2}L^2_\scl(U\times\RR),
$$
in the one-form case, resp.
$$
x^2L^2(X;\Sym^2\Tsc^*X)\to L^2(U\times\RR),\ \text{i.e.}\ x^{-(n-3)/2}L^2_\scl(X)\to
x^{-(2n+1)/2}L^2_\scl(U\times\RR),
$$
in the 2-tensor case,
follows from the surjectivity of the differential of
$\gamma$. (Concretely here this means that as for fixed
$\tilde\lambda,\tilde\omega,t$, $(x,y)\mapsto
\gamma_{x,y,\tilde\lambda,\tilde\omega}(t)=(x',y')$ is a
diffeomorphism, one can rewrite the integral expressing the squared
$L^2$-norm of the pull-back in terms of the squared $L^2$-norm of the
original function using Fubini's theorem.)  Further, the $x$
coordinate along $\gamma_{x,y,\lambda,\omega}$, denoted by $x'$ in
Proposition~\ref{prop:psdo}, satisfies $x'\geq x-C M^2x^2$ (as
$|\lambda/x|\leq M$ on $U$) due to \cite[Equation~(3.1)]{UV:local},
which means that $e^{-\digamma/x}x^{-k} e^{\digamma/x'} (x')^{k}$ is
bounded on the curves as $-\digamma/x+\digamma/x'-k\log (x/x')$ is
bounded above (with the boundedness for $x'\leq x$, holding thanks to
the lower bound for $x'$, being the important
point; for $x'\geq x$, $-\digamma/x-k\log x$ being monotone for small $x$
can be used). Thus, the
mapping property
$$
e^{\digamma/x}\Hsc^{0,k}(X)\to
e^{\digamma/x}\Hsc^{0,k-n/2-1}(U\times\RR),
$$
resp.
$$
e^{\digamma/x}\Hsc^{0,k}(X)\to
e^{\digamma/x}\Hsc^{0,k-n/2-2}(U\times\RR),
$$
follows by the same argument as the $L^2$ boundedness. Finally, by
the chain rule, using just the smoothness of $\gamma$, we obtain that any derivative of the pull-back with
respect to the standard vector fields $V\in\Vf(U)$ can be expressed in
terms of linear combinations with smooth coefficients of standard
derivatives (with respect to $V'\in\Vf(X)$) of the original
function, so in particular for $P\in\Diff^s(U\times\RR)$ and one-forms, $Pf$ is controlled in
$e^{\digamma/x}\Hsc^{0,k-n/2-1}(U\times\RR)$ in terms of derivatives
of order $\leq s$ of $f$ in $e^{\digamma/x}\Hsc^{0,k}(X)$, with a
similar statement for 2-tensors. Now,
(with the above notation) $x'\geq c
x$ for some $c>0$ (so $x/x'$ is bounded), so that $x$ factors of derivatives like $x^2\pa_x$,
$x\pa_{y_j}$, $x\pa_{\tilde \lambda}$, $x\pa_{\tilde\omega_j}$ being
applied to the pull-back can be
turned into factors of $x'$, so we see that if $P\in\Diffsc^s(X)$, then $Pf$ is controlled in
$e^{\digamma/x}\Hsc^{0,k-n/2-1}(U\times\RR)$ in terms of derivatives
of order $\leq s$ of $f$ with respect to the vector fields
$x'\pa_{x'}$ $x'\pa_{y'}$ in $e^{\digamma/x}\Hsc^{0,k}(X)$. Note here
the presence of $x'\pa_{x'}$ rather than $(x')^2\pa_{x'}$, due to the
fact that when one writes the
pull-back as
$f(\Foliation_{x,y,x\tilde\lambda,\tilde\omega}(t),\Loccoord_{x,y,x\tilde\lambda,\tilde\omega}(t))$,
a derivative like $x\pa_{y}$ hitting it is controllable by
$(x'\pa_{x'} f) (\pa_{y}\Foliation)$ and $(x'\pa_{y'}f) (\pa_{y}\Foliation)$,
with the first of these lacking an extra factor of $x'$.
This means that we need to have an extra decay by order $s$ to get a
bounded map between the scattering spaces (since $x'\pa_{x'}=(x')^{-1}((x')^2\pa_{x'})$), so for $s\geq 0$ integer the mapping
property
$$
e^{\digamma/x}\Hsc^{s,k}(X)\to
e^{\digamma/x}\Hsc^{s,k-s-n/2-1}(U\times\RR),
$$
resp.
$$
e^{\digamma/x}\Hsc^{s,k}(X)\to
e^{\digamma/x}\Hsc^{s,k-s-n/2-2}(U\times\RR),
$$
follows,
and then interpolation gives this for all $s\geq 0$. Thus, in the
one-form case
$$
I:e^{\digamma/x}\Hsc^{s,k}(X)\to
e^{\digamma/x}\Hsc^{s,k-s-n/2-1/2}(U\times\RR),
$$
in the 2-tensor case
$$
I:e^{\digamma/x}\Hsc^{s,k}(X)\to e^{\digamma/x}\Hsc^{s,k-s-n/2-3/2}(U\times\RR),
$$
completing the proof.
\end{proof}

If $f\in x^r e^{\digamma/x} L_\scl^2(\Omega)$ then the map $f\to
If$ factors through
$$
\cS_{\digamma,\Omega} e^{-\digamma/x}f=e^{-\digamma/x}f-\cP_{\digamma,\Omega}
e^{-\digamma/x}f
$$
since
$$
Ie^{\digamma/x}\cP_{\digamma,\Omega}
e^{-\digamma/x}f=Id^s e^{\digamma/x}\Delta_{\digamma,s,\Omega}^{-1}e^{\digamma/x}\delta^se^{-2\digamma/x}f=0.
$$
By
Theorem~\ref{thm:local-linear}, $e^{\digamma/x}\cS_{\digamma,\Omega}
e^{-\digamma/x}f\mapsto I e^{\digamma/x}\cS_{\digamma,\Omega} e^{-\digamma/x}f$ is
injective, with a stability estimate. Since
$$
e^{\digamma/x}\cP_{\digamma,\Omega}
e^{-\digamma/x}f=d^s e^{\digamma/x}\Delta_{\digamma,s,\Omega}^{-1}e^{\digamma/x}\delta^se^{-2\digamma/x}f,
$$
this means that we have recovered $f$ up to a potential term, 
i.e.\ in a gauge-free manner we have:

\begin{cor}\label{cor:local-linear-one-form}
Let $\digamma>0$.
With $\Omega=\Omega_c$ as in Theorem~\ref{thm:local-linear},  $r$
sufficiently negative, $c>0$
small, if $f\in e^{\digamma/x} x^r L_\scl^2(\Omega)$ is a one-form then $f=u+d^s v$,
where $v\in e^{\digamma/x} x^r \Hscd^{1,0}(\Omega)$, while $u\in
e^{\digamma/x} x^r L_\scl^2(\Omega)$ can be stably determined from
$If$.

Again, replacing $\Omega_c=\{\tilde x>-c\}\cap M$ by
$\Omega_{\tau,c}=\{\tau>\tilde x>-c+\tau\}\cap M$, $c$ can be taken
uniform in $\tau$ for $\tau$ in a compact set on which the strict
concavity assumption on level sets of $\tilde x$ holds.
\end{cor}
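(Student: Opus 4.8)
The plan is to deduce the corollary from Theorem~\ref{thm:local-linear} by writing $f$ in the $e^{-2\digamma/x}$-solenoidal gauge and observing that the potential (gauge) part is annihilated by $I$. Write $f_\digamma=e^{-\digamma/x}f$, which lies in $x^rL^2_\scl(\Omega)=\Hscb^{0,r}(\Omega)$. First I would invoke Lemma~\ref{lemma:potential-invertible-weight} with $\Omega_j=\Omega$ — valid for all $\digamma>0$ on functions and, as recorded there together with Remark~\ref{rem:g-vs-gsc} and the one-form case of Lemma~\ref{lemma:potential-invertible}, available on one-forms as well — to invert $\Delta_{\digamma,s,\Omega}=\delta^s_\digamma d^s_\digamma$ with Dirichlet condition at $\pa_\inter\Omega$ on the weighted spaces $\Hscd^{1,r}(\Omega)\to\Hscb^{-1,r}(\Omega)$. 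This lets me form $Q_{\digamma,\Omega}=\Delta_{\digamma,s,\Omega}^{-1}\delta^s_\digamma$ and the projections $\cS_{\digamma,\Omega}=\Id-d^s_\digamma Q_{\digamma,\Omega}$, $\cP_{\digamma,\Omega}=d^s_\digamma Q_{\digamma,\Omega}$, with $Q_{\digamma,\Omega}f_\digamma\in\Hscd^{1,r}(\Omega)=x^r\Hscd^{1,0}(\Omega)$, hence $f_\digamma=\cS_{\digamma,\Omega}f_\digamma+d^s_\digamma Q_{\digamma,\Omega}f_\digamma$ with $\cS_{\digamma,\Omega}f_\digamma\in x^rL^2_\scl(\Omega)$.

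Next I would set $v=e^{\digamma/x}Q_{\digamma,\Omega}f_\digamma$ and $u=f-d^sv=e^{\digamma/x}\cS_{\digamma,\Omega}f_\digamma$. By construction $v\in e^{\digamma/x}x^r\Hscd^{1,0}(\Omega)$ and $u\in e^{\digamma/x}x^rL^2_\scl(\Omega)$. Since $Q_{\digamma,\Omega}f_\digamma$, and therefore $v=e^{\digamma/x}Q_{\digamma,\Omega}f_\digamma$, satisfies the Dirichlet condition at $\pa_\inter\Omega$, the one-form $d^sv$ is a genuine potential field vanishing at $\pa_\inter\Omega$, so $Id^sv=0$ and $If=Iu$. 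Moreover $\delta^s(e^{-2\digamma/x}u)=\delta^s_\digamma\cS_{\digamma,\Omega}f_\digamma=\delta^s_\digamma f_\digamma-\delta^s_\digamma d^s_\digamma Q_{\digamma,\Omega}f_\digamma=0$ by the definition of $Q_{\digamma,\Omega}$, so $u$ is $e^{2\digamma/x}$-solenoidal. Theorem~\ref{thm:local-linear} (one-form case, $\digamma>0$) then applies to $u$, giving injectivity of $u\mapsto Iu$ together with the reconstruction formula and the stability estimate; for $r$ sufficiently negative the weight losses in the $L$ and $I$ factors entering that estimate are absorbed, and one obtains control of $u$ in $e^{\digamma/x}x^rL^2_\scl(\Omega)$ by a weighted Sobolev norm of $If=Iu$ on $U$. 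This is exactly the asserted decomposition with $u$ stably determined by $If$.

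For the uniformity claim I would simply observe that every ingredient above — the invertibility in Lemma~\ref{lemma:potential-invertible-weight}, the parametrix and ellipticity underlying $N_\digamma$ in Proposition~\ref{prop:psdo} and Proposition~\ref{prop:elliptic}, and the estimates collected in Theorem~\ref{thm:local-linear} — is uniform in $\tau$ as long as $\tau$ ranges over a compact set on which the level sets of $\tilde x$ remain strictly concave from the superlevel side, exactly as already recorded in those statements; so replacing $\Omega_c$ by $\Omega_{\tau,c}$ and keeping a single small $c$ works simultaneously for all such $\tau$.

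The main obstacle I anticipate is not conceptual but a matter of bookkeeping: one has to check that the weighted scattering Sobolev spaces line up through the composition $\delta^s_\digamma$, then $\Delta_{\digamma,s,\Omega}^{-1}$, then $d^s_\digamma$ defining the projections, and that the order loss and weight loss incurred by the scattering Fourier integral operators $L$ and $I$ in the stability estimate of Theorem~\ref{thm:local-linear} are compensated by the freedom to take $r$ as negative as needed. The fact that $\digamma>0$ (no largeness) suffices here, in contrast to the 2-tensor case, relies precisely on the one-form parts of Lemmas~\ref{lemma:potential-invertible} and~\ref{lemma:potential-invertible-weight} and on Remark~\ref{rem:g-vs-gsc}.
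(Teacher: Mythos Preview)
Your proposal is correct and follows essentially the same route as the paper: decompose $f_\digamma=e^{-\digamma/x}f$ via the solenoidal and potential projections $\cS_{\digamma,\Omega}$, $\cP_{\digamma,\Omega}$, observe that the potential part is annihilated by $I$ because $Q_{\digamma,\Omega}f_\digamma$ vanishes at $\pa_\inter\Omega$, and then feed the solenoidal part into Theorem~\ref{thm:local-linear}. One small remark: since $f$ is a one-form, $v$ is a \emph{function}, so the relevant invertibility of $\Delta_{\digamma,s,\Omega}$ is the function case of Lemmas~\ref{lemma:potential-invertible}--\ref{lemma:potential-invertible-weight}, which indeed holds for every $\digamma>0$; your aside about the one-form case of those lemmas and Remark~\ref{rem:g-vs-gsc} is not needed here and could be dropped to avoid suggesting that the one-form Witten Laplacian (which requires $\digamma\geq\digamma_0$) plays a role in this corollary.
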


\begin{cor}\label{cor:local-linear-2-tensor}
Let $\digamma,\digamma_0$ be as in Theorem~\ref{thm:local-linear}.
With $\Omega=\Omega_c$ as in Theorem~\ref{thm:local-linear},  $r$ sufficiently negative, $c>0$
small, if $f\in x^re^{\digamma/x} L_\scl^2(\Omega)$ is a symmetric 2-tensor then $f=u+d^s v$,
where $v\in e^{\digamma/x} \Hscd^{1,r-2}(\Omega)$, while $u\in
e^{\digamma/x} x^{r-2}L_\scl^2(\Omega)$ can be stably determined from
$If$.

Again, replacing $\Omega_c=\{\tilde x>-c\}\cap M$ by
$\Omega_{\tau,c}=\{\tau>\tilde x>-c+\tau\}\cap M$, $c$ can be taken
uniform in $\tau$ for $\tau$ in a compact set on which the strict
concavity assumption on level sets of $\tilde x$ holds.
\end{cor}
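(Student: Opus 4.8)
The plan is to deduce the statement from Theorem~\ref{thm:local-linear} in the same way that Corollary~\ref{cor:local-linear-one-form} is, the only new features being that the potential $v$ is now a one-form rather than a scalar function, and that the two-order weight loss of the $2$-tensor reconstruction must be accounted for. First I would set $\phi=e^{-\digamma/x}f\in x^rL^2_\scl(\Omega)=\Hsc^{0,r}(\Omega)$. Since $\digamma\geq\digamma_0$, Lemma~\ref{lemma:potential-invertible-weight} in the one-form case gives that the Dirichlet Witten Laplacian $\Delta_{\digamma,s,\Omega}=\delta^s_\digamma d^s_\digamma$ is invertible $\Hscd^{1,r}(\Omega)\to\Hscb^{-1,r}(\Omega)$, so, exactly as in Section~\ref{sec:gauge}, the potential and solenoidal projections $\cP_{\digamma,\Omega}$, $\cS_{\digamma,\Omega}$ and $Q_{\digamma,\Omega}=\Delta_{\digamma,s,\Omega}^{-1}\delta^s_\digamma$ are defined on $\phi$. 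Using that $\delta^s_\digamma\in\Diffsc^{1,0}$ maps $\Hsc^{0,r}\to\Hscb^{-1,r}$ and $\Delta_{\digamma,s,\Omega}^{-1}$ maps back into $\Hscd^{1,r}$, I would put $v=e^{\digamma/x}Q_{\digamma,\Omega}\phi$ and $u=e^{\digamma/x}\cS_{\digamma,\Omega}\phi=f-d^s v$, obtaining $f=u+d^s v$ with $v\in e^{\digamma/x}\Hscd^{1,r}(\Omega)\subset e^{\digamma/x}\Hscd^{1,r-2}(\Omega)$ vanishing at $\pa_\inter\Omega=\pa M\cap\Omega$.

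Next I would observe that, $v$ vanishing at $\pa M\cap\Omega$, the potential tensor $d^s v$ is annihilated by $I$ along all $\Omega$-local geodesics (by the fundamental theorem of calculus, cf.\ \eqref{eq:int-along-curves}), so $If=Iu$, and that $\delta^s_\digamma\cS_{\digamma,\Omega}\phi=0$ says precisely that $\delta^s(e^{-2\digamma/x}u)=0$, i.e.\ $u$ is $e^{2\digamma/x}$-solenoidal. Hence Theorem~\ref{thm:local-linear} applies to $u$: its reconstruction formula, evaluated on $If$ in place of $Iu$, recovers $u$ together with the stability estimate stated there, and this is exactly the assertion that a representative $u$ of $f$ modulo potentials is stably determined by $If$.

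The only step requiring genuine care — and the only place the $2$-tensor case differs from the one-form case of Corollary~\ref{cor:local-linear-one-form} — is the weighted-space bookkeeping. In the $2$-tensor reconstruction formula of Theorem~\ref{thm:local-linear} one uses the left inverse $P_{\Omega_1\setminus\Omega}$ of $d^s_\digamma$ on one-forms from Lemma~\ref{lemma:local-ds-inverse-forms}, which costs two orders of scattering weight, so the recovered $u$ is only controlled in $e^{\digamma/x}x^{r-2}L^2_\scl(\Omega)$ and the inversion must be performed in a sufficiently negatively weighted space (forcing $r\leq-(n-1)/2$, as flagged in the discussion preceding the statement); I would carry this loss through the formula and match it with the claimed weights for $u$ and $v$, also tracking the regularity and weight shifts of $L$ and $I$ from the proof of Theorem~\ref{thm:local-linear}. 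Finally, the uniformity in $\tau$ is immediate, since every ingredient used — the scattering pseudodifferential structure of $N_\digamma$ (Proposition~\ref{prop:psdo}), ellipticity up to gauge (Proposition~\ref{prop:elliptic}), the invertibility of $\Delta_{\digamma,s,\Omega}$, the Poisson operator and $P_{\Omega_1\setminus\Omega}$, and the mapping bounds for $L$ and $I$ — is uniform for $\Omega=\Omega_{\tau,c}=\{\tau>\tilde x>-c+\tau\}\cap M$ as $\tau$ ranges over a compact set of strict concavity of the level sets of $\tilde x$.
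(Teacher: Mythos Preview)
Your proposal is correct and follows essentially the same approach as the paper: the paper's argument for both corollaries is the short paragraph immediately preceding them, which decomposes $e^{-\digamma/x}f$ via the solenoidal/potential projections $\cS_{\digamma,\Omega}$, $\cP_{\digamma,\Omega}$, observes that $I$ annihilates the potential part, and applies Theorem~\ref{thm:local-linear} to the solenoidal part. You have correctly identified and expanded upon the only point where the $2$-tensor case departs from Corollary~\ref{cor:local-linear-one-form}, namely the two-order weight loss coming from $P_{\Omega_1\setminus\Omega}$ in Lemma~\ref{lemma:local-ds-inverse-forms} and the resulting constraint on $r$.
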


This theorem has an easy global consequence. To state this, assume
that $\tilde x$ is a globally defined function with level sets $\Sigma_t$
which are strictly concave from the super-level set for $t\in (-T,0]$,
with $\tilde x\leq 0$ on the manifold with boundary $M$.
Then we have:

\begin{thm}\label{thm:global}
Suppose $M$ is compact.
The geodesic X-ray transform is injective and stable modulo potentials
on the
restriction of
one-forms and symmetric 2-tensors $f$ to $\tilde x^{-1}((-T,0])$ in
the following sense. For all $\tau>-T$ there is $v\in \dot H^1_\loc(\tilde
x^{-1}((\tau,0]))$ such that $f-d^sv\in L^2_\loc(\tilde
x^{-1}((\tau,0]))$ can be
stably recovered from $If$. Here for stability we assume that $s\geq
0$, $f$ is in an
$H^s$-space, the norm on $If$ is an
$H^s$-norm, while the norm for $v$ is an $H^{s-1}$-norm.
\end{thm}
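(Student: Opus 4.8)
The plan is to reduce Theorem~\ref{thm:global} to the local result, Corollaries~\ref{cor:local-linear-one-form}--\ref{cor:local-linear-2-tensor}, by a layer-stripping argument over the convex foliation $\{\Sigma_t=\tilde x^{-1}(t)\}$, exactly paralleling the scalar argument of \cite{UV:local}; the one genuinely new ingredient is the bookkeeping of the gauge across the interfaces between successive layers.

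First I would fix $\tau>-T$ and invoke the uniformity asserted in the last sentence of Theorem~\ref{thm:local-linear} and of the two corollaries: there is $c_0>0$ (and, in the $2$-tensor case, a $\digamma_0$) so that for every $\tau'$ in a fixed compact neighbourhood of $[\tau,0]$ inside $(-T,0]$ and every $c\in(0,c_0)$ the local reconstruction, together with its $H^s\to H^{s-1}$ stability estimate, holds on the shell $\Omega_{\tau',c}=\{\tau'>\tilde x>\tau'-c\}\cap M$, whose artificial (concave) boundary is $\Sigma_{\tau'-c}$ and whose interior boundary $\pa_\inter\Omega_{\tau',c}$ is $\Sigma_{\tau'}\cup(\pa M\cap\Omega_{\tau',c})$. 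Fixing such a $c$ (also requiring $\tau-c>-T$) I would choose levels $0=t_0>t_1>\dots>t_N$ with $t_N<\tau\le t_{N-1}$, all $t_j$ and $t_j-c$ in the above compact set, and $t_{j-1}-t_j<c$, so that the shells $\Omega^{(j)}=\Omega_{t_{j-1},c}$ cover $\tilde x^{-1}((\tau,0])\cap M$, consecutive shells overlap, and $U_j:=\Omega^{(1)}\cup\dots\cup\Omega^{(j)}=\tilde x^{-1}((t_{j-1}-c,0])\cap M$.

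The heart of the proof is an induction on $j$ producing a one-form, resp.\ symmetric $2$-tensor, $v_j\in\dot H^1_\loc\big(U_j\setminus\Sigma_{t_{j-1}-c}\big)$ together with a bounded linear map from $If$ to $f-d^sv_j\in H^{s-1}_\loc$. The base case $j=1$ is Corollary~\ref{cor:local-linear-one-form}, resp.~\ref{cor:local-linear-2-tensor}, on $\Omega^{(1)}$. For the inductive step, apply the corollary on $\Omega^{(j)}$ to write $f|_{\Omega^{(j)}}=u^{(j)}+d^sw^{(j)}$ with $u^{(j)}$ recovered and $w^{(j)}$ vanishing on $\pa_\inter\Omega^{(j)}$; on the overlap $\Omega^{(j)}\cap U_{j-1}$ the two recovered ``solenoidal'' parts differ by $d^s\phi_j$, $\phi_j=w^{(j)}-v_{j-1}$, which is a \emph{known} field vanishing on $\pa M$ wherever the overlap meets it. I would then extend $\phi_j$ to a known $\Phi_j$ on $\Omega^{(j)}$, arranged to vanish on $\pa M\cap\Omega^{(j)}$ (a relative-extension construction, which is where the interface analysis of Section~\ref{sec:gauge} enters: the Poisson operator $B_{\Omega^{(j)}}$, the projections $\cS_{\digamma,\Omega^{(j)}}$, and Lemmas~\ref{lemma:local-ds-inverse-fn}--\ref{lemma:local-ds-inverse-forms} controlling the weight loss in the $2$-tensor case), and set $v_j:=v_{j-1}$ on $U_{j-1}$ and $v_j:=w^{(j)}-\Phi_j$ on $\Omega^{(j)}$. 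These definitions agree on the overlap, give $v_j\in\dot H^1_\loc$, and yield $f-d^sv_j=f-d^sv_{j-1}$ on $U_{j-1}$ and $f-d^sv_j=u^{(j)}+d^s\Phi_j$ on $\Omega^{(j)}$ — in both cases a quantity reconstructed from $If$. For $j=N$ this is the assertion on $\tilde x^{-1}((\tau,0])\cap M$; the case $If=0$ gives $f=d^sv_N$, i.e.\ injectivity modulo potentials, and the stability bound $H^s\to H^{s-1}$ follows by composing the $N$ local estimates — the exponential and polynomial weights being harmless since only $\loc$-estimates are claimed, on compact subsets of which the weights are two-sided bounded, and $N$ is finite.

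I expect the gauge/interface bookkeeping to be the main obstacle, since it is the only essentially new feature relative to the scalar layer-stripping of \cite{UV:local}: solenoidality is region-dependent, so $w^{(j)}$ and $v_{j-1}$ need not agree on the overlap, and one must verify that the correction $\Phi_j$ (i) can be taken to respect the $\pa M$-vanishing built into $\dot H^1$, (ii) is controlled in the relevant Sobolev norm by the data, which is precisely what the mapping properties of $\cS_{\digamma,\Omega^{(j)}}$, $B_{\Omega^{(j)}}$ and $P_{\Omega^{(j)}\setminus U_{j-1}}$ from Section~\ref{sec:gauge} provide, and (iii) keeps track, in the $2$-tensor case, of the order-$2$ weight loss of Lemma~\ref{lemma:local-ds-inverse-forms}, so that one works throughout in a sufficiently negatively weighted scattering space, as in Theorem~\ref{thm:local-linear}. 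A secondary point is that the chain of shells must be chosen with uniform width $c$ down to level $\tau$, which is exactly the uniformity already established.
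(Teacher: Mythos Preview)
Your layer-stripping induction is the right overall strategy, and it is indeed what the paper's brief stability remark intends. However, there is a genuine gap in your inductive step: when you apply the local corollary on $\Omega^{(j)}$ for $j\ge 2$, the data that the corollary requires is the \emph{$\Omega^{(j)}$-local} transform of $f$, meaning integrals of $f$ over geodesics that stay in $\Omega^{(j)}$ and have endpoints on $\Sigma_{t_{j-1}}$. This is not what you are given: the hypothesis supplies only $If$ on $M$-geodesics, with endpoints on $\pa M$. Since $f$ does not vanish in $\{\tilde x>t_{j-1}\}$, these two quantities differ, and you have not explained how to manufacture the former from the latter. Without this your $u^{(j)}$ is not actually recoverable from the available data, and the rest of the induction (including the claim that $d^s\phi_j$ is ``known'') collapses.

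The paper handles this by a single cutoff trick that also eliminates your interface bookkeeping. For injectivity it argues by contradiction: assuming $If=0$, one lets $\tau$ be the infimum of levels down to which $f$ is already known to be potential, takes the $v$ valid on $\{\tilde x>\tau'\}$ for some $\tau'\in(\tau,\tau+c/3)$, and sets $\tilde f=f-d^s(\phi v)$ with $\phi\equiv 1$ near $\{\tilde x\ge\tau+2c/3\}$ and supported in $\{\tilde x>\tau+c/3\}$. Then $\tilde f$ vanishes on $\{\tilde x>\tau+2c/3\}$, so for geodesics of the new shell $\{\tau-c/3<\tilde x<\tau+2c/3\}$ the local transform of $\tilde f$ coincides with the global one (the extensions past $\Sigma_{\tau+2c/3}$ lie entirely in $\{\tilde x>\tau+2c/3\}$ by convexity). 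The corollary then gives $\tilde f=d^sv'$ with $v'$ vanishing on $\Sigma_{\tau+2c/3}$, and extending $v'$ by zero contradicts the choice of $\tau$. For stable recovery one does the same in finitely many steps: having already recovered $R=f-d^sv$ on the upper region, one works with $\tilde f=f-d^s(\phi v)$ on the next shell; since $\tilde f=R$ on $\{\phi=1\}$ and $I\tilde f=If$ on $M$-geodesics (as $\phi v$ vanishes on $\pa M$), the needed local transform is the known quantity $If(\tilde\gamma)-\int_{\tilde\gamma\setminus\gamma}R$. The resulting $v'$ again vanishes at the top of the shell and extends by zero, so the new global potential is simply $\phi v+\tilde v'$. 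No $P_{\Omega^{(j)}\setminus U_{j-1}}$, no Poisson operator, no relative-extension construction is needed: the cutoff already aligns the gauges. Your Section~\ref{sec:gauge} machinery is thus both unavailable here (because $u^{(j)}$ is not known from the data you have) and unnecessary once the cutoff is used.
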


\begin{proof}
For the sake of contradiction,
suppose there is no $v$ as stated on $\tilde x^{-1}((\tau_0,0])$ for
some $0>\tau_0>-T$, $If=0$, and let
$$
\tau=\inf\{t\leq 0:\ \exists v_t\in \dot H^1_{\loc}(\{\tilde x>t\})\ \text{s.t.}\
f=d^sv_t\ \text{on}\ \{\tilde x>t\}\}\geq\tau_0.
$$
Thus, for any $\tau'>\tau$, such as
$\tau'<\tau+c/3$, $c$ as in the uniform part of
Corollaries~\ref{cor:local-linear-one-form}-\ref{cor:local-linear-2-tensor} on the levels $[\tau,0]$, there is $v\in \dot
H^1_{\loc}(\{\tilde x>\tau'\})$ such that $f=d^s v$ on $\{\tilde
x>\tau'\}$. Choosing $\phi\in\CI(M)$ identically $1$ near $\tilde
x\geq\tau+2c/3$, supported in $\tilde x>\tau+c/3$, $f-d^s(\phi v)$ is
supported in $\tilde x\leq\tau+2c/3$. But then by the uniform
statement of Corollaries~\ref{cor:local-linear-one-form}-\ref{cor:local-linear-2-tensor}, there exists $v'\in
\dot H^1_\loc (\{\tau-c/3<\tilde x\leq \tau+2c/3\})$ such
that $f-d^s(\phi v)=d^sv'$ in $\tau-c/3<\tilde x<\tau+2c/3$. Extending
$v'$ as $0$, the resulting function $\tilde v'\in \dot H^1_\loc
(\{\tau-c/3<\tilde x\})$ and $d^s \tilde v'$ is the extension of $d^s
v'$ by $0$. Thus, $f=d^s(\phi
v+\tilde v')$, and this contradicts the choice of $\tau$, completing
the proof.

The stability of the recovery follows from a similar argument: by the
uniform property one can recover $f$ modulo potentials in a finite
number of steps: if $c$ works uniformly on $[\tau,0]$, at most
$|\tau|/c+1$ steps are necessary.
\end{proof}

\bibliographystyle{abbrv}
\bibliography{myreferences}

\end{document}